\theoremstyle{plain}
    \newtheorem{thm}{Theorem}[section]
    \newtheorem*{mainthm}{Main Theorem}
    \newtheorem{lem}[thm]{Lemma}%
    \newtheorem{cor}[thm]{Corollary}
    \newtheorem{prop}[thm]{Proposition}
    \newtheorem{sub}{Sub}[thm]
    \newtheorem{lemsub}[sub]{Lemma}
    \newtheorem{corsub}[sub]{Corollary}
\theoremstyle{definition}
    \newtheorem{dfn}[thm]{Definition}
    \newtheorem*{notan}{Notation}
\theoremstyle{remark}
    \newtheorem{rem}[thm]{Remark}
    \newtheorem{remsub}[sub]{Remark}
    \newtheorem*{remn}{Remark}
    \newtheorem{exam}[thm]{Example}
    \newtheorem{clsub}[sub]{Claim}
    \newtheorem{step}{Step}%
\numberwithin{equation}{section}
\DeclareSymbolFont{stmry}{U}{stmry}{m}{n}
\DeclareMathDelimiter\llbracket{\mathopen}{stmry}{"4A}%
					  {stmry}{"71}
\DeclareMathDelimiter\rrbracket{\mathclose}{stmry}{"4B}%
					   {stmry}{"79}
\newcommand{\BAA}{\mathbb{A}}
\newcommand{\BCC}{\mathbb{C}}
\newcommand{\BKK}{\mathbb{K}}
\newcommand{\BLL}{\mathbb{L}}
\newcommand{\BPP}{\mathbb{P}}
\newcommand{\BQQ}{\mathbb{Q}}
\newcommand{\BRR}{\mathbb{R}}
\newcommand{\BSS}{\mathbb{S}}
\newcommand{\BTT}{\mathbb{T}}
\newcommand{\BZZ}{\mathbb{Z}}
\newcommand{\SA}{\mathcal{A}}
\newcommand{\SC}{\mathcal{C}}
\newcommand{\SF}{\mathcal{F}}
\newcommand{\SG}{\mathcal{G}}
\newcommand{\SH}{\mathcal{H}}
\newcommand{\SI}{\mathcal{I}}
\newcommand{\SL}{\mathcal{L}}
\newcommand{\SO}{\mathcal{O}}
\newcommand{\ST}{\mathcal{T}}
\newcommand{\SU}{\mathcal{U}}
\newcommand{\SV}{\mathcal{V}}
\newcommand{\SX}{\mathcal{X}}
\newcommand{\SY}{\mathcal{Y}}
\newcommand{\ep}{\varepsilon}
\newcommand{\GM}{\mathfrak{m}}
\newcommand{\GR}{\mathfrak{R}}
\newcommand{\GX}{\mathfrak{X}}
\newcommand{\stt}{\mathtt{s}}
\newcommand{\ttt}{\mathtt{t}}
\newcommand{\utt}{\mathtt{u}}
\newcommand{\vtt}{\mathtt{v}}
\newcommand{\xtt}{\mathtt{x}}
\newcommand{\ytt}{\mathtt{y}}
\newcommand{\ztt}{\mathtt{z}}
\newcommand{\SAA}{\mathsf{A}}
\newcommand{\SKK}{\mathsf{K}}
\newcommand{\MM}{\mathsf{M}}
\newcommand{\NN}{\mathsf{N}}
\newcommand{\Bmu}{\boldsymbol{\mu}}
\newcommand{\Bsigma}{\boldsymbol{\sigma}}
\newcommand{\BSigma}{\boldsymbol{\Sigma}}
\newcommand{\Btau}{\boldsymbol{\tau}}
\newcommand{\Be}{\boldsymbol{e}}
\newcommand{\Bt}{\boldsymbol{t}}
\newcommand{\bzero}{\boldsymbol{0}}
\newcommand{\chara}{\operatorname{char}}
\newcommand{\Codim}{\operatorname{codim}}
\newcommand{\Cone}{\operatorname{Cone}}
\newcommand{\dd}{\mathrm{d}}
\newcommand{\Def}{\operatorname{Def}}
\newcommand{\Div}{\operatorname{div}}
\newcommand{\DNA}{\operatorname{DNA}}
\newcommand{\depth}{\operatorname{depth}}
\newcommand{\ev}{\operatorname{ev}}
\newcommand{\SExt}{\operatorname{\mathcal{E}\mathit{xt}}}
\newcommand{\BExt}{\operatorname{\mathbb{E}xt}}
\newcommand{\Hom}{\operatorname{Hom}}
\newcommand{\RHom}{\operatorname{RHom}}
\newcommand{\SHom}{\operatorname{\mathcal{H}\mathit{om}}}
\newcommand{\SRHom}{\operatorname{\mathcal{RH}\mathit{om}}}
\newcommand{\id}{\operatorname{id}}
\newcommand{\Ker}{\operatorname{Ker}}
\newcommand{\LC}{\operatorname{LC}}
\newcommand{\mult}{\operatorname{mult}}
\newcommand{\ob}{\operatorname{ob}}
\newcommand{\OH}{\operatorname{H}}
\newcommand{\OR}{\operatorname{R}}
\newcommand{\OT}{\operatorname{T}}
\newcommand{\Pic}{\operatorname{Pic}}
\newcommand{\Proj}{\operatorname{Proj}}
\newcommand{\Res}{\operatorname{Res}}
\newcommand{\Sets}{\operatorname{Sets}}
\newcommand{\Sing}{\operatorname{Sing}}
\newcommand{\Spec}{\operatorname{Spec}}
\newcommand{\Spf}{\operatorname{Spf}}
\newcommand{\Supp}{\operatorname{Supp}}
\newcommand{\isom}{\simeq}
\newcommand{\injmap}{\hookrightarrow}
\newcommand{\surjmap}{\twoheadrightarrow}
\newcommand{\alg}{\mathrm{alg}}
\newcommand{\loc}{\mathrm{loc}}
\begin{document}

\title[]{Simply connected surfaces of general type
in positive characteristic
via deformation theory}
\author{Yongnam Lee}
\address[Yongnam Lee]{%
\textsc{Department of Mathematics} \endgraf
\textsc{Sogang University, Sinsu-dong, Mapo-gu, Seoul 121-742 Korea}}
\email{ynlee@sogang.ac.kr}
\author{Noboru Nakayama}
\address[Noboru Nakayama]{%
\textsc{Research Institute for Mathematical Sciences} \endgraf
\textsc{Kyoto University, Kyoto 606-8502 Japan}}
\email{nakayama@kurims.kyoto-u.ac.jp}

\begin{abstract}
Algebraically simply connected surfaces of general type with \(
p_g=q=0\) and \( 1\le K^2\le 4\) in positive characteristic (with
one exception in \( K^2=4\)) are presented by using a \(
\BQQ\)-Gorenstein smoothing of two-dimensional toric singularities,
a generalization of Lee--Park's construction \cite{LeePark}
to the positive characteristic case,
and Grothendieck's specialization theorem for the
fundamental group.
\end{abstract}

\subjclass[2000]{Primary 14J29; Secondary 14B07, 14J17}
\keywords{surface of general type, \( \BQQ\)-Gorenstein smoothing,
two-dimensional toric singularity, algebraic fundamental group}

\thanks{
The first named author is partly supported by the WCU Grant funded
by the Korean Government (R33-2008-000-10101-0), and by the National Research
Foundation of Korea(NRF) grant funded by the Korea government(MEST)
(No. 2010-0008752). The second named
author is partly supported by the Grant-in-Aid for Scientific
Research (C), Japan Society for the Promotion of Science.}


\maketitle

\thispagestyle{empty}

\setcounter{tocdepth}{1}
\tableofcontents

\section{Introduction}
\label{sect:intro}

One of the interesting problems in the classification of algebraic
surfaces is to find a new family of simply connected surfaces of
general type with geometric genus \( p_g=0\). Surfaces with \( p_g=0\) are
interesting in view of Castelnuovo's criterion:
\emph{An irrational
surface with irregularity \( q=0\) must have bigenus \( P_2\ge 1\)}.
Simply connected
surfaces of general type with \( p_g=0\) are little known and the
classification is still open.

When a surface is defined over the field \( \BCC\) of complex
numbers, the only known simply connected minimal surfaces of general
type with \( p_g=0\) were Barlow surfaces \cite{Barlow} until 2006.
The canonical divisor of Barlow surfaces satisfies \( K^2=1\).
Recently, the first named author and J.~Park \cite{LeePark} have
constructed a simply connected minimal surface of general type with
\( p_g=0\) and \( K^2=2\)  by using a \( \BQQ\)-Gorenstein smoothing
and Milnor fiber of a smoothing (or rational blow-down surgery).
When a surface is defined over a field of positive characteristic,
the existence of algebraically simply connected minimal surface of
general type with \( p_g=0\) is known only for some special
characteristics. Lang \cite{Lang81} has constructed surfaces of
general type with \( p_g=0\) and \( K^2=1\) in characteristic \( 5
\). Ekedahl \cite{Ek} has given examples of surfaces of general type
with \( p_g=0\) and with \( 1\le K^2\le 9\) in characteristic two.
The inequality
\( 1 \leq K^2_{S} \leq 9\) holds by results of Shepherd-Barron \cite{SB1},
\cite{SB2} when \( S\) is a minimal surface of general type with \(
p_g=0\). This is shown in Liedtke's lecture notes \cite{Lied} on algebraic surfaces in positive characteristic.

We shall construct such a surface of general type
defined over an algebraically closed field of any characteristic
applying the Lee--Park construction given in \cite{LeePark}.
The following is our main result:

\begin{mainthm}
For any algebraically closed field \( \Bbbk \) and for any integer
\( 1 \leq K^{2} \leq 4 \), there exists an algebraically simply
connected minimal surface \( \BSS \)
of general type over \( \Bbbk \) with
\(p_{g}(\BSS) = q(\BSS) = \dim \OH^{2}(\BSS, \Theta_{\BSS/\Bbbk}) = 0 \)
and \( K_{\BSS}^{2} = K^{2} \) except \( (\chara(\Bbbk), K^{2}) = (2, 4) \),
where \( \Theta_{\BSS/\Bbbk} \) denotes the tangent sheaf.
Moreover, one can find such a surface with ample
canonical divisor when \( 1 \leq K^{2} \leq 4 \), except
\((\chara(\Bbbk), K^{2}) = (2, 1)\), \((2, 2)\), and \( (2, 4)\).
\end{mainthm}

\begin{remn}
\begin{enumerate}
\item  The surface \( \BSS \) in Main Theorem is liftable to characteristic zero
since \( \OH^{2}(\BSS, \Theta_{\BSS/\Bbbk}) = \OH^{2}(\BSS, \SO_{\BSS}) = 0 \).

\item
In our method of constructing \( \BSS \),
the bound \( K^{2} \leq 4 \) is necessary (cf.\ Remark~\ref{rem:K2bound} below).

\item  The ampleness property in Main Theorem
is known in characteristic zero by \cite[Section~2]{RS} and
the proof of \cite[Theorem~4.1]{PPS2} in
the case of \( K^{2} = 3 \) and \( 4 \).

\item We have the exceptional cases of \( (\chara(\Bbbk), K^{2}) \) by
the lack of certain examples of rational surfaces \( X \) in Section~\ref{sect:Global}.
This is a technical reason, and
the existence of such a surface \( \BSS \) in the exceptional cases is an open problem.
\end{enumerate}
\end{remn}

The Lee--Park construction is as follows in the case of
\( K^{2} = 2 \) (cf.\ \cite[Section~3]{LeePark}):
First, we consider a special pencil of
cubics in \( \BPP^2\) and blow up many times to get a projective
surface \( M \) (\( \tilde Z\) in \cite{LeePark}) which contains a
disjoint union of five linear chains of smooth rational curves
representing the resolution graphs of special quotient singularities
called of class T (cf.\ Definition~\ref{dfn:classT} below). Then, we
contract these linear chains of rational curves from the surface
\(M\) to produce a projective surface \( X\) with five special
quotient singularities of class T and with \( K_{X}^{2} = 2 \).
We can prove the existence of a
global \(\BQQ\)-Gorenstein smoothing of the singular surface \( X\)
(cf.\ \cite[Theorem~4]{LeePark}), in which a general fiber \( X_t\)
of the \( \BQQ\)-Gorenstein smoothing is a simply connected minimal
surface of general type with \( p_g=0\) and \( K^2 =2\) (cf.\
\cite[Proposition~8]{LeePark}). The method of Lee--Park construction
works to other types of rational elliptic surfaces, which are used to
construct a simply connected minimal surface of general type with \(
p_g=0\) and with \( K^2=1\), \(3\), or \(4\)
(\cite[Section~7]{LeePark}, \cite{PPS1}, \cite{PPS2}). We shall show
that the Lee--Park construction of singular surfaces also
works in positive characteristic, but several key parts in the proof
to show the existence of a global \(\BQQ\)-Gorenstein smoothing
should be modified.

Over the field \( \BCC \) of complex numbers, the existence of a local
\(\BQQ\)-Gorenstein smoothing of a singularity of class T
is given by the index-one cover (cf.\ \cite[Proposition 5.9]{LW},
\cite[Proposition 3.10]{KSh}). The key idea in \cite{LeePark} to
show the vanishing of the obstruction space for a global \(
\BQQ\)-Gorenstein smoothing is to use the lifting property of
derivations of normal surface to its minimal resolutions, the
tautness of the quotient singularities, and the special
configurations of resolution graphs of singular points.

In characteristic \( 0 \), the tautness holds for quotient singularities
(cf.\ \cite[Satz~2.10]{Brieskorn}), i.e, the minimal resolution graph of a
quotient singularity determines the type of singularity.
It is known in characteristic \( 0 \) that the tautness
is equivalent to \( H^1(\Theta_D) = 0\) for any ``sufficiently large''
effective divisor \( D\) supported on
the exceptional divisors on its minimal resolution
(cf.\ \cite[Theorem~3.10]{LauferDef}, \cite[Section~2]{Laufer}).
However, tautness does not hold in characteristic \( p > 0 \) in general.
Indeed, we have some examples of rational double points
when \( p = 2 \), \( 3 \), and \( 5 \), by
Artin's classification in \cite{ArtinRDP}.
The lifting property of derivations from a normal surface to its
minimal resolution exists in characteristic \( 0 \)
(\cite[Proposition 1.2]{BW}),
but this is not true in characteristic \( p > 0 \)
(cf.\ \cite[Example, pp.~345]{Artin74} and
Proposition~\ref{prop:tangentsheaves}.\eqref{prop:tangentsheaves:reflexive}
below). In \cite{Wahl75}, the proof of Theorem C illustrates why the
lifting property is guaranteed to hold only in characteristic \( 0 \),
and also provides sufficient conditions for its truth
in characteristic \( p > 0 \).

However, in our constructions, we have only
two-dimensional toric singularities by contracting linear chains of
smooth rational curves.
In Section~\ref{sect:dualgraph} below, we prove that
the singularity obtained by
contracting a linear chain of smooth rational curves is
a two-dimensional toric singularity and it is taut
(cf.\ Theorem~\ref{thm:graph2toric} below).
Moreover, it turns out that the lifting property of derivations mentioned above
is not so important for proving the vanishing of the obstruction space
(cf.\ Corollary~\ref{cor:compareTangent} below).

In Section~\ref{sect:DefCT}, we introduce the notion of toric surface
singularity of class T (cf.\ Definition~\ref{dfn:classT}),
explain some properties with Tables~\ref{table:M} and
\ref{table:M2} of related numbers for some special cases,
and construct a so-called \( \BQQ \)-Gorenstein smoothing explicitly
by using toric description in Theorem~\ref{thm:localsmoothing}.

We recall several basics on the deformation theory of schemes
in Section~\ref{sect:Deformation} including Schlessinger's theory of
functors on Artinian rings.
As an exercise, using cotangent complexes and obstruction theory,
we shall prove in Theorem~\ref{thm:H2T=0}
that \emph{if \( \OH^{2}(X, \Theta_{X/\Bbbk}) = 0 \)
for an algebraic \( \Bbbk \)-variety \( X \) with only isolated singularities,
then the morphism \( \Def_{X} \to \Def_{X}^{(\loc)} \)
between the global and local the deformation functors of \( X \)
is smooth in the sense of Schlessinger}
(cf.\ \cite[Proposition~6.4]{WahlSmoothings}).
An algebraization result (Theorem~\ref{thm:algn})
is added, which plays an important role
when we construct an algebraic deformation.
The proof uses Artin's theory of algebraization (cf.\ \cite{ArtinFormal}).

In Section~\ref{sect:DeformationClassT},
we shall construct a
deformation of a normal projective surface \( X \) with toric singularities
of class T assuming some extra conditions.
As a consequence, we have a so-called \( \BQQ \)-Gorenstein smoothing
not only over the base field \( \Bbbk \) but
also over a complete discrete valuation ring
with the residue field \( \Bbbk \)
(cf.\ Theorems~\ref{thm:globalsmoothing} and \ref{thm:DVRsmoothing}).
As a byproduct, in Corollary~\ref{cor:log del Pezzo index two},
we can give a correct version of
the existence of \( \BQQ \)-Gorenstein smoothing of
any log del Pezzo surfaces of index two in positive characteristic
(cf.\ \cite[Theorem~5.16]{logdelPezzo}).
By the smoothing over the discrete valuation ring and the Grothendieck
specialization theorem
(cf.\ \cite[Exp.~X, Corollaire~2.4, Th\'eor\`eme~3.8]{SGA1}),
the algebraic simply connectedness of the smooth fiber is
reduced to that of a smooth fiber of a \( \BQQ \)-Gorenstein smoothing
of a reduction of \( X \) of our construction
to the complex number field \( \BCC \).
Note that the simply connectedness in case \( \Bbbk = \BCC \)
for the known Lee--Park constructions has been proved in
\cite{LeePark}, \cite{PPS1}, and \cite{PPS2} by using
Milnor fiber (or rational blow-down) and by applying
van-Kampen's theorem on the minimal resolution of \( X\).

Our plan of the proof of the Main Theorem is as follows:
We first construct a normal rational surface \( X \)
with only toric singularities
of class T satisfying extra conditions by Lee--Park's method.
This \( X \) is constructed from a suitable cubic pencil \( \Phi \)
on \( \BPP^{2} \),
a special construction of a birational morphism \( M \to \BPP^{2} \),
and a blowdown \( M \to X \) of a union of linear chains of rational curves
defining toric singularities of class T.
Second, we apply the results in Section~\ref{sect:DeformationClassT}
to \( X \) and obtain a so-called \( \BQQ \)-Gorenstein smoothing
of \( X \), in which a general smooth fiber \( X_{t} \) is
an expected surface satisfying
the required conditions in Main Theorem.

In Section~\ref{sect:Global}, we shall explain the outline of our proof
giving sufficient conditions for the data needed in constructing \( X \).
During the discussion, from the conditions, we shall prove some key results
such as the vanishing \( \OH^{2}(X, \Theta_{X/\Bbbk}) = 0\), and
the nef and bigness of \( K_{X} \).

In Section~\ref{sect:proof}, we shall give eight examples
working in our proof. Most examples are taken from
\cite{LeePark}, \cite{PPS1}, and \cite{PPS2}, but
Examples~\ref{exam:char2,K2=1}--\ref{exam:char2,K2=2} are new
which are presented by Heesang Park.
These new examples are needed because of some problems
in small characteristics.
For example,
rational surfaces admitting a minimal elliptic fibration with
a configuration type
\( (\text{I}_8, \text{I}_1, \text{I}_1, \text{I}_2)\) of singular fibers
are used in \cite{PPS1} and \cite{PPS2}, but in characteristic \( 2 \),
such a configuration does not exist \cite{Lang}.
Moreover, the vanishing \( \OH^{2}(X, \Theta_{X/\Bbbk}) = 0\)
does not hold in small characteristic in some cases.
By checking the conditions of Section~\ref{sect:Global}
for all the examples, we finally complete the proof of the Main Theorem.

\subsection*{Acknowledgements}
The first named author would like to thank Research Institute for
Mathematical Sciences (RIMS) in Kyoto University and Korea Institute for
Advanced Study for their hospitality during his visit. Main part of
the paper was worked out during his stay at RIMS in 2010.
The authors express their gratitude to
Professors Jonathan Wahl and Christian Liedtke
for the invaluable comments.
The authors are grateful to Dr.~Takuzo Okada and Dr.~Hisanori Ohashi
for their careful reading of the draft versions,
to Dr.~Heesang Park for her providing
Examples~\ref{exam:char2,K2=1}--\ref{exam:char2,K2=2}, and to
Prof.~Dongsoo Shin for his drawing figures. The authors thank to the
referee for suggesting many improvements.


\subsection*{Notation and conventions}
In this article, we fix an algebraically closed field \( \Bbbk \)
of characteristic \( p \geq 0 \).

\( \bullet \) An \emph{algebraic scheme} over a field \( \BKK \) means
a \( \BKK \)-scheme of finite type.
This is called also an \emph{algebraic \( \BKK \)-scheme}.
An \emph{algebraic variety} over a field \( \BKK \)
(or an \emph{algebraic \( \BKK \)-variety})
is an integral separated algebraic scheme over \( \BKK \).

\( \bullet \) For an algebraic \(\Bbbk\)-scheme \( X \),
\(\Omega^{1}_{X/\Bbbk}\) denotes the sheaf of
one-forms and \( \Theta_{X/\Bbbk}\) denotes the
tangent sheaf, i.e., \( \Theta_{X/\Bbbk} =
\SHom_{\SO_{X}}(\Omega^{1}_{X/\Bbbk}, \SO_{X}) \).
For a non-singular algebraic \( \Bbbk \)-variety \( X \) and
a normal crossing divisor \( B \),
\( \Omega^{1}_{X/\Bbbk}(\log B) \) denotes the sheaf of logarithmic
one-forms with poles along only \( B \). Its dual
\( \SHom_{\SO_{X}}(\Omega^{1}_{X/\Bbbk}(\log B), \SO_{X}) \) is denoted by
\( \Theta_{X/\Bbbk}(-\log B) \), which is identified with the sheaf of
derivations \( \delta \in \Theta_{X/\Bbbk} \) such that
\( \delta(\SO_{X}(-B)) \subset \SO_{X}(-B) \).

\( \bullet \) For a normal integral separated scheme \( X \)
and for a (Weil) divisor \( D \), the
reflexive sheaf \( \SO_{X}(D) \) of rank one is by definition the
subsheaf of the sheaf of rational functions
determined by the following property: A non-zero rational function
\( \varphi \) is contained in \( \OH^{0}(\SU, \SO_{X}(D)) \) for a
non-empty open subset \( \SU \) if and only if \(
\Div(\varphi)|_{\SU} + D|_{\SU}\geq 0 \) for the principal divisor \(
\Div(\varphi)|_{\SU} \) on \( \SU \).

\( \bullet \) The maximal ideal of a local ring \(A\) is denoted by \( \GM_{A} \).

\( \bullet \) A \emph{geometric point} \( t \) of a scheme \( X \)
is by definition a morphism
\( t \colon \Spec \Bbbk(t) \to X\) for
an algebraically closed field \( \Bbbk(t) \).
For any scheme \( Z \) over \( X \), the geometric fiber \( Z_{t} \) over
the geometric point \( t \) is defined to be the fiber product
\( Z \times_{X, \, t} \Spec \Bbbk(t) \).

\( \bullet \) An \emph{\'etale neighborhood} of a pair \( (X, x) \) of a scheme \( X \) and
a point \( x \) is by definition a pair \( (X', x') \)
of a scheme \( X' \) \'etale over \( X \) and a point \( x' \) lying over \( x \)
such that the induced homomorphism between the residue fields at \( x \) and \( x' \)
is isomorphic (cf.\ \cite[Section~2]{ArtinApprox}).

\( \bullet \) In this article,
the (\'etale) fundamental group of a scheme \( X \) is called
the \emph{algebraic fundamental group} and
is denoted by \( \pi_{1}^{\alg}(X) \) to avoid confusion with
the topological fundamental group of a complex algebraic scheme
(cf.\ \cite[Exp.~XII, Corollaire~5.2]{SGA1}).


\section{Linear chains of rational curves as exceptional loci}
\label{sect:dualgraph}

The main purpose of this section is to prove
that any two-dimensional singularity having
a linear chain of smooth rational curves
as the exceptional locus of the minimal resolution is ``toric.''
In other words, we prove the tautness (cf.\ \cite{Laufer})
of such singularities.
In the case of characteristic zero, the singularity is just
the cyclic quotient singularity of type \( \frac{1}{n}(1, q) \)
for some integers \( n > q > 0 \) with \( \gcd(n, q) = 1 \), and
the tautness is known by \cite[Satz~2.10]{Brieskorn}.

In this section, let us fix a Noetherian ring \( \Lambda \) and
positive integers \( n \), \( q \) such that \( n > q \) and \( \gcd(n, q) = 1 \).
We define integers \( b_{1} \), \ldots, \( b_{l} \) greater than one
by the continued fraction:
\[ n/q = [b_{1}, \ldots, b_{l}]
:= b_{1} - \cfrac{1}{b_{2} - \cfrac{1}{{}^{\ddots}- \cfrac{1}{b_{l}}}}
\]
and introduce the following two conditions:

\begin{dfn}[Conditions $C(n, q)$ and $C(n, q)'$]\label{dfn:cond_b}
Let \( Y \) be an affine algebraic flat \( \Lambda \)-scheme
and let \( \Sigma \) be a closed subscheme such that
\begin{itemize}
\item  every geometric fiber of \( Y \to \Spec \Lambda \) is a normal surface,
\item \( \Sigma \) is a section of \( Y \to \Spec \Lambda \), and
\item  \( Y \setminus \Sigma \) is smooth over \( \Spec \Lambda \).
\end{itemize}
The pair \( (Y, \Sigma) \) is said to satisfy {\it condition} \( C(n, q) \)
with a proper surjective morphism \( \mu \colon M \to Y \)
if the following conditions are satisfied:
\begin{enumerate}
\item  \( M \to Y \to \Spec \Lambda \) is smooth and
\( \mu \) is an isomorphism over \( Y \setminus \Sigma \).

\item  \( \mu^{-1}(\Sigma) \) is a divisor \( \sum\nolimits_{i = 1}^{l} E_{i} \)
such that:
\begin{enumerate}
\item  Any \( E_{i} \) is a Cartier divisor with
\( E_{i} \isom \BPP^{1}_{\Lambda} \) and
\( \SO_{E_{i}}(-E_{i}) \isom \SO(b_{i}) \).

\item  \( E_{i} \cap E_{j} = \emptyset \) if \( |i - j| > 1 \).

\item  The scheme-theoretic intersection
\( \Sigma_{i} := E_{i-1} \cap E_{i} \) is a section over
\( \Spec \Lambda \) for all \( 2 \leq i \leq l \).
\end{enumerate}
\end{enumerate}
If, in addition, there exist divisors \( B_{1} \), \( B_{2} \) on \( Y \)
satisfying the conditions below, then \( (Y, B_{1}, B_{2}) \) is
said to satisfy {\it condition} \( C(n, q)' \):
\begin{enumerate}
    \addtocounter{enumi}{2}
\item  The set-theoretic intersection \( B_{1} \cap B_{2} \) is \( \Sigma \).

\item  There is a relative Cartier divisor \( E_{0} \) (resp.\ \( E_{l+1} \))
with respect to \( M \to \Spec \Lambda \) such that:
\begin{enumerate}
\item  \( E_{0} \cap E_{i} = \emptyset \) and \( E_{j} \cap E_{l+1} = \emptyset\)
for all \( i > 1 \) and \( j < l\).

\item  The scheme-theoretic intersections
\( \Sigma_{1} := E_{0} \cap E_{1} \) and \( \Sigma_{l+1} := E_{l} \cap E_{l+1} \)
are sections of \( M \to \Spec \Lambda \).

\item \( B_{1} \) (resp.\ \( B_{2} \)) is the image of \( E_{l+1} \) (resp.\
\( E_{0} \)) by \( \mu \colon M \to Y \).
\end{enumerate}
\end{enumerate}
\end{dfn}

\begin{remsub}
In the situation above,
the ``dual graph'' of \( \sum\nolimits_{i = 0}^{l+1}E_{i} \) is written as
\begin{equation}\label{eq:dualgraph4}
\mbox{
\begin{picture}(200, 30)(-5, 0)
    \put(0, 5){\circle*{10}}\put(-10, 20){$E_{0}$}
    \put(5, 5){\line(1, 0){25}}
    \put(35, 5){\circle{10}}\put(30, 20){$E_{1}$}
    \put(40, 5){\line(1, 0){25}}
    \put(70, 5){\circle{10}}\put(65, 20){$E_{2}$}
    \put(75, 5){\line(1, 0){10}}
    \put(90, 5){\line(1, 0){10}}
    \put(105, 5){\line(1, 0){10}}
    \put(120, 5){\line(1, 0){10}}
    \put(135, 5){\line(1, 0){10}}
    \put(150, 5){\circle{10}}\put(145, 20){$E_{l}$}
    \put(155, 5){\line(1, 0){25}}
    \put(185, 5){\circle*{10}}\put(175, 20){$E_{l+1}$}
\end{picture}
}
\end{equation}
where the ends are distinguished, since these are not \( \mu \)-exceptional.
\end{remsub}

\begin{remsub}
If \( \Lambda \) is an algebraically closed field, then the condition
\( C(n, q) \) means that \( Y \) is a normal affine surface with
a singular point \( \Sigma \)
in which the exceptional locus of the minimal resolution is
a linear chain of smooth rational curves.
Moreover, in this case the condition \( C(n, q)' \) means that
\( B_{1} \) and \( B_{2} \) are prime divisors on \( Y \)
with \( B_{1} \cap B_{2} = \Sigma \) in which the dual graph of
\( \mu^{-1}(B_{1} \cup B_{2}) \) is also a linear chain
and the end components correspond to the proper transforms
of \( B_{1} \) and \( B_{2} \).
\end{remsub}

In this section we will do the following:
\begin{itemize}
\item  Giving an explicit construction of a toric \( \Lambda \)-scheme \( V \)
with two boundary divisors \( D_{1} \), \( D_{2} \) such that
\( (V, D_{1}, D_{2}) \) satisfies \( C(n, q)' \).

\item  Showing that
any \( (Y, \Sigma) \) satisfying \( C(n, q) \)
is ``\'etale equivalent to'' \( V \)
along \( \Sigma \) when \( \Lambda \) is a field
(cf.\ Theorem~\ref{thm:graph2toric000}).

\item  Showing that any \( (Y, B_{1}, B_{2}) \) satisfying \( C(n, q)' \)
is ``\'etale over'' \( (V, D_{1}, D_{2}) \)
when \( \Lambda \) is a local ring
(cf.\ Theorem~\ref{thm:graph2toric}).

\item As applications of Theorems~\ref{thm:graph2toric000} and
\ref{thm:graph2toric},
giving some local properties of
the surface singularity which has a linear chain of smooth rational curves
as the exceptional locus of the minimal resolution.
\end{itemize}

\begin{remn}
When \( \Lambda \) is an algebraically closed field,
Theorem~\ref{thm:graph2toric000}
corresponds to the ``tautness'' (cf.\ \cite{Laufer}) for two-dimensional
normal singularities having
linear chains of smooth rational curves as the exceptional locus
of the minimal resolution.
In positive characteristic, the tautness for linear chains of
smooth rational curves seems to be well-known,
but the authors could not find any reference.
Recently, Hara has found another proof of
Theorem~\ref{thm:graph2toric000}
(cf.\ Case (1) in the proof of \cite[Theorem~2.1]{Hara})
using an argument in \cite{BPV}.
\end{remn}

We begin with constructing \( V \) by applying
the theory of toric varieties or of torus embeddings.
We refers the reader to
\cite{Demazure}, \cite{TE}, \cite{Danilov}, \cite{Oda}, etc.\
for more details of the theory.
Let \( \NN_{0} \) be a free abelian group of rank two with a basis
\( (e_{1}, e_{2}) \), i.e., \( \NN_{0} = \BZZ e_{1} + \BZZ e_{2} \).
For the fixed integers \( n \) and \( q \) above,
we set
\[ v := (1/n)(e_{1} + qe_{2}) \in \NN_{0, \BQQ} = \NN_{0} \otimes \BQQ
\quad \text{ and } \quad \NN := \NN_{0} + \BZZ v \subset \NN_{0, \BQQ}. \]
Now, we define \( V \) to be the affine toric \( \Lambda \)-scheme
\( \BTT_{\NN}(\Bsigma) \)
associated with \( \NN \) and with
the cone \( \Bsigma =
\BRR_{\geq 0}e_{1} + \BRR_{\geq 0}e_{2} \subset \NN_{\BRR} \).
More precisely, \( V := \Spec \Lambda[\Bsigma^{\vee} \cap \MM] \)
for the semi-group ring \( \Lambda[\Bsigma^{\vee} \cap \MM]  \), where
\( \MM = \Hom(\NN, \BZZ) \) and
\( \Bsigma^{\vee} = \{m \in \MM_{\BRR} \mid m \geq 0 \text{ on } \Bsigma\} \).
Note that \( \Lambda[\Bsigma^{\vee}\cap \MM_{0}] \)
is a polynomial algebra \( \Lambda[\xtt_{1}, \xtt_{2}] \) of two variables,
where \( \MM_{0} = \Hom(\NN_{0}, \BZZ) \) and that
the subalgebra \( \Lambda[\Bsigma^{\vee} \cap \MM]  \) is
generated by monomials \( \xtt_{1}^{m_{1}}\xtt_{2}^{m_{2}} \)
for integers \( m_{1} \), \( m_{2} \) such that
\( m_{1} \geq 0 \), \( m_{2} \geq 0 \), and \( m_{1} + qm_{2} \equiv 0 \mod n \).
For the group ring \( \Lambda[\MM] \),
the affine \( \Lambda \)-scheme \( \BTT_{\NN} = \Spec \Lambda[\MM] \) is a group scheme
isomorphic to \( \mathbb{G}^{2}_{\textrm{m}, \Lambda} \),
the algebraic torus of relative dimension two.
Since \( \Bsigma^{\vee} \cap \MM \subset \MM \),
\( V \) contains \( \BTT_{\NN}(\{0\}) = \BTT_{\NN} \) as an open subset.
Moreover, the standard action of \( \BTT_{\NN} \) on \( \BTT_{\NN} \) extends to
\( V \), compatibly with the open immersion \( \BTT_{\NN} \subset V \).

\begin{dfn}\label{dfn:toricnq}
The \( \Lambda \)-scheme \( V \) is said to be the {\it toric \( \Lambda \)-scheme of type}
\( (n, q) \).
\end{dfn}

For an element \( m \in \MM \), let \( \Be(m) \) be the element in
the group algebra \( \Lambda[\MM] \) corresponding to \( m \).
Then, \( \Be(m) \) can be regarded as a rational function on \( V \), and
it is regular when \( m \in \Bsigma^{\vee} \cap \MM \).
The cone \( \Bsigma \) has four faces: \( \Bsigma \) itself,
two rays \( \BRR_{\geq 0}e_{1} \),
\( \BRR_{\geq 0}e_{2} \), and the zero-dimensional cone \( \{0\} \).
For such a face \( \Btau \), let \( \Btau^{\perp} \) be the vector subspace
\( \{m \in \MM_{\BRR} \mid m(\Btau) = 0\} \) and consider the ring homomorphism
\( \Lambda[\Bsigma^{\vee} \cap \MM] \to \Lambda[\Btau^{\perp} \cap \MM]\)
defined by
\[ \Be(m) \mapsto \begin{cases}
\Be(m), & \text{ if } m \in \Btau^{\perp}; \\
0, & \text{ otherwise.}
\end{cases}\]
The kernel of the ring homomorphism defines a closed subscheme \( Z(\Btau) \), which is
the closure of an orbit of \( \BTT_{\NN} \) on \( V \).
We set \( D_{i} := Z(\BRR_{\geq 0}e_{i}) \) for \( i = 1 \), \( 2 \), and
set \( \BSigma := Z(\Bsigma) \).
Then, the following properties are easily shown:
\begin{itemize}
\item \( D_{1} \) and \( D_{2} \) are prime divisors on \( V \)
when \( \Lambda \) is integral.

\item \( \BSigma \) is a section of \( V \) over \( \Spec \Lambda \).

\item  \( \BSigma \) is the scheme-theoretic
intersection \( D_{1} \cap D_{2} \)
(cf.\ Remark~\ref{remsub:dfn:dfn:toricnq:D1D2} below).

\item \( V \setminus (D_{1} \cup D_{2}) \) is the ``open torus'' \( \BTT_{\NN}(\{0\}) \).

\item For \( i = 1 \), \( 2 \),
\( D_{i} \isom \BAA^{1}_{\Lambda}\) and
\( D_{i} \setminus \BSigma \isom \mathbb{G}_{\textrm{m}, \Lambda} \).

\item \( \BTT_{\NN}(\{0\}) \), \( D_{1} \setminus \BSigma \),
\( D_{2} \setminus \BSigma\), and \( \BSigma \) are
the orbits of \( \BTT_{\NN} \) in \( V \).
\end{itemize}

Note also that, for \( i = 1 \), \( 2 \), the order
of zeros (or the minus of the order of poles) of
the rational function \( \Be(m) \) for \( m \in \MM \) along
the prime divisor \( D_{i} \) equals \( m(e_{i}) \)
(cf.\ \cite[Chapter~I, Theorem~9]{TE},\cite[\S 5]{Danilov},
\cite[Section~2.1]{Oda}). Hence,
the principal divisor \( \Div(\Be(m)) \)
has the following expression:
\begin{equation}\label{eq:principaldiv000}
\Div(\Be(m)) = m(e_{1})D_{1} + m(e_{2})D_{2}.
\end{equation}

\begin{remsub}\label{remsub:dfn:dfn:toricnq:D1D2}
Using \eqref{eq:principaldiv000}, we can prove that
\( \BSigma\) is just the scheme-theoretic intersection
of \( D_{1} \) and \( D_{2} \) as follows.
For \( i = 1 \), \( 2 \),
the defining ideal \( I_{i} \) of \( D_{i} \) is generated by \( \Be(m) \)
for \( m \in \Bsigma^{\vee} \cap \MM \) with \( m(e_{i}) > 0 \)
by \eqref{eq:principaldiv000}.
On the other hand, the defining ideal \( I \) at \( \BSigma \) is generated by
\( \Be(m) \) for \( m \in \Bsigma^{\vee} \cap \MM \setminus \{0\}\).
Thus, \( I= I_{1} + I_{2} \), i.e., \( \BSigma \) is the
scheme-theoretic intersection \( D_{1} \cap D_{2} \).
\end{remsub}

\begin{remsub}
If \( \Lambda' \) is a \( \Lambda \)-algebra, then
\( V \times_{\Spec \Lambda} \Spec \Lambda' \)
is also the toric \( \Lambda' \)-scheme of type \( (n, q) \).
If \( \Lambda \) is an algebraically closed field,
then \( V \) is an affine normal surface
with unique singular point \( \BSigma \).
\end{remsub}

In order to show that \( (V, D_{1}, D_{2}) \) satisfies the condition \( C(n, q)' \),
we shall construct
the toric ``minimal resolution of singularities'' \( \nu \colon U \to V \)
which is essentially the same as the well-known Jung--Hirzebruch resolution
(cf.\ \cite{Jung}, \cite{Hirz}).
The following lemma follows from the property \( n/q = [b_{1}, \ldots, b_{l}] \):
the proof is left to the reader.

\begin{lem}\label{lem:v_i}
There exist vectors \( v_{0} \), \( v_{1} \), \ldots, \( v_{l+1} \in \NN \cap \Bsigma \)
satisfying the following conditions\emph{:}
\begin{enumerate}
\item  Let \( p_{i} \), \( q_{i} \) be integers determined by
\( v_{i} = (1/n)(p_{i}e_{1} + q_{i}e_{2}) \).  Then,
\[ p_{0} = 0 < p_{1} = 1 < \cdots < p_{l+1} = n \quad \text{ and }
\quad q_{0} = n > q_{1} = q> \cdots > q_{l} = 1 > q_{l+1} = 0. \]
\item  \( v_{i-1} + v_{i+1} = b_{i}v_{i} \) for \( 1 \leq i \leq l \).
\item \( \NN = \BZZ v_{i-1} + \BZZ v_{i}  \) for \( 1 \leq i \leq l \).
\item\label{lem:v_i:4}
Let \( (h_{1}, h_{2}) \) be the basis of \( \MM_{0} = \Hom(\NN_{0}, \BZZ) \) dual to
\( (e_{1}, e_{2}) \), i.e., \( h_{i}(e_{j}) = \delta_{i, j} \).
Then, for all \( 1 \leq i \leq l \),
\[ (-q_{i}h_{1} + p_{i}h_{2}, \, q_{i-1}h_{1} - p_{i-1}h_{2}) \]
is the basis of \( \MM \) dual to \( (v_{i-1}, v_{i}) \).
\item \label{lem:v_i:5}
\( p_{i}q_{j} \equiv p_{j}q_{i} \mod n\) for all \( 1 \leq i, j \leq l\).
\end{enumerate}
\end{lem}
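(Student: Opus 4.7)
The plan is to define the vectors $v_i$ by the Hirzebruch--Jung recurrence encoded in the continued fraction $n/q = [b_1, \ldots, b_l]$, and then verify the five conditions one by one. Explicitly, I would set $v_0 := e_2$, $v_1 := v = (1/n)(e_1 + q e_2)$, and $v_{i+1} := b_i v_i - v_{i-1}$ for $1 \leq i \leq l$. Condition (2) is then immediate by construction. Writing $v_i = (1/n)(p_i e_1 + q_i e_2)$, the coordinate sequences satisfy the same linear recurrence with initial data $(p_0, q_0) = (0, n)$ and $(p_1, q_1) = (1, q)$.

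Condition (3) would follow by induction on $i$: the base case $\NN = \BZZ v_0 + \BZZ v_1$ reduces to $\NN = \BZZ e_2 + \BZZ v$ via the identity $e_1 = nv - q e_2$ (since $nv = e_1 + q e_2$), and the change of basis from $(v_{i-1}, v_i)$ to $(v_i, v_{i+1})$ is unimodular by the defining recurrence, preserving the lattice equality.

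Condition (1) is the main content, and I would establish it through the standard properties of the Hirzebruch--Jung algorithm. Since $b_i \geq 2$, the recurrence $q_{i+1} = b_i q_i - q_{i-1}$ forces the sequence $(q_i)$ to be strictly decreasing while positive; the identity $\gcd(q_{i-1}, q_i) = \gcd(q_i, q_{i+1})$ together with $\gcd(n, q) = 1$ shows the sequence must hit $1$ and then $0$, and the defining property of $n/q = [b_1, \ldots, b_l]$ is precisely that this happens at steps $l$ and $l + 1$, yielding $q_l = 1$ and $q_{l+1} = 0$. An analogous induction, using $b_i \geq 2$, gives $(p_i)_{i \geq 1}$ strictly increasing. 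To pin down $p_{l+1} = n$, I would establish by induction the determinant identity $p_{i-1} q_i - p_i q_{i-1} = -n$ (base case $i = 1$: $0 \cdot q - 1 \cdot n = -n$; inductive step: direct substitution of the two recurrences), then specialize at $i = l + 1$ to read off $-p_{l+1} = -n$.

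The remaining conditions are formal. For (4), condition (3) ensures that $(v_{i-1}, v_i)$ is a $\BZZ$-basis of $\NN$, so the dual basis automatically lies in $\MM$, and a direct pairing computation using the determinant identity $p_{i-1} q_i - p_i q_{i-1} = -n$ verifies that $(-q_i h_1 + p_i h_2,\ q_{i-1} h_1 - p_{i-1} h_2)$ is dual to $(v_{i-1}, v_i)$. For (5), given $i$ and $j$, I would use (3) to write $v_i = s v_{j-1} + t v_j$ with $s, t \in \BZZ$, pass to coordinates to get $p_i = s p_{j-1} + t p_j$ and $q_i = s q_{j-1} + t q_j$, and then compute $p_i q_j - p_j q_i = s (p_{j-1} q_j - p_j q_{j-1}) = sn$, which vanishes modulo $n$. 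The only real hurdle is the identification step in (1), namely that the integers $b_i$ produced by the continued fraction are exactly those driving this Euclidean-style recurrence; everything else is bookkeeping with a linear recurrence.
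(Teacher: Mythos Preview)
Your proposal is correct and is precisely the standard Hirzebruch--Jung argument the paper has in mind; the paper itself omits the proof, stating only that the lemma ``follows from the property \( n/q = [b_{1}, \ldots, b_{l}] \)'' and leaving it to the reader. The one variant worth noting is that the paper derives (5) from (4) rather than from (3): since \(-q_{i}h_{1} + p_{i}h_{2} \in \MM\) by (4) and \(v_{j} \in \NN\), their pairing \((p_{i}q_{j} - p_{j}q_{i})/n\) is automatically an integer---this is a touch slicker than your coordinate computation, which incidentally has a harmless sign slip (\(p_{j-1}q_{j} - p_{j}q_{j-1} = -n\), giving \(-sn\) rather than \(sn\), but of course this does not affect the conclusion modulo \(n\)).
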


Note that \( v_{0} = e_{2} \), \( v_{1} = v \), and \( v_{l+1} = e_{1} \).
Note also that the last assertion \eqref{lem:v_i:5} is derived from
\eqref{lem:v_i:4} since it is equivalent to:
\( (-q_{i}h_{1} + p_{i}h_{2})(v_{j}) \in \BZZ  \) for all \( 1 \leq i, j \leq l\).

The cones \( \Bsigma_{i} := \BRR_{\geq 0} v_{i-1} + \BRR_{\geq 0} v_{i} \) for
\( 1 \leq i \leq l \) and the rays \( \BRR_{\geq 0} v_{j} \)
for \( 0 \leq j \leq l +1 \) together with the zero cone \( \{0\} \)
form a non-singular \emph{fan} \( \triangle \) of \( \NN \)
such that the \emph{support} \( |\triangle| \)
coincides with \( \Bsigma \)
(cf.\ \cite[Section~4.2]{Demazure}, \cite[Chapter~I, \S2]{TE},
\cite[\S 5]{Danilov}, \cite[Chapter~1]{Oda}).
Let \( U \) be
the toric \( \Lambda \)-scheme \( \BTT_{\NN}(\triangle)  \)
associated with the fan \( \triangle \).
Then, we have a canonical proper surjective morphism \( \nu \colon U \to V \)
which is an isomorphism at least on the open torus \( \BTT_{\NN}(\{0\}) \).
Now, \( U \) is a union of the smooth affine toric \( \Lambda \)-schemes
\( U_{i} = \Spec \Lambda[\Bsigma_{i}^{\vee} \cap \MM] \isom \BAA^{2}_{\Lambda}\).
In fact, by Lemma~\ref{lem:v_i}, we see that
\( \Lambda[\Bsigma_{i}^{\vee} \cap \MM] \) is a polynomial \( \Lambda \)-algebra
of two variables generated by
\[ \xi_{i} := \Be(-q_{i}h_{1} + p_{i}h_{2}) = \xtt_{1}^{-q_{i}}\xtt_{2}^{p_{i}}
\quad \text{ and } \quad
\eta_{i} := \Be(q_{i-1}h_{1} - p_{i-1}h_{2}) = \xtt_{1}^{q_{i-1}}\xtt_{2}^{-p_{i-1}}. \]
Here, \( U_{i} \cap U_{j} \) equals the open torus
\( \BTT_{\NN}(\{0\}) \isom \BTT_{\NN}\) of \( U \) if \( |i - j| > 1 \), since
\( \Bsigma_{i} \cap \Bsigma_{j} = \{0\} \).
For \( 1 \leq i \leq l\), the intersection \( U_{i}^{\star} := U_{i} \cap U_{i+1} \)
is isomorphic to the toric \( \Lambda \)-scheme
\( \BTT_{\NN}(\BRR_{\geq 0}v_{i}) \isom
\BAA^{1}_{\Lambda} \times_{\Spec \Lambda} \mathbb{G}_{\mathrm{m}, \Lambda}\),
since \( \Bsigma_{i} \cap \Bsigma_{i+1} = \BRR_{\geq 0}v_{i} \).
For \( 0 \leq i \leq l+1 \), let \( G_{i} \) be the \( \BTT_{\NN} \)-invariant
prime divisor on \( U = \BTT_{\NN}(\triangle) \) corresponding to
the ray \( \BRR_{\geq 0} v_{i} \).
In particular, \( G_{0} \) (resp.\ \( G_{l+1} \)) is the proper transform of
\( D_{2} \) (resp.\ \( D_{1} \)).

\begin{lem}\label{lem:VDDC}
The triplet \( (V, D_{1}, D_{2}) \) satisfies the condition
\( C(n, q)' \) with the proper morphism \( \nu \colon U \to V \).
\end{lem}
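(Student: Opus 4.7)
The plan is to set $E_{i} := G_{i}$ for each $0 \leq i \leq l+1$ and verify the clauses of $C(n,q)$ and $C(n,q)'$ directly, using Lemma~\ref{lem:v_i} as the essential combinatorial input. The preliminary hypotheses on $V$ are immediate from the toric description: $V = \Spec \Lambda[\Bsigma^{\vee} \cap \MM]$ is affine and of finite type (Gordan's lemma), flat over $\Lambda$ with monomial basis $\{\Be(m)\}$, has normal geometric fibers, and has $\BSigma$ as a section (the quotient of $\Lambda[\Bsigma^{\vee} \cap \MM]$ by its augmentation ideal is $\Lambda$).

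I would handle the structural clauses next. The fan $\triangle$ is non-singular since each pair $\{v_{i-1}, v_{i}\}$ is a $\BZZ$-basis of $\NN$ (Lemma~\ref{lem:v_i}(3)); hence every chart $U_{i} \isom \BAA^{2}_{\Lambda}$, so $U \to \Spec \Lambda$ is smooth. The toric morphism $\nu$ is proper and surjective because $|\triangle| = \Bsigma$, and it is an isomorphism over $V \setminus \BSigma$ since $\triangle$ shares the boundary rays $\BRR_{\geq 0}e_{1}$, $\BRR_{\geq 0}e_{2}$ with the fan of $V$. By the orbit-cone correspondence, $\nu^{-1}(\BSigma) = \bigcup_{i=1}^{l} G_{i}$ set-theoretically. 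Lemma~\ref{lem:v_i}(4) makes the local picture explicit: on $U_{i} = \Spec \Lambda[\xi_{i}, \eta_{i}]$ the monomials $\xi_{i}, \eta_{i}$ are dual to $v_{i-1}, v_{i}$, so $G_{i-1}$ and $G_{i}$ are cut out by $\xi_{i} = 0$ and $\eta_{i} = 0$ with multiplicity one, and no other $G_{j}$ meets $U_{i}$; the identity $\xi_{i}\eta_{i+1} = 1$ on the torus then exhibits $G_{i}$ as two copies of $\BAA^{1}_{\Lambda}$ glued into $\BPP^{1}_{\Lambda}$.

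The substantive step is the self-intersection assertion $\SO_{E_{i}}(-E_{i}) \isom \SO(b_{i})$ of clause (2)(a) for $1 \leq i \leq l$. For this I would invoke the recursion $v_{i+1} = b_{i}v_{i} - v_{i-1}$ of Lemma~\ref{lem:v_i}(2) to obtain the monomial identity $\xi_{i+1} = \xi_{i}^{b_{i}} \eta_{i}$; the local generators $\eta_{i}$ and $\xi_{i+1}$ of $\SO_{U}(-G_{i})$ on $U_{i}$ and $U_{i+1}$ then differ on the overlap by the factor $\xi_{i}^{b_{i}} = \eta_{i+1}^{-b_{i}}$, which matches the standard transition of $\SO_{\BPP^{1}}(b_{i})$ under the identification $\xi_{i} = s/t$, $\eta_{i+1} = t/s$. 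This is the one place where the continued-fraction data $n/q = [b_{1}, \ldots, b_{l}]$ genuinely enters, and it is the main calculation of the argument.

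The remaining clauses reduce to bookkeeping. For $|i - j| > 1$ the rays $\BRR_{\geq 0} v_{i}$, $\BRR_{\geq 0} v_{j}$ share no two-dimensional cone of $\triangle$, so $E_{i}$ and $E_{j}$ lie in disjoint unions of charts (clauses (2)(b) and (4)(a)). Each intersection $\Sigma_{i} = E_{i-1} \cap E_{i}$ sits in $U_{i}$ as $\{\xi_{i} = \eta_{i} = 0\} \isom \Spec \Lambda$, settling (2)(c) and (4)(b). Clause (3) is Remark~\ref{remsub:dfn:dfn:toricnq:D1D2}. For (4)(c), $E_{0} = G_{0}$ and $E_{l+1} = G_{l+1}$ are by construction the proper transforms of $D_{2}$ and $D_{1}$, each isomorphic to $\BAA^{1}_{\Lambda}$ over $\Lambda$ (cut out by $\xi_{1} = 0$ on $U_{1}$ and $\eta_{l+1} = 0$ on $U_{l+1}$ respectively), hence relative Cartier divisors with $\nu(E_{0}) = D_{2} = B_{2}$ and $\nu(E_{l+1}) = D_{1} = B_{1}$.
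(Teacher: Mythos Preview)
Your proof is correct and follows essentially the same approach as the paper: both arguments work chart by chart on $U = \bigcup U_{i}$, identify the local defining equations of $G_{i}$ via the coordinates $\xi_{i}, \eta_{i}$ dual to $(v_{i-1}, v_{i})$, and derive the transition relation $\xi_{i+1} = \xi_{i}^{b_{i}}\eta_{i}$ from the recursion $v_{i-1} + v_{i+1} = b_{i}v_{i}$ of Lemma~\ref{lem:v_i}(2) to read off the self-intersection $\SO_{G_{i}}(-G_{i}) \isom \SO(b_{i})$. The paper organizes the computation through the principal-divisor formula $\Div(\Be(m)) = \sum m(v_{i})G_{i}$, while you invoke the orbit--cone correspondence and the monomial identity $\xi_{i}\eta_{i+1} = 1$ directly, but the content is the same.
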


\begin{proof}
For any \( m \in \MM \), we have
\begin{equation}\label{eq:principaldiv222}
\Div(\Be(m)) = \sum\nolimits_{i = 0}^{l+1} m(v_{i}) G_{i}
\end{equation}
as in \eqref{eq:principaldiv000},
where \( \Be(m) \) is regarded as a rational function on \( U \).
Now, \( G_{i}|_{U_{j}} = 0 \) for \( j < i \) and \( j > i+1 \),
since \( v_{i} \not\in \Bsigma_{j}  \).
Furthermore, we have:
\begin{equation}\label{eq:GG_i}
G_{i}|_{U_{i}} = \Div(\eta_{i})|_{U_{i}} \quad \text{ and }
\quad
G_{i}|_{U_{i+1}} = \Div(\xi_{i+1})|_{U_{i+1}},
\end{equation}
by the calculation
\begin{align*}
\Div(\xi_{i})|_{U_{i}} &=
\sum\nolimits_{j = i-1}^{i}(-q_{i}h_{1} + p_{i}h_{2})(v_{j})\, G_{j}|_{U_{i}}
= G_{i-1}|_{U_{i}},  \\
\Div(\eta_{i})|_{U_{i}} &=
\sum\nolimits_{j = i-1}^{i}(q_{i-1}h_{1} - p_{i-1}h_{2})(v_{j})\, G_{j}|_{U_{i}}
= G_{i}|_{U_{i}},
\end{align*}
using \eqref{eq:principaldiv222} and Lemma~\ref{lem:v_i}.
As a consequence,
\[ U_{i}^{\star} = U_{i} \setminus G_{i-1} = U_{i} \cap \{ \xi_{i} \ne 0 \}
\isom \BAA^{1}_{\Lambda} \times \mathbb{G}_{\textrm{m}, \Lambda}. \]
The toric \( \Lambda \)-scheme \( U \) is obtained by
gluing the affine \( \Lambda \)-planes
\( U_{i} \isom \BAA^{2}_{\Lambda}\) for \( 1 \leq i \leq l+1 \) with
the coordinate systems \( (\xi_{i}, \eta_{i}). \)
The transition relation is
\begin{equation}\label{eq:transition222}
\xi_{i+1}|_{U_{i}^{\star}} = \xi_{i}^{b_{i}}\eta_{i} |_{U_{i}^{\star}},
\quad \text{ and } \quad
\eta_{i+1}|_{U_{i}^{\star}} = \xi_{i}^{-1}|_{U_{i}^{\star}},
\end{equation}
on \( U_{i}^{\star} = U_{i} \cap U_{i+1} \).
This is possible, since \( (-q_{i+1}, p_{i+1}) - (q_{i-1}, -p_{i-1}) = b_{i}(-q_{i}, p_{i})\)
(cf.\ Lemma~\ref{lem:v_i}).
Furthermore, \( G_{i} \isom \BPP^{1}_{\Lambda} \) with
\( \SO_{G_{i}}(G_{i}) \isom \SO(-b_{i}) \) for all \( 1 \leq i \leq l \)
by \eqref{eq:GG_i} and \eqref{eq:transition222}.
On the other hand, \( G_{0} \) and \( G_{l+1} \)
are isomorphic to \( \BAA^{1}_{\Lambda} \).
The scheme-theoretic intersection \( \BSigma_{i} = G_{i-1} \cap G_{i} \)
for \( 1 \leq i \leq l + 1 \) is just the orbit
corresponding to the two-dimensional cone \( \Bsigma_{i} \).
Therefore, \(  \sum\nolimits_{i = 0}^{l+1} G_{i}  \) is
a (relative) simple normal crossing divisor (over \( \Spec \Lambda \))
with the dual graph similar to
\eqref{eq:dualgraph4}, and hence, \( (V, D_{1}, D_{2}) \) satisfies
the condition \( C(n, q)' \).
\end{proof}

\begin{remsub}\label{remsub:lem:VDDC}
Let \( p_{i} \) and \( q_{i} \) for \( 0 \leq i \leq l+1 \)
be the integers defined by \( (n, q) \) in Lemma~\ref{lem:v_i}.
If \( a_{1} \) and \( a_{2} \) are integers with
\( a_{1} + qa_{2} \equiv 0 \bmod n \),
then, \( p_{i}a_{1} + q_{i}a_{2} \equiv 0 \bmod n\) for all
\( 0 \leq i \leq l+1 \), and
\begin{equation}\label{eq:muaD000}
\mu^{*}(a_{1}D_{1} + a_{2}D_{2}) =
\sum\nolimits_{i = 0}^{l+1} \frac{p_{i}a_{1} + q_{i}a_{2}}{n} G_{i}.
\end{equation}
In fact, this is derived from
\eqref{eq:principaldiv000} and \eqref{eq:principaldiv222} applied to
\(m = a_1h_1 + a_2h_2\).
As special cases, we have
\begin{equation}\label{eq:muDD}
\mu^{*}(nD_{1}) =
\sum\nolimits_{i = 1}^{l+1} p_{i} G_{i} \quad \text{ and } \quad
\mu^{*}(nD_{2}) =
\sum\nolimits_{i = 0}^{l} q_{i} G_{i}.
\end{equation}
We have another proof of \eqref{eq:muaD000} in
Corollary~\ref{corsub:lem:Lipman0002} below.
\end{remsub}

Next, we consider an arbitrary affine \( \Lambda \)-scheme \( Y \) and
a closed subscheme \( \Sigma \) satisfying \( C(n, q) \).
First we shall show:

\begin{lem}\label{lem:relnormal}
Let \( Y \) be an affine \( \Lambda \)-scheme and
\(\Sigma \) a closed subscheme such that \( (Y, \Sigma) \) satisfies
\( C(n, q) \) for a proper morphism \( \mu \colon M \to Y \). Then,
\( \SO_{Y} \isom \mu_{*}\SO_{M} \isom j_{*}\SO_{Y \setminus
\Sigma},\) where \( j \colon Y \setminus \Sigma
\injmap Y \) denotes the open immersion.
\end{lem}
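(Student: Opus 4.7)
The plan is to establish the chain $\SO_Y \hookrightarrow \mu_*\SO_M \hookrightarrow j_*\SO_{Y\setminus\Sigma}$ and then prove both inclusions are isomorphisms. The two inclusions themselves are immediate. The first is the unit of the adjunction $\SO_Y \to \mu_*\SO_M$, injective since $\mu$ is surjective and $M$ is reduced (being smooth over $\Lambda$). The second is obtained from the adjunction for the open immersion $j$: on $Y \setminus \Sigma$ one has $\mu_*\SO_M|_{Y\setminus\Sigma} = \SO_{Y\setminus\Sigma}$ because $\mu$ is an isomorphism there, and the natural map $\mu_*\SO_M \to j_*j^*\mu_*\SO_M = j_*\SO_{Y\setminus\Sigma}$ is injective because $\mu_*\SO_M$ has no nonzero local sections supported in $\Sigma$ (again by reducedness of $M$).

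The heart of the argument is the equality $\mu_*\SO_M = \SO_Y$, which I would prove by fiberwise reduction over $\Spec\Lambda$. For any geometric point $t\colon \Spec \bar K \to \Spec \Lambda$, the base change $\mu_t\colon M_t \to Y_t$ is a proper birational morphism from the smooth surface $M_t$ onto the normal surface $Y_t$, contracting the chain $\sum_{i=1}^{l} E_{i,t}$ of smooth rational curves $E_{i,t} \isom \BPP^1_{\bar K}$ with $E_{i,t}^2 = -b_i$ to the isolated point $\Sigma_t \in Y_t$. Since $b_i \geq 2$, the tridiagonal intersection matrix with diagonal entries $-b_i$ and off-diagonal ones is negative definite, so $\mu_t$ is the minimal resolution of the normal singularity $(Y_t, \Sigma_t)$. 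A direct Mayer--Vietoris computation on the chain shows that $H^1(\bigcup E_{i,t}, \SO) = 0$ and that every effective cycle supported on the chain has nonpositive arithmetic genus; by Artin's criterion for rational surface singularities (valid in arbitrary characteristic), $(Y_t, \Sigma_t)$ is therefore rational, giving $\mu_{t,*}\SO_{M_t} = \SO_{Y_t}$ and $R^1\mu_{t,*}\SO_{M_t} = 0$. Because $\mu$ is proper with fibers of dimension at most one, $R^i\mu_*\SO_M = 0$ for $i \geq 2$, and the standard cohomology-and-base-change theorem then propagates the fiberwise vanishing to $R^1\mu_*\SO_M = 0$ and forces $\mu_*\SO_M$ to be $\Lambda$-flat with $\mu_*\SO_M \otimes_\Lambda \bar K \isom \SO_{Y_t}$ on every geometric fiber. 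Combined with the $\Lambda$-flatness of $\SO_Y$, the inclusion $\SO_Y \hookrightarrow \mu_*\SO_M$ is an isomorphism on every fiber over $\Spec\Lambda$, hence globally by Nakayama.

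The remaining equality $\SO_Y \isom j_*\SO_{Y\setminus\Sigma}$ amounts to extending regular functions across $\Sigma$. Since we have just shown $\SO_Y = \mu_*\SO_M$ with $M$ smooth (hence normal) and dominating $Y$ properly and birationally, $Y$ is normal; moreover, $\Sigma$ is a section over $\Spec\Lambda$ while $Y$ has relative dimension $2$, so $\Sigma$ has codimension $\geq 2$ in $Y$. The usual Hartogs-type extension on the normal scheme $Y$ then yields $j_*\SO_{Y\setminus\Sigma} = \SO_Y$, completing the chain of isomorphisms.

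The main obstacle is the base-change step in the middle paragraph. Since $\mu$ is a birational contraction, it is not flat over $Y$, so the flat-base-change form of cohomology and base change does not apply directly. The saving feature is that the fibers of $\mu$ have dimension at most one: this makes $R^{\geq 2}\mu_*\SO_M = 0$ automatic, which is precisely the hypothesis one needs in the general cohomology-and-base-change theorem to propagate commutation with base change back through $R^1\mu_*$ and down to $\mu_*$ itself, and this propagation combined with fiberwise rationality is what ultimately forces $\mu_*\SO_M = \SO_Y$.
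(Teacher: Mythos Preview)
Your approach is substantially more elaborate than the paper's and has a genuine gap in the base-change step. The standard cohomology-and-base-change theorem (EGA~III, 7.7 or Hartshorne III.12.11) requires the sheaf $\SO_M$ to be flat over the base $Y$, not merely over $\Lambda$; the vanishing of $R^{\geq 2}\mu_*\SO_M$ does not substitute for this hypothesis. Without flatness of $\SO_M$ over $Y$, knowing $R^1\mu_{t,*}\SO_{M_t}=0$ fiberwise does not directly yield $R^1\mu_*\SO_M=0$, nor that $\mu_*\SO_M$ commutes with base change. The argument can be repaired via \emph{derived} base change: since both $M$ and $Y$ are flat over $\Lambda$, the square formed by $Y_t\to Y$ and $\mu$ is Tor-independent, so $Lg^*R\mu_*\SO_M\simeq R\mu_{t,*}\SO_{M_t}$; unwinding this with $R^{\geq 2}=0$ gives $(R^1\mu_*\SO_M)\otimes_\Lambda k(t)=0$ for every $t$, and Nakayama (after noting that $R^1\mu_*\SO_M$, being supported on the section $\Sigma$, is a finite $\Lambda$-module) finishes. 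But this is considerably more than ``fiber dimension $\leq 1$ is the hypothesis one needs''. There is also a smaller issue: you deduce normality of $Y$ from ``$M$ smooth hence normal'', yet $M$ smooth over $\Lambda$ gives $M$ normal only when $\Lambda$ itself is normal, which is not assumed here.

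The paper bypasses all of this with a two-line depth argument. For $y\in\Sigma$ lying over $t\in\Spec\Lambda$, flatness of $Y\to\Spec\Lambda$ and normality of the two-dimensional fiber $Y_t$ give $\depth\SO_{Y,y}\geq\depth\SO_{Y_t,y}=2$, so $\SO_Y\to j_*\SO_{Y\setminus\Sigma}$ is already an isomorphism by the depth criterion for extension across a closed subset---no normality of $Y$ required. Likewise $\depth\SO_{M,x}\geq 1$ for $x$ over $y$, so $\SO_M\hookrightarrow j'_*\SO_{M\setminus\mu^{-1}(\Sigma)}$. Pushing forward and using $M\setminus\mu^{-1}(\Sigma)\isom Y\setminus\Sigma$ sandwiches $\mu_*\SO_M$ between two copies of $\SO_Y$. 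No rationality, no base change, no $R^1$ needed.
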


\begin{proof}
We take an arbitrary point \( y \in \Sigma \) and a point \( x \in M \)
lying over \( y \). Let \( t \in \Spec \Lambda\) be the image of \( y \),
and let \( Y_{t} \) and \( M_{t} \) be the fiber over \( t \) of
\( Y \to \Spec \Lambda \) and \( M \to \Spec \Lambda \), respectively.
Then,
\begin{align*}
\depth \SO_{Y, y} &= \depth \SO_{Y_{t}, y} + \depth \SO_{\Spec \Lambda, t}
\geq \depth \SO_{Y_{t}, y} = 2, \\
\depth \SO_{M, x} &= \depth \SO_{M_{t}, x} + \depth \SO_{\Spec \Lambda, t}
\geq \depth \SO_{M_{t}, x} \geq 1,
\end{align*}
since \( Y \) and \( M \) are flat over \( \Lambda \), and since
all the fibers of \( Y \to \Spec \Lambda \) and \( M \to \Spec \Lambda \)
are normal surfaces.
Hence, \( \SO_{Y} \to j_{*}\SO_{Y \setminus \Sigma} \) is an isomorphism and
\( \SO_{M} \to j'_{*}\SO_{M \setminus \mu^{-1}(\Sigma)} \) is injective
for the open immersion \( j' \colon M \setminus \mu^{-1}(\Sigma) \injmap M \)
(cf.\ \cite[IV, Th\'eor\`eme~(5.10.5), Proposition~(5.10.2)]{EGA} or
\cite[Exp.~III, Proposition~3.3, Corollaire~3.5]{SGA2}).
Since \( M \setminus \mu^{-1}(\Sigma) \isom Y \setminus \Sigma \),
we have isomorphisms
\( \SO_{Y} \isom \mu_{*}\SO_{M} \isom j_{*}\SO_{Y \setminus \Sigma}\).
\end{proof}

The condition \( C(n, q) \) characterizes
the toric \( \Lambda \)-scheme of type \( (n, q) \) up to \'etale morphism when
\( \Lambda \) is a field. Namely, we have:

\begin{thm}[tautness]\label{thm:graph2toric000}
Assume that \( \Lambda \) is a field.
Let \( (Y, \Sigma) \) be a pair
satisfying the condition \( C(n, q) \)
with a proper morphism \( \mu \colon M \to Y\).
Let \( V \) be the toric \( \Lambda \)-scheme \( V \) of type \( (n, q) \)
and \( \nu \colon U \to V \)
the minimal resolution constructed as above. Then, there exists
an \'etale neighborhood \( Y^{\circ} \)
of \( \Sigma \) in \( Y \) with
\'etale morphisms \( \tau \colon Y^{\circ} \to V \) and
\( \Phi \colon M^{\circ} := M \times_{Y} Y^{\circ} \to U\)
which form a commutative Cartesian diagram\emph{:}
\[ \begin{CD}
M^{\circ} @>{\mu \times_{Y} Y^{\circ}}>> Y^{\circ} \\
@V{\Phi}VV @V{\tau}VV \\
U @>{\nu}>> \phantom{.}V.
\end{CD}\]
\end{thm}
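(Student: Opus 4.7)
The plan is to argue in three stages: reduce to the extended condition $C(n,q)'$ by étale-locally constructing two transverse boundary divisors; produce the comparison morphism $\tau$ via regular functions matching the toric generators of $\Lambda[\Bsigma^\vee \cap \MM]$; and verify étaleness through the induced isomorphism of completed local rings, then obtain $\Phi$ by base change.

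For the reduction, since $\Lambda$ is a field and $\Sigma$ is a closed point of $Y$, after passing to a suitable étale neighborhood of $\Sigma$ one chooses $\Lambda$-rational points $p_1 \in E_1 \setminus \Sigma_2$ and $p_{l+1} \in E_l \setminus \Sigma_l$. Smoothness of $M$ furnishes étale local coordinates near each point in which $E_1$ (resp.\ $E_l$) is a coordinate axis, and the complementary axis extends to a relative Cartier divisor $E_0$ (resp.\ $E_{l+1}$) on an étale neighborhood of the entire chain. Setting $B_2 := \mu(E_0)$ and $B_1 := \mu(E_{l+1})$ on the induced cover $Y' \to Y$ yields a triple $(Y', B_1, B_2)$ satisfying $C(n,q)'$.

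To produce $\tau$, I observe that for every $(a_1, a_2) \in \BZZ_{\geq 0}^2$ with $a_1 + qa_2 \equiv 0 \pmod n$, the $\BQQ$-divisor
\[
\mu^{*}(a_1 B_1 + a_2 B_2) \;=\; a_2 E_0 + \sum\nolimits_{i=1}^{l} \tfrac{a_1 p_i + a_2 q_i}{n}\, E_i + a_1 E_{l+1}
\]
(a direct analogue of \eqref{eq:muaD000}) has integer coefficients by Lemma~\ref{lem:v_i}\eqref{lem:v_i:5} and intersects each $\mu$-exceptional curve trivially by the recurrence of Lemma~\ref{lem:v_i}(2). Since the local class group of the cyclic-quotient-type singularity at $\Sigma$ is $\BZZ/n$, realized as the cokernel of the negative-definite intersection matrix of the exceptional chain, after shrinking to a sufficiently small étale neighborhood $Y^\circ \subset Y'$ one obtains regular functions $f_{a_1, a_2} \in \OH^0(Y^\circ, \SO)$ with $\Div(f_{a_1, a_2}) = a_1 B_1 + a_2 B_2$. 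Normalizing each at a fixed $\Lambda$-rational reference point of $Y^\circ \setminus (B_1 \cup B_2)$ forces the ratio $f_{a_1,a_2} f_{b_1,b_2}/f_{a_1+b_1, a_2+b_2}$ to be a unit of value one, hence identically one; thus $\Be(a_1 h_1 + a_2 h_2) \mapsto f_{a_1, a_2}$ extends to a $\Lambda$-algebra map $\Lambda[\Bsigma^\vee \cap \MM] \to \OH^0(Y^\circ, \SO)$, i.e., a morphism $\tau \colon Y^\circ \to V$ sending $\Sigma$ to $\BSigma$ and $B_i$ to $D_i$.

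The induced map $\hat{\SO}_{V, \BSigma} \to \hat{\SO}_{Y^\circ, \Sigma}$ on completed local rings at the singular points is an isomorphism by construction (the toric monomial relations are matched through the $f_{a_1, a_2}$), so $\tau$ is étale at $\Sigma$; elsewhere $\tau$ restricts to an isomorphism between the open tori $Y^\circ \setminus (B_1 \cup B_2)$ and $V \setminus (D_1 \cup D_2)$ determined by the invertible functions. Setting $M^\circ := M \times_Y Y^\circ$, the universal property of the minimal resolution $\nu \colon U \to V$ then yields a unique $\Phi \colon M^\circ \to U$ filling the Cartesian square, and $\Phi$ inherits étaleness from $\tau$ by base change. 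The principal obstacle lies in stage two: establishing principality of $a_1 B_1 + a_2 B_2$ on a small enough étale neighborhood of $\Sigma$ rests on the identification of the local class group of a cyclic-quotient-type surface singularity as the cokernel of the intersection matrix of the dual graph, a purely linear-algebraic fact which is characteristic-free.
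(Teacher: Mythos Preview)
Your three-stage outline mirrors the paper's strategy, but each stage has a genuine gap in execution.

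\textbf{Stage 1.} Choosing a point $p_1\in E_1$ and a transverse coordinate axis gives you a germ of a divisor near $p_1$, not a Cartier divisor $E_0$ on a neighbourhood of the \emph{entire} exceptional chain with $E_0\cap E_i=\emptyset$ for $i>1$. There is no mechanism in your sketch for extending that germ while controlling its intersection with the other $E_i$. The paper handles this by first passing to the henselization so that $\Pic(M\sptilde)\to\Pic(\bigcup E_i)$ is surjective (Lipman), choosing an invertible sheaf $\SL$ with $\deg\SL|_{E_1}=1$ and $\deg\SL|_{E_i}=0$ for $i>1$, and then invoking Lemma~\ref{lem:Lipman000} to get that $\SL$ is globally generated; a suitable section then cuts out $E_0$ with the correct numerics. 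That vanishing/generation lemma is the missing ingredient.

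\textbf{Stage 2.} Your multiplicativity argument fails: the ratio $f_{a_1,a_2}f_{b_1,b_2}/f_{a_1+b_1,a_2+b_2}$ is a unit on the affine scheme $Y^\circ$, and a unit on an affine surface which equals $1$ at a chosen reference point is \emph{not} forced to be identically $1$ (think of $\BAA^2\setminus\{xy=0\}$). Even restricting to finitely many semigroup generators, you face a finite set of unit-valued obstructions with no reason to vanish. The paper avoids this entirely by fixing once and for all sections $\epsilon_i$ of $\SO_M(E_i)$ and \emph{defining} $\phi_{a_1,a_2}:=\prod_i\epsilon_i^{(p_ia_1+q_ia_2)/n}$; multiplicativity is then automatic, and these functions immediately give local coordinates $(s_i,t_i)$ on the charts $M_i$ matching the toric coordinates $(\xi_i,\eta_i)$ on $U_i$.

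\textbf{Stage 3.} Asserting that $\hat\tau$ is an isomorphism ``by construction'' is not a proof, and there is no universal property of the minimal resolution that hands you $\Phi$. Also, $\tau$ is only \'etale, not an isomorphism onto the open torus. The paper proceeds in the opposite order: it first \emph{constructs} $\Phi\colon M\to U$ by gluing the local maps $\Phi_i\colon M_i\to U_i$ (which is possible because the transition relations \eqref{eq:transition2} match \eqref{eq:transition222}), observes that $\Phi|_E\colon E\to G$ is an isomorphism, descends to $\tau\colon Y\to V$, and \emph{then} proves $\hat\tau$ is an isomorphism by the formal function theorem, comparing $\varprojlim_m \mu_*(\SO_M/\SO_M(-mE))$ with $\varprojlim_m \nu_*(\SO_U/\SO_U(-mG))$.
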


\begin{remsub}
An \'etale neighborhood \( Y^{\circ} \) of \( \Sigma \) in \( Y \) means
an \'etale morphism \( Y^{\circ} \to Y \) such that the image contains \( \Sigma \)
and that \( \Sigma \times_{Y} Y^{\circ} \to \Sigma \) is an isomorphism
(cf.\ Notation and conventions).
\end{remsub}

Similarly, the condition \( C(n, q)' \) characterizes
the toric \( \Lambda \)-scheme of type \( (n, q) \) up to \'etale morphism:

\begin{thm}\label{thm:graph2toric}
Assume that \( \Lambda \) is a Noetherian local ring.
Let \( (Y, B_{1}, B_{2})\)  be a collection
satisfying the condition \( C(n, q)' \)
with a proper morphism \( \mu \colon M \to Y\).
Let \( V \) be the toric \( \Lambda \)-scheme \( V \) of type \( (n, q) \)
and \( \nu \colon U \to V \)
the minimal resolution constructed as above. Then, there exists
an open neighborhood \( Y^{\circ} \)
of \( \Sigma \) in \( Y \) with
\'etale morphisms \( \tau \colon Y^{\circ} \to V \) and
\( \Phi \colon M^{\circ} := M \times_{Y} Y^{\circ} \to U\)
which form the same commutative Cartesian diagram as in
Theorem~\emph{\ref{thm:graph2toric000}}.
\end{thm}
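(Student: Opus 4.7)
The plan is to build the étale morphism $\tau$ on an open neighborhood $Y^{\circ}$ of $\Sigma$ by first constructing $\Phi\colon M^{\circ}\to U$ chart-by-chart, using suitable local generators of $\SO_{M}(-E_{i})$ that mimic the toric coordinates on $U$, and then descending via $\mu_{*}\SO_{M}=\SO_{Y}$.

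First, I would verify, analogously to \eqref{eq:muDD}, that
\[
\mu^{*}(nB_{1}) \;=\; \sum\nolimits_{i=1}^{l+1} p_{i} E_{i}, \qquad
\mu^{*}(nB_{2}) \;=\; \sum\nolimits_{i=0}^{l} q_{i} E_{i}
\]
as Cartier divisors on $M$. The coefficients are determined by the linear system $\mu^{*}(nB_{j})\cdot E_{k}=0$ for $1\le k\le l$ together with the known coefficients on the non-exceptional $E_{0}$, $E_{l+1}$; the continued-fraction identity $v_{i-1}+v_{i+1}=b_{i}v_{i}$ of Lemma~\ref{lem:v_i} supplies the solution. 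Since the right-hand sides are Cartier on $M$ and $\mu_{*}\SO_{M}=\SO_{Y}$ by Lemma~\ref{lem:relnormal}, this forces $nB_{1}$ and $nB_{2}$ to be Cartier on $Y$. Because $\Lambda$ is local and $\Sigma$ is a section, I may shrink $Y$ to an affine open $Y^{\circ}\supset\Sigma$ on which the line bundles $\SO_{Y}(-nB_{j})$ and $\SO_{M}(-E_{i})|_{M^{\circ}}$ (with $M^{\circ}:=\mu^{-1}(Y^{\circ})$) are all trivial, and pick trivializing sections $f_{j}$ and $s_{i}$. Using $\Gamma(M^{\circ},\SO_{M}^{\times})\isom\Gamma(Y^{\circ},\SO_{Y}^{\times})$, rescale the $s_{i}$ so that $\mu^{*}f_{1}=\prod_{i=1}^{l+1} s_{i}^{p_{i}}$ and $\mu^{*}f_{2}=\prod_{i=0}^{l} s_{i}^{q_{i}}$.

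For each $1\le i\le l+1$, set $W_{i}:=M^{\circ}\setminus\bigcup_{j\notin\{i-1,i\}}E_{j}$, an affine open containing $\Sigma_{i}$. Then $(s_{i-1},s_{i})|_{W_{i}}$ forms a regular system of parameters in a neighborhood of $\Sigma_{i}$ by smoothness of $M/\Lambda$ and the transversality inherent in $C(n,q)'$, so the candidate map $\Phi_{i}\colon W_{i}\to U_{i}=\Spec\Lambda[\xi_{i},\eta_{i}]$ defined by $\xi_{i}\mapsto s_{i-1}$ and $\eta_{i}\mapsto s_{i}$ is étale near $\Sigma_{i}$; after shrinking $W_{i}$ I may take $\Phi_{i}$ étale on all of $W_{i}$. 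The delicate point, and the main obstacle, is gluing these chart maps into an étale $\Phi\colon M^{\circ}\to U$: the toric transition \eqref{eq:transition222} is not satisfied on the nose by the naive $s_{j}$ on the overlaps $W_{i}\cap W_{i+1}$, and one must re-scale $s_{j}$ by units in a chart-dependent way and solve the resulting cocycle equations inductively. Consistency of the cocycle will follow from the global normalization $\mu^{*}f_{j}=\prod s_{i}^{\ast}$ fixed above, which encodes precisely the relation forced by \eqref{eq:muDD} on $V$. Once the $\Phi_{i}$'s are glued, $\Phi$ is étale by construction, and, since $\nu_{*}\SO_{U}=\SO_{V}$ together with $\mu_{*}\SO_{M^{\circ}}=\SO_{Y^{\circ}}$ (Lemma~\ref{lem:relnormal}), $\Phi$ descends to the desired étale morphism $\tau\colon Y^{\circ}\to V$ making the square Cartesian.
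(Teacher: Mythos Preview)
Your outline matches the paper's strategy—build $\Phi$ chart by chart, then descend to $\tau$—but the two places you flag as routine are precisely where the argument has content, and neither is carried through.

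\textbf{Gluing.} Normalizing only via $\mu^{*}f_{1}=\prod s_{i}^{p_{i}}$ and $\mu^{*}f_{2}=\prod s_{i}^{q_{i}}$ does not suffice: these correspond to the sublattice $n\MM_{0}\subset\MM$, which misses the dual-basis vectors $(-q_{i},p_{i})$ needed for the transition. Worse, your recipe cannot be repaired by chart-dependent units. On $W_{i}\cap W_{i+1}$ the relation $\eta_{i+1}=\xi_{i}^{-1}$ forces the $W_{i+1}$-coordinate cutting out $E_{i+1}$ to equal a unit times $s_{i-1}^{-1}$, and the required unit involves $s_{i+1}^{-1}$, which has a pole along $E_{i+1}\subset W_{i+1}$; it does not extend. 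The paper's fix is to use \emph{all} Cartier divisors $a_{1}B_{1}+a_{2}B_{2}$ with $a_{1}+qa_{2}\equiv 0\pmod n$ (Corollary~\ref{corsub:lem:Lipman0002}), trivialize them simultaneously, and define on each $M_{i}$ the coordinates as restrictions of global rational functions on $Y$: $s_{i}=\mu^{*}\phi_{-q_{i},p_{i}}$, $t_{i}=\mu^{*}\phi_{q_{i-1},-p_{i-1}}$, where $\mu^{*}\phi_{a_{1},a_{2}}=\prod_{j}\epsilon_{j}^{(p_{j}a_{1}+q_{j}a_{2})/n}$ for one fixed choice of defining sections $\epsilon_{j}$ of $E_{j}$. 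Multiplicativity of $\phi$ in $(a_{1},a_{2})$ together with the continued-fraction identity $(-q_{i+1},p_{i+1})=b_{i}(-q_{i},p_{i})+(q_{i-1},-p_{i-1})$ then gives the transition $s_{i+1}=s_{i}^{b_{i}}t_{i}$, $t_{i+1}=s_{i}^{-1}$ on the nose, with no cocycle to solve.

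\textbf{\'Etaleness of $\tau$.} From $\mu_{*}\SO_{M^{\circ}}=\SO_{Y^{\circ}}$ you do get a morphism $\tau$ with $\nu\circ\Phi=\tau\circ\mu$, but neither \'etaleness of $\tau$ nor Cartesianness of the square follows formally from \'etaleness of $\Phi$, since $\nu$ is not \'etale. The paper proves $\tau$ is \'etale along $\Sigma$ by showing it induces an isomorphism on formal completions: $\Phi^{*}G=E$ and $\Phi|_{E}\colon E\to G$ is an isomorphism, so $\OH^{0}(\SO_{U}/\SO_{U}(-mG))\isom\OH^{0}(\SO_{M}/\SO_{M}(-mE))$ for all $m$, whence $\SO_{V}^{\wedge}\isom\SO_{Y}^{\wedge}$ and one invokes \cite[IV, (17.6.3), (17.8.2)]{EGA}.
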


Before proving Theorems~\ref{thm:graph2toric000} and \ref{thm:graph2toric},
we need some results on
the ``singularity'' of \( Y \) along \( \Sigma \).

\begin{lem}\label{lem:Lipman000}
In the situation of Theorem~\emph{\ref{thm:graph2toric000}} or
\emph{\ref{thm:graph2toric}},
if an invertible sheaf \( \SL \) on \( M \) is \( \mu \)-nef, i.e.,
\( \deg (\SL|_{C}) \geq 0\) for any fiber \( C \) of
\( E_{i} \isom \BPP^{1}_{\Lambda} \to \Sigma \isom \Spec \Lambda\) for all
\( 1 \leq i \leq l \),
then \( \OH^{j}(M, \SL) = 0 \) for all \( j > 0 \) and
\( \SL \) is generated by global sections. If \( \SL \) is
\( \mu \)-numerically trivial,
i.e., \( \deg (\SL|_{C}) = 0\) for any \( C \) above, then
\( \mu_{*}\SL \) is an invertible sheaf on \( Y \)
and \( \mu^{*}(\mu_{*}\SL) \isom \SL \).
\end{lem}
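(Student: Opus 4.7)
The plan is to establish the lemma as a special instance of the relative vanishing theorem for a proper birational contraction of a surface whose exceptional set is a negative-definite chain of rational curves; the argument is modelled on Lipman's approach to rational surface singularities.

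First I would reduce to the case in which \( \Lambda \) is an algebraically closed field. Since \( M \) and \( Y \) are flat and proper over \( \Lambda \) with \( Y \) affine, the formation of every \( R^{j}\mu_{*}\SL \) commutes with flat base change on \( \Lambda \); the \( \mu \)-nef and \( \mu \)-numerically-trivial conditions are likewise preserved under pullback to geometric fibers. Hence the statement for arbitrary Noetherian \( \Lambda \) follows by a spreading-out plus fiberwise argument from the case of a geometric fiber, and for the remainder I assume \( \Lambda \) is an algebraically closed field, so that \( M \) is a smooth surface and \( Y \) is a normal affine surface contracted along the chain \( \bigcup E_{i} \).

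The key input from linear algebra is that the intersection matrix \( B = (E_{i} \cdot E_{j})_{1 \le i, j \le l} \) is negative definite, because \( -B \) is a strictly diagonally dominant tridiagonal \( M \)-matrix (diagonal \( b_{i} \ge 2 \), off-diagonal \( 1 \)); consequently \( (-B)^{-1} \) has positive entries, and one can find an effective integral divisor \( Z = \sum m_{i} E_{i} \) with \( m_{i} > 0 \) such that \( -Z \cdot E_{j} > 0 \) for every \( j \). Thus \( -Z \) is \( \mu \)-ample, so \( M \) is projective over \( Y \), and relative Serre vanishing gives \( R^{j}\mu_{*}(\SL(-NZ)) = 0 \) for all \( j \ge 1 \) and \( N \gg 0 \). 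I would then descend from \( NZ \) to \( 0 \) by induction on \( k \) using the short exact sequences
\[ 0 \to \SL(-(k+1)Z) \to \SL(-kZ) \to \SL(-kZ)|_{Z} \to 0 \qquad (k = N-1, \ldots, 0). \]
At each step, the vanishing of \( R^{1}\mu_{*}(\SL(-kZ)|_{Z}) \) reduces, via a computation sequence stripping off one \( E_{i} \) from an endpoint of the remaining support at a time, to the vanishing of \( H^{1} \) of line bundles of degree \( \ge -1 \) on \( \BPP^{1}_{\Lambda} \); this lower bound holds because \( \SL \) is \( \mu \)-nef and \( -Z \) is \( \mu \)-ample. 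Summing up yields \( R^{j}\mu_{*}\SL = 0 \) for \( j \ge 1 \), hence \( H^{j}(M, \SL) = 0 \). Global generation at each point of \( \mu^{-1}(\Sigma) \) then comes from the exact sequence \( 0 \to \SL(-E_{i}) \to \SL \to \SL|_{E_{i}} \to 0 \) combined with \( R^{1}\mu_{*}\SL(-E_{i}) = 0 \) and the fact that nonnegative-degree line bundles on \( \BPP^{1} \) are globally generated.

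For the \( \mu \)-numerically-trivial case, global generation produces a section \( s \in H^{0}(M, \SL) \) nonvanishing on every \( E_{i} \); by properness of \( \mu \), \( s \) trivializes \( \SL \) on an open neighborhood \( \mu^{-1}(U) \supset \mu^{-1}(\Sigma) \), so \( (\mu_{*}\SL)|_{U} \isom \SO_{U} \). Since \( \mu \) is an isomorphism outside \( \Sigma \), \( (\mu_{*}\SL)|_{Y \setminus \Sigma} \) is also invertible, whence \( \mu_{*}\SL \) is invertible on all of \( Y \), and the surjection \( \mu^{*}(\mu_{*}\SL) \surjmap \SL \) between invertible sheaves is an isomorphism. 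The main technical obstacle is the computation-sequence analysis in the previous paragraph, namely the verification that each \( \BPP^{1} \)-quotient has degree \( \ge -1 \) at every intermediate stage; this is a combinatorial bookkeeping made tractable by the negative-definite tridiagonal structure and the strategy of peeling from an endpoint of the current support.
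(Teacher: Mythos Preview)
Your overall Lipman-style strategy is sound, but the execution has two concrete gaps that the paper's argument is designed to avoid.

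\textbf{Gap 1: the computation sequence at $k=0$.} By insisting that $-Z$ be $\mu$-ample you are forced to take $Z=\sum m_iE_i$ non-reduced in general. Then ``stripping one $E_i$ from an endpoint of the remaining support'' is not well-behaved: when you pass from $\SL|_{Z}$ to $\SL|_{Z-E_1}$, the kernel is $\SL|_{E_1}\bigl(-(Z-E_1)\bigr)$, whose degree is $\deg(\SL|_{E_1})-(Z\cdot E_1+b_1)$. If $Z\cdot E_1=-1$ (the generic minimal choice) and $\deg(\SL|_{E_1})=0$, this equals $1-b_1\le -2$ as soon as $b_1\ge 3$, so $\OH^1$ of the kernel does not vanish and the induction breaks. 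Your parenthetical ``this lower bound holds because $\SL$ is $\mu$-nef and $-Z$ is $\mu$-ample'' is exactly the step that fails.

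\textbf{Gap 2: global generation.} You invoke $\OR^1\mu_*\SL(-E_i)=0$, but $\SL(-E_i)$ is not $\mu$-nef (its degree on a neighbour of $E_i$ can be $-1$), so this vanishing is not covered by what you have already proved and needs its own argument.

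The paper's fix is to take the \emph{reduced} cycle $Z=\sum_{i=1}^{l}E_i$. Because $b_i\ge 2$, one has $Z\cdot E_j\le 0$, so $\SO_M(-Z)$ is already $\mu$-nef (not ample). This has two payoffs: (a) for any $\mu$-nef $\SL$ and any $m\ge 0$, $\SL(-mZ)$ is again $\mu$-nef, so it suffices to prove $\OH^1(Z,\SL|_Z)=0$ for $\mu$-nef $\SL$ and then conclude $\OH^1(M,\SL)=0$ via the formal completion $\varprojlim_m \OH^1(\SO_{mZ}\otimes\SL)$ rather than via Serre vanishing; (b) on the reduced chain the endpoint-peeling induction over the sub-chains $Z_{a,b}=\sum_{i=a}^{b}E_i$ is clean---the kernel at each step is $\SL|_{E_a}(-1)$ of degree $\ge -1$. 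The same $Z_{a,b}$ induction shows $\SL|_Z$ is globally generated, and surjectivity of $\OH^0(M,\SL)\to\OH^0(Z,\SL|_Z)$ follows from $\OH^1(M,\SL(-Z))=0$, which is now a legitimate instance of the nef case. This replaces your appeal to $\OR^1\mu_*\SL(-E_i)=0$.

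Finally, the paper carries this out directly over an arbitrary Noetherian $\Lambda$; your reduction to an algebraically closed field is unnecessary (and the justification you give for it is somewhat loose).
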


\begin{proof}
For the vanishing \( \OH^{j}(M, \SL) = 0  \), it is enough to check
only when \( j = 1 \),
since the dimension of the fibers of \( \mu \) is at most one.
Let \( Z \) be the divisor \( \sum\nolimits_{i = 1}^{l} E_{i} \)
which is a relative Cartier divisor over \( \Spec \Lambda \).
Then, \( \SO_{M}(-Z) \) is \( \mu \)-nef by \( b_{i} \geq 2 \).
It is enough to prove the following two assertions:
\begin{enumerate}
\item  \label{lem:Lipman000:cond1} \( \OH^{1}(Z, \SL|_{Z}) = 0 \).

\item  \label{lem:Lipman000:cond2} \( \SL|_{Z} \) is generated by global sections.
\end{enumerate}
In fact, \( \OH^{1}(Z, \SL|_{Z} \otimes \SO_{Z}(-mZ)) = 0 \)
for any \( m \geq 0 \)
by \eqref{lem:Lipman000:cond1},
and it implies the vanishing
\( \OH^{1}(M, \SO_{mZ} \otimes \SL) = 0 \) for all \( m \geq 0 \)
by induction using the exact sequence
\[ 0 \to \SO_{Z}(-(m-1)Z) \otimes_{\SO_{M}} \SL \to
\SO_{mZ} \otimes_{\SO_{M}} \SL
\to \SO_{(m-1)Z} \otimes_{\SO_{M}} \SL \to 0. \]
Hence, \( \OH^{1}(M, \SL) = 0  \), since the module \( \OH^{1}(M, \SL) \)
is supported on \( \Sigma \) and
the formal completion \( \OH^{1}(M, \SL)^{\wedge} \)
along \( \Sigma \) is isomorphic to the projective limit
\( \varprojlim_{m} \OH^{1}(\SO_{mZ} \otimes \SL) \)
(cf.\ \cite[III, Th\'eor\`eme~(4.1.5)]{EGA}).
We also have \( \OH^{1}(M, \SO_{M}(-Z) \otimes \SL) = 0 \),
since \( \SO_{M}(-Z) \) is nef, and as a consequence,
the restriction map
\( \OH^{0}(M, \SL) \to \OH^{0}(Z, \SL|_{Z}) \) is surjective.
Thus, \( \SL \) is generated by global sections by \eqref{lem:Lipman000:cond2}.

For integers \( 1 \leq a \leq b \leq l \), we set
\( Z_{a, b} := \sum\nolimits_{i = a}^{b} E_{i} \).
We shall show \eqref{lem:Lipman000:cond1} and \eqref{lem:Lipman000:cond2}
by proving the following two assertions for any such pair \( (a, b) \) of integers:
\begin{enumerate}
    \addtocounter{enumi}{2}
\item \label{lem:Lipman000:cond3}
\( \OH^{1}(Z_{a, b}, \SL|_{Z_{a, b}}) = 0 \).

\item \label{lem:Lipman000:cond4}
\( \SL|_{Z_{a, b}} \) is generated by global sections.
\end{enumerate}
We shall prove them by induction on \( b - a \).
These are true when \( a = b \). In fact, \( Z_{a, a} = E_{a} \),
\( E_{a} \isom \BPP^{1}_{\Lambda} \), and
\( \SL|_{E_{a}} \isom \SO_{\BPP^{1}_{\Lambda}}(d_{a})\) for some \( d_{a} \geq 0 \).
Assume that \( b > a \). Then, we have two exact sequences
\begin{align*}
&0 \to \SO_{E_{a}}(-Z_{a+1, b}) \isom \SO_{\BPP^{1}_{\Lambda}}(-1)
\to \SO_{Z_{a, b}} \to \SO_{Z_{a+1, b}} \to 0 \quad \text{and}\\
&0 \to \SO_{E_{b}}(-Z_{a, b-1}) \isom \SO_{\BPP^{1}_{\Lambda}}(-1)
\to \SO_{Z_{a, b}} \to \SO_{Z_{a, b-1}} \to 0.
\end{align*}
Since \( \SL|_{E_{a}} \isom \SO_{\BPP^{1}_{\Lambda}}(d_{a}) \) and
\( \SL|_{E_{b}} \isom \SO_{\BPP^{1}_{\Lambda}}(d_{b}) \) with \( d_{a} \), \( d_{b} \geq 0 \), we have
\[ \OH^{1}(E_{a}, \SL|_{E_{a}} \otimes \SO_{E_{a}}(-Z_{a+1, b})) =
\OH^{1}(E_{b}, \SL|_{E_{b}} \otimes \SO_{E_{b}}(-Z_{a, b-1})) = 0.\]
Therefore, we have surjections
\( \OH^{0}(\SL|_{Z_{a, b}}) \to \OH^{0}(\SL|_{Z_{a+1, b}}) \)
and \( \OH^{0}(\SL|_{Z_{a, b}}) \to \OH^{0}(\SL|_{Z_{a, b-1}})\),
and isomorphisms
\( \OH^{1}(\SL|_{Z_{a, b}}) \isom \OH^{1}(\SL|_{Z_{a+1, b}})
\isom \OH^{1}(\SL|_{Z_{a, b-1}})\).
Thus, \eqref{lem:Lipman000:cond3} and \eqref{lem:Lipman000:cond4}
for \( (a, b) \) follow from those for
\( (a+1, b) \) and for \( (a, b-1) \).
By induction on \( b - a \), \eqref{lem:Lipman000:cond3} and \eqref{lem:Lipman000:cond4} hold
for any \( (a, b) \), and hence, \eqref{lem:Lipman000:cond1} and \eqref{lem:Lipman000:cond2}
hold. Thus, we are done.
\end{proof}

\begin{remn}
If one knows the vanishing \( \OH^{1}(M, \SO_{M}) = 0 \), then
Lemma~\ref{lem:Lipman000} is a special case of \cite[Theorem~(12.1)]{Lipman},
which generalizes \cite[Lemma~5]{ArtinIsol} stated over an algebraically closed field.
\end{remn}

\begin{corsub}\label{corsub:lem:Lipman000}
Let \( Y \) be an affine normal algebraic surface defined over
an algebraically closed field \( \Bbbk \) with a unique singular point \( P \).
If the exceptional locus for the minimal resolution of  a singularity
is a linear chain of smooth rational curves,
then \( (Y, P) \) is a rational singularity.
\end{corsub}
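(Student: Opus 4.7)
The strategy is to apply Lemma~\ref{lem:Lipman000} directly with the trivial invertible sheaf, after first observing that the hypothesis of the corollary is precisely condition \(C(n,q)\) for a suitable pair \((n,q)\). Let \(\mu\colon M\to Y\) be the minimal resolution of the unique singularity \(P\in Y\), and let \(E_1,\ldots,E_l\) be the exceptional smooth rational curves, forming a linear chain. By the minimality of \(\mu\), each self-intersection \(E_i^2=-b_i\) satisfies \(b_i\geq 2\), so we can define coprime integers \(n>q>0\) by the continued fraction \(n/q=[b_1,\ldots,b_l]\). With \(\Lambda=\Bbbk\), the section condition on \(\Sigma:=\{P\}\) and on the scheme-theoretic intersections \(\Sigma_i=E_{i-1}\cap E_i\) is automatic since everything is a \(\Bbbk\)-point; in particular, \((Y,\Sigma)\) satisfies \(C(n,q)\) with respect to \(\mu\colon M\to Y\).

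Next, I would take \(\SL=\SO_M\) in Lemma~\ref{lem:Lipman000}. Since \(\deg(\SO_M|_{E_i})=0\) for every \(i\), the sheaf \(\SO_M\) is \(\mu\)-numerically trivial, hence \emph{a fortiori} \(\mu\)-nef. The first assertion of the lemma then yields \(\OH^j(M,\SO_M)=0\) for all \(j>0\); in particular \(\OH^1(M,\SO_M)=0\).

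Finally, since \(Y\) is affine and \(\mu\) is proper, the higher direct image \(R^1\mu_*\SO_M\) is the quasi-coherent \(\SO_Y\)-module whose global sections form \(\OH^1(M,\SO_M)\); combined with Lemma~\ref{lem:relnormal} (so that \(\mu_*\SO_M\isom\SO_Y\)), we get \(R^1\mu_*\SO_M=0\), which is the definition of a rational singularity. I do not anticipate any serious obstacle: the whole content is already encoded in Lemma~\ref{lem:Lipman000}, and the only point requiring any thought is confirming that an arbitrary chain configuration fits the combinatorial setup of \(C(n,q)\), which is a standard continued-fraction bookkeeping.
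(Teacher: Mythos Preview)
Your proposal is correct and is exactly the argument the paper intends: the corollary has no separate proof in the paper because it follows immediately from Lemma~\ref{lem:Lipman000} with \(\SL=\SO_M\), and you have spelled out precisely this deduction. The remark following the corollary in the paper merely records an alternative route via Artin's criterion \(p_a(Z)=0\), but the primary argument is the one you gave.
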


\begin{remsub}
In Corollary~\ref{corsub:lem:Lipman000},
the divisor \( Z \) in the proof of Lemma~\ref{lem:Lipman000}
is nothing but the fundamental cycle.
Thus, the rationality of \( (Y, P) \) is also derived from \( p_{a}(Z) = 0 \) by
\cite[Theorem~3]{ArtinIsol}.
\end{remsub}

\begin{corsub}\label{corsub:lem:Lipman0002}
Let \( \mu \colon M \to Y \), \( B_{1} \), \( B_{2} \), and
\( \{E_{i}\}_{i = 0}^{l+1} \) be as in
the situation of Theorem~\emph{\ref{thm:graph2toric}}.
Let \( a_{1} \), \( a_{2} \) be integers such that
\( a_{1} + qa_{2} \equiv 0 \bmod n \).
Then, \( p_{i}a_{1} + q_{i}a_{2} \equiv 0 \bmod n\) for all
\( 0 \leq i \leq l+1 \) for integers \( p_{i} \) and \( q_{i} \)
defined in Lemma~\emph{\ref{lem:v_i}},
\( a_{1}B_{1} + a_{2}B_{2} \) is Cartier, and
the equality
\begin{equation}\label{eq:muaD}
\mu^{*}(a_{1}B_{1} + a_{2}B_{2}) =
\sum\nolimits_{i = 0}^{l+1} \frac{p_{i}a_{1} + q_{i}a_{2}}{n} E_{i}
\end{equation}
of Cartier divisors hold on \( M\).
\end{corsub}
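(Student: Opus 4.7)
The plan is to build the Cartier divisor
\[
D := \sum_{i=0}^{l+1} c_{i}E_{i}, \qquad c_{i} := (p_{i}a_{1} + q_{i}a_{2})/n,
\]
on the smooth \( \Lambda \)-scheme \( M \), to descend the invertible sheaf \( \SO_{M}(D) \) to \( Y \) via Lemma~\ref{lem:Lipman000}, and then to identify it with \( \SO_{Y}(a_{1}B_{1} + a_{2}B_{2}) \). This will simultaneously yield the Cartier property of \( a_{1}B_{1} + a_{2}B_{2} \) and the divisorial equality~\eqref{eq:muaD}, with the integrality of the \( c_{i} \) falling out as a by-product.

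First I will verify that the \( c_{i} \) are integers by induction on \( i \). The boundary cases follow from \( (p_{0}, q_{0}) = (0, n) \) and \( (p_{l+1}, q_{l+1}) = (n, 0) \), and the case \( i = 1 \) is the hypothesis. The recurrence \( v_{i-1} + v_{i+1} = b_{i}v_{i} \) of Lemma~\ref{lem:v_i}\,(2) then propagates the congruence \( p_{i}a_{1} + q_{i}a_{2} \equiv 0 \bmod n \) and at the same time delivers the identity
\[
c_{j-1} - b_{j}c_{j} + c_{j+1} = 0 \qquad (1 \leq j \leq l).
\]
Since \( \SO_{E_{j}}(-E_{j}) \isom \SO(b_{j}) \) and \( E_{j-1} \), \( E_{j+1} \) each meet every fiber of \( E_{j} \isom \BPP^{1}_{\Lambda} \to \Spec \Lambda \) transversally in a single point, this relation is exactly \( \deg(D|_{C}) = 0 \) for every such fiber \( C \); hence \( \SO_{M}(D) \) is \( \mu \)-numerically trivial, and Lemma~\ref{lem:Lipman000} produces an invertible sheaf \( \SL := \mu_{*}\SO_{M}(D) \) on \( Y \) with \( \mu^{*}\SL \isom \SO_{M}(D) \).

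Next I will identify \( \SL \) with \( \SO_{Y}(a_{1}B_{1} + a_{2}B_{2}) \). Because \( c_{0} = a_{2} \) and \( c_{l+1} = a_{1} \), and because \( E_{0} \) (resp.\ \( E_{l+1} \)) maps birationally onto \( B_{2} \) (resp.\ \( B_{1} \)) and isomorphically after removing \( \Sigma_{1} \) (resp.\ \( \Sigma_{l+1} \)), the restriction of \( D \) to \( M \setminus \mu^{-1}(\Sigma) \isom Y \setminus \Sigma \) matches \( a_{1}B_{1} + a_{2}B_{2} \) there. Both \( \SL \) and \( \SO_{Y}(a_{1}B_{1} + a_{2}B_{2}) \) are reflexive of rank one and agree on \( Y \setminus \Sigma \); as \( \Sigma \) is a section and therefore has codimension two in \( Y \), the isomorphism propagates globally, and in particular \( a_{1}B_{1} + a_{2}B_{2} \) is Cartier.

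The step I expect to require the most care is the passage from this isomorphism of invertible sheaves to the equality of Cartier divisors~\eqref{eq:muaD}. Writing \( \mu^{*}(a_{1}B_{1} + a_{2}B_{2}) - D = \Div(\psi) \) for a rational function \( \psi \in K(M)^{\times} = K(Y)^{\times} \), the support of \( \Div(\psi) \) is contained in the exceptional locus \( \mu^{-1}(\Sigma) = \bigcup_{i=1}^{l} E_{i} \), so \( \psi^{\pm 1} \) are regular on \( Y \setminus \Sigma \isom M \setminus \mu^{-1}(\Sigma) \). The isomorphism \( \SO_{Y} \isom j_{*}\SO_{Y \setminus \Sigma} \) provided by Lemma~\ref{lem:relnormal} extends them to regular functions on all of \( Y \), making \( \psi \) a global unit on \( Y \) and hence on \( M \), so that \( \Div(\psi) = 0 \). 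This is the step that blends the depth/reflexivity properties of \( Y \) with the continued-fraction combinatorics of Lemma~\ref{lem:v_i}, and is where I expect the most bookkeeping.
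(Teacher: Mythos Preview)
Your proof is correct and follows essentially the same route as the paper: build the Cartier divisor \(D\) on \(M\), check \(\mu\)-numerical triviality via the recurrence of Lemma~\ref{lem:v_i}, descend via Lemma~\ref{lem:Lipman000}, and then identify the result with \(a_{1}B_{1}+a_{2}B_{2}\). The paper compresses your last two paragraphs into the single observation that \(a_{1}B_{1}+a_{2}B_{2}\) is the push-forward \(\mu_{*}D\) (using \(c_{0}=a_{2}\), \(c_{l+1}=a_{1}\)), which immediately yields both the Cartier property and the equality~\eqref{eq:muaD}; your explicit reflexivity and unit-via-Lemma~\ref{lem:relnormal} argument is a more detailed unpacking of the same step.
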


\begin{proof}
By Lemma~\ref{lem:v_i},
we have \( p_{i}a_{1} + q_{i}a_{2} \equiv 0 \bmod n \), and
we see that the right hand side of \eqref{eq:muaD} is
\( \mu \)-numerically trivial.
Hence by Lemma~\ref{lem:Lipman000},
the associated invertible sheaf to the divisor of the right hand side
is just the pullback of an invertible sheaf on \( Y \).
Thus, \eqref{eq:muaD} holds, since \( a_{1}B_{1} + a_{2}B_{2} \)
is the push-forward of the right hand side.
\end{proof}

\begin{proof}[Proof of Theorems~\emph{\ref{thm:graph2toric000} and \ref{thm:graph2toric}}]
First, we shall prove Theorem~\ref{thm:graph2toric000} assuming
Theorem~\ref{thm:graph2toric} to be true.
In Theorem~\ref{thm:graph2toric000}, \( \Lambda \) is a field, say \( \BKK \).
Thus, \( \Sigma \) is a \( \BKK \)-rational point.
Let \( R\) be the henselization of the local ring \( \SO_{Y, \Sigma} \) and
let \( \mu\sptilde \colon M\sptilde \to \Spec R \) be
the base change of \( \mu \colon M \to Y \) by \( \Spec R \to Y \).
Then,
\[ \Pic(M\sptilde) \to \Pic\left(\bigcup\nolimits_{i = 1}^{l} E_{i}\right) \]
is surjective by \cite[Lemma~(14.3)]{Lipman} or
\cite[IV, Corollaire~(21.9.12)]{EGA}.
Hence, by replacing \( Y \) with an \'etale neighborhood \( Y^{\circ} \to Y \)
of \( \Sigma \) in \( Y \), we may assume that
\( \Pic(M) \to \Pic(\bigcup\nolimits_{i = 1}^{l} E_{i}) \)
is surjective.
Then, we can find invertible sheaves \( \SL \), \( \SL' \) on \( M \)
such that
\[ \deg \SL|_{E_{1}} = \deg \SL'|_{E_{l}} = 1, \quad \text{ and } \quad
\deg \SL|_{E_{i}} = \deg \SL'|_{E_{j}} = 0\]
for \( i > 1 \) and \( j < l \).
Applying Lemma~\ref{lem:Lipman000} to \( \SL \) and \( \SL' \), we have
two affine prime divisors \( E_{0} \), \( E_{l+1} \) on \( M \) such that
\begin{itemize}
\item  \( \SL \isom \SO_{M}(E_{0}) \) and \( \SL' \isom \SO_{M}(E_{l+1}) \),

\item  \( E_{0} \cap E_{1} \) and \( E_{l} \cap E_{l+1} \)
are sections of \( M \to \Spec \Lambda \), and

\item  \( E_{0} \cap \bigcup\nolimits_{i = 2}^{l+1} E_{i} =
E_{l+1} \cap \bigcup\nolimits_{i = 0}^{l-1} E_{i} = \emptyset\).
\end{itemize}
%
Hence, the dual graph of \( \sum\nolimits_{i = 0}^{l+1}E_{i} \) is
the same as \eqref{eq:dualgraph4}.
We set \( B_{1} := \mu_{*}(E_{l+1}) \) and \( B_{2} := \mu_{*}(E_{0}) \).
Then, \( B_{1} \) and \( B_{2} \) are prime divisors on \( Y \)
with \( B_{1} \cap B_{2} = \Sigma\), set-theoretically.
Furthermore, \( E_{l+1} \) and \( E_{0} \) are the proper transforms of
\( B_{1} \) and \( B_{2} \), respectively.
Thus, \( (Y, B_{1}, B_{2}) \) satisfies the condition \( C(n, q)' \).
Hence, Theorem~\ref{thm:graph2toric000} is derived from
Theorem~\ref{thm:graph2toric}.

In the rest of the proof, we shall prove
Theorem~\ref{thm:graph2toric}.
If \( a_{1} \) and \( a_{2} \) are integers with
\( a_{1} + qa_{2} \equiv 0 \bmod n \),
then \( a_{1}B_{1} + a_{2}B_{2} \) is Cartier by
Corollary~\ref{corsub:lem:Lipman0002}.
We may assume that these Cartier
divisors \( a_{1}B_{1} + a_{2}B_{2} \) are all linearly
equivalent to zero by replacing \( Y \)
with an open neighborhood of \( \Sigma \), since \( \Lambda \) is local.

For \( 0 \leq i \leq l+1 \),
let \( \epsilon_{i} \) be a global section of \( \SO_{M}(E_{i}) \)
such that the divisor \( (\epsilon_{i})_{0} \) of zeros equals \( E_{i} \);
in other words, \( \epsilon_{i} \colon \SO_{M} \to \SO_{M}(E_{i}) \) is dual to
the natural injection \( \SO_{M}(-E_{i}) \subset \SO_{M} \).
Let \( a_{1} \), \( a_{2} \) be integers with
\( a_{1} + qa_{2} \equiv 0 \bmod n \).
Then, by \eqref{eq:muaD}, there is
a rational function \( \phi_{a_{1}, a_{2}} \) on \( Y \)
such that
\[ \mu^{*}(\phi_{a_{1}, a_{2}}) =
\prod\nolimits_{i = 0}^{l+1} \epsilon_{i}^{(p_{i}a_{1} + q_{i}a_{2})/n}. \]
For \(  1 \leq i \leq l + 1 \),
let \( M_{i} \) be the complement of \( \bigcup\nolimits_{k \ne i - 1,\, i} E_{k} \)
in \( M \). Then, \( M_{i} \) is a neighborhood of the intersection
\( E_{i-1} \cap E_{i}\).
We define
\[ s_{i} := \mu^{*}(\phi_{-q_{i}, p_{i}})|_{M_{i}} \quad \text{ and }
\quad t_{i} := \mu^{*}(\phi_{q_{i-1}, -p_{i-1}})|_{M_{i}} \]
for \( 1 \leq i \leq l+1 \).
By \eqref{eq:muaD}, we see that
\( s_{i} \) and \( t_{i} \) are regular on \( M_{i} \) satisfying
\[ \Div(s_{i})|_{M_{i}} = E_{i-1}|_{M_{i}} \quad \text{ and } \quad
\Div(t_{i})|_{M_{i}} = E_{i}|_{M_{i}}.\]
In particular, \( (s_{i}, t_{i}) \)
is a local coordinate system of \( M_{i} \)
along the intersection \( E_{i-1} \cap E_{i} \).
For \( 1 \leq i \leq l \), let \( M_{i}^{\star} \) be the intersection
\( M_{i} \cap M_{i+1} = M \setminus \bigcup\nolimits_{k \ne i} E_{k}\).
Then, we have
\begin{equation}\label{eq:transition2}
s_{i+1}|_{M_{i}^{\star}} = s_{i}^{b_{i}}t_{i}|_{M_{i}^{\star}}, \quad
t_{i+1}|_{M_{i}^{\star}} = s_{i}^{-1}|_{M_{i}^{\star}},
\end{equation}
similar to \eqref{eq:transition222}.
For \( 1 \leq i \leq l+1 \), let
\(\Phi_{i} \colon
M_{i} \to U_{i} = \Spec \Bbbk[\Bsigma_{i}^{\vee} \cap \MM]
\isom \BAA^{2}_{\Bbbk} \)
be the morphism defined by
\[ \Phi_{i}^{*}(\xi_{i}) = s_{i} \quad \text{ and } \quad
\Phi_{i}^{*}(\eta_{i}) = t_{i}, \]
which is \'etale along \( (E_{i-1} \cup E_{i}) \cap M_{i} \).
By \eqref{eq:transition222} and \eqref{eq:transition2}, the morphisms
\( \Phi_{i}\) for \( 1 \leq i \leq l+1 \) are glued to a morphism
\( \Phi \colon M = \bigcup M_{i} \to U = \bigcup U_{i} \), which is
\'etale along \( \bigcup\nolimits_{i = 0}^{l+1} E_{i} \), and which induces
\( \Phi^{*}(G_{i}) = E_{i} \) for all \( 0 \leq i \leq l+1 \).
Since \( \mu \circ \Phi \colon M \to U \to V \) contracts
the divisor \( E = \sum\nolimits_{i = 1}^{l}E_{i} \) to the section \( \BSigma \),
we have a morphism \( \tau \colon Y \to V \)
such that \( \nu \circ \Phi = \tau \circ \mu\), i.e.,
the diagram
\[ \begin{CD}
M @>{\mu}>> Y \\
@V{\Phi}VV @V{\tau}VV\\
U @>{\nu}>> V
\end{CD}\]
is commutative.
For the proof of Theorem~\ref{thm:graph2toric},
it is enough to prove that \( \tau \) is \'etale along \( \Sigma \).
Applying \cite[III, Th\'eor\`eme~(4.1.5)]{EGA},
we have isomorphisms
\[ \SO_{Y}^{\wedge} \isom \varprojlim\nolimits_{m}
\mu_{*}(\SO_{M}/\SO_{M}(-mE)) \quad \text{ and } \quad
\SO_{V}^{\wedge} \isom \varprojlim\nolimits_{m}
\nu_{*}(\SO_{U}/\SO_{U}(-mG)), \]
where \( E = \sum\nolimits_{i = 1}^{l} E_{i} \),
\( G = \sum\nolimits_{i = 1}^{l}G_{i} \), and
\( \SO_{Y}^{\wedge} \) (resp.\ \( \SO_{V}^{\wedge} \))
is the formal completion of
\( \SO_{Y} \isom \mu_{*}\SO_{M} \) (resp.\ \( \SO_{V} \isom \nu_{*}\SO_{U} \))
along \( \Sigma \) (resp.\ \( \BSigma \)).
On the other hand, \( \Phi \) induces an isomorphism
\[ \OH^{0}(U, \SO_{U}/\SO_{U}(-mG)) \isom \OH^{0}(M, \SO_{M}/\SO_{M}(-mE)) \]
of \( \Lambda \)-algebras for all \( m > 0 \), since
\( \Phi^{*}(G) = E \) and
\( \Phi|_{E} \colon E \to G \) is an isomorphism.
Thus, \( \tau \) induces an isomorphism
\( \SO_{U}^{\wedge} \isom \SO_{Y}^{\wedge}  \).
Hence, the morphism \( \tau_{t} \colon Y_{t} \to V_{t} \)
between the fibers of \( V \) and \( U \) over any point \( t \in \Spec \Lambda \)
induced by \( \tau \) is \'etale at the point \( \BSigma \cap Y_{t} \) by
\cite[IV, Th\'eor\`eme~(17.6.1) or Proposition~(17.6.3)]{EGA}.
Thus, \( \tau \) is \'etale along \( \Sigma \) by \cite[IV, Proposition~(17.8.2)]{EGA},
and this completes the proof.
\end{proof}

In the rest of Section~\ref{sect:dualgraph},
we shall show some local properties of
surface singularities that have a linear chain of smooth rational curves
as the exceptional divisor for the minimal resolution.

\begin{lem}\label{lem:imageGG000}
Let \(V\) be the toric surface \( \BTT_{\NN}(\Bsigma) \)
of type \((n, q)\) defined over \( \Bbbk \) and
let \(D_1\), \(D_2\), \(\nu \colon U \to V\), \(G_i\) be the same as above,
except that the singular point of \( V \) is denoted
by \( \bzero \) instead of \( \BSigma \).
Then, there are natural isomorphisms\emph{:}
\begin{align*}
\nu_{*}\SO_{U}(G_{0}) &\isom \SO_{V}(D_{2}), &
\nu_{*}\SO_{G_{0}}(G_{0}) &\isom \SExt^{1}_{\SO_{V}}(\SO_{D_{2}}, \SO_{V}), \\
\nu_{*}\SO_{U}(G_{l+1}) &\isom \SO_{V}(D_{1}), &
\nu_{*}\SO_{G_{l+1}}(G_{l+1}) &\isom \SExt^{1}_{\SO_{V}}(\SO_{D_{1}}, \SO_{V}).
\end{align*}
\end{lem}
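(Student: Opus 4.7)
The plan is to deduce both isomorphisms from pushing forward the short exact sequence
\[ 0 \to \SO_{U} \to \SO_{U}(G_{0}) \to \SO_{G_{0}}(G_{0}) \to 0, \]
combined with a local \(\SExt\) computation on \( V \). The case of \( G_{l+1} \) and \( D_{1} \) is entirely parallel; it follows by the obvious symmetry of the toric construction (switching \( e_{1} \leftrightarrow e_{2} \) exchanges \( D_{1} \leftrightarrow D_{2} \) and \( G_{0} \leftrightarrow G_{l+1} \)), so I will only write the \( G_{0} \) case.

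First I would prove \( \nu_{*}\SO_{U}(G_{0}) \isom \SO_{V}(D_{2}) \). Both sides are coherent sheaves of rank one on the normal affine surface \( V \), and on the smooth locus \( V \setminus \{\bzero\} \) the morphism \( \nu \) is an isomorphism identifying \( G_{0} \) with \( D_{2} \); hence their restrictions to \( V \setminus \{\bzero\} \) agree. Since \( \SO_{V}(D_{2}) \) is reflexive by construction, it suffices to check that every section of \( \SO_{V}(D_{2}) \) comes from a section of \( \nu_{*}\SO_{U}(G_{0}) \). Given a rational function \( \varphi \) with \( \Div(\varphi) + D_{2} \geq 0 \), pulling back by \( \nu \) and using the \( \BQQ \)-Cartier formula \( \nu^{*}D_{2} = G_{0} + \sum_{i=1}^{l}(q_{i}/n)G_{i} \) from Remark~\ref{remsub:lem:VDDC} yields \( \Div(\nu^{*}\varphi) + G_{0} + \sum_{i=1}^{l}(q_{i}/n)G_{i} \geq 0 \); since \( \Div(\nu^{*}\varphi) \) is an integral divisor and \( 0 < q_{i}/n < 1 \) for \( 1 \leq i \leq l \), the coefficient along each \( G_{i} \) (\( i \geq 1 \)) is automatically \( \geq 0 \), so \( \nu^{*}\varphi \) is indeed a global section of \( \SO_{U}(G_{0}) \) near the exceptional locus.

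Next, for the second isomorphism, I would push the displayed sequence forward by \( \nu \). Using Lemma~\ref{lem:relnormal} (so that \( \nu_{*}\SO_{U} \isom \SO_{V} \)), the first isomorphism just proved, and the vanishing \( R^{1}\nu_{*}\SO_{U} = 0 \) coming from the rationality of \( (V, \bzero) \) (Corollary~\ref{corsub:lem:Lipman000}), I obtain the exact sequence
\[ 0 \to \SO_{V} \to \SO_{V}(D_{2}) \to \nu_{*}\SO_{G_{0}}(G_{0}) \to 0. \]
On the other hand, applying \( \SHom_{\SO_{V}}(-, \SO_{V}) \) to \( 0 \to \SI_{D_{2}} \to \SO_{V} \to \SO_{D_{2}} \to 0 \), and observing that \( \SHom(\SO_{D_{2}}, \SO_{V}) = 0 \) (since \( D_{2} \) has no embedded components and \( V \) is Cohen--Macaulay), together with \( \SHom_{\SO_{V}}(\SI_{D_{2}}, \SO_{V}) \isom \SHom_{\SO_{V}}(\SI_{D_{2}}^{**}, \SO_{V}) = \SHom_{\SO_{V}}(\SO_{V}(-D_{2}), \SO_{V}) \isom \SO_{V}(D_{2}) \) by reflexivity, one obtains
\[ 0 \to \SO_{V} \to \SO_{V}(D_{2}) \to \SExt^{1}_{\SO_{V}}(\SO_{D_{2}}, \SO_{V}) \to 0, \]
in which the first map is again the natural inclusion. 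Comparing the two exact sequences identifies \( \nu_{*}\SO_{G_{0}}(G_{0}) \) with \( \SExt^{1}_{\SO_{V}}(\SO_{D_{2}}, \SO_{V}) \).

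The only delicate point I foresee is checking that the two sequences produce \emph{canonically} the same cokernel: both extensions exhibit \( \SO_{V}(D_{2}) \) as an extension of a sheaf supported on \( D_{2} \) (respectively on the image of \( G_{0} \)) by the structure sheaf, via the same natural inclusion \( \SO_{V} \hookrightarrow \SO_{V}(D_{2}) \) (the one sending \( 1 \) to \( 1 \)). Once this identification of inclusions is recorded, the two cokernels are canonically isomorphic. All other ingredients --- reflexivity of rank one sheaves on the normal surface \( V \), rationality of the cyclic quotient singularity, and the explicit pullback formula \eqref{eq:muDD} --- are already in hand from the preceding parts of this section.
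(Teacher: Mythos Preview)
Your proposal is correct and follows essentially the same route as the paper's own proof: establish \( \nu_{*}\SO_{U}(G_{0}) \isom \SO_{V}(D_{2}) \) via the pullback formula \eqref{eq:muDD} and the bound \( 0 < q_{i} < n \), then push forward the short exact sequence using \( R^{1}\nu_{*}\SO_{U} = 0 \), and finally compare with the \( \SExt \) sequence obtained from \( 0 \to \SO_{V}(-D_{2}) \to \SO_{V} \to \SO_{D_{2}} \to 0 \). The only cosmetic difference is that the paper checks both inclusions \( \OH^{0}(V, \SO_{V}(D_{2})) \subset \OH^{0}(U, \SO_{U}(G_{0})) \) and its converse explicitly, whereas you invoke reflexivity of \( \SO_{V}(D_{2}) \) (plus torsion-freeness of \( \nu_{*}\SO_{U}(G_{0}) \), which you should state) to reduce to one direction.
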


\begin{proof}
Let \( \Bbbk(V) = \Bbbk(U) \) be the rational function field of \( V \)
and \( U \).
For a non-zero rational function
\( \varphi \in \Bbbk(V) \), if \( \Div(\varphi)_{V} + D_{2} \geq 0\)
for the principal divisor
\( \Div(\varphi)_{V} \) on \( V \), then we have
\[ n\Div(\varphi)_{U} + nG_{0}
= \nu^{*}(\Div(\varphi^{n})_{V} + nD_{2}) - \sum\nolimits_{i = 1}^{l} q_{i}G_{i}
\geq - \sum\nolimits_{i = 1}^{l} q_{i}G_{i}\]
for the principal divisor \( \Div(\varphi)_{U} \) on \( U \) by
applying \eqref{eq:muDD} in Remark~\ref{remsub:lem:VDDC}.
Thus,
\( \Div(\varphi)_{U} + G_{0} \geq 0 \), since \( q_{i} < n \)
for all \( 1 \leq i \leq l \).
On the other hand, if a non-zero rational function \( \psi \in \Bbbk(U)\)
satisfies
\( \Div(\psi)_{U} + G_{0} \geq 0\), then
\[ \Div(\psi)_{V} + D_{2} = \nu_{*}(\Div(\psi)_{U} + G_{0}) \geq 0.  \]
Therefore,
\( \OH^{0}(V, \SO_{V}(D_{2})) = \OH^{0}(U, \SO_{U}(G_{0})) \), and
we have an isomorphism \( \nu_{*}\SO_{U}(G_{0}) \isom \SO_{V}(D_{2}) \).
By applying \( \nu_{*} \) to the exact sequence
\( 0 \to \SO_{U} \to \SO_{U}(G_{0}) \to \SO_{G_{0}}(G_{0}) \to 0 \), we have
another exact sequence
\[ 0 \to \SO_{V} \to \SO_{V}(D_{2}) \to \nu_{*}\SO_{G_{0}}(G_{0}) \to 0, \]
since \( \OR^{1}\nu_{*}\SO_{U} = 0 \) by the rationality of
the toric singularity or by Corollary~\ref{corsub:lem:Lipman000}.
Hence, \( \nu_{*}\SO_{G_{0}}(G_{0}) \isom
\SExt^{1}_{\SO_{V}}(\SO_{D_{2}}, \SO_{V})\)
is derived from \( 0 \to \SO_{V}(-D_{2}) \to \SO_{V} \to \SO_{D_{2}} \to 0 \) by
applying \( \SExt^{i}_{\SO_{V}}(\bullet, \SO_{V}) \).
The other isomorphisms concerning \( G_{l+1} \) and \( D_{1} \) are obtained by
a similar way.
\end{proof}

Before going to Lemma~\ref{lem:Res},
we recall some basics on the ``sheaf of logarithmic one-forms'' on
the toric surface \( V = \BTT_{\NN}(\Bsigma) \) and
the residue homomorphism.
The sheaf \( \Omega^{1}_{\BTT_{\NN}/\Bbbk} \) of
one-forms on the torus \( \BTT_{\NN} = \Spec \Bbbk[\MM] \) is
trivial by the isomorphism
\[ \MM \otimes_{\BZZ} \SO_{\BTT_{\NN}} \xrightarrow{\isom}
\Omega^{1}_{\BTT_{\NN}/\Bbbk} \]
which maps \( m \otimes 1 \) to \( \Be(m)^{-1}\dd \Be(m) \)
for any \( m \in \MM \).
Regarding \( \BTT_{\NN} \) as the open subset
\( \BTT_{\NN}(\{0\}) = V \setminus D \)
of \( V = \BTT_{\NN}(\Bsigma) \), where \( D = D_{1} + D_{2} \),
we can extend the isomorphism to
\[ \theta \colon \MM \otimes_{\BZZ} \SO_{V} \xrightarrow{\isom}
\widetilde{\Omega}^{1}_{V/\Bbbk}(\log D) :=
j_{*}\Omega^{1}_{V^{\circ}/\Bbbk}(\log D^{\circ}),  \]
where \( j \) is the open immersion
\( V^{\circ} := V \setminus \{\bzero\} \subset V \), and
\( D^{\circ} \) is the divisor \( D|_{V^{\circ}} \)
(cf.\ \cite[(1.12), Proposition]{Ishida-Oda}, \cite[\S 4.3, Proposition]{Fulton}).
Note that \( V^{\circ} \) and \( D^{\circ} \) are non-singular.
The logarithmic tangent sheaf
\[ \Theta_{V/\Bbbk}(-\log D) :=
\SHom_{\SO_{V}}(\widetilde{\Omega}_{V/\Bbbk}^{1}(\log D), \SO_{V}) \]
is isomorphic to \( \NN \otimes_{\BZZ} \SO_{V} \)
by the dual of \( \theta \).
The double-dual \( (\Omega^{1}_{V/\Bbbk})^{\vee\vee} \) of the sheaf
\( \Omega^{1}_{V/\Bbbk} \) of relative one-forms is just
the kernel of the residue homomorphism
\[ \Res \colon \widetilde{\Omega}_{V/\Bbbk}^{1}(\log D) \to
\SO_{D_{1}} \oplus \SO_{D_{2}}.   \]
This residue homomorphism is given by the evaluation map
\[ \ev \colon \MM \ni m \mapsto
(m(e_{1}), m(e_{2})) \in \BZZ \oplus \BZZ  \]
in the sense that there is a commutative diagram
\[ \begin{CD}
\MM \otimes_{\BZZ}  \SO_{V} @>{\ev \otimes \id}>>
(\BZZ \oplus \BZZ) \otimes \SO_{V} \\
@V{\theta}V{\isom}V @VVV \\
\widetilde{\Omega}^{1}_{V/\Bbbk}(\log D) @>{\Res}>> \SO_{D_{1}} \oplus \SO_{D_{2}}
\end{CD}\]
where the right vertical arrow is the direct sum of the natural homomorphisms
\( \SO_{V} \to \SO_{D_{1}} \) and \( \SO_{V} \to \SO_{D_{2}} \).

\begin{lem}\label{lem:Res}
The residue homomorphism \( \Res \) is surjective
if and only if \( p \nmid n, \) where \( p = \chara \Bbbk \).
If \( p\mid n \), then
the cokernel of \( \Res \) is
the skyscraper sheaf at \( \bzero \)
corresponding to the residue field \( \Bbbk(\bzero) \) of \( \SO_{V, \bzero}\), and
the image of \( \Res \) is isomorphic to \( \SO_{D} \).
\end{lem}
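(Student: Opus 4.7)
My plan is to exploit the commutative diagram displayed just before the lemma to reduce the statement about \( \Res \) to a lattice computation. Since the left vertical arrow \( \theta \) is an isomorphism, the image and cokernel of \( \Res \) coincide with those of the composite
\( \MM \otimes_{\BZZ} \SO_{V} \to (\BZZ \oplus \BZZ) \otimes_{\BZZ} \SO_{V} \to \SO_{D_{1}} \oplus \SO_{D_{2}} \).
The first step is to identify \( \ev(\MM) \subset \BZZ \oplus \BZZ \): because \( \MM = \Hom(\NN, \BZZ) \) and \( \NN = \NN_{0} + \BZZ v \) with \( v = (1/n)(e_{1} + qe_{2}) \), a pair \( (a, b) = (m(e_{1}), m(e_{2})) \) arises from some \( m \in \MM \) precisely when \( a + qb \equiv 0 \bmod n \). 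This sublattice is \( \BZZ \)-generated by \( (n, 0) \) and \( (-q, 1) \).

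Translating to sheaves, the image of \( \Res \) is then the \( \SO_{V} \)-submodule of \( \SO_{D_{1}} \oplus \SO_{D_{2}} \) generated by the two constant sections \( (\bar{n}, 0) \) and \( (-\bar{q}, 1) \), where overbars denote reduction modulo \( p \). If \( p \nmid n \), then \( \bar{n} \in \Bbbk^{*} \), so after rescaling the image contains \( (1, 0) \) and hence also \( (0, 1) \); thus it exhausts \( \SO_{D_{1}} \oplus \SO_{D_{2}} \), giving the surjectivity claim. If instead \( p \mid n \), then \( \gcd(n, q) = 1 \) forces \( \bar{q} \in \Bbbk^{*} \), while the first generator vanishes, so the image of \( \Res \) reduces to the \( \SO_{V} \)-module \( \{(-\bar{q}\, f|_{D_{1}}, f|_{D_{2}}) \mid f \in \SO_{V}\} \).

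To conclude in the case \( p \mid n \), I would invoke that \( D_{1} \cap D_{2} = \BSigma \) is the reduced closed point \( \bzero \) (already observed in Remark~\ref{remsub:dfn:dfn:toricnq:D1D2}), whence the restriction map \( \SO_{V} \to \SO_{D_{1}} \oplus \SO_{D_{2}} \) surjects onto the subsheaf \( \SO_{D} = \{(a, b) \mid a(\bzero) = b(\bzero)\} \). Post-composing with the \( \SO_{V} \)-linear automorphism \( (a, b) \mapsto (-\bar{q}\, a, b) \) then identifies the image of \( \Res \) with \( \SO_{D} \). The cokernel falls out from the surjective \( \SO_{V} \)-linear map \( \SO_{D_{1}} \oplus \SO_{D_{2}} \to \Bbbk(\bzero) \) defined by \( (A, B) \mapsto A(\bzero) + \bar{q}\, B(\bzero) \), whose kernel is exactly the image just identified, yielding the skyscraper description. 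The only delicate point I anticipate is this last identification of \( \SO_{V} \cdot (-\bar{q}, 1) \) with \( \SO_{D} \); once the clean description of \( \SO_{D} \) coming from \( D_{1} \cap D_{2} = \{\bzero\} \) is in hand, everything else is a direct diagram chase.
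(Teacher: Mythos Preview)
Your argument is correct and follows essentially the same route as the paper: both identify \( \ev(\MM) \) as the sublattice \( \{(a,b): a + qb \equiv 0 \bmod n\} \), then read off the image and cokernel of \( \Res \) from the commutative diagram via the map \( \SO_{D_{1}} \oplus \SO_{D_{2}} \to \Bbbk(\bzero) \), \( (\alpha_{1}, \alpha_{2}) \mapsto \alpha_{1}(\bzero) + \bar q\,\alpha_{2}(\bzero) \). The only cosmetic difference is that the paper packages the two cases \( p \nmid n \) and \( p \mid n \) uniformly via the target \( \Bbbk(\bzero)/n\Bbbk(\bzero) \), whereas you split cases from the start using the explicit generators \( (n,0) \) and \( (-q,1) \); the content is identical.
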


\begin{proof}
The subgroup \( \MM \) of \(\MM_{0} = \BZZ h_{1} + \BZZ h_{2} \) is
generated by elements \( a_{1}h_{1} + a_{2}h_{2} \) such that
\( a_{1} + qa_{2} \equiv 0 \mod n \).
Here, \( \Res \circ \theta \) maps \( (a_{1}h_{1} + a_{2}h_{2}) \otimes 1 \) to
\( (a_{1}, a_{2})
\in \OH^{0}(D_{1}, \SO_{D_{1}}) \oplus \OH^{0}(D_{2}, \SO_{D_{2}}) \).
We shall show that
the cokernel of \( \Res \) is isomorphic to \( \Bbbk(\bzero)/n\Bbbk(\bzero) \) by
the homomorphism
\[ \phi \colon \SO_{D_{1}} \oplus \SO_{D_{2}} \ni
(\alpha_{1}, \alpha_{2}) \mapsto
(\alpha_{1}|_{\bzero} + q\alpha_{2}|_{\bzero}) \bmod n
\, \in \Bbbk(\bzero)/n\Bbbk(\bzero).\]
The homomorphism \( \phi \) is in the commutative diagram
\[ \begin{CD}
0 @>>> \SO_{V}(-D_{1}) \oplus \SO_{V}(-D_{2}) @>>>
M_{0} \otimes_{\BZZ} \SO_{V} @>>> \SO_{D_{1}} \oplus \SO_{D_{2}} @>>> 0 \\
@. @V{\phi'}VV @V{\phi''}VV @V{\phi}VV \\
@. (\BZZ/n\BZZ) \otimes_{\BZZ} \SI @>>> \SO_{V}/n\SO_{V}
@>>> \Bbbk(\bzero)/n\Bbbk(\bzero) @>>> 0
\end{CD}
\]
of exact sequences, where \( \SI \) is the defining ideal of \( \bzero \), and
the homomorphisms \( \phi' \) and \( \phi'' \) are also induced by
\[ \SO_{V} \oplus \SO_{V} \ni (\beta_{1}, \beta_{2})
\mapsto \beta_{1} + q\beta_{2} \in \SO_{V}. \]
Here, \( \phi' \) is surjective,
since \( \SI = \SO_{V}(-D_{1}) + \SO(-D_{2}) \)
(cf.\ Remark~\ref{remsub:dfn:dfn:toricnq:D1D2}) and
since \( \gcd(n, q) = 1 \). The kernel of \( \phi'' \) is just
the image of the natural homomorphism
\( M \otimes_{\BZZ} \SO_{V}\to M_{0}\otimes \SO_{V} \).
Thus, the image of
\( \Res \colon M \otimes_{\BZZ} \SO_{V} \to \SO_{D_{1}} \oplus \SO_{D_{2}} \) is
just the kernel of \( \phi \).
In particular, \( \Res \) is surjective if and only if \( p \nmid n \).

Assume that \( p \mid n \). Then, \( p \nmid q \) by \( \gcd(n, q) = 1 \).
Hence, the image of \( \Res \) is also the kernel of
\[ \SO_{D_{1}} \oplus \SO_{D_{2}} \ni
(\alpha_{1}, \alpha_{2}) \mapsto
\alpha_{1}|_{\bzero} + \alpha_{2}|_{\bzero}
\in \Bbbk(\bzero).\]
Thus, the image of \( \Res \) is just \( \SO_{D} \),
since  we have a natural exact sequence
\( 0 \to \SO_{D} \to \SO_{D_{1}} \oplus \SO_{D_{2}} \to \Bbbk(\bzero) \to 0 \).
\end{proof}

\begin{prop}\label{prop:tangentsheaves}
Let \( Y \) be a normal algebraic surface over an algebraically closed
field \( \Bbbk \) and let \( \mu \colon M \to Y \) be
the minimal resolution of singularities.
Assume that any connected component of the \( \mu \)-exceptional
locus \( E \) is a linear chain of rational curves.
Then, the following hold\emph{:}
\begin{enumerate}
\item \label{prop:tangentsheaves:isom} The natural injection
\( \mu_{*}\Theta_{M/\Bbbk}(-\log E) \to \mu_{*}\Theta_{M/\Bbbk}\)
is an isomorphism.

\item  \label{prop:tangentsheaves:vanish}
\( \OR^{i}\mu_{*}\Theta_{M/\Bbbk}(-\log E) = 0  \) for all \( i > 0 \).

\item  \label{prop:tangentsheaves:reflexive0}
The direct image sheaf \( \mu_{*}\Omega^{1}_{M/\Bbbk} \)
is reflexive. In other words,
\( (\Omega^{1}_{Y/\Bbbk})^{\vee\vee} \isom \mu_{*}\Omega^{1}_{M/\Bbbk} \).

\item  \label{prop:tangentsheaves:reflexive}
The natural injection \( \mu_{*}\Theta_{M/\Bbbk} \injmap \Theta_{Y/\Bbbk} \)
is not an isomorphism if and only if
\( p \mid n \) and \( q = n-1 \).

\end{enumerate}
\end{prop}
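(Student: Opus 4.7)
My plan is to work \'etale-locally around each singular point of $Y$, using Theorem~\ref{thm:graph2toric000} to reduce everything to the toric model $V$ of type $(n, q)$ with its Jung--Hirzebruch minimal resolution $\nu \colon U \to V$, the $\nu$-exceptional locus $G = G_1 + \cdots + G_l$, and the proper transforms $G_0$, $G_{l+1}$ of the toric divisors $D_2$, $D_1$. Write $\tilde G := G_0 + G + G_{l+1}$; the key starting point is that $\Theta_U(-\log \tilde G) \isom \NN \otimes_{\BZZ} \SO_U$ is free of rank two with direct image $\NN \otimes \SO_V = \Theta_V(-\log D)$.

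For \eqref{prop:tangentsheaves:isom} and \eqref{prop:tangentsheaves:vanish}, I would push forward two short exact sequences. First,
\[
0 \to \Theta_U(-\log G) \to \Theta_U \to \bigoplus_{i=1}^{l} \SO_{G_i}(G_i) \to 0,
\]
whose surjectivity, including at the nodes $G_i \cap G_{i+1}$, is immediate in the toric coordinates $(\xi_i, \eta_i)$: since $\SO_{G_i}(G_i) \isom \SO_{\BPP^1}(-b_i)$ with $b_i \ge 2$, each $\nu_* \SO_{G_i}(G_i)$ vanishes, which proves \eqref{prop:tangentsheaves:isom}. Second,
\[
0 \to \Theta_U(-\log \tilde G) \to \Theta_U(-\log G) \to \SO_{G_0}(G_0) \oplus \SO_{G_{l+1}}(G_{l+1}) \to 0;
\]
here $R^1\nu_*\Theta_U(-\log \tilde G) = \NN \otimes R^1\nu_*\SO_U = 0$ by Corollary~\ref{corsub:lem:Lipman000}, and each $R^1\nu_*\SO_{G_j}(G_j)$ for $j = 0, l+1$ vanishes because $G_0, G_{l+1}$ map isomorphically onto the affine schemes $D_2, D_1$ respectively. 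Higher $R^i$ vanish since $\nu$-fibers are at most one-dimensional, so \eqref{prop:tangentsheaves:vanish} follows. For \eqref{prop:tangentsheaves:reflexive0}, I would push forward the Poincar\'e residue sequence $0 \to \Omega^1_U \to \Omega^1_U(\log \tilde G) \to \bigoplus_{i=0}^{l+1} \SO_{G_i} \to 0$, use $\nu_*\Omega^1_U(\log \tilde G) = \widetilde\Omega^1_V(\log D) = \MM \otimes \SO_V$, and compare with Lemma~\ref{lem:Res}: the pushed-forward residue map has image matching the image $\mathcal{C}$ of the classical residue map on $V$, so that $\nu_*\Omega^1_U$ coincides with $(\Omega^1_V)^{\vee\vee}$.

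For \eqref{prop:tangentsheaves:reflexive}, \eqref{prop:tangentsheaves:isom} reduces the comparison to $\nu_*\Theta_U(-\log G)$ versus $\Theta_V$. Pushing forward the second sequence above and invoking Lemma~\ref{lem:imageGG000} yields
\[
0 \to \Theta_V(-\log D) \to \nu_*\Theta_U \to \SExt^1(\SO_{D_1}, \SO_V) \oplus \SExt^1(\SO_{D_2}, \SO_V) \to 0,
\]
while applying $\SHom(-, \SO_V)$ to $0 \to (\Omega^1_V)^{\vee\vee} \to \widetilde\Omega^1_V(\log D) \to \mathcal{C} \to 0$ gives
\[
0 \to \Theta_V(-\log D) \to \Theta_V \to \SExt^1(\mathcal{C}, \SO_V) \to 0.
\]
Consequently, the cokernel of $\nu_*\Theta_U \to \Theta_V$ equals that of the induced map $\SExt^1(\SO_{D_1} \oplus \SO_{D_2}, \SO_V) \to \SExt^1(\mathcal{C}, \SO_V)$. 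When $p \nmid n$, Lemma~\ref{lem:Res} gives $\mathcal{C} = \SO_{D_1} \oplus \SO_{D_2}$, so this cokernel vanishes. When $p \mid n$, the same lemma supplies $0 \to \SO_D \to \SO_{D_1} \oplus \SO_{D_2} \to \Bbbk(\bzero) \to 0$, and a long $\SExt$-sequence identifies the cokernel with $\ker\bigl(\SExt^2(\Bbbk(\bzero), \SO_V) \to \SExt^2(\SO_{D_1} \oplus \SO_{D_2}, \SO_V)\bigr)$. The hard part will be to show that this kernel is nonzero precisely when $V$ is Gorenstein at $\bzero$, which by Corollary~\ref{corsub:lem:Lipman0002} is equivalent to $-D_1 - D_2 = K_V$ being Cartier, i.e.\ $1 + q \equiv 0 \pmod n$, i.e.\ $q = n-1$. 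I would carry this out via local duality on the two-dimensional Cohen--Macaulay ring $\SO_{V, \bzero}$: in the Gorenstein case $\omega_V \isom \SO_V$ locally, forcing $\SExt^2(\Bbbk(\bzero), \SO_V) \isom \Bbbk(\bzero)$ whereas $\SExt^2(\SO_{D_i}, \SO_V) = 0$ (since $\SO_{D_i}$ has depth one and its support is not $\{\bzero\}$), producing a nonzero kernel; outside the Gorenstein case the map on $\SExt^2$ becomes injective, which I would verify via explicit Koszul-style free resolutions of $\SO_{D_i}$ built from the toric generators of $\SO_V$.
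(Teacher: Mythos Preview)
Your treatment of \eqref{prop:tangentsheaves:isom}, \eqref{prop:tangentsheaves:vanish}, and \eqref{prop:tangentsheaves:reflexive0} is essentially the paper's own argument: the same two short exact sequences, the same use of $\Theta_{U}(-\log \widehat G)\isom \NN\otimes\SO_U$, rationality (Corollary~\ref{corsub:lem:Lipman000}), and the comparison of residues via $\MM\otimes\SO_V$. One small point in \eqref{prop:tangentsheaves:reflexive0}: pushing forward is only left exact, so you get $\nu_*\Omega^1_U\subset\ker(\nu_*\mathrm{Res})$, not equality; the paper closes this by noting the induced map $(\Omega^1_V)^{\vee\vee}\to\nu_*\Omega^1_U$ is an iso on $V\setminus\{\bzero\}$, has reflexive source and torsion-free target, hence is an isomorphism.

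For \eqref{prop:tangentsheaves:reflexive} you set things up exactly as the paper does, arriving at the cokernel of $\SExt^1(\SO_{D_1}\oplus\SO_{D_2},\SO_V)\to\SExt^1(\SO_D,\SO_V)$. Your endgame then diverges. Your Gorenstein direction is clean and is a genuinely nice alternative: when $q=n-1$ the local ring is Gorenstein, so $\SExt^2(\SO_{D_i},\SO_V)=0$ while $\SExt^2(\Bbbk(\bzero),\SO_V)\ne 0$, forcing a nonzero kernel. But the converse --- injectivity of $\SExt^2(\Bbbk(\bzero),\SO_V)\to\SExt^2(\SO_{D_1}\oplus\SO_{D_2},\SO_V)$ in the non-Gorenstein case --- is only a promise. ``Koszul-style free resolutions of $\SO_{D_i}$'' over $\SO_V$ are not available in any routine form when $\SO_V$ is not a complete intersection (which it is not for general $(n,q)$), so this step is at least as hard as what it replaces, and you have not actually carried it out.

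The paper avoids this difficulty entirely. It observes (``by a standard argument'') that the cokernel in question coincides with the cokernel of the addition map
\[
\SO_V(D_1)\oplus\SO_V(D_2)\;\longrightarrow\;\SO_V(D),\qquad D=D_1+D_2,
\]
and then computes this directly in the monomial basis: $\OH^0(\SO_V(D))$ is spanned over $\OH^0(\SO_V)$ by $\xtt_1^{a_1}\xtt_2^{a_2}$ with $a_1,a_2\ge -1$ and $a_1+qa_2\equiv 0\ (\mathrm{mod}\ n)$, while $\OH^0(\SO_V(D_1))+\OH^0(\SO_V(D_2))$ is the subspace with $\max(a_1,a_2)\ge 0$. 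Hence the cokernel is nonzero iff $\xtt_1^{-1}\xtt_2^{-1}$ satisfies the congruence, i.e.\ $-(1+q)\equiv 0\ (\mathrm{mod}\ n)$, i.e.\ $q=n-1$. This is elementary, characteristic-free, and complete; I would recommend replacing your local-duality sketch for the non-Gorenstein direction with this monomial argument (or, if you wish to keep the duality viewpoint, at least recognise that the remaining injectivity claim still requires exactly this kind of explicit toric computation).
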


\begin{proof}
Since the assertions are \'etale local on \( Y \), we may assume that
\( Y \) is the toric surface \( V = \BTT_{\NN}(\Bsigma)\)
by Theorem~\ref{thm:graph2toric000}.
Let \( D_{1} \), \( D_{2} \), \( \bzero \), \( \nu \colon U \to V \),
and \( G_{i} \) be as before. Then, the minimal resolution \( M \to Y \) is
just \( U \to V \) and \( E = G = \sum\nolimits_{i = 1}^{l} G_{i}\).
We set \( \widehat{G} = \sum\nolimits_{i = 0}^{l+1} G_{i} = G + G_{0} + G_{l+1} \).
Since \( G \) and \( \widehat{G} \) are
simple normal crossing divisors,
we have a commutative diagram
\[ \begin{CD}
0 @>>> \Theta_{U/\Bbbk}(-\log \widehat{G}) @>>> \Theta_{U/\Bbbk} @>>>
\bigoplus\nolimits_{i = 0}^{l+1} \SO_{G_{i}}(G_{i}) @>>> 0\\
@. @VVV @| @VVV \\
0 @>>> \Theta_{U/\Bbbk}(-\log G) @>>> \Theta_{U/\Bbbk} @>>>
\bigoplus\nolimits_{i = 1}^{l} \SO_{G_{i}}(G_{i}) @>>> 0
\end{CD}\]
of exact sequences. The assertion \eqref{prop:tangentsheaves:isom}
is derived from the bottom sequence by taking \( \nu_{*} \), since
\( \nu_{*}\SO_{G_{i}}(G_{i}) = \OH^{0}(\BPP^{1}, \SO(-b_{i})) = 0 \)
for \( 1 \leq i \leq l \).
The commutative diagram above induces an exact sequence
\begin{equation}\label{eq:exactGG}
0 \to \Theta_{U/\Bbbk}(-\log \widehat{G}) \to \Theta_{U/\Bbbk}(-\log G) \to
\SO_{G_{0}}(G_{0}) \oplus \SO_{G_{l+1}}(G_{l+1}) \to 0.
\end{equation}
Here, \( \Theta_{U/\Bbbk}(-\log \widehat{G}) \isom \NN \otimes_{\BZZ} \SO_{U}\),
since \( \widehat{G} \) is the complement of the torus \( \BTT_{\NN} \)
in \( U = \BTT_{\NN}(\triangle) \).
Now, \( \OR^{i}\nu_{*}\SO_{U} = 0 \) for \( i > 0 \),
since \( V \) has only rational singularities
(cf.\ Corollary~\ref{corsub:lem:Lipman000}).
We have also \( \OR^{i}\nu_{*}\SO_{G_{0}}(G_{0}) =
\OR^{i}\nu_{*}\SO_{G_{l+1}}(G_{l+1}) = 0 \)
for \( i > 0 \), since \( \nu \)
induces isomorphisms \( G_{0} \to D_{2}\) and \( G_{l+1} \to D_{1} \).
Thus, the assertion \eqref{prop:tangentsheaves:vanish} is obtained by
applying \( \OR^{i}\nu_{*} \) to the exact sequence \eqref{eq:exactGG}.

For the assertion \eqref{prop:tangentsheaves:reflexive0},
we consider the commutative diagram
\[ \begin{CD}
\nu_{*}\Omega^{1}_{U/\Bbbk}(\log \widehat{G}) @>{\nu_{*}\Res}>>
\bigoplus\nolimits_{i = 0}^{l+1} \nu_{*}\SO_{G_{i}} \\
@A{\alpha}AA @AA{\beta}A \\
\widetilde{\Omega}^{1}_{V/\Bbbk}(\log D) @>{\Res}>>
\SO_{D_{1}} \oplus \SO_{D_{2}}
\end{CD}\]
obtained by comparing the residue homomorphisms on \( V \) and \( U \).
Here, \( \alpha \) is an isomorphism, since
\( \widetilde{\Omega}^{1}_{V}(\log D) \isom \MM \otimes_{\BZZ} \SO_{V} \)
and \( \Omega^{1}_{U}(\log \widehat{G}) \isom \MM \otimes_{\BZZ} \SO_{U} \).
The map \( \beta \) is an isomorphism
to \( \nu_{*}\SO_{E_{l+1}} \oplus \nu_{*}\SO_{E_{0}} \).
Thus, we have a homomorphism
\[ \gamma \colon (\Omega^{1}_{V/\Bbbk})^{\vee\vee}
\to \nu_{*}\Omega^{1}_{U/\Bbbk} \]
as the induced homomorphism between the kernels of the top and bottom
homomorphisms.
Then, \( \gamma \) is an isomorphism,
since it is an isomorphism on \( V \setminus \{\bzero\} \),
the source is reflexive, and the target is torsion free.
Hence, we have \eqref{prop:tangentsheaves:reflexive0}.

It remains to prove \eqref{prop:tangentsheaves:reflexive}.
Let \( \SF \) be the image of
\( \Res \colon \widetilde{\Omega}^{1}_{V/\Bbbk}(\log D)
\to \SO_{D_{1}} \oplus \SO_{D_{2}} \).
Then, by \eqref{eq:exactGG} and by Lemma~\ref{lem:imageGG000},
we have a commutative diagram
\[ \begin{CD}
0 @>>> \Theta_{V/\Bbbk}(-\log D) @>>> \nu_{*}\Theta_{U/\Bbbk}(-\log G) @>>>
\SExt^{1}_{\SO_{V}}(\bigoplus\nolimits_{i = 1}^{2} \SO_{D_{i}},
\SO_{V}) @>>> 0\\
@. @| @VVV @VVV \\
0 @>>> \Theta_{V/\Bbbk}(-\log D) @>>>
\Theta_{V/\Bbbk} @>>> \SExt^{1}_{\SO_{V}}(\SF, \SO_{V}) @>>> 0
\end{CD}\]
of exact sequences, in which the bottom one is obtained from
\[ 0 \to (\Omega^{1}_{V/\Bbbk})^{\vee\vee} \to
\widetilde{\Omega}^{1}_{V/\Bbbk}(\log D) \to \SF \to 0 \]
by taking \( \SExt^{i}_{\SO_{V}}(\bullet, \SO_{V}) \).
Hence, if \( p \nmid n \), then
\( \SF = \SO_{D_{1}} \oplus \SO_{D_{2}} \) by Lemma~\ref{lem:Res} and
\( \Theta_{V/\Bbbk} \isom \nu_{*}\Theta_{U/\Bbbk}(-\log G) \)
by the commutative diagram above.
Thus, we may assume that \( p \mid n \).
Then, by Lemma~\ref{lem:Res}, \( \SF = \SO_{D} \), and we obtain an exact sequence
\begin{multline*}
\cdots \to \SExt^{i}_{\SO_{V}}(\Bbbk(\bzero), \SO_{V}) \to
\SExt^{i}_{\SO_{V}}(\SO_{D_{1}} \oplus \SO_{D_{2}}, \SO_{V}) \\
\to \SExt^{i}_{\SO_{V}}(\SO_{D}, \SO_{V}) \to
\SExt^{i+1}_{\SO_{V}}(\Bbbk(\bzero), \SO_{V}) \to \cdots.
\end{multline*}
It is enough to determine when the cokernel of
\[ \SExt^{1}_{\SO_{V}}(\SO_{D_{1}} \oplus \SO_{D_{2}}, \SO_{V})
\to \SExt^{1}_{\SO_{V}}(\SO_{D}, \SO_{V}) \]
is not zero. By a standard argument, we see that the cokernel is isomorphic to that of
the homomorphism
\( \SO_{V}(D_{1}) \oplus \SO_{V}(D_{2}) \to \SO_{V}(D) \)
induced from the natural inclusions \( \SO_{V}(D_{1}) \injmap \SO_{V}(D) \) and
\( \SO_{V}(D_{2}) \injmap \SO_{V}(D) \).
Now, \( \OH^{0}(V, \SO_{V}(D)) \) is generated by the rational functions
\( \varphi \) on \( V \) such that \( \Div(\varphi) + D \geq 0 \).
Hence, \( \OH^{0}(V, \SO_{V}(D)) \) is generated by the following monomials
\( \xtt_{1}^{a_{1}}\xtt_{2}^{a_{2}} \)
as an \( \OH^{0}(V, \SO_{V}) \)-submodule of \( \Bbbk[\MM] = \OH^{0}(\BTT_{\NN}, \SO)\):
\begin{itemize}
\item  \( a_{1} \) and \( a_{2} \) are integers with \( a_{1} + qa_{2} \equiv 0 \mod n\).

\item  \( a_{1} \geq -1 \) and \( a_{2} \geq -1 \)
(cf.\ \eqref{eq:principaldiv000}).
\end{itemize}
Similarly, \( \OH^{0}(V, \SO_{V}(D_{1})) + \OH^{0}(V, \SO_{V}(D_{2}))\)
is generated by the following monomials \( \xtt_{1}^{b_{1}}\xtt_{2}^{b_{2}} \) as
an \( \OH^{0}(V, \SO_{V})  \)-submodule of \( \Bbbk[\MM] \):
\begin{itemize}
\item  \( b_{1} \) and \( b_{2} \) are integers with
\( b_{1} + qb_{2} \equiv 0 \mod n \).

\item  \( b_{1} \geq -1 \) and \( b_{2} \geq -1 \), but
\( \max\{b_{1}, b_{2}\} \geq 0 \).
\end{itemize}
Therefore, the cokernel is not zero if and only if
the monomial \( \xtt_{1}^{-1}\xtt_{2}^{-1} \) is contained
in \( \OH^{0}(V, \SO_{V}(D)) \).
This is just the case where \( q = n - 1 \).
Thus \eqref{prop:tangentsheaves:reflexive} is proved, and
we have finished the proof of Proposition~\ref{prop:tangentsheaves}.
\end{proof}

\begin{remsub}
If the injection of the assertion \eqref{prop:tangentsheaves:reflexive} is
an isomorphism, then \( \mu \)
is an ``equivariant resolution of singularities''
in the sense of Hironaka.
Hence, for toric singularities of type \( (n, q = n - 1) \),
equivariant resolutions do exist
if and only if \( p \nmid n \)
(cf.\ \cite[Theorem]{Wahl75} and an example of characteristic two
in \cite[page~345]{Artin74}).
\end{remsub}

\begin{cor}\label{cor:compareTangent}
In the situation of Proposition~\emph{\ref{prop:tangentsheaves}},
one has an isomorphism
\[ \OH^{2}(M, \Theta_{M/\Bbbk}(-\log E)) \xrightarrow{\isom}
\OH^{2}(Y, \Theta_{Y/\Bbbk}). \]
\end{cor}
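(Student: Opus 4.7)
The plan is to use the Leray spectral sequence for $\mu$ combined with a short exact sequence coming from the (possibly non-surjective) natural injection $\mu_{*}\Theta_{M/\Bbbk} \injmap \Theta_{Y/\Bbbk}$.

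First, I would invoke the Leray spectral sequence
\[
E_{2}^{p, q} = \OH^{p}(Y, \OR^{q}\mu_{*}\Theta_{M/\Bbbk}(-\log E))
\Rightarrow \OH^{p+q}(M, \Theta_{M/\Bbbk}(-\log E)).
\]
By Proposition~\ref{prop:tangentsheaves}.\eqref{prop:tangentsheaves:vanish}, the sheaves $\OR^{q}\mu_{*}\Theta_{M/\Bbbk}(-\log E)$ vanish for every $q > 0$, so the spectral sequence degenerates at $E_{2}$ and yields
\[
\OH^{p}(M, \Theta_{M/\Bbbk}(-\log E)) \isom \OH^{p}(Y, \mu_{*}\Theta_{M/\Bbbk}(-\log E))
\]
for all $p \geq 0$. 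Next, Proposition~\ref{prop:tangentsheaves}.\eqref{prop:tangentsheaves:isom} identifies $\mu_{*}\Theta_{M/\Bbbk}(-\log E)$ with $\mu_{*}\Theta_{M/\Bbbk}$, so we have
\[
\OH^{2}(M, \Theta_{M/\Bbbk}(-\log E)) \isom \OH^{2}(Y, \mu_{*}\Theta_{M/\Bbbk}).
\]

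Second, I would compare $\mu_{*}\Theta_{M/\Bbbk}$ with $\Theta_{Y/\Bbbk}$ using the natural injection from Proposition~\ref{prop:tangentsheaves}.\eqref{prop:tangentsheaves:reflexive}. Placing it in a short exact sequence
\[
0 \to \mu_{*}\Theta_{M/\Bbbk} \to \Theta_{Y/\Bbbk} \to \SF \to 0,
\]
the key point is that the cokernel $\SF$ is supported only at the (finitely many) singular points of $Y$ where the local type is $(n, n-1)$ with $p \mid n$, which can be checked \'etale locally via Theorem~\ref{thm:graph2toric000} reducing to the toric model. Hence $\SF$ is a skyscraper sheaf, so $\OH^{i}(Y, \SF) = 0$ for all $i > 0$.

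Third, the associated long exact sequence of cohomology
\[
\cdots \to \OH^{1}(Y, \SF) \to \OH^{2}(Y, \mu_{*}\Theta_{M/\Bbbk}) \to \OH^{2}(Y, \Theta_{Y/\Bbbk}) \to \OH^{2}(Y, \SF) \to \cdots
\]
forces $\OH^{2}(Y, \mu_{*}\Theta_{M/\Bbbk}) \isom \OH^{2}(Y, \Theta_{Y/\Bbbk})$. Stringing together the isomorphisms from the two steps delivers the desired isomorphism. There is no real obstacle here once Proposition~\ref{prop:tangentsheaves} is in hand; the only thing to be careful about is the zero-dimensional support of $\SF$, which is immediate from the local characterization in part~\eqref{prop:tangentsheaves:reflexive}.
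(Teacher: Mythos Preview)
Your proof is correct and follows essentially the same approach as the paper's: Leray spectral sequence plus vanishing of higher direct images, then a short exact sequence with zero-dimensional cokernel. The only cosmetic difference is that the paper skips the detour through part~\eqref{prop:tangentsheaves:isom} and instead observes directly that $\Theta_{Y/\Bbbk}$ is the double-dual of $\mu_{*}\Theta_{M/\Bbbk}(-\log E)$, so the cokernel is automatically supported in dimension $\leq 0$ without needing the precise classification in part~\eqref{prop:tangentsheaves:reflexive}.
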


\begin{proof}
Since \( \Theta_{Y/\Bbbk} \) is the double-dual of
\( \mu_{*}\Theta_{M/\Bbbk}(-\log E)\),
there is an exact sequence
\[ 0 \to \mu_{*}\Theta_{M/\Bbbk}(-\log E) \to \Theta_{Y/\Bbbk} \to \SG \to 0\]
for a coherent sheaf \( \SG \) with \( \dim \Supp \SG \leq 0\), which
induces an isomorphism
\[ \iota_{1} \colon \OH^{2}(Y, \mu_{*}\Theta_{M/\Bbbk}(-\log E)) \isom
\OH^{2}(Y, \Theta_{Y/\Bbbk}).\]
On the other hand, since
\( \OR^{i}\mu_{*}\Theta_{M/\Bbbk}(-\log E) = 0 \) for all \( i > 0 \) by
Proposition~\ref{prop:tangentsheaves}.\eqref{prop:tangentsheaves:vanish},
the Leray spectral sequence
for \( \mu \) induces an isomorphism
\[ i_{2} \colon \OH^{2}(X, \mu_{*}\Theta_{M/\Bbbk}(-\log E)) \isom
\OH^{2}(M, \Theta_{M/\Bbbk}(-\log E)). \]
Therefore, we obtain the claimed isomorphism as \( i_{1} \circ i_{2}^{-1} \).
\end{proof}


\section{Toric singularity of class T}
\label{sect:DefCT}

In this section, we introduce the notion of toric singularity
of class T and study its properties.
We first discuss some invariants
arising from toric singularities of class T:
The results here are already known in papers
such as \cite{WahlElliptic}, \cite{LW}, \cite{KSh},
\cite{Stevens91}, \cite{Manetti91}, \cite{Lee99},
but we shall give a self-contained proof.
Tables \ref{table:M} and \ref{table:M2} obtained here
are used in some calculations in Sections~\ref{sect:Global}
and \ref{sect:proof}.
Second, in Theorem~\ref{thm:localsmoothing} below,
we shall construct a special
smoothing (deformation) of toric singularities of class T,
which plays an important role in producing new surfaces.

\begin{dfn}\label{dfn:toricsing}
Let \( X \) be a normal algebraic surface defined over \( \Bbbk \) and
\( x \) a closed point. The germ \( (X, x) \) (in the \'etale topology)
is said to be a ``toric singularity of type \( (n, q) \)''
if the formal completion of \( \SO_{X, x} \)
is isomorphic to the formal completion of \( \SO_{V, \bzero} \)
for an affine toric surface \( V \) of type \( (n, q) \)
(cf.\ Definition~\ref{dfn:toricnq}) over \( \Bbbk \)
and the zero-dimensional orbit \( \bzero \).
\end{dfn}

\begin{remsub}
Note that by \cite[Corollary~2.6]{ArtinApprox},
the condition in Definition~\ref{dfn:toricsing}
is equivalent to the existence of a common \'etale neighborhood
of \( (X, x) \) and \( (V, \bzero) \).
\end{remsub}

\begin{remsub}
By Theorem~\ref{thm:graph2toric}, \( (X, x) \) is a toric singularity
if and only if the exceptional locus of the minimal resolution is
a linear chain of smooth rational curves.
\end{remsub}

\begin{dfn}\label{dfn:classT}
Let \( \ST_{\DNA}\)
be the set of triples \( (d, n, a) \) of positive integers
with \( n > a \) and \( \gcd(n, a) = 1 \).
A two-dimensional surface singularity is said to be of type \( T(d, n, a) \)
for a triplet \( (d, n, a) \in \ST_{\DNA} \) if it is a toric singularity of type
\( (dn^{2}, dna - 1) \).
The singularities of ``class T'' are the singularities of
type \( T(d, n, a) \) for all \( (d, n, a) \in \ST_{\DNA} \)
(cf.\ \cite[Proposition~3.10]{KSh}, \cite[\S4]{Manetti91}).
\end{dfn}

\begin{remsub}
The definition of class T in \cite{KSh} is different from ours.
Our definition of class T corresponds to that of
non-Gorenstein class T in \cite{KSh}.
\end{remsub}

Before going to the study of toric singularities of class T,
we prepare some invariants arising from each element of \( \ST_{\DNA} \).
Let \( (d, n, a) \) be a triplet in \(\ST_{\DNA} \).
We can define positive integers \( l \), \( b_{1} \), \ldots, \( b_{l} \)
by the property that
\( l \geq 1 \), \( b_{i} \geq 2 \) for all \( 1 \leq i \leq l\),
and \( dn^{2}/(dna - 1) = [b_{1}, \ldots, b_{l}] \).
Then, \( (b_{1}, \ldots, b_{l}) \ne (2, 2, \ldots, 2) \); for otherwise,
\( dn^{2} = dna \) contradicting \( n > a \).
By Lemma~\ref{lem:v_i}, we can define also non-negative integers
\( p_{i} \) and \( q_{i} \) for \( 0 \leq i \leq l + 1\)
by the following properties:
\begin{itemize}
\item  \( p_{0} = 0 < p_{1} = 1 < p_{2} < \cdots < p_{l} < p_{l+1} = dn^{2} \).

\item  \( q_{0} = dn^{2} > q_{1} = dna - 1 > q_{2} > \cdots > q_{l} = 1 > q_{l+1} = 0\)

\item  \( p_{i-1} + p_{i+1} = b_{i}p_{i}\) and \( q_{i-1} + q_{i+1} = b_{i}q_{i} \)
for all \( 1 \leq i \leq l \).
\end{itemize}
We set \( r_{i} := (p_{i} + q_{i})/(dn) \) for \( 0 \leq i \leq l+1 \).
Then, \( r_{0} = r_{l+1} = n \) and \( r_{1} = a \). Moreover, we have:

\begin{lem}\label{lem:ripl}
\begin{enumerate}
\item \label{lem:ripl:1}
\( r_{i} \) is a positive integer with \( 1 \leq r_{i} < n\)
for all \( 1 \leq i \leq l \).
\item \label{lem:ripl:2}
\( r_{i} \equiv ap_{i} \equiv -aq_{i} \mod n\) for all \( 1 \leq i \leq l \).
\item \label{lem:ripl:3}
\( r_{l} = n - a \), equivalently, \( p_{l} = dn(n-a) - 1 \).
\end{enumerate}
\end{lem}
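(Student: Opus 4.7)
The plan is to show that the sequence $r_i = (p_i + q_i)/(dn)$ satisfies the same three-term recurrence as $p_i$ and $q_i$, and then exploit convexity plus the boundary values. Adding the two recurrences $p_{i-1} + p_{i+1} = b_i p_i$ and $q_{i-1} + q_{i+1} = b_i q_i$ gives
\[
r_{i-1} + r_{i+1} = b_i r_i \qquad (1 \leq i \leq l).
\]
Direct computation gives $r_0 = r_{l+1} = n$ and $r_1 = (1 + (dna - 1))/(dn) = a$. Integrality of each $r_i$ then follows by induction from $r_{i+1} = b_i r_i - r_{i-1}$, since $r_0 = n$ and $r_1 = a$ are integers and $b_i \in \mathbb{Z}$. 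Positivity $r_i \geq 1$ for $1 \leq i \leq l$ is immediate from $p_i, q_i \geq 1$ in the chain of Lemma~\ref{lem:v_i}.

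For the upper bound $r_i < n$ in \eqref{lem:ripl:1}, observe that since $b_i \geq 2$, the recurrence gives the convexity inequality $r_{i-1} + r_{i+1} \geq 2r_i$. A convex sequence on $\{0,1,\dots,l+1\}$ attains its maximum at the endpoints, so $r_i \leq n$ throughout. If some $r_k = n$ with $1 \leq k \leq l$, then $r_{k-1} + r_{k+1} = b_k n \geq 2n$ together with $r_{k-1}, r_{k+1} \leq n$ forces equality, i.e.\ $b_k = 2$ and $r_{k-1} = r_{k+1} = n$. Iterating this downward to index $1$ would yield $r_1 = n$, contradicting $r_1 = a < n$. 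Hence $r_i < n$ for all $1 \leq i \leq l$.

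For \eqref{lem:ripl:2}, put $s_i := r_i - a p_i$. Since both $(r_i)$ and $(p_i)$ satisfy the recurrence $x_{i+1} = b_i x_i - x_{i-1}$, so does $(s_i)$. The initial values $s_0 = n \equiv 0 \pmod{n}$ and $s_1 = a - a\cdot 1 = 0$ then force $s_i \equiv 0 \pmod{n}$ for all $i$ by induction. The congruence $r_i \equiv -a q_i \pmod{n}$ then follows either from the identical argument applied to $t_i := r_i + a q_i$ (using $t_0 = n + a\cdot dn^2 \equiv 0$ and $t_1 = a\cdot dna \equiv 0 \pmod{n}$), or more directly from $p_i + q_i = dn\, r_i \equiv 0 \pmod n$.

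Finally, \eqref{lem:ripl:3} is an immediate consequence of \eqref{lem:ripl:1} and \eqref{lem:ripl:2}: applying \eqref{lem:ripl:2} at $i = l$ with $q_l = 1$ gives $r_l \equiv -a \pmod{n}$, and the bound $1 \leq r_l < n$ from \eqref{lem:ripl:1} pins down $r_l = n - a$. Then $p_l = dn\, r_l - q_l = dn(n-a) - 1$. The only slightly delicate step is the convexity argument in \eqref{lem:ripl:1}; everything else is formal manipulation of the recurrence.
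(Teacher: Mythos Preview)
Your proof is correct and follows essentially the same approach as the paper: convexity of the recurrence for part~\eqref{lem:ripl:1}, and deducing \eqref{lem:ripl:3} from \eqref{lem:ripl:1} and \eqref{lem:ripl:2} exactly as the paper does. The only genuine difference is in \eqref{lem:ripl:2}: the paper invokes Lemma~\ref{lem:v_i}\eqref{lem:v_i:5} (namely \(p_i q_1 \equiv q_i \bmod dn^2\), hence \(p_i(dna-1)\equiv q_i\), so \(dn r_i = p_i+q_i \equiv dna\,p_i\)), whereas you bypass that lemma entirely and run the three-term recurrence directly on \(s_i = r_i - a p_i\) with initial values divisible by \(n\). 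Both routes are elementary; yours is slightly more self-contained, while the paper's is terser because the congruence \(p_iq_j\equiv p_jq_i\) has already been recorded.
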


\begin{proof}
The assertions \eqref{lem:ripl:1} and \eqref{lem:ripl:2}
follow from the convexity \( r_{i-1} + r_{i+1} = b_{i}r_{i} \) and
\( p_{i}q_{1} \equiv q_{i} \mod dn^{2} \)
for \( 1 \leq i \leq l \) (cf.\ Lemma~\ref{lem:v_i}.\eqref{lem:v_i:5}).
The last assertion \eqref{lem:ripl:3} is a consequence of the previous two assertions.
\end{proof}

\begin{dfn}
For \( (d, n, a) \in \ST_{\DNA} \), we define:
\begin{gather*}
\begin{xalignat*}{3}
B(d, n, a) &:= (b_{1}, b_{2}, \ldots, b_{l}), &
P(d, n, a) &:= (p_{1}, p_{2}, \ldots, p_{l}), &
Q(d, n, a) &:= (q_{1}, q_{2}, \ldots, q_{l}), \\
R(d, n, a) &:= (r_{1}, r_{2}, \ldots, r_{l}), &
C(d, n, a) &:= (c_{1}, c_{2}, \ldots, c_{l}) &
\text{where} \quad c_{i} &:= 1 - r_{i}/n,
\end{xalignat*}\\
\delta(d, n, a) := \sum\nolimits_{i = 1}^{l} b_{i} - (2l + 1),
\quad \text{ and } \quad
l(d, n, a) := l.
\end{gather*}
\end{dfn}

The following characterization of toric singularities of class T
is well-known:

\begin{lem}[{\cite[Proposition~5.9]{LW}}]\label{lem:dnaInteger}
Let \( (X, x) \) be a normal surface singularity such that the exceptional locus
of the minimal resolution \( \mu \colon M \to (X, x) \)
of singularity is a linear chain of smooth rational curves.
Then, the following two conditions are mutually equivalent\emph{:}
\begin{enumerate}
\item \label{lem:dnaInteger:1}
\( \Delta^{2} \) is a negative integer for the effective \( \BQQ \)-divisor
\( \Delta = \mu^{*}(K_{X}) - K_{M}\).

\item \label{lem:dnaInteger:2}
\( (X, x) \) is a toric surface singularity of class T.
\end{enumerate}
Moreover, if \( (X, x) \) is a singularity of type \( T(d, n, a) \),
then \( \Delta^{2} = -\delta(d, n, a) \),
and \( \Delta = \sum\nolimits_{i = 1}^{l} c_{i}E_{i} \)
for the linear chain \( E_{1} + \cdots + E_{l} \) of smooth rational curves
and for \( C(d, n, a) = (c_{1}, \ldots, c_{l}) \).
\end{lem}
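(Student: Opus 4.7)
The plan is to reduce to the toric case via Theorem~\ref{thm:graph2toric000}, compute $\Delta$ and $\Delta^{2}$ explicitly from the continued fraction data $N/q = [b_{1}, \ldots, b_{l}]$ (using capital $N$ for the toric parameter to avoid clashing with the class T symbol $n$), and then use the integrality of $\Delta^{2}$ to extract the class T form of $(N, q)$. So I assume $(X, x)$ is the toric singularity of type $(N, q)$, and let $(p_{i})$, $(q_{i})$ be the integer sequences from Lemma~\ref{lem:v_i} with $n$ there taken to be $N$, so $p_{0} = 0$, $p_{l+1} = N$, $p_{1} = 1$, $q_{0} = N$, $q_{l+1} = 0$, $q_{1} = q$, $q_{l} = 1$.

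Step one is to compute $\Delta = \sum_{i=1}^{l} d_{i} E_{i}$. Adjunction on $E_{i} \isom \BPP^{1}$ gives $K_{M} \cdot E_{i} = b_{i} - 2$, and $\mu^{*}(K_{X}) \cdot E_{i} = 0$, so $\Delta \cdot E_{i} = 2 - b_{i}$. Setting $s_{i} := 1 - d_{i}$ converts the resulting linear system into the homogeneous three-term recurrence $s_{i-1} + s_{i+1} = b_{i} s_{i}$ with $s_{0} = s_{l+1} = 1$; since the two-dimensional solution space of this recurrence is spanned by $(p_{i})$ and $(q_{i})$, matching boundary values forces $s_{i} = (p_{i} + q_{i})/N$, hence
\[ \Delta = \sum_{i=1}^{l} \left(1 - \frac{p_{i} + q_{i}}{N}\right) E_{i}. \]

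Step two is to evaluate $\Delta^{2} = \sum_{i} d_{i}(2 - b_{i})$. The telescoping identity $\sum_{i=1}^{l} p_{i} b_{i} = 2 \sum_{i=1}^{l} p_{i} + p_{0} + p_{l+1} - p_{1} - p_{l}$, immediate from $p_{i-1} + p_{i+1} = b_{i} p_{i}$, together with its analogue for $(q_{i})$ and the boundary values, yields after simplification
\[ N \Delta^{2} = -N \delta' + N - (p_{l} + q + 2), \qquad \delta' := \sum_{i=1}^{l} b_{i} - 2l - 1. \]
Hence $\Delta^{2} \in \BZZ$ iff $N \mid p_{l} + q + 2$. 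Applying Lemma~\ref{lem:v_i}.\eqref{lem:v_i:5} with $(i, j) = (l, 1)$ gives $p_{l} q \equiv p_{1} q_{l} = 1 \pmod N$, so $p_{l} \equiv q^{-1} \pmod N$, and the integrality condition becomes equivalent to $(q+1)^{2} \equiv 0 \pmod N$.

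Step three is to decode this congruence. Writing $g := \gcd(N, q+1)$, $N = gm$, $q + 1 = gk$ with $\gcd(m, k) = 1$, the relation $gm \mid g^{2} k^{2}$ forces $m \mid g$; writing $g = md$ yields $N = dm^{2}$ and $q + 1 = dmk$, so setting $(n, a) := (m, k)$ produces $(N, q) = (dn^{2}, dna - 1)$ with $\gcd(n, a) = 1$ and $n \geq a$ (from $q < N$). The boundary case $n = a$ forces $n = a = 1$, giving the Du Val $A_{d-1}$ singularity, where $b_{i} = 2$ for all $i$ makes the recurrence have only the constant-zero solution, so $\Delta = 0$ and $\Delta^{2} = 0$ fails \eqref{lem:dnaInteger:1}. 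Otherwise $n > a$, so $(d, n, a) \in \ST_{\DNA}$ and \eqref{lem:dnaInteger:2} holds; substituting $p_{l} = dn(n - a) - 1$ (from Lemma~\ref{lem:ripl}) and $q = dna - 1$ gives $p_{l} + q + 2 = dn^{2} = N$, whence $\Delta^{2} = -\delta' = -\delta(d, n, a)$, and $d_{i} = 1 - (p_{i} + q_{i})/N = 1 - r_{i}/n = c_{i}$, yielding $\Delta = \sum c_{i} E_{i}$. The strict inequality $\delta(d, n, a) \geq 1$ follows from $\Delta \neq 0$---indeed Lemma~\ref{lem:ripl} gives $r_{i} < n$ so every $c_{i} > 0$---combined with the negative-definiteness of the intersection form on $\bigoplus_{i} \BZZ E_{i}$, which forces $\Delta^{2} < 0$. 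The main obstacle is the number-theoretic manipulation of step three: one has to reduce $(q+1)^{2} \equiv 0 \pmod N$ to the class T form $(dn^{2}, dna - 1)$ while cleanly separating out the degenerate $A_{d-1}$ branch where the candidate triple has $n = a = 1$ and hence lies outside $\ST_{\DNA}$.
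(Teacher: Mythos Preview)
Your proof is correct and follows essentially the same route as the paper's: reduce to the toric model via Theorem~\ref{thm:graph2toric000}, compute \(\Delta\) in terms of \((p_i+q_i)/N\), reduce integrality of \(\Delta^2\) to \((q+1)^2\equiv 0\pmod N\), and decode this into the class~T form while isolating the \(A_{l}\) branch where \(\Delta=0\). The only cosmetic differences are that the paper obtains the formula for \(\Delta\) from the toric identity \(\nu^*(K_V+D_1+D_2)=K_U+\sum G_i\sim 0\) together with \eqref{eq:muaD000}, whereas you solve the three-term recurrence directly; and the paper phrases the inverse of \(q\) as \(q'\) rather than \(p_l\) (these coincide since \(p_lq\equiv 1\pmod N\) and \(0<p_l<N\)) and decodes the congruence ``by considering the prime factorization'' where you give the explicit gcd argument.
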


\begin{proof}
By Theorem~\ref{thm:graph2toric},
we may assume that \( X \) is a toric surface \( V \) of type \( (n, q) \)
for some positive
integers \( n \), \( q \) with \( n > q \) and \( \gcd(n, q) = 1 \).
Thus, we can use the description of the minimal resolution
\( \nu \colon U \to V \) of the toric surface
given in Section~\ref{sect:dualgraph}.
Note that
\( \nu^{*}(K_{V} + D_{1} + D_{2}) =
K_{U} + \sum\nolimits_{i = 0}^{l+1} G_{i} \sim 0\).
Thus, we have
\[
\Delta = \nu^{*}(K_{V}) - K_{U} =
\sum\nolimits_{i = 0}^{l+1} G_{i} - \nu^{*}(D_{1} + D_{2})
=
\sum\nolimits_{i = 1}^{l} (1 - \frac{p_{i} + q_{i}}{n}) G_{i}
\]
for integers \( p_{i} \), \( q_{i} \) in Lemma~\ref{lem:v_i},
by \eqref{eq:muaD000} in Remark~\ref{remsub:lem:VDDC}.
In particular, if \( (n, q) = (dm^{2}, dma - 1) \)
for a triplet \( (d, m, a) \in \ST_{\DNA} \), then we have
\( \Delta = \sum_{i = 1}^{l} c_{i}G_{i} \) for
\( C(d, m, a) = (c_{1}, \ldots, c_{m}) \).
On the other hand,
\[ \Delta G_{i} = -K_{U}G_{i} =  2 + G_{i}^{2} = 2 - b_{i} \]
for all \( 1 \leq i \leq l \) by adjunction,
where \( n/q = [b_{1}, \ldots, b_{l}] \). Hence,
\[ \Delta^{2} = \sum\nolimits_{i = 1}^{l}
(1 - \frac{p_{i} + q_{i}}{n}) (2 - b_{i}), \]
and it is an integer if and only if
\[ \sum\nolimits_{i = 1}^{l} (p_{i} + q_{i})(2 - b_{i}) \equiv 0 \mod n. \]
Since \( b_{i} (p_{i}, q_{i}) = (p_{i-1}, q_{i-1}) + (p_{i+1}, q_{i+1}) \)
(cf.\ Lemma~\ref{lem:v_i}), we have
\begin{align*}
&\sum\nolimits_{i = 1}^{l} (p_{i} + q_{i})(2 - b_{i}) =
2\sum\nolimits_{i = 1}^{l} (p_{i} + q_{i}) -
\sum\nolimits_{i = 1}^{l} (p_{i-1} + q_{i-1})
- \sum\nolimits_{i = 1}^{l} (p_{i+1} + q_{i+1}) \\
&= p_{1} + p_{l} + q_{1} + q_{l}- (p_{0} + p_{l+1} + q_{0} + q_{l+1})
= q + q' + 2 - 2n,
\end{align*}
where \( 0 < q' < n\) with \( qq' \equiv 1 \bmod n \)
(cf.\ Lemma~\ref{lem:v_i}.\eqref{lem:v_i:5}).
Thus, \( \Delta^{2} \in \BZZ \) if and only if
\( q + q' + 2 \equiv 0 \bmod n \).
Since \( \gcd(n, q) = 1 \), this condition is equivalent to
\[ (q + 1)^{2} = q^{2} + 2q + 1 \equiv q(q + 2 + q') \equiv 0 \mod n. \]
By considering the prime factorization, we see that this is also equivalent to
either
\begin{enumerate}
    \renewcommand{\theenumi}{\roman{enumi}}
    \renewcommand{\labelenumi}{(\theenumi)}
\item \label{lem:dnaInteger:case1} \( n = q + 1 \), or
\item \label{lem:dnaInteger:case2} \( n = dm^{2} \) and \( q+1 = dma\) for
some \( (d, m, a) \in \ST_{\DNA} \).
\end{enumerate}
In case \eqref{lem:dnaInteger:case1},
\( (b_{1}, \ldots, b_{l}) = (2, 2, \ldots, 2) \) and
\( \Delta^{2} = 0 \).
In case \eqref{lem:dnaInteger:case2}, we have
\( q + q' + 2 = dm^{2}\) by Lemma~\ref{lem:ripl}.\eqref{lem:ripl:3}, and
\[
\Delta^{2} = \sum\nolimits_{i = 1}^{l} (2 - b_{i}) -
\frac{1}{dm^{2}}(q + q' + 2 - 2dm^{2})
= 2l + 1 -\sum\nolimits_{i = 1}^{l}b_{i} = -\delta(d, m, a) < 0.
\]
Thus, we are done.
\end{proof}

\begin{corsub}\label{corsub:lem:dnaInteger}
Let \( X \) be a normal projective surface whose non-Gorenstein
singularities are toric singularities of class T.
Then, \( K_{X}^{2} \) is an integer.
\end{corsub}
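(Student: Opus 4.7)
The plan is to pull \( K_X \) back to a resolution and reduce the problem to the integrality of \( \Delta^{2} \) at each singular point, which is exactly the content of Lemma~\ref{lem:dnaInteger}. I would take a resolution \( \mu \colon M \to X \) (say the minimal one) and write \( \mu^{*}K_{X} = K_{M} + \Delta \), where \( \Delta \) is a \( \BQQ \)-divisor supported on the \( \mu \)-exceptional locus. The pull-back is well defined because \( K_{X} \) is \( \BQQ \)-Cartier: away from the non-Gorenstein locus this is automatic, and at each non-Gorenstein point the singularity is of class T by hypothesis, hence \( \BQQ \)-Gorenstein.

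Since \( \Delta \) is \( \mu \)-exceptional we have \( \mu_{*}\Delta = 0 \), so the projection formula yields \( \mu^{*}K_{X} \cdot \Delta = K_{X} \cdot \mu_{*}\Delta = 0 \). Expanding,
\[ K_{X}^{2} = (\mu^{*}K_{X})^{2} = K_{M}^{2} + 2K_{M}\cdot\Delta + \Delta^{2} = K_{M}^{2} - \Delta^{2}, \]
where the last equality uses \( K_{M}\cdot\Delta = (\mu^{*}K_{X} - \Delta)\cdot\Delta = -\Delta^{2} \). Here \( K_{M}^{2} \in \BZZ \) since \( M \) is smooth and projective, so it suffices to prove \( \Delta^{2} \in \BZZ \).

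To that end, I would decompose \( \Delta = \sum_{x \in \Sing X} \Delta_{x} \) according to the singular points of \( X \). The supports of distinct \( \Delta_{x} \) lie over disjoint open neighborhoods, so the cross terms vanish and \( \Delta^{2} = \sum_{x} \Delta_{x}^{2} \). At a Gorenstein singularity, \( K_{X} \) is Cartier near \( x \), so both \( \mu^{*}K_{X} \) and \( \Delta_{x} = \mu^{*}K_{X} - K_{M} \) have integer coefficients, giving \( \Delta_{x}^{2} \in \BZZ \). At a non-Gorenstein singularity of type \( T(d,n,a) \), Lemma~\ref{lem:dnaInteger} yields \( \Delta_{x}^{2} = -\delta(d,n,a) \in \BZZ \). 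Summing over \( x \), we obtain \( \Delta^{2} \in \BZZ \), and therefore \( K_{X}^{2} \in \BZZ \). The argument is essentially bookkeeping once Lemma~\ref{lem:dnaInteger} is in hand; the only point requiring care is the global \( \BQQ \)-Cartier property of \( K_{X} \), which is ensured by the class T assumption at every non-Gorenstein point.
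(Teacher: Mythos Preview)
Your argument is correct and is exactly the standard computation the paper leaves implicit: the corollary is stated without proof, as an immediate consequence of Lemma~\ref{lem:dnaInteger}, and your expansion via \( K_{X}^{2} = K_{M}^{2} - \Delta^{2} \) together with the local decomposition \( \Delta^{2} = \sum_{x} \Delta_{x}^{2} \) is precisely the intended reasoning.
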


\begin{lem}[{cf.\ \cite[Proposition~3.11]{KSh}}]\label{lem:n=2}
For a triplet \( (d, n, a) \in \ST_{\DNA}\),
either \( b_{1} \geq 3 \) or \( b_{l} \geq 3 \) holds
for \( B(d, n, a) = (b_{1}, \ldots, b_{l}) \).
Assume that \( b_{1} \geq 3 \) and \( b_{l} \geq 3 \). Then,
\( (d, n, a) = (l, 2, 1) \).
Here,
\( B(1, 2, 1) = (4) \), \( B(2, 2, 1) = (3, 3) \),
and \(B(l, 2, 1) = (3, 2, \ldots, 2, 3) \) for \( l \geq 3\).
Moreover, for all \( 1 \leq i \leq l \),
\[ (p_{i}, q_{i}, r_{i}, c_{i}) = (2i-1, 2(l - i) + 1, 1, 1/2). \]
\end{lem}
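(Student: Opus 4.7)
The whole argument hinges on one telescoping identity in the sequence \( (r_{i}) \). Rewriting the recursion \( r_{i-1} + r_{i+1} = b_{i} r_{i} \) as \( r_{i-1} - 2 r_{i} + r_{i+1} = (b_{i} - 2) r_{i} \) and summing from \( i = 1 \) to \( l \), the left side telescopes to \( r_{0} + r_{l+1} - r_{1} - r_{l} \), which equals \( n \) because \( r_{0} = r_{l+1} = n \), \( r_{1} = a \), and \( r_{l} = n - a \) by Lemma~\ref{lem:ripl}. Thus one obtains
\begin{equation*}
\sum_{i=1}^{l} (b_{i} - 2)\, r_{i} \;=\; n. \tag{$\ast$}
\end{equation*}

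For the first assertion, suppose \( b_{1} = b_{l} = 2 \). If \( l = 1 \), then \((\ast)\) reduces to \( 0 = n \), a contradiction. If \( l \geq 2 \), the equality case of the convexity at \( i = 1 \) forces \( r_{2} = 2 r_{1} - r_{0} = 2a - n \); since \( r_{2} \) is among the positive integers \( r_{1}, \ldots, r_{l} \), this gives \( 2a > n \). Symmetrically, \( b_{l} = 2 \) yields \( r_{l-1} = n - 2a > 0 \) and hence \( 2a < n \), a contradiction.

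For the main assertion, assume \( b_{1}, b_{l} \geq 3 \). The case \( l = 1 \) is settled directly: \( dn^{2} = b_{1}(dna - 1) \), and since \( \gcd(dna - 1, n) = 1 \) one has \( dna - 1 \mid d \), so \( dna \leq d + 1 \); this forces \( d = 1 \), \( (n, a) = (2, 1) \), and \( b_{1} = 4 \). For \( l \geq 2 \), bound each summand of \((\ast)\) below: \( (b_{1} - 2) r_{1} \geq a \), \( (b_{l} - 2) r_{l} \geq n - a \), and all interior summands are nonnegative; their sum is already \( n \), so equality must hold throughout. Hence \( b_{1} = b_{l} = 3 \) and \( b_{i} = 2 \) for \( 1 < i < l \). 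With this shape of \( B \), the convexity for \( 1 < i < l \) makes \( r_{1}, \ldots, r_{l} \) an arithmetic progression with common difference \( r_{2} - r_{1} = (3a - n) - a = 2a - n \); the boundary value \( r_{l} = n - a \) then yields \( a + (l - 1)(2a - n) = n - a \), equivalent to \( l(2a - n) = 0 \). Since \( l \geq 2 \), we conclude \( 2a = n \), and \( \gcd(n, a) = 1 \) forces \( (n, a) = (2, 1) \), whence \( r_{i} = 1 \) for all \( 1 \leq i \leq l \).

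With the shape of \( B \) in hand, I run the recurrence for \( (p_{i}) \): \( p_{1} = 1 \) and \( p_{i+1} = 2 p_{i} - p_{i-1} \) for \( 1 < i < l \) give \( p_{i} = 2i - 1 \) on \( 1 \leq i \leq l \), and then \( p_{l+1} = b_{l} p_{l} - p_{l-1} = 3(2l - 1) - (2l - 3) = 4l \). Comparing with \( p_{l+1} = dn^{2} = 4d \) yields \( d = l \), so \( (d, n, a) = (l, 2, 1) \). The formulas \( q_{i} = dn\, r_{i} - p_{i} = 2l - (2i - 1) = 2(l - i) + 1 \) and \( c_{i} = 1 - r_{i}/n = 1/2 \) follow immediately. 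The subtle step is extracting \((\ast)\); once it is available, it rigidly pins down both the shape of \( B \) and the triplet.
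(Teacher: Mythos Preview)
Your proof is correct and takes a genuinely different route from the paper's. The paper argues via inequalities extracted directly from the \( p \)- and \( q \)-sequences: from \( 0 \leq q_{2} = b_{1}q_{1} - q_{0} < q_{1} \) and the analogous inequality at the other end it obtains two-sided bounds on \( a \) (and on \( n - a \)) in terms of \( n \), \( b_{1} \), \( b_{l} \), and from these deduces first that \( 1/b_{1} + 1/b_{l} < 1 \), then squeezes \( a \) to \( n/2 \) when \( b_{1}, b_{l} \geq 3 \). Your single identity \( \sum_{i}(b_{i}-2)r_{i} = n \) replaces all of this: the first assertion falls out from the sign contradiction \( r_{2} = 2a - n > 0 \) versus \( r_{l-1} = n - 2a > 0 \), and the second from the fact that the two boundary summands already fill the budget \( n \), forcing equality and hence the exact shape \( (3,2,\ldots,2,3) \). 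Your approach is cleaner and yields the shape of \( B \) and the value \( (n,a) = (2,1) \) in one stroke, whereas the paper's inequalities determine \( (n,a) \) first and then recovers the \( b_{i} \) from the known values of \( r_{i} \). The price is that your argument relies on Lemma~\ref{lem:ripl} (for \( r_{l} = n - a \) and positivity of the \( r_{i} \)), while the paper's inequalities are closer to the raw continued-fraction recursion; but since Lemma~\ref{lem:ripl} is already available, this is no loss.
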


\begin{proof}
Assume first that \( l = 1 \). Then \( dn^{2} = b_{1}(dna - 1) \) and
\( \gcd(dn^{2}, dna - 1) = 1 \) imply that \( dn^{2} = b_{1} \)
and \( dna = 2 \). Hence
\( (d, n, a) = (1, 2, 1) \) and \( b_{1} = 4 \).
In this case, \( (p_{1}, q_{1}, r_{1}, c_{1}) = (1, 1, 1, 1/2) \).

Thus, we may assume that \( l \geq 2 \).
Since \( p_{l} = dn(n-a) - 1 \) by Lemma~\ref{lem:ripl}, we have
\begin{equation}\label{eq:n=2:1}
\frac{n}{b_{1}} + \frac{1}{dn}\leq a < \frac{n}{b_{1}-1} + \frac{1}{dn}
\quad \text{and} \quad
\frac{n}{b_{l}} + \frac{1}{dn}\leq n - a < \frac{n}{b_{l}-1} + \frac{1}{dn}
\end{equation}
from
\( 0 \leq q_{2} = b_{1}q_{1} - q_{0} < q_{1}\) and
\( 0 \leq p_{l-1} = b_{l}p_{l} - p_{l+1} < p_{l}  \).
In particular,
\[ \frac{1}{b_{1}} + \frac{1}{b_{l}} + \frac{2}{dn^{2}} \leq 1, \]
and hence, \( b_{1} \geq 3\) or \( b_{l} \geq 3 \) holds.
Assume that \( b_{1} \geq 3 \) and \( b_{l} \geq 3 \). Then,
\[ \frac{n}{2} - \frac{1}{dn}
\leq n (1 - \frac{1}{b_{l}-1}) - \frac{1}{dn} < a <
\frac{n}{b_{1}-1} + \frac{1}{dn} \leq \frac{n}{2} + \frac{1}{dn}\]
by \eqref{eq:n=2:1}.
Since \( n \geq 2 \) and \( a \in \BZZ \), we have
\( a = n/2 \). Thus, \( (n, a) = (2, 1) \), since \( \gcd(n, a) = 1 \).
Therefore,
\( q_{0} = p_{l+1} = 4d \), \( q_{1} = p_{l} = 2d - 1 \),
and hence, \( r_{0} = r_{l+1} = 2 \).
For \( 1 \leq i \leq l \), we have \( r_{i} = 1 \),
since \( 0 < r_{i} < n=2 \) by Lemma~\ref{lem:ripl}.\eqref{lem:ripl:1}.
Thus, \( c_{i} = 1/2 \) and
\( p_{i} + q_{i} = 2d \) for all \( 1 \leq i \leq l\).
As a consequence,
\[ b_{i} = \frac{r_{i-1} + r_{i+1}}{r_{i}} =
\begin{cases}
2, & \text{ if } 1 < i < l; \\
3, & \text{ if } i = 1 \text{ or } i = l.
\end{cases}
\]
The equalities \( (p_{i}, q_{i}) = (2i-1, 2(d-i) + 1) \)
are shown by induction using
\( p_{i-1} + p_{i+1} = b_{i}p_{i}  \) and
\( q_{i-1} + q_{i+1} = b_{i}q_{i} \)
with the initial values \( p_{1} = q_{l} = 1 \) and \( p_{0} = q_{l+1} = 0 \).
\end{proof}

If \( (d, n, a) \in \ST_{\DNA} \),
then \( (d, n, n-a) \), \( (d, 2n - a, n) \), \( (d, n+a, a) \in \ST_{\DNA} \).
Thus, we have three maps \( \boldsymbol{i}\), \( \boldsymbol{t}_{L} \),
\( \boldsymbol{t}_{R} \colon \ST_{\DNA} \to \ST_{\DNA}\) defined by
\[ \boldsymbol{i}(d, n, a) = (d, n, n-a), \quad
\boldsymbol{t}_{L}(d, n, a) = (d, 2n - a, n), \quad
\boldsymbol{t}_{R}(d, n, a) = (d, n+a, a).
\]
Thus, \( \boldsymbol{i} \) is an involution, and
\( \boldsymbol{t}_{L}
= \boldsymbol{i} \circ \boldsymbol{t}_{R} \circ \boldsymbol{i} \).
Concerning these maps, we have:

\begin{lem}%
[{cf.\ \cite[(2.8.2)]{WahlElliptic}, \cite[Proposition~3.11]{KSh}, %
\cite[Lemma~3.4]{Stevens91}, \cite[Theorem~17]{Manetti91}, %
\cite[Theorem~15]{Lee99}}]\label{lem:going-up}
Let \( (d, n, a) \) be a triplet in \( \ST_{\DNA} \) with
\(  B(d, n, a) = (b_{1}, b_{2}, \ldots, b_{l}) \) and
\( R(d, n, a) = (r_{1}, r_{2}, \ldots, r_{l}) \).
Then, the following hold\emph{:}
\begin{align*}
B(d, 2n - a, n) &= (2, b_{1}, \ldots, b_{l-1}, b_{l} + 1), &
\delta(d, 2n - a, n) &= \delta(d, n, a) + 1, \\
R(d, 2n - a, n) &= (n = r_{1} + r_{l}, r_{1}, \ldots, r_{l}), &
l(d, 2n - a, n) &= l(d, n, a) + 1, \\
B(d, n+a, a) &= (b_{1} + 1, b_{2}, \ldots, b_{l}, 2), &
\delta(d, n+a, a) &= \delta(d, n, a) + 1, \\
R(d, n+a, a) &= (r_{1}, \ldots, r_{l}, n = r_{1} + r_{l}), &
l(d, n+a, a) &= l(d, n, a) + 1.
\end{align*}
\end{lem}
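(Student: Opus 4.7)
The plan is to establish the \( \boldsymbol{t}_{R} \)-assertion by a direct calculation with the continued fraction data, and to derive the \( \boldsymbol{t}_{L} \)-assertion from it via the symmetry \( \boldsymbol{t}_{L} = \boldsymbol{i} \circ \boldsymbol{t}_{R} \circ \boldsymbol{i} \).

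First, I would verify that the involution \( \boldsymbol{i} \) reverses the continued-fraction data: if \( B(d, n, a) = (b_{1}, \ldots, b_{l}) \) and \( R(d, n, a) = (r_{1}, \ldots, r_{l}) \), then \( B(d, n, n-a) = (b_{l}, \ldots, b_{1}) \) and \( R(d, n, n-a) = (r_{l}, \ldots, r_{1}) \). This rests on the classical reversal for Hirzebruch--Jung continued fractions: \( n/q^{*} = [b_{l}, \ldots, b_{1}] \) whenever \( n/q = [b_{1}, \ldots, b_{l}] \) with \( 0 < q^{*} < n \) and \( qq^{*} \equiv 1 \bmod n \). For the triple \( (d, n, a) \), the identity
\[
(dna - 1)\bigl(dn(n-a) - 1\bigr) = dn^{2}\bigl(da(n-a) - 1\bigr) + 1,
\]
immediate from expanding the left-hand side, identifies \( dn(n-a) - 1 \) as the inverse of \( dna - 1 \) modulo \( dn^{2} \), hence as the \( q \)-datum of \( (d, n, n-a) \). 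The corresponding reversal of the \( r \)-sequence follows from the symmetry of the recurrence together with the symmetric boundary values \( r_{0} = r_{l+1} = n \).

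Next, I would prove the \( \boldsymbol{t}_{R} \)-statement by constructing candidate sequences and verifying the endpoint conditions. Define \( (\tilde p_{i}, \tilde q_{i}, \tilde r_{i})_{0 \leq i \leq l+2} \) by the initial values dictated by \( (d, n+a, a) \), namely \( \tilde p_{0} = 0, \tilde p_{1} = 1, \tilde q_{0} = d(n+a)^{2}, \tilde q_{1} = d(n+a)a - 1, \tilde r_{0} = n+a, \tilde r_{1} = a \), together with the common recurrence \( \tilde x_{i+1} + \tilde x_{i-1} = \tilde b_{i} \tilde x_{i} \) for the tuple \( (\tilde b_{1}, \ldots, \tilde b_{l+1}) := (b_{1} + 1, b_{2}, \ldots, b_{l}, 2) \). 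By Lemma~\ref{lem:v_i} and the uniqueness of the Hirzebruch--Jung expansion with all entries at least \( 2 \), it suffices to verify the endpoint identities \( \tilde p_{l+2} = d(n+a)^{2} \) and \( \tilde q_{l+2} = 0 \). This is a transfer-matrix calculation: setting \( M(b) := \begin{pmatrix} b & -1 \\ 1 & 0 \end{pmatrix} \) and \( R := M(b_{l}) \cdots M(b_{2}) \), Lemma~\ref{lem:ripl}\eqref{lem:ripl:3} yields \( p_{l} = dn(n-a) - 1 \) and \( q_{l} = 1 \), while the identity from the first step supplies the closed-form entry \( S := (p_{l} q_{1} - 1)/(dn^{2}) = da(n-a) - 1 \). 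A direct multiplication of
\[
M(2) \, R \, M(b_{1} + 1) \begin{pmatrix} 1 & d(n+a)a - 1 \\ 0 & d(n+a)^{2} \end{pmatrix}
\]
produces first row \( (d(n+a)^{2}, 0) \), precisely because of the clean value of \( S \). With \( B(d, n+a, a) \) in hand, the \( \tilde r \)-recurrence gives \( \tilde r_{2} = (b_{1}+1)a - (n+a) = b_{1} a - n = r_{2} \), then \( \tilde r_{i} = r_{i} \) for \( 2 \leq i \leq l \) by induction using \( \tilde b_{i} = b_{i} \), and finally \( \tilde r_{l+1} = b_{l} r_{l} - r_{l-1} = r_{l+1} = n \), so that \( R(d, n+a, a) = (r_{1}, \ldots, r_{l}, n) \).

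Finally, combining the first two steps via \( \boldsymbol{t}_{L} = \boldsymbol{i} \circ \boldsymbol{t}_{R} \circ \boldsymbol{i} \) yields the \( \boldsymbol{t}_{L} \)-assertion: reversing \( B(\boldsymbol{t}_{R}(d, n, n-a)) = (b_{l} + 1, b_{l-1}, \ldots, b_{1}, 2) \) gives \( (2, b_{1}, \ldots, b_{l-1}, b_{l} + 1) \), and analogously for \( R \). The identities for \( l \) and \( \delta \) are then formal: both operations extend the \( b \)-sequence by one entry, so \( l \) increases by one, and the net change to \( \sum b_{i} \) is \( +3 \) (one from modifying an existing end, two from the appended \( 2 \)), whence \( \delta = \sum b_{i} - (2l + 1) \) increases by \( 3 - 2 = 1 \). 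The main technical obstacle is the endpoint verification of the transfer-matrix computation; what makes it tractable is the closed-form \( S = da(n-a) - 1 \) secured by Lemma~\ref{lem:ripl}, without which the cross-multiplications become unwieldy.
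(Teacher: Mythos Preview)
Your proof is correct and shares the paper's overall architecture: establish the reversal behaviour under the involution \( \boldsymbol{i} \), reduce the \( \boldsymbol{t}_{L} \)-case to the \( \boldsymbol{t}_{R} \)-case via \( \boldsymbol{t}_{L} = \boldsymbol{i}\circ\boldsymbol{t}_{R}\circ\boldsymbol{i} \), and then compute the \( \boldsymbol{t}_{R} \)-case directly. The difference lies in how that last computation is organized. The paper does not run the recurrence forward and check endpoints; instead it writes down an explicit linear substitution
\[
(p'_{i}, q'_{i}) \;=\; \frac{1}{dn^{2}}\,(p_{i}, q_{i})
\begin{pmatrix} dn(n+a)-1 & 1 \\ -1 & dn(n+a)+1 \end{pmatrix}
\]
for \( 1 \le i \le l+1 \), checks via Lemma~\ref{lem:ripl} that the \( p'_{i}, q'_{i} \) are positive integers, and then verifies that these transformed sequences satisfy the three-term recurrence with coefficients \( (b_{1}+1, b_{2}, \ldots, b_{l}, 2) \). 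Because the substitution matrix has row sums \( dn(n+a) \), the identity \( (p'_{i}+q'_{i})/(d(n+a)) = (p_{i}+q_{i})/(dn) = r_{i} \) is immediate, so the \( R \)-assertion falls out for free. Your transfer-matrix endpoint check achieves the same thing but separates the \( B \)- and \( R \)-verifications and requires a bit more bookkeeping (in particular, you should note that the determinant constraint forces \( \tilde q_{l+1} = 1 \) once \( \tilde p_{l+2} = d(n+a)^{2} \) and \( \tilde q_{l+2} = 0 \), so that uniqueness of the HJ expansion with entries \( \ge 2 \) applies). Both routes rest on the same key input, namely \( p_{l} = dn(n-a)-1 \) from Lemma~\ref{lem:ripl}\eqref{lem:ripl:3}.
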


\begin{proof}
We set
\[ P(d, n, a) = (p_{1}, \ldots, p_{l}), \quad
Q(d, n, a) = (q_{1}, \ldots, q_{l}), \quad \text{and}
\quad C(d, n, a) = (c_{1}, \ldots, c_{l}). \]
Concerning the map
\( \boldsymbol{i} \colon (d, n, a) \mapsto (d, n, n - a)\),
we have
\begin{align*}
B(d, n, n-a) &= (b_{l}, b_{l-1}, \ldots, b_{1}), &
\delta(d, n, n-a) &= \delta(d, n, a), \\
P(d, n, n-a) &= (p_{l}, p_{l-1}, \ldots, p_{1}), &
Q(d, n, n-a) &= (q_{l}, q_{l-1}, \ldots, q_{1}), \\
R(d, n, n-a) &= (r_{l}, r_{l-1}, \ldots, r_{1}), &
C(d, n, n-a) &= (c_{l}, c_{l-1}, \ldots, c_{1}),
\end{align*}
by \( p_{l} = dn(n-a) - 1 \) (cf.\ Lemma~\ref{lem:ripl}.\eqref{lem:ripl:3}).
Hence, the equalities for \( \boldsymbol{t}_{L}(d, n, a) = (d, 2n - a, n) \)
are derived from those for \( \boldsymbol{t}_{R}(d, n, a) = (d, n + a, a) \)
by \( \boldsymbol{t}_{L}
= \boldsymbol{i} \circ \boldsymbol{t}_{R} \circ \boldsymbol{i} \).
Thus, it is enough to prove the equalities for \( (d, n + a, a) \).
We set
\[ (p'_{0}, q'_{0}) := (0, d(n+a)^{2}) \quad \text{and} \quad
(p'_{l+2}, q'_{l+2}) := (d(n+a)^{2}, 0), \]
and for \( 1 \leq i \leq l+1 \), we set
\begin{equation}\label{eq:p'iq'i}
(p'_{i}, q'_{i}) := \frac{1}{dn^{2}}(p_{i}, q_{i})
\begin{pmatrix}
dn(n+a) - 1 & 1 \\
-1 & dn(n+a) + 1
\end{pmatrix}.
\end{equation}
Then,
\( p'_{i} \) and \( q'_{i} \) are positive integers
by Lemma~\ref{lem:ripl}.\eqref{lem:ripl:2}, and we have
\[ (p'_{i-1}, q'_{i-1}) + (p'_{i+1}, q'_{i+1}) = \begin{cases}
(b_{1} + 1)(p'_{1}, q'_{1}), & \text{ for } i = 1, \\
b_{i}(p'_{i}, q'_{i}), & \text{ for } 2 \leq i \leq l, \\
2(p'_{l+1}, q'_{l+1}), & \text{ for } i = l + 1,
\end{cases}\]
for all \( 1 \leq i \leq l+1 \).
Moreover, \( q'_{1} = da(n+a) - 1 \) by \eqref{eq:p'iq'i}.
Therefore,
\( B(d, n+a, a) = (b_{1} + 1, b_{2}, \ldots, b_{l}, 2) \),
\( P(d, n+a, a) = (p'_{1}, p'_{2}, \ldots, p'_{l+1}) \), and
\( Q(d, n+a, a) = (q'_{1}, q'_{2}, \ldots, q'_{l+1}) \).
In particular,
\[ \delta(d, n+a, a) = 3 + \sum\nolimits_{i = 1}^{l} b_{i} - (2(l+1) + 1)
= \delta(d, n+a, a) + 1.\]
By \eqref{eq:p'iq'i}, we have
\[ \frac{p'_{i} + q'_{i}}{d(n+a)} =
\frac{dn(n+a)(p_{i} + q_{i})}{d(n+a) \cdot dn^{2}} =
\frac{p_{i} + q_{i}}{dn} = r_{i} \]
for all \( 1 \leq i \leq l+1 \).
Hence, \( R(d, n+a, a) = (r_{1}, \ldots, r_{l}, n = r_{1} + r_{l}) \).
Thus, we are done.
\end{proof}

\begin{table}[t]
\begin{tabular}{c|c|c|l}
\hline
\( (d, n, a) \) & \( \delta\) & \( l \) &
\( \begin{pmatrix}
b_{1} & b_{2} & \cdots & b_{l} \\
r_{1} & r_{2} & \cdots & r_{l}
\end{pmatrix}
\) \\
\hline \hline
\( (1, 2, 1) \) & \( 1 \) & \( 1 \) & \( \begin{pmatrix} 4 \\
1 \\
\end{pmatrix}\) \\
\hline
\( (k, 2, 1) \) & \( 1 \) & \( k \) & \( \begin{pmatrix}
3 & 2 & \cdots & 2 & 3 \\
1 & 1 & \cdots & 1 & 1
\end{pmatrix}\)\\
\hline
\( (1, m, 1) \) & \( m - 1\) & \( m-1 \) &
\( \begin{pmatrix}
m+2 & 2 & 2 & \cdots & 2 \\
1 & 2 & 3 & \cdots & m-1 \\
\end{pmatrix}\)\\
\hline
\( (1, 2m - 1, m) \) & \( m \) & \( m \) &
\( \begin{pmatrix}
2 & m + 2 & 2 & \cdots & 2 & 3 \\
m & 1 & 2 & \cdots & m-2 & m-1\\
\end{pmatrix}\)\\
\hline
\( (k, m, 1) \) & \( m-1 \) & \( m + k - 1 \) &
\( \begin{pmatrix}
m+1 & 2 & \cdots & 2 & 3 & 2 & 2 & \cdots & 2 \\
1 & 1 & \cdots & 1 & 1 & 2 & 3 & \cdots & m-1
\end{pmatrix}\)\\
\hline
\( (1, 3m - 1, m) \) & \( m + 1\) & \( m + 1\) &
\( \begin{pmatrix}
3 & m + 2 & 2 & \cdots & 2 & 3 & 2\\
m & 1 & 2 & \cdots & m-2 & m-1 & 2m - 1\\
\end{pmatrix}\)\\
\hline
\end{tabular}

\vspace{1ex}

\hfill (Here, \( k \geq 2 \) and \( m \geq 3 \))
\caption{Invariants related to singularities of class T, Part I}
\label{table:M}
\end{table}

\begin{table}[ht]
\begin{tabular}{c|c|c|l}
\hline
\( (d, n, a) \) & \( \delta \) & \( l \) &
\( \begin{pmatrix}
b_{1} & b_{2} & \cdots & b_{l} \\
r_{1} & r_{2} & \cdots & r_{l}
\end{pmatrix} \) \\
\hline \hline
\( (1, 11, 3) \) & \( 5\) & \( 5 \) &
\( \begin{pmatrix}
4 & 5 & 3 & 2 & 2\\
3 & 1 & 2 & 5 & 8\\
\end{pmatrix}\)\\
\hline
\( (1, 19, 5) \) & \( 7\) & \( 7 \) &
\( \begin{pmatrix}
4 & 7 & 2 & 2 & 3 & 2 & 2\\
5 & 1 & 2 & 3 & 4 & 9 & 14\\
\end{pmatrix}\)\\
\hline
\( (1, 19, 13) \) & \( 8\) & \( 8 \) &
\( \begin{pmatrix}
2 & 2 & 9 & 2 & 2 & 2 & 2 & 4\\
13 & 7 & 1 & 2 & 3 & 4 & 5 & 6\\
\end{pmatrix}\)\\
\hline
\( (3, 23, 4) \) & \( 8\) & \( 10\) &
\( \begin{pmatrix}
6 & 5 & 2 & 3 & 2 & 3 & 2 & 2 & 2 & 2 \\
4 & 1 & 1 & 1 & 2 & 3 & 7 & 11 & 15 & 19
\end{pmatrix}\)\\
\hline
\( (1, 25, 17) \) & \( 10\) & \( 10 \) &
\( \begin{pmatrix}
2 & 2 & 11 & 2 & 2 & 2 & 2 & 2 & 2 & 4 \\
17 & 9 & 1 & 2 & 3 & 4 & 5 & 6 & 7 & 8
\end{pmatrix}\)\\
\hline
\( (1, 35, 6) \) & \( 10\) & \( 10 \) &
\( \begin{pmatrix}
6 & 8 & 2 & 2 & 2 & 3 & 2 & 2 & 2 & 2 \\
6 & 1 & 2 & 3 & 4 & 5 & 11 & 17 & 23 & 29
\end{pmatrix}\)\\
\hline
\( (1, 63, 34) \) & \( 11\) & \( 11 \) &
\( \left(\begin{array}{ccccccccccc}
2 & 7 & 7 & 2 & 2 & 3 & 2 & 2 & 2 & 2 & 3\\
34 & 5 & 1 & 2 & 3 & 4 & 9 & 14 & 19 & 24 & 29
\end{array}\right)\)\\
\hline
\( (1, 252, 145) \) & \( 13\) & \(  13 \) &
\( \left(\begin{array}{ccccccccccccc}
2 & 4 & 6 & 2 & 6 & 2 & 4 & 2 & 2 & 2 & 3 & 2 & 3\\
145 & 38 & 7 & 4 & 1 & 2 & 3 & 10 & 17 & 24 & 31 & 69 & 107
\end{array} \right)\)\\
\hline
\end{tabular}

\vspace{1ex}

\caption{Invariants related to singularities of class T, Part II}
\label{table:M2}
\end{table}

\begin{corsub}[{cf.\ \cite[Proposition~3.11]{KSh}, \cite[Theorem~17]{Manetti91}, %
\cite[Proposition~20]{Lee99}}] \label{corsub:going-up}
Any element \( (d, n, a) \) of \( \ST_{\DNA} \) is obtained from
\( (d, 2, 1) \) by a successive compositions of
maps \( \boldsymbol{t}_{L} \) and \( \boldsymbol{t}_{R} \).
The number of the compositions equals
\( \delta(d, n, a) - 1 = l(d, n, a)  - d \).
In particular, \( \sum\nolimits_{i = 1}^{l} b_{i} = 3l + 2 - d\).
\end{corsub}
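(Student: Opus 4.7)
The plan is to reduce any triple $(d,n,a)\in\ST_{\DNA}$ to the ``base case'' $(d,2,1)$ by repeatedly inverting $\boldsymbol{t}_{L}$ and $\boldsymbol{t}_{R}$, and then to count the number of steps using the additivity provided by Lemma~\ref{lem:going-up}. First, I would record the base values. Using Lemma~\ref{lem:n=2}, one reads off $B(d,2,1)$ directly in the three cases $d=1$, $d=2$, $d\geq 3$, and in all cases finds $l(d,2,1)=d$ and $\sum_{i=1}^{l}b_{i}=2d+2$, so that $\delta(d,2,1)=(2d+2)-(2d+1)=1$. Moreover, Lemma~\ref{lem:going-up} shows that each application of $\boldsymbol{t}_{L}$ or $\boldsymbol{t}_{R}$ fixes $d$ and increases both $\delta$ and $l$ by $1$; so once surjectivity onto $\ST_{\DNA}$ (from $\{(d,2,1)\}_{d\ge 1}$, via iterated $\boldsymbol{t}_{L}$, $\boldsymbol{t}_{R}$) is established, the count of operations is automatically $\delta(d,n,a)-1=l(d,n,a)-d$.

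The heart of the argument is an inverse step: if $n>2$, write $(d,n,a)$ as $\boldsymbol{t}_{L}$ or $\boldsymbol{t}_{R}$ of an element with strictly smaller second coordinate. The defining formulas give the two candidate inverses
\[
\boldsymbol{t}_{L}^{-1}(d,n,a)=(d,a,2a-n),\qquad \boldsymbol{t}_{R}^{-1}(d,n,a)=(d,n-a,a),
\]
and these lie in $\ST_{\DNA}$ precisely when $a>n/2$ and $a<n/2$ respectively. The equality $a=n/2$ would force $2\mid n$ and $\gcd(n,a)=n/2=1$, hence $n=2$, contradicting $n>2$; so exactly one of the two candidates belongs to $\ST_{\DNA}$, and its second coordinate ($a$ or $n-a$) is strictly less than $n$. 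Descending induction on $n$ terminates at $n=2$, where the only element is $(d,2,1)$, proving the surjectivity statement. Combining with the $\delta$- and $l$-additivity above yields the equality of the two expressions for the number of compositions.

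The final formula then drops out from the definition $\delta(d,n,a)=\sum_{i=1}^{l}b_{i}-(2l+1)$: substituting $\delta=l-d+1$ gives $\sum_{i=1}^{l}b_{i}=2l+1+\delta=3l+2-d$, as claimed. The only mildly subtle point in the whole argument is the case split $a\gtrless n/2$ together with ruling out $a=n/2$ for $n>2$; everything else is bookkeeping with the recursions in Lemma~\ref{lem:going-up}.
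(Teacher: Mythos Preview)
Your proof is correct and follows the same overall descent strategy as the paper: show that every non-base triple is in the image of $\boldsymbol{t}_{L}$ or $\boldsymbol{t}_{R}$, then induct. The difference is in how the case split is made and what you induct on. The paper reads off from the continued-fraction expansion whether $b_{1}=2$ or $b_{l}=2$ (using Lemma~\ref{lem:n=2} to rule out both being $\geq 3$ unless $(n,a)=(2,1)$), and inducts on $l(d,n,a)$. You instead use the purely arithmetic dichotomy $a>n/2$ versus $a<n/2$ to decide which inverse applies, and induct on $n$. These criteria are equivalent---indeed the inequalities \eqref{eq:n=2:1} in the proof of Lemma~\ref{lem:n=2} show $b_{1}=2\Leftrightarrow a>n/2$ and $b_{l}=2\Leftrightarrow a<n/2$ for $n>2$---so the two arguments match step for step. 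Your formulation has the virtue of not touching the $B$-sequence at all for the reduction; the paper's formulation has the advantage of making transparent how $B(d,n,a)$ shrinks at each step, which is what the surrounding discussion (Tables~\ref{table:M}, \ref{table:M2}, Corollary~\ref{corsub:going-up2}) actually uses.
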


\begin{proof}
If \( b_{1} \geq 3\) and \( b_{l} \geq 3 \), then
\( (d, n, a) = (d, 2, 1) \), \( l(d, 2, 1) = d \), and
\( \delta(d, 2, 1) = 1\) by Lemma~\ref{lem:n=2}.
If \( b_{1} = 2 \), then \( b_{l} \geq 3 \) by Lemma~\ref{lem:n=2}, and
\( (d, n, a) = \boldsymbol{t}_{L}(d, a, n - 2a) \) with
\( l(d, n, a) = l(d, a, n - 2a) + 1 \) and
\( \delta(d, n, a) = \delta(d, a, n - 2a) + 1\)
by Lemma~\ref{lem:going-up}.
Similarly,
if \( b_{l} = 2 \), then \( b_{1} \geq 3 \), and
\( (d, n, a) = \boldsymbol{t}_{R}(d, n - a, a) \) with
\( l(d, n, a) = l(d, n - a, a) + 1 \) and
\( \delta(d, n, a) = \delta(d, n-a, a) + 1 \).
Hence, we are done by induction on \( l(d, n, a) \).
\end{proof}

In Tables~\ref{table:M} and \ref{table:M2},
we list \( \delta = \delta(d, n, a) \), \( l = l(d, n, a) \),
\( B(d, n, a) \), and \( R(d, n, a) \) for typical elements
\( (d, n, a) \in \ST_{\DNA} \), some of which are used later.
For the numbers \( c_{i} \), we have:

\begin{corsub}[{cf.\ \cite[Corollary~17]{Lee99}}]\label{corsub:going-up2}
Let \( (d, n, a) \) be a triplet in \( \ST_{\DNA} \)
with \( B(d, n, a) = (b_{1}, \ldots, b_{l}) \),
\( R(d, n, a) = (r_{1}, \ldots, r_{l}) \),
and \( C(d, n, a) = (c_{1}, \ldots, c_{l}) \).
If \( c_{i} \leq 1/2 \) \emph{(}or equivalently, \( r_{i}/n \geq 1/2\)\emph{)}
for some \( i \) and if \( n > 2 \), then
either \( b_{j} = 2 \) for all \( j \leq i \) or
\( b_{k} = 2 \) for all \( k \leq i \).
\end{corsub}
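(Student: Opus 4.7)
The plan is to exploit the convexity of the sequence $(r_j)_{0 \le j \le l+1}$, extended beyond $R(d,n,a)$ by the endpoint values $r_0 = r_{l+1} = n$. The recursion $r_{j-1} + r_{j+1} = b_j r_j$ together with $b_j \ge 2$ makes the second difference $r_{j+1} - 2 r_j + r_{j-1} = (b_j - 2) r_j$ non-negative, so the first differences $d_j := r_{j+1} - r_j$ form a non-decreasing integer sequence. By Lemma~\ref{lem:ripl}.\eqref{lem:ripl:3} one has $d_0 = a - n < 0$ and $d_l = n - r_l = a > 0$, so there is a unique index $k_1$ with $d_{k_1} \le -1$ and $d_{k_1+1} \ge 0$; this yields the unimodal picture $r_0 > r_1 > \cdots > r_{k_1+1} \le r_{k_1+2} \le \cdots \le r_{l+1}$.

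Given an index $1 \le i \le l$ with $r_i \ge n/2$, it lies either in the \emph{left flank} $\{1, \ldots, k_1\}$ or in the \emph{right flank} $\{k_1+1, \ldots, l\}$. The involution $\boldsymbol{i}$ from Lemma~\ref{lem:going-up} sends $(d,n,a)$ to $(d,n,n-a)$ and reverses both $B(d,n,a)$ and $R(d,n,a)$, so it interchanges the two flanks while preserving the hypothesis $r_i \ge n/2$. Thus it suffices to handle the left-flank case and show that $b_j = 2$ for all $1 \le j \le i$; pulling the conclusion back through $\boldsymbol{i}$ yields $b_k = 2$ for all $k \ge i$ in the right-flank case, which is the intended other alternative in the corollary.

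The core is a short convexity contradiction. Suppose some $j_0 \le i$ had $b_{j_0} \ge 3$. Then $d_{j_0} - d_{j_0-1} = (b_{j_0} - 2) r_{j_0} \ge r_{j_0}$, and combined with $d_{j_0} \le d_{k_1} \le -1$ this forces $r_{j_0-1} = r_{j_0} - d_{j_0-1} \ge 2 r_{j_0} + 1$. Monotonicity on the left flank gives $r_{j_0} \ge r_i \ge n/2$, hence $r_{j_0-1} \ge n + 1$. When $j_0 \ge 2$ this contradicts $r_{j_0-1} \le n - 1$ from Lemma~\ref{lem:ripl}.\eqref{lem:ripl:1}. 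When $j_0 = 1$ it collapses to $n = r_0 \ge 2a + 1$, i.e.\ $a \le (n-1)/2$, contradicting $a = r_1 \ge n/2$; and it is precisely here that the hypothesis $n > 2$ (together with $\gcd(n,a) = 1$) is used to exclude the borderline equality $a = n/2$.

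The main obstacle is organizing the unimodal flank decomposition cleanly and verifying that the involution $\boldsymbol{i}$ really does convert one alternative of the ``either/or'' into the other. Once this framework is in place, the inequality chain above is entirely elementary, and neither an induction on $\delta(d,n,a)$ nor a direct use of the going-up maps $\boldsymbol{t}_L$, $\boldsymbol{t}_R$ of Lemma~\ref{lem:going-up} is required.
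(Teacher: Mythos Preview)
Your convexity argument is genuinely different from the paper's. The paper proceeds by contradiction, reduces via the involution to the case \(b_1 \geq 3\), \(b_l = 2\), then applies the going-down map \(\boldsymbol{t}_R^{-1}\) (implicitly iterated until the last entry is \(\geq 3\)) and finishes with a short case split on whether \(b_1 \geq 4\) or \(b_1 = 3\), invoking Lemma~\ref{lem:n=2} and a further application of \(\boldsymbol{t}_L^{-1}\). Your approach bypasses the going-up/going-down machinery entirely and works directly with the unimodal shape of \((r_j)\), which is cleaner and more self-contained.

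That said, there is a small but genuine gap. Your claim that the involution ``interchanges the two flanks'' is not quite correct. If \(k_2\) denotes the \emph{first} index with \(d_{k_2} > 0\), then the left flank of \((d, n, n-a)\) is \(\{1, \ldots, l - k_2\}\); hence an index \(i\) in your right flank \(\{k_1+1, \ldots, l\}\) lands in the left flank of the involuted triplet only when \(i \geq k_2 + 1\). Indices \(i \in \{k_1+1, \ldots, k_2\}\) --- the plateau where \(r_i\) equals the global minimum of the sequence --- are not covered by your reduction. This, not the \(j_0 = 1\) subcase, is where the hypothesis \(n > 2\) is actually needed: on the plateau \(r_i = \min_j r_j\), so \(a = r_1 \geq r_i \geq n/2\) and \(n - a = r_l \geq r_i \geq n/2\), forcing \(a = n/2\), which is excluded by \(\gcd(n, a) = 1\) together with \(n > 2\). (In your \(j_0 = 1\) subcase the two inequalities \(a \leq (n-1)/2\) and \(a \geq n/2\) are already incompatible for every \(n\), so \(n > 2\) plays no role there.) Once you insert this one-line observation --- that for \(n > 2\) no index \(i\) with \(r_i \geq n/2\) can lie on the plateau --- your involution reduction applies to the remaining right-flank indices \(i \geq k_2 + 1\), and the argument is complete.
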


\begin{proof}
Assume the contrary.
Then, \( b_{j} \geq 3 \) and \( b_{k} \geq 3 \) for some \( j \leq i \leq k \).
By Lemma~\ref{lem:n=2}, we have \( b_{1} = 2 \) or \( b_{l} = 2 \).
Thus, we may assume that \( b_{1} \geq 3 \) and \( b_{l} = 2 \).
In particular, we may put \( j = 1 \).
Then, \( (d, n, a) = \boldsymbol{t}_{R}(d, n-a, a) \),
and
\[ B(d, n - a, a) = (b_{1} - 1, b_{2}, \ldots, b_{l-1}) \quad
\text{and} \quad
R(d, n - a, a) = (r_{1}, r_{2}, \ldots, r_{l-1})\]
by Lemma~\ref{lem:going-up}.
In particular,
\[ c'_{i} = 1 - r'_{i}/(n-a) < 1 - r_{i}/n = c_{i} \leq 1/2, \]
where \( C(d, n - a, a) = (c'_{1}, \ldots, c'_{l-1}) \).
Hence, in order to derive a contradiction,
we may assume that \( k = l-1 \).

If \( b_{1} - 1 \geq 3 \), then \( n - a = 2 \) and \( r_{i} = 1 \)
by Lemma~\ref{lem:n=2};  this is a contradiction, since
\( r_{i}/n \leq 1/3  \).
Thus, \( b_{1} = 3 \) and
\( (d, n - a, a) = \boldsymbol{t}_{L}(d, a, 3a - n)  \)
by Lemma~\ref{lem:going-up}. In particular, \( 2a < n < 3a\).
Moreover,
\[ B(d, a, 3a - n) = (b_{2}, \ldots, b_{l-2}, b_{l-1} - 1) \quad \text{and} \quad
R(d, a, 3a - n) = ( r_{2}, \ldots, r_{l-1}).\]
Thus, \( r_{i} < a \) by Lemma~\ref{lem:ripl} applied to \( (d, a, 3a - n) \),
and hence, \( r_{i}/n < a/n < 1/2 \); this is a contradiction.
\end{proof}

Finally in Section~\ref{sect:DefCT}, we shall prove:

\begin{thm}\label{thm:localsmoothing}
Let \( \Lambda \) be a complete discrete valuation ring or a field.
Let \( V \) be a toric \( \Lambda \)-scheme of type \( (dn^{2}, dna - 1) \)
\emph{(}cf.\ Definition~\emph{\ref{dfn:toricnq})}
for some positive integers \( d \), \( n \), \( a \) with
\( n > a \) and \( \gcd(n, a) = 1 \).
Then, there exist a flat family \( \SV \to T \) of normal affine surfaces
over an open subset \( T \) of the affine line \( \BAA^{1}_{\Lambda} \)
and a section \( \sigma \colon \Spec \Lambda \to T \)
satisfying the following conditions\emph{:}
\begin{enumerate}
\item \label{thm:localsmoothing:1}
\( \SV \times_{T, \sigma} \Spec \Lambda \isom V\).

\item \label{thm:localsmoothing:2}
\( \SV \to T \) is smooth over \( T \setminus \sigma(\Spec \Lambda) \).

\item \label{thm:localsmoothing:3}
\( \SV \) is normal, and \( rK_{\SV}\) is Cartier
with \(\SO_{\SV}(rK_{\SV})|_{V} \isom \SO_{V}(rK_{V}) \)
for any integer \( r \) divisible by \( n \).
\end{enumerate}
\end{thm}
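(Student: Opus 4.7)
The plan is to construct $\SV$ as an invariant-theoretic quotient of $\BAA^{3}_{\Lambda}$ by $\mu_{n}$, equipped with a non-toric smoothing parameter. First, using the toric descriptions of both sides, I would identify $V$ with the cyclic quotient $\widetilde{V}/\mu_{n}$, where $\widetilde{V} := \Spec \Lambda[x,y,z]/(xy-z^{dn})$ is the index-one cover (an $A_{dn-1}$-hypersurface) and $\mu_{n}$ acts via $\xi \cdot (x,y,z) = (\xi x, \xi^{-1}y, \xi^{a} z)$. This identification would be verified by matching semigroup rings: since $\gcd(a,n) = 1$, the monomials $x^{i}y^{j}z^{k}$ with $i - j + ak \equiv 0 \pmod{n}$ generating $\Lambda[x,y,z]^{\mu_{n}}$ reduce, modulo $xy - z^{dn}$, to precisely the monomials spanning $\Lambda[\Bsigma^{\vee} \cap \MM]$ for $V$ of type $(dn^{2}, dna - 1)$.

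Next I would set $\SV := \Spec \Lambda[x,y,z]^{\mu_{n}}$, a normal toric $\Lambda$-scheme realized as a direct summand of $\Lambda[x,y,z]$ because $\mu_{n}$ is diagonalizable, hence linearly reductive, over $\Lambda$ in every characteristic, and define the morphism $\SV \to T_{0} := \BAA^{1}_{\Lambda}$ via the invariant element $t := xy - z^{dn}$. Conditions (1) and (3) of the theorem would then follow readily: the fiber over the zero section is $\Lambda[x,y,z]^{\mu_{n}}/(t) \isom (\Lambda[x,y,z]/(xy - z^{dn}))^{\mu_{n}} \isom \SO(V)$ by exactness of invariants; and the quotient map $\pi \colon \BAA^{3}_{\Lambda} \to \SV$ has degree $n$ with fixed locus $\{x = y = z = 0\}$ of codimension three, so $\pi$ is \'etale in codimension one. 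Consequently $nK_{\SV}$ (and hence $rK_{\SV}$ for every $n \mid r$) is Cartier, and $\SO_{\SV}(rK_{\SV})|_{V} \isom \SO_{V}(rK_{V})$ follows from the fact that $V \subset \SV$ is the Cartier divisor $\{t = 0\}$.

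The remaining and principal point is condition (2): smoothness of $\SV_{t_{0}}$ for $t_{0} \neq 0$. Since $\gcd(a,n) = 1$, every closed point of the hypersurface $\widetilde\SV_{t_{0}} := \{xy = z^{dn} + t_{0}\}$ has trivial $\mu_{n}$-stabilizer scheme; in residue characteristics coprime to $dn$ the cover $\widetilde\SV_{t_{0}}$ is itself smooth and the free quotient is smooth as well. In residue characteristics $p \mid dn$, however, the cover acquires singularities along the locus $\{x = y = 0,\ z^{dn} + t_{0} = 0\}$, and one must check by an explicit local computation that the infinitesimal part of the $\mu_{n}$-action absorbs these singularities, leaving a regular $\mu_{n}$-invariant local ring, as one can verify directly already in the simplest case $T(1,2,1)$ in characteristic two. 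This delicate characteristic-$p$ analysis is the main obstacle; once it is in hand, one takes $T \subseteq T_{0}$ to be the maximal open locus on which every fiber is smooth and $\sigma$ to be the zero section, completing the construction.
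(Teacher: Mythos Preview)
Your overall strategy---realizing $V$ as the $\mu_n$-quotient of the $A_{dn-1}$-hypersurface $\{xy = z^{dn}\}$ and deforming that hypersurface inside the ambient quotient $\BAA^3_\Lambda/\mu_n$---is exactly the paper's, and your verification of conditions (1) and (3) is fine. The gap is in condition (2), and it is not merely a delicate computation that remains to be done: your specific one-parameter family $xy = z^{dn} + t$ \emph{fails} to give smooth quotient fibers whenever the residue characteristic $p$ divides $d$.

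To see this, work on the open chart $\{z \neq 0\}$ of the quotient. Choosing $a'$ with $aa' \equiv 1 \pmod n$, the invariants there are freely generated by $u = xz^{-a'}$, $v = yz^{a'}$, $w = z^n$ (with $w$ inverted), and the fiber over $t = t_0 \neq 0$ has equation $uv = w^d + t_0$. Its Jacobian $(v,\, u,\, dw^{d-1})$ vanishes at $(0,0,w_0)$ with $w_0^d = -t_0$ whenever $p \mid d$; since $w_0 \neq 0$, this singular point lies in the chart. Thus for $p \mid d$ \emph{every} fiber over $t_0 \neq 0$ is singular, and no open $T \subset \BAA^1_\Lambda$ containing the zero section can satisfy (2). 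Your test case $T(1,2,1)$ has $d = 1$ and so does not detect this; already $T(2,2,1)$ in characteristic $2$ fails, as does $T(p,2,1)$ in any odd characteristic $p$---and in the latter case $\mu_2$ is \'etale, so the quotient map is \'etale and no ``infinitesimal absorption'' of cover singularities is even possible.

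The paper avoids this by deforming instead to $xy = z^{dn} - s(z^n + 1)$. On the same chart this reads $uv = w^d - s(w+1)$, whose $w$-derivative $dw^{d-1} - s$ equals $-s \neq 0$ when $p \mid d$. The additional $\mu_n$-invariant monomial $z^n$ supplies a term linear in the invariant coordinate $w$, and that linear term is precisely the ingredient your construction is missing.
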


\begin{remsub}
Our idea of the proof of Theorem~\ref{thm:localsmoothing}
is taken from the proof of \cite[Proposition~4.19]{logdelPezzo},
which treats a special case.
However, the proof of Proposition~4.19 contains an error
when \( p \mid n \).
The error is corrected by the present proof.
\end{remsub}

\begin{remsub}
Roughly speaking, when \( \Lambda  \) is a field,
Theorem~\ref{thm:localsmoothing}
asserts that a toric singularity
of class T has a ``\( \BQQ \)-Gorenstein smoothing''
(cf.\ \cite{KSh}).
The proof of Theorem~\ref{thm:localsmoothing} is essentially the same
as in the proof of \cite[Proposition~3.10]{KSh} when \( \Lambda \)
is a field of characteristic zero.
\end{remsub}

We recall the construction of \( V \) in Section~\ref{sect:dualgraph}:
We have a free abelian group \( \NN_{0} = \BZZ e_{1} + \BZZ e_{2} \)
with the base \( (e_{1}, e_{2}) \) and the cone \( \Bsigma = \Cone(e_{1}, e_{2})
\subset \NN_{0} \otimes \BRR \)
such that \( V = \BTT_{\NN}(\Bsigma) \) for a free abelian group
\( \NN = \NN_{0} + \BZZ v \) defined by the vector
\[ v = \frac{1}{dn^{2}}(e_{1} + (dna - 1)e_{2}). \]
Let \( \MM \) and \( \MM_{0} \) be the dual abelian groups of \( \NN \)
and \( \NN_{0} \), respectively.
Then, \( R_{0} = \Lambda[\Bsigma^{\vee} \cap \MM_{0}] \) is
a polynomial \( \Lambda \)-algebra generated by two-variables
\( \xtt_{1} \), \( \xtt_{2} \) which correspond to
the dual basis of \( (e_{1}, e_{2}) \).
In particular, \( V_{0} = \BTT_{\NN_{0}}(\Bsigma) \isom  \BAA^{2}_{\Lambda} \).
The affine coordinate ring
\( R := \OH^{0}(V, \SO_{V}) = \Lambda[\Bsigma^{\vee} \cap \MM] \) is
a \( \Lambda \)-subalgebra of \( \Lambda[\xtt_{1}, \xtt_{2}] \) generated by
the monomials \( \xtt_{1}^{k_{1}}\xtt_{2}^{k_{2}} \)
satisfying \( k_{1} + (dna - 1)k_{2} \equiv 0 \bmod dn^{2} \).

In this situation, we define a subgroup \( \NN_{1} \) of \( \NN \) by
\[ \NN_{1} :=  \NN_{0} + \BZZ nv =
\NN_{0} + \BZZ \frac{1}{dn}(e_{1} + (dn - 1)e_{2}).\]
Then, we have a toric \( \Lambda \)-scheme
\( V_{1} = \BTT_{\NN_{1}}(\Bsigma) \),
and finite surjective morphisms \( \tau_{0} \colon V_{0} \to V_{1} \)
and \( \tau \colon V_{1} \to V \)
associated with the inclusions \( \NN_{0} \subset \NN_{1} \)
and \( \NN_{1} \subset \NN \).
The affine coordinate ring \( R_{1} := \OH^{0}(V_{1}, \SO_{V_{1}})
= \Lambda[\Bsigma^{\vee} \cap \MM_{1}] \),
where \( \MM_{1} \) is the dual abelian group of \( \NN_{1} \),
is a \( \Lambda \)-subalgebra of \( \Lambda[\xtt_{1}, \xtt_{2}] \) generated by
three monomials \( \xtt_{1}^{dn} \), \( \xtt_{2}^{dn} \), \( \xtt_{1}\xtt_{2} \).
Thus,
\[ R_{1} \isom \Lambda[\utt_{1}, \utt_{2}, \ztt]/(\utt_{1}\utt_{2} - \ztt^{dn}) \]
for three variables \( \utt_{1} \), \( \utt_{2} \), \( \ztt \), where
the isomorphism above is defined by
\( \utt_{1} = \xtt_{1}^{dn} \), \( \utt_{2} = \xtt_{2}^{dn} \),
and \( \ztt = \xtt_{1}\xtt_{2} \).
Since \( n \) is the index of \( \NN/\NN_{1} \), the group subscheme
\( \Bmu_{n} = \Ker (\BTT_{\NN_{1}} \to \BTT_{\NN}) \) of
\( \BTT_{\NN_{1}} \) acts on \( V_{1} \) and its quotient scheme is just \( V \).
Here, the action of \( \Bmu_{n} \) on \( V_{1} = \Spec R_{1} \) is given by
\begin{equation}\label{eq:mu_n action}
(\utt_{1}, \utt_{2}, \ztt) \mapsto
(\utt_{1} \otimes \ttt, \utt_{2} \otimes \ttt^{-1}, \ztt \otimes \ttt^{a}),
\end{equation}
where \( \Bmu_{n} \) is regarded as
\(\Spec \Lambda[\ttt, \ttt^{-1}]/(\ttt^{n} - 1) \).
The action of \( \Bmu_{n} \) on \( V_{1} \) is induced from
that on \( \Spec \Lambda[\utt_{1}, \utt_{2}, \ztt]  \) given by
\eqref{eq:mu_n action}.
Then, the \( \Bmu_{n} \)-invariant part \( R\sptilde  \) of
\( \Lambda[\utt_{1}, \utt_{2}, \ztt] \) is generated by
monomials \( \utt_{1}^{m_{1}}\utt_{2}^{m_{2}}\ztt^{m_{3}} \)
such that \( m_{i} \geq 0 \) for all \( 1 \leq i \leq 3 \) and
\( m_{1} - m_{2} + am_{3} \equiv 0 \bmod n \).
We see that \( \Spec R\sptilde \) is isomorphic to
\( \BTT_{\NN\sptilde}(\Bsigma\sptilde) \)
for the affine toric \( \Lambda \)-scheme
\( \BTT_{\NN\sptilde}(\Bsigma\sptilde) \) of relative dimension three
defined as follows:
Let \( \NN_{1}\sptilde \) be a free abelian group
\( \bigoplus\nolimits_{i = 1}^{3}\BZZ e\sptilde_{i} \)
of rank three
and let \( \Bsigma\sptilde \)
be the cone \( \sum\nolimits_{i = 1}^{3}\BRR_{\geq 0}e_{i}\sptilde \).
The abelian group \( \NN\sptilde \) is a subgroup of
\( \NN_{1}\sptilde \otimes \BQQ \) defined by
\[ \NN\sptilde := \NN_{1}\sptilde +
\BZZ\, \frac{1}{n}(e\sptilde_{1} - e\sptilde_{2} + a e\sptilde_{3}). \]
Then, \( V = \Spec R \) is a hypersurface (or a Cartier divisor) of
\( V\sptilde := \Spec R\sptilde \) defined by the principal ideal
\( (\utt_{1}\utt_{2} - \ztt^{dn}) \).

\begin{lem}\label{lem:GorIndex}
The smallest integer \( r \) such that \( rK_{V} \) \emph{(}resp.\
\( rK_{V\sptilde} \)\emph{)} is Cartier, is \( n \). Thus,
\( n \) is equal to the Gorenstein index of \( V \)
\emph{(}resp.\ \( V\sptilde \)\emph{)}.
\end{lem}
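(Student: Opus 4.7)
The plan is to compute the Gorenstein index of each toric variety directly from the standard toric description of the canonical divisor. Recall that for an affine toric variety $\BTT_{\NN}(\Bsigma)$, the canonical divisor is $K = -\sum_{\rho} D_{\rho}$, where $\rho$ runs through the rays of $\Bsigma$ and $D_{\rho}$ is the corresponding torus-invariant prime divisor; moreover, $rK$ is Cartier if and only if there exists $m \in \MM$ such that $m(v_{\rho}) = -r$ for every primitive generator $v_{\rho}$ of a ray $\rho$.

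First, I would show that $e_{1}$ and $e_{2}$ are primitive in $\NN = \NN_{0} + \BZZ v$, where $v = (1/(dn^{2}))(e_{1} + (dna-1)e_{2})$. Suppose $\lambda e_{1} = \alpha_{1} e_{1} + \alpha_{2} e_{2} + \beta v \in \NN$ with $\lambda > 0$; the coefficient of $e_{2}$ vanishes, forcing $\beta(dna-1) = -\alpha_{2}\cdot dn^{2}$, and since $\gcd(dna - 1, dn^{2}) = 1$ we get $\beta \in dn^{2}\BZZ$, hence $\lambda \in \BZZ$. Thus the two rays of $\Bsigma$ have primitive generators $e_{1}, e_{2}$, and $K_{V} = -D_{1} - D_{2}$.

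Next, $rK_{V}$ is Cartier exactly when there exists $m = a_{1}h_{1} + a_{2}h_{2} \in \MM$ with $a_{1} = a_{2} = -r$. Since $\MM = \{a_{1}h_{1} + a_{2}h_{2} \mid a_{1} + (dna - 1)a_{2} \equiv 0 \bmod dn^{2}\}$, the condition becomes $-r - r(dna - 1) = -r\cdot dna \equiv 0 \bmod dn^{2}$, i.e.\ $ra \equiv 0 \bmod n$; because $\gcd(n, a) = 1$, this is equivalent to $n \mid r$. So the Gorenstein index of $V$ equals $n$.

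For $\GX := V\sptilde$, the same strategy applies: I would verify that $e\sptilde_{1}, e\sptilde_{2}, e\sptilde_{3}$ are primitive in $\NN\sptilde = \NN_{1}\sptilde + \BZZ\,\tfrac{1}{n}(e\sptilde_{1} - e\sptilde_{2} + a e\sptilde_{3})$ by a direct computation analogous to the one above, conclude $K_{V\sptilde} = -\sum_{i=1}^{3} D_{i}\sptilde$, and then apply the Cartier criterion: $rK_{V\sptilde}$ is Cartier iff there exists $\tilde m \in \MM\sptilde$ with $\tilde m(e\sptilde_{i}) = -r$ for $i = 1, 2, 3$. Writing $\tilde m = \sum m_{i}\tilde h_{i}$, the membership in $\MM\sptilde$ amounts to $m_{1} - m_{2} + a m_{3} \equiv 0 \bmod n$, and the three conditions $m_{i} = -r$ reduce to $-ar \equiv 0 \bmod n$, again equivalent to $n \mid r$. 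The only mildly delicate point is the primitivity check for the ray generators in the enlarged lattices $\NN$ and $\NN\sptilde$; everything else is a direct translation of standard facts from toric geometry.
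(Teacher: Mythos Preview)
Your argument is correct and is essentially the same approach as the paper's: both compute the Gorenstein index via the toric Cartier criterion, checking when $-r(D_{1}+D_{2})$ (resp.\ $-r\sum D\sptilde_{i}$) is a principal divisor, i.e.\ comes from an element of $\MM$ (resp.\ $\MM\sptilde$). The only cosmetic difference is that the paper parameterizes $\MM$ through the monomials $\utt_{1}^{k_{1}}\utt_{2}^{k_{2}}\ztt^{k_{3}}$ with $k_{1}-k_{2}+ak_{3}\equiv 0 \bmod n$ (these coordinates being already at hand for the smoothing construction), whereas you work directly with the description $\MM=\{a_{1}h_{1}+a_{2}h_{2}\mid a_{1}+(dna-1)a_{2}\equiv 0 \bmod dn^{2}\}$; the resulting congruences $ak_{3}\equiv 0 \bmod n$ and $ra\equiv 0 \bmod n$ are of course equivalent.
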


\begin{proof}
Let \( (k_{1}, k_{2}, k_{3}) \) be an integral vector
such that \( k_{1} - k_{2} + ak_{3} \equiv 0 \bmod n \).
Then, the principal divisor associated to the monomial
\( \utt_{1}^{k_{1}}\utt_{2}^{k_{2}}\ztt^{k_{3}} \) on \( V \) is
expressed as
\[ \Div(\utt_{1}^{k_{1}}\utt_{2}^{k_{2}}\ztt^{k_{3}})_{V} =
(k_{1}dn + k_{3})D_{1} + (k_{2}dn + k_{3})D_{2}\]
by \eqref{eq:muaD000} in Remark~\ref{remsub:lem:VDDC}.
Note that \( D := D_{1} + D_{2} \sim -K_{V} \).
Hence, \( n \) is the Gorenstein index of \( V \).
In fact,
\( nD = \Div(\ztt^{n}) \) is Cartier, and
if \( jD \) is an Cartier divisor for an integer \( j \),
then \( j = k_{1}dn + k_{3} = k_{2}dn + k_{3}\)
for some integral vector \( (k_{1}, k_{2}, k_{3}) \) above; hence
\( k_{1} = k_{2} \) and \( j \equiv k_{3} \equiv 0 \mod n\).

For \( 1 \leq i \leq 3 \), let  \( D\sptilde_{i} \) be
the \( \BTT_{\NN\sptilde} \)-invariant divisor on \( V\sptilde \)
corresponding to the ray \( \BRR_{\geq 0}e\sptilde_{i} \).
Since \( \gcd(n, a) = 1 \), we see that
\[ \BRR_{\geq 0}e\sptilde_{i} \cap \NN\sptilde = \BZZ_{\geq 0} e\sptilde_{i} \]
for \( 1 \leq i \leq 3 \).
Then, we have the following equality similar to
\eqref{eq:muaD000}:
\[ \Div(\utt_{1}^{k_{1}}\utt_{2}^{k_{2}}\ztt^{k_{3}})_{V\sptilde} =
k_{1}D\sptilde_{1} + k_{2}D\sptilde_{2}  + k_{3}D\sptilde_{3}.\]
Here, we know also
\( D\sptilde := \sum\nolimits_{i = 1}^{3} D\sptilde_{i} \sim -K_{V\sptilde} \)
and that \( nD\sptilde_{i} \) is Cartier for all \( i \).
Suppose that \( jD\sptilde \) is Cartier for an integer \( j \). Then,
\( j = k_{1} = k_{2} = k_{3} \) for an
integral vector \( (k_{1}, k_{2}, k_{3}) \) such that
\( k_{1} - k_{2} + ak_{3} \equiv 0 \bmod n\); hence \( j \equiv 0 \bmod n  \).
Therefore, the Gorenstein index of \( V\sptilde \) is also \( n \).
\end{proof}

\begin{remsub}
\( V \) and \( V\sptilde \) are \( \BQQ \)-factorial, since
the cones \( \Bsigma \) and \( \Bsigma\sptilde \) are simplicial.
\end{remsub}

\begin{remsub}
It is well-known in characteristic zero that
the singularity on \( V\sptilde \) is a cyclic quotient
terminal singularity of type \( \frac{1}{n}(1, -1, a) \)
(cf.\ \cite[\S 3]{ReidCan}, \cite[(4.13)]{ReidTerm},
\cite[(5.1), (5.2)]{ReidYoung}).
Even in the positive characteristic case, the singularity is ``terminal''
in the sense that there is a resolution
\( \mu\sptilde \colon M\sptilde \to V\sptilde\) of singularity
such that
\[ K_{M\sptilde} = {\mu\sptilde}^{*}(K_{V\sptilde}) + \sum a_{i}E_{i}\sptilde \]
for \( \mu\sptilde \)-exceptional prime divisors \( E_{i}\sptilde \) and
positive rational numbers \( a_{i} \).
In fact, a toric resolution of \( V\sptilde \) is taken independently of
the characteristic, and the discrepancy \( a_{i} \) is also independent of
the characteristic.
\end{remsub}

\begin{remsub}\label{remsub:lem:GorIndex}
If \( X \) is a \( \BQQ \)-Gorenstein normal algebraic \( \Lambda \)-scheme
and if \( Y \) is a normal Cartier divisor of \( X \), then
\( r(K_{X} + Y)|_{Y} \sim rK_{Y} \) for the Gorenstein index \( r \) of \( X \).
Indeed, the left hand side is Cartier and is linearly equivalent to
the right hand side on the non-singular locus of \( Y \).
In particular, \( Y \) is also \( \BQQ \)-Gorenstein and \( r \) is divisible
by the \( \BQQ \)-Gorenstein index of \( Y \).
Applying this to Lemma~\ref{lem:GorIndex},
we see that the restriction \( nK_{V\sptilde}|_{V} \)
is linearly equivalent to \( nK_{V} \).
\end{remsub}

\begin{proof}[Proof of Theorem~\emph{\ref{thm:localsmoothing}}]
We consider an algebra
\[ R_{1}^{\sharp} :=
\Lambda[\utt_{1}, \utt_{2}, \ztt, \stt]/
(\ztt^{dn} - \utt_{1}\utt_{2} - \stt(\ztt^{n} + 1)) \]
over the polynomial ring \( \Lambda[\stt] \) of one variable.
This is flat over \( \Lambda[\stt] \), since it is Cohen--Macaulay and
every fiber of \( \Spec R^{\sharp}_{1} \to \Spec \Lambda[\stt] \)
is equidimensional.
We consider the \( \Bmu_{n} \)-action on \( \Spec R_{1}^{\sharp} \)
given by
\[ (\utt_{1}, \utt_{2}, \ztt, \stt) \mapsto
(\ttt \utt_{1}, \ttt^{-1}\utt_{2}, \ttt^{a}\ztt, \stt), \]
where \( \Bmu_{n} = \Spec \Lambda[\ttt, \ttt^{-1}]/(\ttt^{n} - 1)\).
Let \( R^{\sharp} \) be the \( \Bmu_{n} \)-invariant subring,
which is the \( \Lambda[\stt] \)-submodule
generated by monomials
\( \utt_{1}^{k_{1}}\utt_{2}^{k_{2}}\ztt^{k_{3}}\) with
\( k_{1} - k_{2} + ak_{3} \equiv 0 \mod n \).
We set \( V^{\sharp} := \Spec R^{\sharp} \),
\( S := \Spec \Lambda[\stt] = \BAA^{1}_{\Lambda} \), and
let \( \sigma \colon \Spec \Lambda \to S \) be the section
defined by \( \Lambda[\stt] \to \Lambda[\stt]/(\stt) \isom \Lambda \).
Then, \( V^{\sharp} \times_{S, \sigma} \Spec \Lambda \isom V \).
Since \( R^{\sharp} \) is
the \( \Bmu_{n} \)-invariant ring of \( R_{1}^{\sharp} \),
which is a direct summand of \( R_{1}^{\sharp} \), we see that
\( V^{\sharp} \) is normal and \( V^{\sharp} \to S \) is flat.
On the other hand,
the affine scheme \( V^{\sharp} \) is isomorphic to
the hypersurface of \( V\sptilde \times_{\Spec \Lambda} S \)
defined by \( \ztt^{dn} - \utt_{1}\utt_{2} - s(\ztt^{n} + 1) = 0\).
By Lemma~\ref{lem:GorIndex}, \( V\sptilde \times_{\Spec \Lambda} S \) is
\( \BQQ \)-Gorenstein with index \( n \).
Hence, \( V^{\sharp} \) is also \( \BQQ \)-Gorenstein with index \( r \)
dividing \( n \), since \( V^{\sharp} \) is normal
(cf.\ Remark~\ref{remsub:lem:GorIndex}).
Here, \( r = n \) by Lemma~\ref{lem:GorIndex}, since
\( V \) is also a hypersurface of \( V^{\sharp} \).
Consequently, we have
\( \SO_{V^{\sharp}}(nK_{V^{\sharp}})|_{V} \sim \SO_{V}(nK_{V}) \)
(cf.\ Remark~\ref{remsub:lem:GorIndex}).

We shall study the singularity of fibers of
\( V^{\sharp} \to S \) outside the section \( \sigma(\Spec \Lambda) \)
defined by \( \stt = 0 \).
Let \( R^{\sharp}\langle 1 \rangle \) be the affine coordinate ring of
the affine open subset
\( \{ \utt_{1} \ne 0, \stt \ne 0\} \) of \( \Spec R^{\sharp} \).
Then, \( R^{\sharp}\langle 1 \rangle \) is isomorphic to
\[
\Lambda[\ytt_{1}^{\pm 1}, \ytt_{2}, \ytt_{3}, \stt^{\pm 1}]/
(\ytt_{3}^{dn}\ytt_{1}^{ad} - \ytt_{2} - \stt(\ytt_{3}^{n}\ytt_{1}^{a} + 1)) \isom
\Lambda[\ytt_{1}^{\pm 1}, \ytt_{3}, \stt^{\pm 1}]\]
by the correspondence
\( (\ytt_{1}, \ytt_{2}, \ytt_{3}) =
(\utt_{1}^{n}, \utt_{1}\utt_{2}, \ztt\utt_{1}^{-a}) \).
Thus, \( R^{\sharp}\langle 1 \rangle \) is smooth over \( \Lambda[\stt^{\pm 1}] \).
Similarly, the affine coordinate ring
\( R^{\sharp}\langle 2 \rangle \) of \( \{\utt_{2} \ne 0, \stt \ne 0\} \)
is smooth over \( \Lambda[\stt^{\pm 1}] \), since it is isomorphic to
\[ \Lambda[\ytt_{1}, \ytt_{2}^{\pm 1}, \ytt_{3}, \stt^{\pm 1}]/
(\ytt_{3}^{dn}\ytt_{2}^{-ad} - \ytt_{1} - \stt(\ytt_{3}^{n}\ytt_{2}^{-a} + 1))
\isom \Lambda[\ytt_{2}^{\pm 1}, \ytt_{3}, \stt^{\pm 1}]\]
by the correspondence
\( (\ytt_{1}, \ytt_{2}, \ytt_{3}) =
(\utt_{1}\utt_{2}, \utt_{2}^{n}, \ztt\utt_{2}^{a}) \).

The affine coordinate ring \( R^{\sharp}\langle 3 \rangle \)
of \( \{\ztt \ne 0, \stt \ne 0\} \)
is isomorphic to
\[ \Lambda[\ytt_{1}, \ytt_{2}, \ytt_{3}^{\pm 1}, \stt^{\pm 1}]
/(\ytt_{3}^{d} - \ytt_{1}\ytt_{2} - \stt(\ytt_{3} + 1)) \]
by the correspondence
\( (\ytt_{1}, \ytt_{2}, \ytt_{3}) = (\utt_{1}\ztt^{-a'}, \utt_{2}\ztt^{a'}, \ztt^{n}) \),
where \( 0 < a' < n\) with \( aa' \equiv 1 \mod n \).
Let \( p \) be the characteristic of the residue field of \( \Lambda \).
If \( p \mid d \) or \( p \mid d - 1 \), then
\( R^{\sharp}\langle 3 \rangle \) is smooth over \( \Lambda[\stt^{\pm 1}] \)
by the Jacobian criterion.
If \( p \nmid d \) and \( p \nmid d - 1 \), then \( R^{\sharp}\langle 3 \rangle \)
is smooth over \( \Lambda[\stt^{\pm 1}, (\stt - c)^{-1}] \) for
\( c = d^{d}/(d-1)^{d-1} \) by the Jacobian criterion. Thus, the open subset
\( T := \Spec \Lambda[\stt, (\stt - c)^{-1}] \) of \( S = \BAA^{1}_{\Lambda} \),
\( \sigma \colon \Spec \Lambda \to T \subset S\),
and
\( \SV := V^{\sharp} \times_{S} T \to T\) satisfy
all the conditions of Theorem~\ref{thm:localsmoothing}.
\end{proof}


\section{Review of deformation theory}
\label{sect:Deformation}

We review the deformation theory of \(\Bbbk\)-schemes for
the fixed algebraically closed field \( \Bbbk \).

\begin{dfn}\label{dfn:defo}
Let \( X \) be a \( \Bbbk \)-scheme.
Let \( T \) be a scheme and let \( o \) be a \( \Bbbk \)-rational point
of \( T \), which is just a morphism \( o \colon \Spec \Bbbk \to T \).
A {\it deformation} of \( X \) over \( T \) with the reference point \( o \)
is a pair \( (Y/T, \iota) \) of
a flat morphism \( Y \to T \) of schemes
and an isomorphism \( \iota \colon Y \times_{T, o} \Spec \Bbbk \isom X\)
over \( \Spec \Bbbk \).
\end{dfn}

In this section,
we fix a Noetherian local ring \( \Lambda \)
which is either \( \Bbbk \)
or a complete discrete valuation ring
with residue field \( \Bbbk \).
For example,
the ring of Witt vectors of \( \Bbbk \) is a candidate of \( \Lambda \).

We recall some important notions mainly from
Schlessinger's article \cite{Sc68}.
Let \( \SC_{\Lambda} \) be the category of Artinian local \( \Lambda \)-algebras
with residue field \( \Bbbk \) and
let \( \widehat{\SC}_{\Lambda} \) be the category of complete
Noetherian local \( \Lambda \)-algebras \( \GR = (\GR, \GM_{\GR}) \)
such that \( \GR/\GM_{\GR}^{n} \in \SC_{\Lambda} \) for all \( n \).
An object \( \GR \) of \( \widehat{\SC}_{\Lambda} \) defines
a functor \( h_{\GR} \colon \SC_{\Lambda} \to (\Sets) \)
to the category of sets by \( h_{\GR}(A) = \Hom_{\widehat{\SC}_{\Lambda}}(\GR, A) \).
For two functors \( F \), \( G \colon \SC_{\Lambda} \to (\Sets) \),
a morphism \( \phi \colon F \to G \) means a natural transformation of functors.
The morphism \( \phi \colon F \to G \) is called \emph{smooth}
(in the sense of Schlessinger \cite[(2.2)]{Sc68}) if the natural map
\( F(B) \to F(A) \times_{G(A)} G(B) \) is surjective for
any surjection \( B \to A \) in \( \SC_{\Lambda} \).

The deformation functor \( \Def_{X}  \colon \SC_{\Lambda} \to
(\Sets) \)  of a \( \Bbbk \)-scheme \( X \) is defined as follows.
An infinitesimal deformation
of \( X \) to an algebra \( A \) of \( \SC_{\Lambda} \)
is a deformation \( (X_{A}, \iota) \) of \( X \) over \( \Spec A \)
with \( \GM_{A} \) as a \( \Bbbk \)-rational reference point .
Here, \( X_{A} \) is a flat \( A \)-scheme and
\( \iota \) is an isomorphism \( X_{A} \times_{\Spec A} \Spec \Bbbk \isom X \).
Another deformation \( (X'_{A},\iota') \) of \( X \)
is isomorphic to \( (X_{A}, \iota) \) if there is a morphism
\( \phi \colon X_{A} \to X'_{A} \) over \( \Spec A \) such that
\( \iota' =  \phi \circ \iota \).
Note that this morphism \( \phi \) is indeed an isomorphism over \( \Spec A \),
since \( X_{A} \) and \( X'_{A} \) are both homeomorphic to \( X \)
and since \( \phi \) induces the identity on \( X \).
We define \( \Def_{X}(A) \) to be the set of isomorphism classes
of deformations of \( X \) to \( A \).
Then, \( \Def_{X} \) gives rise to a functor
\(\SC_{\Lambda} \to (\Sets)\), which
is called the {\it deformation functor} of \( X \).
When \( X \) is an affine scheme \( \Spec R \), we write \( \Def_{R} \)
for \( \Def_{X} \).

\begin{dfn}[Pro-couple, formal deformation, and hull]\label{dfn:pro-couple}
Let \( \GR \) be an object of \( \widehat{\SC}_{\Lambda} \) and
set \( \GR_{n} :=  \GR/\GM_{\GR}^{n+1}\) for \( n \geq 0 \).
Let \( \xi \) be an element of
\[ \widehat{\Def_{X}}(\GR) :=
\varprojlim\nolimits_{n} \Def_{X}(\GR_{n}).\]
Note that to give an element of \( \widehat{\Def_{X}}(\GR) \) is equivalent to
giving a morphism \( h_{\GR} \to \Def_{X} \) of functors.
In \cite{Sc68}, the pair \( (\GR, \xi) \) is called a \emph{pro-couple}.
For each \( n \), the element
\( \xi \) defines a flat \( \GR_{n} \)-scheme \( X_{n} \)
with an isomorphism
\( \iota_{n} \colon X_{n} \times_{\Spec \GR_{n}} \Spec \Bbbk \isom X\).
Moreover, \( (X_{n}, \iota_{n}) \) form an inductive system.
Thus, we have a formal scheme \( \GX = \GX_{\xi} := \varinjlim X_{n} \)
flat over the affine formal scheme \( \Spf \GR \) with an isomorphism
\( \iota \colon \GX \times_{\Spf \GR} \Spec \Bbbk \isom X \)
(cf.\ \cite[I, Proposition~(10.6.3)]{EGA}).
The pair \( (\GX/\Spf \GR, \iota) \)
or the system \( (X_{n}, \iota_{n})_{n \geq 0} \) is
called a \emph{formal deformation} of \( X \)
over \( \GR \) (or over \( \Spf \GR \)).
\end{dfn}

\begin{remsub}
In the definition above, if \( X \) is an algebraic \( \Bbbk \)-scheme, i.e.,
a \( \Bbbk \)-scheme of finite type, then
\(\GX\) is Noetherian and
\( \GX \to \Spf \GR \) is a morphism of finite type by
\cite[I, Corollaire~(10.6.4), Proposition~(10.13.1), D\'efinition~(10.13.3)]{EGA}.
If \( X \) is proper over \( \Bbbk \), then \(\GX \to \Spf \GR\)
is proper (cf.\ \cite[III, (3.4.1)]{EGA}).
\end{remsub}

\begin{dfn}\label{dfn:effective,algebraizable,hull}
Let \( (\GR, \xi) \) be a pro-couple of \( \Def_{X} \) and
let \( \GX \to \Spf \GR \) be the formal deformation
of \( X \) associated to \( (\GR, \xi) \).

\begin{enumerate}
\item  The pro-couple (or the formal deformation)
is said to be \emph{effective} if
there is a flat \( \GR \)-scheme \(\SX \) such that
\( \SX \) is a deformation of \( X \) over \( \Spec \GR \) and that
\( (\GR, \xi) \) is induced from
the inductive system \( \SX_{n} = \SX \times_{\Spec \GR} \Spec \GR_{n} \);
equivalently the \( \GM_{\GR} \)-adic completion of \( \SX \) is
isomorphic to \( \GX \)
as a formal scheme over \( \Spf \GR \).

\item  The pro-couple (or the formal deformation)
is said to be \emph{algebraizable} if
there is a deformation \( Y \to T \) of \( X \) with
a \( \Bbbk \)-rational reference point \( o \in T\) such that
\begin{itemize}
\item \( T \) is a scheme of finite type over \( \Lambda \),

\item  the completion of the local ring \( \SO_{T, o} \) is
isomorphic to \( \GR \), and

\item  the formal completion of \( Y \) along the fiber \( X \) is isomorphic to
\( \GX \) over \( \Spf \GR \).
\end{itemize}

\item The pro-couple \( (\GR, \xi) \)
is called a \emph{pro-representable hull}
(or a \emph{hull}, for short) of \( \Def_{X} \) if the morphism
\( h_{\GR} \to \Def_{X} \) corresponding to \( \xi \) is smooth and
induces the bijection
\[ \Bt(h_{\GR}) := h_{\GR}\left(\Bbbk[\ep]/(\ep^{2})\right)
\to \Bt(\Def_{X}) := \Def_{X}\left(\Bbbk[\ep]/(\ep^{2})\right) \]
between the tangent spaces (cf.\ \cite[Definition~2.7]{Sc68}).
\end{enumerate}
\end{dfn}

\begin{remsub}\label{remsub:dfn:effective,algebraizable,hull:1}
The hull is unique up to non-canonical isomorphism
(cf.\ \cite[Proposition~2.9]{Sc68}).
The existence of hull of \( \Def_{X} \) is known in the cases
when \( X \) is proper over \( \Bbbk \)
and when \( X \) is affine with only isolated singularities
(cf.\ \cite[Proposition~3.10]{Sc68}).
\end{remsub}

\begin{remsub}\label{remsub:dfn:effective,algebraizable,hull:200}
Let \( X \) be a projective \( \Bbbk \)-scheme and let
\( \GX \to \Spf \GR \) be the formal deformation associated
with a pro-representable hull \( (\GR, \xi) \) of \( \Def_{X} \).
If \( \GX \) admits a relatively ample invertible sheaf over \( \Spf \GR \),
then this is effective by a projective morphism \(\SX \to \Spec \GR \),
by an application \cite[III, Th\'eor\`eme~(5.4.5)]{EGA} of
Grothendieck's existence theorem, i.e.,
\( \GX \) is the \( \GM_{\GR} \)-adic completion of \( \SX \).
Moreover, this is algebraizable by Artin's result \cite[Theorem~1.6]{ArtinFormal}.
\end{remsub}

\begin{remsub}\label{remsub:dfn:effective,algebraizable,hull:2}
Let \( X \) be an equidimensional affine algebraic \( \Bbbk \)-scheme with
only isolated singularities. Then, the pro-representable hull of \( \Def_{X} \)
is effective by \cite[Chapitre~IV, Th\'eor\`eme~7]{Elkik}
and is algebraizable by \cite[Theorem~1.6]{ArtinFormal}.
\end{remsub}

\begin{lem}\label{lem:functorsisom}
If \( X \) is an affine algebraic \( \Bbbk \)-scheme
with a unique singular point \( x \) and
if \( (X', x') \) is an \'etale neighborhood of \( (X, x) \),
then there is a smooth morphism
\( \Def_{X} \to \Def_{\SO_{X', x'}} \) of functors on \( \SC_{\Lambda} \)
inducing an isomorphism between the tangent spaces.
\end{lem}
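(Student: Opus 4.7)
The plan is to define the morphism $\Def_{X} \to \Def_{\SO_{X',x'}}$ by \'etale pullback and localization, and then to establish both the tangent-space isomorphism and the smoothness via local-to-global cotangent cohomology on the affine scheme $X$. Concretely, for a deformation $(X_{A}, \iota) \in \Def_{X}(A)$ with $A \in \SC_{\Lambda}$, set $X'_{A} := X_{A} \times_{X} X'$, which is \'etale over $X_{A}$, flat over $A$, and admits a unique closed point $x'_{A}$ specializing to $x'$ via $\iota$. Sending $(X_{A}, \iota)$ to the local ring $\SO_{X'_{A}, x'_{A}}$, together with the induced identification of its special fiber with $\SO_{X',x'}$, defines the required natural transformation.

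For the tangent space, since $X$ is affine and has only the isolated singular point $x$, the local-to-global spectral sequence for cotangent cohomology degenerates and yields $\Bt(\Def_{X}) \isom H^{0}(X, \SExt^{1}_{\SO_{X}}(\Omega^{1}_{X/\Bbbk}, \SO_{X}))$; this $\SExt$-sheaf is supported at $x$, so its global sections coincide with its stalk there. Since $X' \to X$ is \'etale at $x'$ above $x$ and the cotangent complex is compatible with \'etale base change, this stalk is canonically identified with $\SExt^{1}_{\SO_{X',x'}}(\Omega^{1}_{\SO_{X',x'}/\Bbbk}, \SO_{X',x'}) \isom \Bt(\Def_{\SO_{X',x'}})$, and a direct check shows that this identification agrees with the tangent map induced by the morphism constructed above.

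For smoothness, consider a surjection $B \surjmap A$ in $\SC_{\Lambda}$; after reducing to a small extension with kernel $I$ satisfying $I\GM_{B} = 0$, given $(X_{A}, \iota) \in \Def_{X}(A)$ with image $R_{A} \in \Def_{\SO_{X',x'}}(A)$ and a lift $R_{B} \in \Def_{\SO_{X',x'}}(B)$ of $R_{A}$, we must produce a lift $X_{B} \in \Def_{X}(B)$ of $X_{A}$ whose image is $R_{B}$. The obstruction to lifting $X_{A}$ to an element of $\Def_{X}(B)$ lies in $T^{2}(X/\Bbbk) \otimes_{\Bbbk} I$; by the same local-to-global argument together with the \'etale invariance of the cotangent complex, this group is canonically identified with $T^{2}(\SO_{X',x'}/\Bbbk) \otimes_{\Bbbk} I$, and the two obstruction classes correspond. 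Since the local obstruction vanishes by the existence of $R_{B}$, so does the global one, yielding some lift $X_{B}$. Its image $R'_{B}$ may differ from $R_{B}$ by an element of $T^{1}(\SO_{X',x'}/\Bbbk) \otimes_{\Bbbk} I$; via the tangent isomorphism and the compatibility of the torsor structures on the two sets of lifts, we may act on $X_{B}$ by the corresponding element of $T^{1}(X/\Bbbk) \otimes_{\Bbbk} I$ to adjust its image to be exactly $R_{B}$.

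The main point that requires care is the precise compatibility of the local-to-global identifications of $T^{i}(X/\Bbbk)$ (for $i = 1, 2$) with \'etale localization at the singular point and with the canonical tangent and obstruction classes, so that both deformation functors really share the same infinitesimal theory at the unique singular point; this follows from the functoriality of the cotangent complex, its compatibility with \'etale base change, and the vanishing of higher cohomology of coherent sheaves on the affine scheme $X$.
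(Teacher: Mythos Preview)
Your argument is correct in substance and gives a self-contained proof, but it differs from the paper's route. The paper disposes of the lemma in two lines: the morphism comes from the topological invariance of the \'etale site (\cite[IV, Th\'eor\`eme (18.1.2)]{EGA}), and both the smoothness and the tangent-space isomorphism are obtained by citing \cite[Theorem~4.10(b)]{Rim} as a black box. What you have written is essentially an unpacking of that reference: you run the obstruction-theoretic argument directly via the cotangent complex and the local-to-global spectral sequence, using that on an affine scheme the higher cohomology vanishes so that $T^{i}(X/\Bbbk,\SO_{X})$ coincides with the stalk of $\ST^{i}$ at the unique singular point. In effect your proof is the affine special case of the paper's own proof of Theorem~\ref{thm:H2T=0}, where the hypothesis $\OH^{2}(X,\Theta_{X/\Bbbk})=0$ holds automatically. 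This buys you an argument independent of Rim's paper, at the cost of repeating machinery the paper deploys anyway a few lines later.

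One genuine notational slip: the expression $X'_{A} := X_{A} \times_{X} X'$ is not well-defined, since there is no morphism $X_{A} \to X$ (only the closed immersion $X \hookrightarrow X_{A}$ coming from $\iota$). What you need is precisely the statement that an \'etale morphism $X' \to X$ lifts uniquely to an \'etale morphism $X'_{A} \to X_{A}$ along the nilpotent thickening $X \hookrightarrow X_{A}$; this is the content of \cite[IV, Th\'eor\`eme~(18.1.2)]{EGA} (or equivalently the topological invariance of the \'etale site), and it is what the paper invokes. Once you replace the fiber-product notation by this lifting statement, the rest of your construction and your obstruction-theoretic verification go through as written.
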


\begin{proof}
We have a morphism \( \Def_{X} \to \Def_{\SO_{X', x'}} \)
by \cite[IV, Th\'eor\`eme (18.1.2)]{EGA}.
The smoothness and the isomorphism between the tangent spaces
are derived from \cite[Theorem 4.10.(b)]{Rim}.
\end{proof}

\begin{remsub}\label{remsub:lem:functorsisom00:nonsingular}
If \( (X, x) \) is non-singular, then
\( \Def_{\SO_{X, x}}(A) \) consists of one element for any \( A \in \SC_{\Lambda} \).
In other words, the canonical morphism \( \Def_{R} \to h_{\Lambda} \) of functors is
an isomorphism.
This follows for example from \cite[IV, Proposition~(18.1.1)]{EGA} and
the formal smoothness (cf.\ \cite[IV, D\'efinition~(17.1.1)]{EGA})
of \( \Spec \SO_{X, x} \to \Spec \Bbbk \).
\end{remsub}

\begin{lemsub}\label{lem:functorsisom01}
Let \( (X, x)  \) and \( (X', x') \) be as in Lemma~\emph{\ref{lem:functorsisom}}.
Let \( Y \to T\) be a deformation of \( \Spec \SO_{X, x} \) over
a scheme \( T \) which is
with a \( \Bbbk \)-rational reference point \( o \in T \) such that \( Y \) is
an affine scheme of a local ring.
Then, there exist a deformation \( Y' \to T\) of \( \Spec \SO_{X', x'} \) over \( T \)
with reference point \( o \) and a formally \'etale morphism
\( Y' \to Y \) over \( T \) inducing \( \Spec \SO_{X', x'} \to \Spec \SO_{X, x} \)
as a morphism between the fibers over \( o \).
\end{lemsub}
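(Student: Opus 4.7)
The plan is to exhibit $A'$ concretely via the standard structure theorem for \'etale morphisms. Writing $R := \SO_{X, x}$, $R' := \SO_{X', x'}$, and $Y = \Spec A$ for a local ring $A$ with $A \otimes_{\SO_{T, o}} \Bbbk \isom R$, we observe that $R \to R'$ is an \'etale map of local rings with trivial residue field extension (by the very definition of an \'etale neighborhood recalled in the Notation and conventions). The structure theorem for \'etale morphisms then furnishes a monic polynomial $f(T) \in R[T]$ together with a prime $P$ of $R[T]/(f(T))$ lying over $\GM_{R}$, such that $R' \isom (R[T]/(f(T)))_{P}$ and $f'(T) \notin P$; because the residue field of $R'$ equals $\Bbbk$, one checks that $P$ is maximal in $R[T]/(f(T))$ with residue field $\Bbbk$.

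The next step is to lift this presentation to $A$. We choose any monic lift $F(T) \in A[T]$ of $f(T)$, form $B := A[T]/(F(T))$, which is finite free over $A$, and let $\GM_{B} \subset B$ denote the preimage of $P$ under the canonical surjection $B \twoheadrightarrow B \otimes_{A} R = R[T]/(f(T))$. Since $P$ has residue field $\Bbbk$ and $\GM_{A} \subset \GM_{B}$, the ideal $\GM_{B}$ is maximal in $B$ with residue field $\Bbbk$. We then define $A' := B_{\GM_{B}}$ and $Y' := \Spec A'$.

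Verification will proceed as follows. The base change gives
\[
A' \otimes_{A} R = (B \otimes_{A} R)_{P} = (R[T]/(f(T)))_{P} = R',
\]
so $Y' \times_{T} \Spec \Bbbk \isom \Spec R'$, and the induced morphism on special fibers agrees with the given \'etale neighborhood. Moreover, $A'$ is flat over $A$ (the flat extension $A \to B$ followed by a localization), hence flat over $\SO_{T, o}$, so $Y' \to T$ is a deformation of $\Spec R'$. Since $F'(T)$ reduces modulo $\GM_{A}$ to $f'(T) \notin P$, we have $F'(T) \notin \GM_{B}$, so $F'(T)$ is a unit in $A'$; this makes $A \to B$ \'etale at $\GM_{B}$, and the composite $A \to A' = B_{\GM_{B}}$ is therefore formally \'etale, being an \'etale morphism followed by a localization. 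No step poses a real obstacle; the only mildly subtle point is the maximality of $P$ in $R[T]/(f(T))$, which is automatic from the trivial residue field extension built into the definition of \'etale neighborhood.
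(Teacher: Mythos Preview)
Your argument is correct. You construct the lift by hand via the local structure theorem for \'etale morphisms and an explicit monic lift; each verification step (finite freeness of $B$ over $A$, the identification $A'\otimes_A R = (R[T]/(f))_P = R'$, invertibility of $F'(T)$ in $A'$, and formal \'etaleness of a localization of an \'etale map) goes through as you say.

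The paper takes a different, much shorter route: it simply invokes \cite[IV, Proposition~(18.1.1)]{EGA}, using that $\SO_{X,x}$ and $\SO_{X',x'}$ are essentially of finite type over $\Bbbk$. In other words, the paper appeals to the general EGA lifting machinery for (formally) \'etale morphisms, whereas you essentially reprove the relevant special case from scratch. Your approach is more self-contained and makes the construction explicit, at the cost of some length; the paper's approach is a one-line citation but requires the reader to locate and unpack the EGA statement. Both are perfectly valid here.
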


\begin{proof}
This follows from \cite[IV, Proposition~(18.1.1)]{EGA},
since \( \SO_{X, x} \) and \( \SO_{X', x'}\) are
essentially of finite type over \( \Bbbk \).
\end{proof}

\begin{dfn}\label{dfn:Defloc}
Let \( X \) be an algebraic \( \Bbbk \)-scheme
and \( P \) a \( \Bbbk \)-rational point.
We write \( \Def_{(X, P)} := \Def_{\SO_{X, P}} \).
When \( X \) is non-singular outside a finite set
(i.e., has only isolated singularities), we define a functor
\( \Def_{X}^{(\loc)} \) on \( \SC_{\Lambda} \) by
\[ \Def^{(\loc)}_{X}(A) := \prod\nolimits_{P \in \Sing X}\Def_{(X, P)}(A)  \]
for \( A \in \SC_{\Lambda} \), where \( \Sing X \) stands for the singular locus.
\end{dfn}

\begin{remsub}\label{remsub:dfn:Defloc}
There is a natural morphism \( \Def_{X} \to \Def_{(X, P)} \) of functors
for any \( \Bbbk \)-rational point \( P \in X \) which
sends \( (X_{A}, \iota) \in \Def_{X}(A) \) to the pair consisting of
the local \( A \)-algebra \( \SO_{X_{A}, \iota(P)} \) and the
isomorphism \( \SO_{X, P} \isom  \SO_{X_{A}, \iota(P)} \otimes_{A} \Bbbk \)
induced by \( \iota \).
As a consequence, we have a natural morphism
\( \Def_{X} \to \Def_{X}^{(\loc)} \) of functors when
\( X \) has only isolated singularities.
\end{remsub}

We shall give a proof of the following well-known:

\begin{thm}[{\cite[Proposition~6.4]{WahlSmoothings}}]\label{thm:H2T=0}
Let \( X \) be an algebraic \( \Bbbk \)-scheme with only isolated singularities.
Assume that \( \OH^{2}(X, \Theta_{X/\Bbbk}) = 0 \),
where \( \Theta_{X/\Bbbk}\) denotes the tangent sheaf
\( \SHom_{\SO_{X}}(\Omega^{1}_{X/\Bbbk}, \SO_{X}) \).
Then, the morphism \( \Def_{X} \to \Def_{X}^{(\loc)} \) of functors is smooth.
\end{thm}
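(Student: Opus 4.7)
The plan is to combine the standard obstruction calculus along small extensions with the local-to-global spectral sequence for cotangent cohomology. Since smoothness of a morphism of functors on \( \SC_{\Lambda} \) can be tested along small extensions \( 0 \to I \to B \to A \to 0 \) (with \( \GM_{B} \cdot I = 0 \)), I fix such an extension and suppose given \( \xi_{A} \in \Def_{X}(A) \) together with \( \eta_{B} \in \Def_{X}^{(\loc)}(B) \) whose image in \( \Def_{X}^{(\loc)}(A) \) is the local part of \( \xi_{A} \). The task is to produce \( \xi_{B} \in \Def_{X}(B) \) lifting \( \xi_{A} \) globally and inducing \( \eta_{B} \) locally.

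The key input is the local-to-global spectral sequence
\[ E_{2}^{p,q} = \OH^{p}(X, \mathcal{T}^{q}_{X/\Bbbk}) \Longrightarrow \OT^{p+q}_{X/\Bbbk}, \]
where \( \mathcal{T}^{0}_{X/\Bbbk} = \Theta_{X/\Bbbk} \) and, because \( X \) has only isolated singularities, \( \mathcal{T}^{q}_{X/\Bbbk} \) for \( q \geq 1 \) is a skyscraper sheaf supported on \( \Sing X \). Consequently \( \OH^{i}(X, \mathcal{T}^{q}_{X/\Bbbk}) = 0 \) for \( i > 0 \) and \( q \geq 1 \), and
\[ \OH^{0}(X, \mathcal{T}^{q}_{X/\Bbbk}) = \bigoplus\nolimits_{P \in \Sing X} \OT^{q}_{(X, P)/\Bbbk} \qquad (q \geq 1). \]
The five-term exact sequence together with the hypothesis \( \OH^{2}(X, \Theta_{X/\Bbbk}) = 0 \) then forces the surjectivity
\( \OT^{1}_{X/\Bbbk} \surjmap \bigoplus_{P} \OT^{1}_{(X, P)/\Bbbk}, \)
and the vanishing of the higher filtration pieces on \( \OT^{2}_{X/\Bbbk} \) (coming from \( E_{2}^{1,1} = 0 \) and \( E_{2}^{2,0} = 0 \)) gives the injectivity
\( \OT^{2}_{X/\Bbbk} \injmap \bigoplus_{P} \OT^{2}_{(X, P)/\Bbbk}. \)

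With these two facts the obstruction argument runs in two steps. First, the obstruction to lifting \( \xi_{A} \) to some element of \( \Def_{X}(B) \) is a class \( o \in \OT^{2}_{X/\Bbbk} \otimes_{\Bbbk} I \) whose image in \( \bigoplus_{P} \OT^{2}_{(X, P)/\Bbbk} \otimes I \) is the collection of local obstructions. The latter vanish because \( \eta_{B} \) already witnesses local liftability, so the injectivity above gives \( o = 0 \) and some global lifting \( \xi_{B}' \in \Def_{X}(B) \) of \( \xi_{A} \) exists. Second, the set of such liftings is a torsor under \( \OT^{1}_{X/\Bbbk} \otimes_{\Bbbk} I \), and the analogous statement holds on each local piece in a compatible way; hence the discrepancy between \( \eta_{B} \) and the local part of \( \xi_{B}' \) defines a class in \( \bigoplus_{P} \OT^{1}_{(X, P)/\Bbbk} \otimes I \), which the surjectivity above lifts to some \( \tau \in \OT^{1}_{X/\Bbbk} \otimes I \); twisting \( \xi_{B}' \) by \( \tau \) produces the required \( \xi_{B} \).

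The main technical obstacle will be bookkeeping: one must verify that the obstructions and torsor-difference classes arising at the deformation-functor level agree with the cohomology classes in \( \OT^{i}_{X/\Bbbk} \) and \( \OT^{i}_{(X, P)/\Bbbk} \) used above, and that the global-to-local comparison map of the spectral sequence coincides with the map \( \Def_{X} \to \Def_{X}^{(\loc)} \) under these identifications. These compatibilities are consequences of the functoriality of Illusie's cotangent complex and work uniformly in any characteristic; the substance of the work consists in tracking them through the various identifications rather than in any delicate new computation.
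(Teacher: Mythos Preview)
Your proposal is correct and follows essentially the same route as the paper's proof: both reduce to a small extension, invoke the local-to-global spectral sequence for cotangent cohomology, use $E_2^{2,0}=E_2^{1,1}=0$ to obtain the injection on $\OT^2$ and the surjection on $\OT^1$, kill the global obstruction via the vanishing of its local images, and then adjust the resulting lift by a $\OT^1$-torsor element to match the prescribed local data. The only cosmetic difference is that the paper works directly with coefficients in $I\SO_X$ rather than writing $\OT^i_{X/\Bbbk}\otimes_\Bbbk I$, which is the same thing since $I$ is a $\Bbbk$-vector space.
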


\begin{proof}
Let \( B \to A \) be a surjection in \( \SC_{\Lambda} \) with the kernel \( I \)
satisfying \( I\GM_{B} = 0 \).
It suffices to prove that
\begin{equation}\label{eq:smoothXXloc}
\Def_{X}(B) \to \Def_{X}(A) \times_{\Def^{(\loc)}_{X}(A)} \Def^{(\loc)}_{X}(B)
\end{equation}
is surjective.
An element of the right-hand side of \eqref{eq:smoothXXloc}
consists of
\begin{itemize}
\item a flat \( A \)-scheme \( X_{A} \) with an isomorphism
\( \iota_{A} \colon X_{A} \times_{\Spec A} \Spec \Bbbk \isom X\), and

\item  flat \( B \)-algebras \( S^{(P)}_{B} \) for any \( P \in \Sing X \)
with isomorphisms
\( \iota_{B}^{(P)} \colon S^{(P)}_{B} \otimes_{B} \Bbbk\isom \SO_{X, P} \),
\end{itemize}
where, for any \( P \in \Sing X \),
we can find an isomorphism
\( \Psi_{P} \colon \SO_{X_{A}, P} \xrightarrow{\isom} S^{(P)}_{B} \otimes_{B} A \)
such that the composite
\[ \SO_{X_{A}, P} \xrightarrow{\Psi_{P}}
S^{(P)}_{B} \otimes_{B} A \to S^{(P)}_{B} \otimes_{B} \Bbbk
\xrightarrow{\iota_{B}^{(P)}} \SO_{X, P}\]
is the homomorphism induced by \( \iota_{A} \).

We apply the obstruction theory of infinitesimal deformations
using the cotangent complexes (cf.\ \cite{Illusie}, \cite{LS}):
Let \( \BLL_{Z/Y} \) be the cotangent complex
(as an object of the derived category \( D^{-}(\text{Coh}(Z)) \)) for a morphism
\( Z \to Y \) of schemes. For a coherent sheaf \( \SF \) on \( Z \), we denote
the associated cohomology groups/sheaves by
\begin{align*}
\OT^{i}(Z/Y, \SF) &= \BExt^{i}(\BLL_{Z/Y}, \SF) =
\OH^{i}(\RHom_{\SO_{Z}}(\BLL_{Z/Y}, \SF)), \\
\ST^{i}(Z/Y, \SF) &= \underline{\BExt}^{i}(\BLL_{Z/Y}, \SF)
= \SH^{i}(\SRHom_{\SO_{Z}}(\BLL_{Z/Y}, \SF))
\end{align*}
for \( i \geq 0 \). When \( Y\) is the affine scheme \( \Spec A \),
we write ``\( /A \)'' instead of ``\( /\Spec A \)''.
If \( Z \) is also affine in addition,
then \( Z \) is replaced with the coordinate ring.
We recall a few properties on \( \OT^{i}(Z/Y, \SF) \) and
\( \ST^{i}(Z/Y, \SF) \).
\begin{enumerate}
    \renewcommand{\theenumi}{\roman{enumi}}
    \renewcommand{\labelenumi}{(\theenumi)}
\item  \label{cot:1}
\( \OT^{0}(Z/Y, \SF)
\isom \SHom_{\SO_{Z}}(\Omega^{1}_{Z/Y}, \SF)\)
(cf.\ \cite[Chapitre~II, (1.2.7.4) and Corollaire 1.2.4.3]{Illusie}).

\item \label{cot:2}
If \( Z \to Y \) is smooth, then
\( \OT^{i}(Z/Y, \SF) = \ST^{i}(Z/Y, \SF) = 0 \)
for all \( i > 0 \) (cf.\ \cite[Chapitre~III, Proposition~3.1.2]{Illusie}).

\item \label{cot:3} If \( Z \to Y \) is the base change of a flat morphism
\( Z_{1} \to Y_{1} \) by an affine morphism \( Y \to Y_{1} \),
then \( \OT^{i}(Z/Y, \SF) \isom
\OT^{i}(Z_{1}/Y_{1}, u_{*}\SF) \)
for the induced affine morphism \( u \colon Z \to Z_{1} \)
(cf.\ \cite[2.3.2]{LS}, \cite[Chapitre~II, Corollaire~2.3.11]{Illusie}).
\end{enumerate}

We want to find a \( B \)-scheme \( X_{B} \), whose structure sheaf \( \SO_{X_{B}} \) is
sitting inside a commutative diagram
\[ \begin{CD}
0 @>>> I\SO_{X_{A}} @>>> \SO_{X_{B}} @>>> \SO_{X_{A}} @>>> 0 \\
@. @AAA @AAA @AAA \\
0 @>>> I @>>> B @>>> A @>>> 0
\end{CD}\]
of algebra extensions, where the top exact sequence means that
the ideal sheaf of \( X_{A} \) in \( X_{B} \) is square zero and is isomorphic to
\( I\SO_{X_{A}} \) as an \( \SO_{X_{A}} \)-module, and where
the vertical arrows are natural homomorphisms for the \( A \)-scheme \( X_{A} \)
and the \( B \)-scheme \( X_{B} \).
Note that \( I\SO_{X_{A}} \isom I \otimes_{\Bbbk} \SO_{X} \),
since \( I\GM_{B} = 0 \). We write \( I\SO_{X} := I \otimes_{\Bbbk} \SO_{X} \).
The obstruction class \( \ob(X_{A}) \) for the existence of \( X_{B} \)
lies in the cohomology group
\( T^{2}(X_{A}/A, I\SO_{X_{A}}) \)
(cf.\ \cite[Chapitre~III, Th\'eor\`em~2.1.7.(i)]{Illusie})
which is isomorphic to \( T^{2}(X/\Bbbk, I\SO_{X}) \) by \eqref{cot:3} above.
We consider the spectral sequence
\[ E_{2}^{p, q} = \OH^{p}(X, \ST^{q}(X/\Bbbk, I\SO_{X})) \Rightarrow
E^{p+q} = \OT^{p+q}(X/\Bbbk, I\SO_{X}). \]
Then, \( E_{2}^{2, 0} = 0 \) by \( \OH^{2}(X, \Theta_{X/\Bbbk}) = 0 \),
since \( \ST^{0}(X/\Bbbk, I\SO_{X}) = \Theta_{X/\Bbbk}\otimes_{\Bbbk} I  \)
by \eqref{cot:1}.
Moreover, \( E_{2}^{1, 1} = 0 \), since \( \Sing X \) is finite and
since
\( \ST^{1}(X/\Bbbk, I\SO_{X}) = \ST^{1}(X/\Bbbk, \SO_{X}) \otimes_{\Bbbk} I\)
is supported on \( \Sing X \) by \eqref{cot:2}.
In particular,
\[ \OT^{1}(X/\Bbbk, I\SO_{X}) = E^{1} \to
E_{2}^{0, 1} =
\prod\nolimits_{P \in \Sing X} \OT^{1}(\SO_{X, P}/\Bbbk, I\SO_{X, P})\]
is surjective, and
\[ \OT^{2}(X/\Bbbk, I\SO_{X}) = E^{2} \to E_{2}^{0, 2}   =
\prod\nolimits_{P \in \Sing X} \OT^{2}(\SO_{X, P}/\Bbbk, I\SO_{X, P}) \]
is injective.
The class \( \ob(X_{A}) \) lies in the kernel of
\( E^{2} \to E_{2}^{0, 2}  \),
since \( S^{(P)}_{A} \) possesses a lifting \( S^{(P)}_{B} \) to \( B \).
Thus, \( \ob(X_{A}) = 0 \). As a consequence,
we have a flat \( B \)-scheme \( X_{B} \) with
an isomorphism
\( \iota_{B/A} \colon X_{B} \times_{\Spec B} \Spec A \isom X_{A} \) over \( A \).
In other words, we have an element \( (X_{B}, \iota_{B}) \) of \( \Def_{X}(B) \)
which is mapped to \( (X_{A}, \iota_{A}) \in \Def_{X}(A) \).
However, \( \SO_{X_{B}, P} \) may not be isomorphic to \( S^{(P)}_{B} \)
as a lift of \( S^{(P)}_{A} \) to \( B \).
But, by a usual obstruction theory of deformations,
the difference of two lifts lies in
\( T^{1}(S^{(P)}_{A}/A, I\SO_{X, P}) \isom
T^{1}(\SO_{X, P}/\Bbbk, I\SO_{X, P})\)
(cf.\ \cite[Chapitre~III, Th\'eor\`em~2.1.7.(ii)]{Illusie}).
Since \( E^{1} \to E^{0, 1}_{2}\) is surjective,
we can replace the lift \( X_{B} \) by another one so that \( \SO_{X_{B}, P} \)
is isomorphic to \( S^{(P)}_{B} \) for any \( P \in \Sing X \).
Thus, \( (X_{B}, \iota_{B}) \) is mapped to the given element of
the right-hand side of \eqref{eq:smoothXXloc}.
Therefore, \eqref{eq:smoothXXloc} is surjective.
\end{proof}

Finally in Section~\ref{sect:Deformation},
we shall give the following result on algebraization
related to Theorem~\ref{thm:H2T=0}.
Roughly speaking, the result says that under suitable conditions,
any given local algebraic deformations of isolated singularities
extend to a global algebraic deformation.

\begin{thm}[algebraization]\label{thm:algn}
Let \( X \) be a normal projective variety defined over
an algebraically closed field \( \Bbbk \) with only isolated singularities.
Assume that
\begin{enumerate}
    \renewcommand{\theenumi}{\roman{enumi}}
    \renewcommand{\labelenumi}{(\theenumi)}
\item \label{thm:algn:ass1} \( \OH^{2}(X, \Theta_{X/\Bbbk}) = 0 \),

\item \label{thm:algn:ass2}
the formal deformation \( \GX \to \Spf \GR \) associated with
a pro-representable hull of \( \Def_{X} \) is projective, i.e.,
\( \GX \) admits a relatively ample invertible sheaf over \( \Spf \GR \).
\end{enumerate}
Let \( T \) be an algebraic \( \Lambda \)-scheme
and let \( o \in T \) be a \( \Bbbk \)-rational point.
For every singular point \( P \in X \), assume that
we are given an affine \'etale neighborhood \( (U_{(P)}, P')\)
of \( P \) in \( X \) with a unique singular point and
given a deformation \( \SU_{(P)} \to T \) of \( U_{(P)} \)
with reference point \( o \). Then, there exist
\begin{itemize}
\item  an \'etale neighborhood \( (T', o') \) of \( (T, o) \), and

\item  a projective morphism \( Z \to T' \) which is
a deformation of \( X \) with reference point \( o' \)
\end{itemize}
such that, for any singular point \( P \),
\( (Z, P) \) and \( (\SU_{(P)}, P') \) have a common \'etale neighborhood,
i.e., the formal completions of
the local rings \( \SO_{Z, P} \) and \( \SO_{\SU_{(P)}, P'} \)
are isomorphic to each other
\emph{(}\cite[Corollary~(2.6)]{ArtinApprox}\emph{)}.
\end{thm}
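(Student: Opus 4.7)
The plan is to combine the smoothness of \( \Def_{X} \to \Def_{X}^{(\loc)} \) from Theorem~\ref{thm:H2T=0}, the effectivization afforded by assumption~\eqref{thm:algn:ass2} together with Grothendieck's existence theorem, and Artin's algebraization theorem. Set \( B := \widehat{\SO}_{T, o} \) and \( B_{n} := B/\GM_{B}^{n+1} \). The given deformation \( \SU_{(P)} \to T \), pulled back along \( \Spec B_{n} \to T \), defines an element of \( \Def_{U_{(P)}}(B_{n}) \); via the smooth morphism \( \Def_{U_{(P)}} \to \Def_{(X, P)} \) of Lemma~\ref{lem:functorsisom}, which is bijective on tangent spaces, this yields a compatible system in \( \varprojlim_{n} \Def_{(X, P)}(B_{n}) \). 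Collecting over \( P \in \Sing X \) produces an element \( \eta \in \varprojlim_{n} \Def_{X}^{(\loc)}(B_{n}) \).

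By hypothesis~\eqref{thm:algn:ass1} and Theorem~\ref{thm:H2T=0}, the morphism \( \Def_{X} \to \Def_{X}^{(\loc)} \) is smooth in the sense of Schlessinger, so successive application of smoothness to the Artinian surjections \( B_{n} \twoheadrightarrow B_{n-1} \) lifts \( \eta \) to a compatible system \( (X_{n}/\Spec B_{n})_{n \geq 0} \). This system assembles into a formal deformation \( \GX' \to \Spf B \) of \( X \) whose induced local deformation at each singular point \( P \) matches the completion of \( \SU_{(P)} \). Since \( (\GR, \xi) \) is a hull of \( \Def_{X} \), smoothness of \( h_{\GR} \to \Def_{X} \) provides a \( \Lambda \)-algebra morphism \( \GR \to B \) through which the pro-couple of \( \GX' \) factors; hence \( \GX' \) is a pullback of \( \GX \) and inherits the relatively ample invertible sheaf assumed in~\eqref{thm:algn:ass2}. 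By Grothendieck's existence theorem (cf.\ Remark~\ref{remsub:dfn:effective,algebraizable,hull:200}) there is then a projective \( B \)-scheme \( \SX' \to \Spec B \) deforming \( X \) whose \( \GM_{B} \)-adic completion is \( \GX' \).

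Finally, applying Artin's algebraization theorem \cite[Theorem~1.6]{ArtinFormal} to \( \SX' \to \Spec B \) produces an algebraic \( \Lambda \)-scheme \( T' \) with a \( \Bbbk \)-rational point \( o' \), a projective deformation \( Z \to T' \) of \( X \) with reference point \( o' \), and an isomorphism between the formal completion of \( Z \to T' \) along the fibre over \( o' \) and \( \SX' \to \Spec B \). Since \( \widehat{\SO}_{T', o'} \isom B \isom \widehat{\SO}_{T, o} \), the pairs \( (T, o) \) and \( (T', o') \) share a common étale neighborhood by \cite[Corollary~2.6]{ArtinApprox}; replacing \( T' \) by such a neighborhood, we may assume \( T' \to T \) is étale. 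The compatibility at each singularity follows from the construction: the formal completions of \( \SO_{Z, P} \) and \( \SO_{\SU_{(P)}, P'} \) are both identified with that of \( \SO_{\SX', P} \), so \( (Z, P) \) and \( (\SU_{(P)}, P') \) share a common étale neighborhood.

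The main obstacle is the correct interlocking of the three mechanisms. One must verify that the pro-couple of \( \GX' \) arises as a pullback of the hull's pro-couple (so that the projectivity hypothesis~\eqref{thm:algn:ass2} propagates and Grothendieck effectivization applies), and that Artin's algebraization can be arranged to produce an étale neighborhood of the prescribed base \( (T, o) \) rather than an unrelated scheme. Once these compatibilities are set up, the matching with each given \( \SU_{(P)} \) is automatic.
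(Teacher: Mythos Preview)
Your argument breaks down at the invocation of Artin's theorem. Theorem~1.6 of \cite{ArtinFormal} algebraizes an effective formal element only when that element is \emph{versal}; it is precisely versality that forces the completion \( \widehat{\SO}_{T', o'} \) of the resulting algebraic base to coincide with the given complete local ring. Your effective family \( \SX' \to \Spec B \) is not versal in general: it was constructed to match the prescribed local deformations \( \SU_{(P)} \), and \( B = \widehat{\SO}_{T,o} \) is an arbitrary complete local ring, not the hull. Without versality, Artin approximation only yields, for each \( n \), an algebraic family over some \( (T',o') \) agreeing with \( \SX' \) modulo \( \GM_B^{n} \); you do not get \( \widehat{\SO}_{T',o'} \isom B \), and hence neither the étale link to \( (T,o) \) nor the exact isomorphism of completions at each \( P \) follows.

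The paper avoids this by never trying to algebraize the non-versal family over \( B \). Instead it algebraizes the \emph{hulls}: the global hull \( \GX \to \Spf \GR \) to \( W \to S \) and each local hull to \( W_{(P)} \to T_{(P)} \) (here Artin~1.6 applies legitimately). The given data \( \SU_{(P)} \to T \) then yield, after étale localization on \( T \), classifying maps \( \varphi_{(P)} \colon T \to T_{(P)} \), and similarly \( W \to S \) yields \( \phi_{(P)} \colon S \to T_{(P)} \). Theorem~\ref{thm:H2T=0} now says the map \( \phi \colon S \to \prod_{P} T_{(P)} \) is smooth at \( b \); hence \( S \times_{\prod T_{(P)}} T \to T \) is smooth near \( (b,o) \) and admits a section over an étale neighborhood \( T' \to T \). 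Pulling back \( W \to S \) along the resulting \( T' \to S \) gives the desired \( Z \to T' \). The crucial difference is that smoothness is used at the level of \emph{algebraic} moduli, so the lift of \( T \) is produced geometrically rather than by an illegitimate appeal to algebraization over a non-versal base.
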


\begin{proof}
By Remark~\ref{remsub:dfn:effective,algebraizable,hull:2} and
by Lemma~\ref{lem:functorsisom},
for any point \( P \in \Sing X \), there exists
an algebraic deformation \( W_{(P)} \to T_{(P)} \)
of an affine open neighborhood of \( P \) over
an algebraic \( \Lambda \)-scheme \( T_{(P)} \) such that
the formal completion of the fiber induces a hull
of the deformation functor \( \Def_{(X, P)} \).
By replacing \( (T, o) \) with an \'etale neighborhood \( (T', o') \),
we have a morphism \( \varphi_{(P)} \colon T \to T_{(P)} \) such that
the deformations \( \SU_{(P)} \to T \) and \( W_{(P)} \times_{T_{(P)}} T \)
are equivalent to each other
in the sense of \cite[Example~(4.5)]{ArtinVersal}
by the versality there.
In particular, \( (\SU_{(P)}, P') \) and
\( (W_{(P)} \times_{T_{(P)}} T, P) \) have a common \'etale neighborhood.

By the assumption \eqref{thm:algn:ass2} and
by Remark~\ref{remsub:dfn:effective,algebraizable,hull:200},
there exist an algebraic \( \Lambda \)-scheme
\( S \) with a \( \Bbbk \)-rational point \( b \) and a flat projective morphism
\( W \to S \) such that
\begin{itemize}
\item  \( W \to S\) is a deformation of \( X \) with the reference point \( b \),

\item the completion of \( \SO_{S, b} \) is isomorphic to \( \GR \), and

\item  \( \GX \) is isomorphic over \( \Spf \GR \) to
the formal completion of \( W \) along the fiber over \( b \).
\end{itemize}
Since \( \Spec \SO_{W, P} \) is a deformation
of \( \Spec \SO_{X, P} \) over \( S \) for any \( P \in \Sing X \),
after replacing \( (S, b) \) with an \'etale neighborhood,
we have a morphism \( \phi_{(P)} \colon S \to T_{(P)}\)
such that \( (W, P) \) and \( (W_{(P)} \times_{T_{(P)}} S, P) \)
have a common \'etale neighborhood as in \cite[Example~(4.5)]{ArtinVersal}.

Let \( \phi \colon S \to \prod_{P \in \Sing X} T_{(P)} \) be the
morphism defined by \( \{\phi_{(P)}\} \). Then, \( \phi \) is smooth
at \( b \) by Theorem~\ref{thm:H2T=0}. Hence, we may assume that \(
\phi \) is smooth by replacing \( S \) with an open neighborhood of
\( b \). Let \( \varphi \colon T \to \prod_{P \in \Sing X} T_{(P)}
\) be the morphism defined by \( \{\varphi_{(P)}\} \). Since \( \phi
\) is smooth, after replacing \( (T, o) \) with an \'etale
neighborhood, we have a morphism \( \psi \colon T \to S \) such that
\( \psi(o) = b \) and \( \varphi = \phi \circ \psi \). Then, the
base change \( Z = W \times_{S} T \to T \) satisfies the expected
conditions.
\end{proof}

\begin{remsub}\label{remsub:H2O=0}
The assumption \eqref{thm:algn:ass2} above is satisfied if
\( \OH^{2}(X, \SO_{X}) = 0 \)
(cf.\ \cite[Exp.~III, Proposition~7.2]{SGA1}).
\end{remsub}


\section{Deformations of certain projective surfaces with toric singularities of class T}
\label{sect:DeformationClassT}

We shall construct some algebraic deformations of
a projective normal surface \( X \)
with toric singularities of class T under extra assumptions.
We treat in Theorem~\ref{thm:globalsmoothing} only deformations
over \( \Bbbk \), but in Theorem~\ref{thm:DVRsmoothing},
deformations over a complete discrete
valuation ring with residue field \( \Bbbk \).
Here, we allow also
rational double points on \( X \) in Theorem~\ref{thm:globalsmoothing}.
As a corollary of Theorem~\ref{thm:globalsmoothing},
in Corollary~\ref{cor:log del Pezzo index two},
we shall give a correct proof of \cite[Theorem~5.16]{logdelPezzo}
that any log del~Pezzo surface of index two
admits a \( \BQQ \)-Gorenstein smoothing to del~Pezzo surfaces.
Theorem~\ref{thm:DVRsmoothing} is applied to the study of
the algebraic fundamental groups of smooth geometric fibers
in Corollary~\ref{cor:Pi1} and Remark~\ref{remsub:cor:Pi1}.

To begin with,
we recall the following well-known result.

\begin{lem}\label{lem:SmoothingLCI}
Let \( X \) be an affine algebraic \( \Bbbk \)-variety
with a \( \Bbbk \)-rational point \( P \) such that
\( X \setminus \{P\} \) is non-singular and
\( (X, P) \) is a local complete intersection singularity,
i.e., the local ring \( \SO_{X, P} \) is a complete intersection.
Then, there exist an affine open neighborhood \( X' \),
an affine flat morphism \( \SX \to T \)
over a non-singular curve \( T \),
and a \( \Bbbk \)-rational point \( o \in T \)
such that
\begin{enumerate}
\item  \( \SX \times_{T} o \isom X' \),

\item  \( \SX \to T \) is smooth over \( T \setminus \{o\} \).
\end{enumerate}
In particular, the singularity \( (X, P) \) admits a smoothing.
\end{lem}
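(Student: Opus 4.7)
My plan is to produce a smoothing by perturbing a local complete-intersection presentation of \( X \) near \( P \).  First I will shrink \( X \) to an affine open neighborhood \( X' \) of \( P \) and use the l.c.i.\ hypothesis to embed \( X' \) as a closed subscheme of \( \BAA^{N}_{\Bbbk} = \Spec \Bbbk[x_{1}, \ldots, x_{N}] \) cut out by a regular sequence \( f_{1}, \ldots, f_{c} \), where \( c = N - \dim X \).  I then introduce the parameter space \( S = \BAA^{c(N+1)}_{\Bbbk} \) with coordinates \( t_{i, \alpha} \) for \( 1 \leq i \leq c \) and \( 0 \leq \alpha \leq N \), and form the universal perturbation
\[ F_{i}(x, t) := f_{i}(x) - t_{i, 0} - \sum\nolimits_{\alpha = 1}^{N} t_{i, \alpha} x_{\alpha}, \]
giving a family \( \SY := V(F_{1}, \ldots, F_{c}) \subset \BAA^{N}_{\Bbbk} \times S \) with projection \( \pi \colon \SY \to S \) whose fibre over \( 0 \in S \) is \( X' \).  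Since \( \partial F_{i}/\partial t_{i, 0} = -1 \), the evaluation map \( (x, t) \mapsto F(x, t) \) is a smooth submersion, so \( \SY \) is smooth over \( \Bbbk \); since each fibre of \( \pi \) is a complete intersection of the expected dimension \( N - c \) and \( \SY \) is Cohen--Macaulay, miracle flatness guarantees that \( \pi \) is flat.

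The crux will be to show that the image in \( S \) of the relative critical locus \( \mathcal{Z} := \{(x, t) \in \SY \mid \operatorname{rank}\,(\partial F_{i}/\partial x_{k})(x, t) < c \} \)---which is exactly the locus of singular points on fibres of \( \pi \)---is a proper closed subset.  For this I will consider the jet map
\[ \Psi \colon \BAA^{N}_{\Bbbk} \times S \to \BAA^{c}_{\Bbbk} \times \mathrm{Mat}_{c \times N}, \qquad (x, t) \mapsto \bigl( F(x, t),\ (\partial F_{i}/\partial x_{k})(x, t) \bigr), \]
so that \( \mathcal{Z} = \Psi^{-1}\bigl(\{0\} \times \{ M : \operatorname{rank} M < c \}\bigr) \).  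The derivatives of \( \Psi \) with respect to \( \{ t_{i, \alpha}\}_{0 \leq \alpha \leq N} \) span the \( i \)-th ``row block'' \( \BAA^{N+1} \) of the target, because the polynomials \( \{ 1, x_{1}, \ldots, x_{N}\} \) together with their first partials span \( \Bbbk^{N+1} \) at every point; since the \( c \) row blocks are acted upon by disjoint collections of \( t \)-variables, \( d\Psi \) is everywhere surjective and \( \Psi \) is smooth.  Consequently \( \mathcal{Z} \) has codimension \( c + (N - c + 1) = N + 1 \) in \( \BAA^{N}_{\Bbbk} \times S \), so \( \dim \pi(\mathcal{Z}) \leq \dim \mathcal{Z} = c(N + 1) - 1 < \dim S \), whence \( \pi(\mathcal{Z}) \subsetneq S \).

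To finish, I will pick a line \( L \subset S \) through \( 0 \) not contained in \( \pi(\mathcal{Z}) \); then \( L \cap \pi(\mathcal{Z}) \) is finite and contains \( 0 \), and taking \( T \) to be a sufficiently small affine open neighborhood of \( 0 \) in \( L \) with \( T \cap \pi(\mathcal{Z}) = \{0\} \), the restriction \( \SX := \SY \times_{S} T \to T \) is the desired smoothing.  The main obstacle in the plan is the transversality step: in characteristic zero it is an immediate application of Sard's theorem applied to \( \pi \), but in positive characteristic the naive perturbation \( F_{i} = f_{i} - t_{i} \) can fail to give a smoothing because the image of the critical locus in \( \BAA^{c}_{\Bbbk} \) may fill it out by Frobenius-type phenomena.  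The plan avoids this obstruction by including \emph{linear} perturbations in the \( x \)-variables, which makes the jet map \( \Psi \) smooth and hence enforces transversality in any characteristic.
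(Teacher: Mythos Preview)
Your approach is correct and takes a genuinely different route from the paper's. One small wrinkle: the assertion that every fibre of \( \pi \colon \SY \to S \) is a complete intersection of dimension \( N - c \) fails globally (for instance with \( N = c = 2 \), \( f_{1} = x_{1} \), \( f_{2} = x_{2} \), the choice \( t_{1,1} = t_{2,2} = 1 \) and all other \( t_{i,\alpha} = 0 \) makes \( F_{1} \) and \( F_{2} \) vanish identically), so \( \pi \) need not be flat over all of \( S \). This is harmless for your purposes: once you restrict to the curve \( T \), every fibre has dimension exactly \( N - c \) (the central fibre is \( X' \) by construction, and the others are smooth and hence locally of codimension \( c \)), so \( \SX \) is a genuine complete intersection in \( \BAA^{N}_{\Bbbk} \times T \) and miracle flatness over the regular one-dimensional base \( T \) applies.

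The paper's proof is more classical: it homogenizes the \( f_{i} \) to \( F_{i} \), compactifies \( X \) to a complete intersection \( \overline{X} \subset \BPP^{n+l}_{\Bbbk} \), invokes Bertini (valid over an algebraically closed field in any characteristic) to produce general hypersurfaces \( G_{i} \) of the same degrees whose common intersection is smooth, and then interpolates via the pencil \( \{ sF_{i} + tG_{i} = 0 \} \) over an open subset of \( \BPP^{1} \). You trade the projective compactification and Bertini for an explicit transversality computation: by enlarging the parameter space to include all affine-linear perturbations you force the \( 1 \)-jet map \( \Psi \) to be a smooth morphism, and the dimension bound on the discriminant then follows directly, with no appeal to generic smoothness or Sard. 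Your route is more self-contained and makes visible why constant perturbations alone can fail in positive characteristic; the paper's route is shorter but leans on the Bertini package.
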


\begin{proof}
The local ring \( \SO_{X, P} \) is isomorphic to
the localization of
\[ A = \Bbbk[\xtt_{1}, \ldots, \xtt_{n + l}]/(f_{1}, \ldots, f_{l}) \]
at the origin \( \{\xtt_{1} = \cdots = \xtt_{n+l} = 0\}\)
for a certain regular sequence
\( f_{1} \), \ldots, \( f_{l} \), where \( n = \dim X \).
Hence, we may assume that \( X = \Spec A \) and \( P \) is the origin.
For \( 1 \leq i \leq l \),
let \( F_{i} \) be the homogeneous polynomial in
\( \Bbbk[\xtt_{0}, \xtt_{1}, \ldots, \xtt_{n+l}] \) such that
\( F_{i}(1, \xtt_{1}, \ldots, \xtt_{n+l}) = f_{i} \) and
\( \xtt_{0} \nmid F_{i} \).
Then, the complete intersection closed subscheme
\( \overline{X} \subset \BPP^{n+l}_{\Bbbk} \)  defined by
\( \{F_{1} = \cdots = F_{l} = 0\} \) is regarded as
the closure of \( X\), i.e.,
\( \overline{X} \cap D_{+}(\xtt_{0}) = X\),
where \( D_{+}(\xtt_{0}) = \{\xtt_{0} \ne 0\}  \).
By Bertini's theorem, since \( \Bbbk \) is algebraically closed,
we can take homogeneous polynomials \( G_{1} \), \ldots, \( G_{l} \) with
\( \deg G_{i} = \deg F_{i} \) such that
\( \{G_{i} = 0\} \) is non-singular for all \( 1 \leq i \leq l \)
and \( \sum_{i = 1}^{l} \{G_{i} = 0\}  \) is a simple normal crossing divisor.
For \( s \), \( t \in \Bbbk \), let
\( H_{i}(s, t) \) be the divisor \( \{sF_{i} + tG_{i} = 0\} \).
Then, there is an open neighborhood \( T' \subset \BPP^{1}\) of \( (0:1) \)
such that, for any closed point \( (s : t) \in T'\),
\( H_{i}(s, t) \) is non-singular for all \( 1 \leq i \leq l \)
and \( \sum_{i= 1}^{l}H_{i}(s, t) \) is a simple normal crossing divisor.
Thus, for \( T = T' \cup \{(1:0)\} \),
\[ \SX := \{H_{1}(s, t) = \cdots = H_{l}(s, t) = 0\} \cap D_{+}(\xtt_{0}) \to T\]
is a desired morphism with \( o = (1:0) \in T \).
\end{proof}

By \cite[Corollary~6]{ArtinIsol}, we have:

\begin{corsub}\label{corsub:SmoothingLCI}
Any rational Gorenstein surface singularity \emph{(}rational double point\emph{)}
admits a smoothing.
\end{corsub}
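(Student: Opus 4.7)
The plan is to reduce directly to Lemma~\ref{lem:SmoothingLCI} by exploiting the hypersurface structure of rational double points.

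By the cited result \cite[Corollary~6]{ArtinIsol}, together with Artin's classification \cite{ArtinRDP} in positive characteristic, every rational Gorenstein surface singularity $(Y, P)$ has embedding dimension three, so its completed local ring is of the form $\Bbbk[[\xtt_1, \xtt_2, \xtt_3]]/(\widehat{f})$ for some power series $\widehat{f}$. The explicit ADE-type equations listed in \cite{ArtinRDP} then furnish a polynomial $f \in \Bbbk[\xtt_1, \xtt_2, \xtt_3]$ (which may be taken irreducible, as $\widehat{f}$ is irreducible because rational double points are normal) such that $X_0 := \Spec \Bbbk[\xtt_1, \xtt_2, \xtt_3]/(f)$ is an affine algebraic $\Bbbk$-variety with an isolated singularity at the origin $P_0$ realizing the prescribed formal type. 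In particular, $(X_0, P_0)$ and $(Y, P)$ share a common \'etale neighborhood in the sense of \cite[Corollary~(2.6)]{ArtinApprox}.

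Since $X_0$ is a hypersurface in affine three-space, the germ $(X_0, P_0)$ is a local complete intersection singularity. Shrinking $X_0$ to a sufficiently small principal affine open neighborhood of $P_0$ if necessary, we may assume that $X_0 \setminus \{P_0\}$ is non-singular. Lemma~\ref{lem:SmoothingLCI} then yields an affine flat morphism $\SX \to T$ over a non-singular curve $T$ with a $\Bbbk$-rational point $o \in T$ whose fiber over $o$ is a neighborhood of $P_0$ in $X_0$ and which is smooth over $T \setminus \{o\}$; this is a smoothing of $(X_0, P_0)$. Transporting this smoothing across the common \'etale neighborhood gives a smoothing of the original singularity $(Y, P)$.

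The only step that carries real content is the initial reduction to hypersurface form, which we import wholesale from the cited works of Artin; the Bertini-style construction of the actual smoothing has already been carried out in Lemma~\ref{lem:SmoothingLCI}, so nothing further is needed.
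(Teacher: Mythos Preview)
Your proof is correct and follows essentially the same approach as the paper: the paper simply records the corollary as an immediate consequence of \cite[Corollary~6]{ArtinIsol} (rational double points are hypersurface, hence local complete intersection, singularities) together with Lemma~\ref{lem:SmoothingLCI}. Your write-up spells out the same reduction in more detail, additionally invoking \cite{ArtinRDP} for explicit equations and making the \'etale transport explicit, but the substance is identical.
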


The following is our main technical tool for constructing
desired surfaces of general type.

\begin{thm}\label{thm:globalsmoothing}
Let \( \Bbbk \) be an algebraically closed field. Let \( X \) be a
normal projective surface defined over \( \Bbbk \) whose
singularities are rational double points or toric singularities of
class T. Assume that \( X \) satisfies the following two
conditions\emph{:}
\begin{enumerate}
    \renewcommand{\theenumi}{\roman{enumi}}
    \renewcommand{\labelenumi}{(\theenumi)}
\item \label{thm:globalsmoothing:ass1} \( \OH^{2}(X, \Theta_{X/\Bbbk}) = 0 \).

\item  \label{thm:globalsmoothing:ass2}
\( \OH^{2}(X, \SO_{X}) = 0 \).
\end{enumerate}
Then, there is a deformation \( \SX \to T \) of \( X \)
over a non-singular algebraic curve \( T \) defined over \( \Bbbk \)
with a reference \( \Bbbk \)-rational point \( o \in T\)
such that\emph{:}
\begin{enumerate}
\item \label{thm:globalsmoothing:1}
\( \SX \to T \) is a projective morphism and
it is smooth over \( T \setminus \{o\} \).

\item \label{thm:globalsmoothing:2}
\( \SX \) is normal, \( rK_{\SX} \) is Cartier, and
\( \SO_{\SX}(rK_{\SX})|_{X} \isom \SO_{X}(rK_{X}) \)
for the Gorenstein index \( r \) of \( X \).
\end{enumerate}
In particular, after replacing \( T \) with an open neighborhood of \( o \),
the following hold for any \( \Bbbk \)-rational point \( t \) of \( T \setminus \{o\} \)
and the fiber \( X_{t} := \SX \times_{T} t \) over \( t \)\emph{:}
\begin{enumerate}
    \addtocounter{enumi}{2}
\item  \label{thm:globalsmoothing:3}
\( X_{t} \) is a non-singular projective surface defined over \( \Bbbk \).

\item \label{thm:globalsmoothing:4}
\( \dim \OH^{i}(X_{t}, \SO_{X_{t}}) = \dim \OH^{i}(X, \SO_{X}) \)
for all \( i \geq 0 \).

\item  \label{thm:globalsmoothing:5} \( K_{X_{t}}^{2} = K_{X}^{2}\).

\item \label{thm:globalsmoothing:6}
\( \OH^{2}(X_{t}, \Theta_{X_{t}/\Bbbk}) = 0 \).

\item \label{thm:globalsmoothing:7}
If \( K_{X} \) \emph{(}resp.\ \( -K_{X} \)\emph{)} is ample,
then so is \( K_{X_{t}} \) \emph{(}resp.\ \( -K_{X_{t}} \)\emph{)}.

\item \label{thm:globalsmoothing:8}
If \( K_{X} \) \emph{(}resp.\ \( -K_{X} \)\emph{)} is nef and big,
then so is \( K_{X_{t}} \) \emph{(}resp.\ \( -K_{X_{t}} \)\emph{)}.
\end{enumerate}
\end{thm}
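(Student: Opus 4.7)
The plan is to first construct local \( \BQQ \)-Gorenstein smoothings at each singularity of \( X \), then glue them into a global deformation by applying the algebraization result Theorem~\ref{thm:algn}, and finally restrict the resulting family to a general curve through the reference point.

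\textbf{Local smoothings.} For each \( P \in \Sing(X) \) I would produce an affine flat morphism \( \SU_{(P)} \to T_{(P)} \) from a normal scheme to an open subset of \( \BAA^{1}_{\Bbbk} \) with a \( \Bbbk \)-rational reference point \( o_{P} \), smooth over \( T_{(P)} \setminus \{o_{P}\} \), whose central fiber is an \'etale neighborhood of \( P \) in \( X \), and for which \( r_{P} K_{\SU_{(P)}} \) is Cartier restricting to \( r_{P} K_{X} \), where \( r_{P} \) is the local Gorenstein index. For toric singularities of class T this is exactly Theorem~\ref{thm:localsmoothing}; for rational double points one invokes Corollary~\ref{corsub:SmoothingLCI} with \( r_{P}=1 \).

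\textbf{Algebraization and cutting down.} Set \( T_{0} := \prod_{P} T_{(P)} \) with reference point \( o_{0} := (o_{P})_{P} \), and pull back each local family to \( T_{0} \). Hypothesis (i) of Theorem~\ref{thm:algn} is the assumption \( \OH^{2}(X, \Theta_{X/\Bbbk}) = 0 \), and hypothesis (ii) follows from \( \OH^{2}(X, \SO_{X}) = 0 \) via Remark~\ref{remsub:H2O=0}; the theorem then produces an \'etale neighborhood \( (T', o') \) of \( (T_{0}, o_{0}) \) together with a projective deformation \( Z \to T' \) whose \'etale local structure at each \( P \in \Sing(X) \) matches that of \( \SU_{(P)} \). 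The fiber of \( Z \) at a point \( t' \in T' \) is singular at (the image of) \( P \) if and only if the composition \( T' \to T_{0} \to T_{(P)} \) sends \( t' \) to \( o_{P} \); outside the union of these hyperplane pullbacks every fiber is smooth. I would then choose a smooth curve \( T \subset T' \) through \( o := o' \) whose image in \( T_{0} \) is a general line through \( o_{0} \), so that \( T \setminus \{o\} \) avoids every such hyperplane, and set \( \SX := Z \times_{T'} T \). This gives (1), and the normality and the Cartier property of \( rK_{\SX} \) with the asserted restriction to \( X \) descend from the local models via flatness and \'etale descent, together with Remark~\ref{remsub:lem:GorIndex}, giving (2).

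\textbf{Fiber properties and main obstacle.} After shrinking \( T \) about \( o \): (3) is automatic; (4) follows from upper-semicontinuity of \( h^{i}(\SO_{X_{t}}) \), flat invariance of \( \chi(\SO_{X_{t}}) \), and the hypothesis \( \OH^{2}(X, \SO_{X}) = 0 \); (5) is the constancy of \( (rK_{\SX})^{2} \) in a flat family with \( rK_{\SX} \) Cartier; (7) and (8) come from the standard openness in proper flat families of ampleness and of nef-and-bigness, applied to the \( \BQQ \)-Cartier divisor \( K_{\SX} \). The main technical obstacle is (6): the relative tangent sheaf \( \Theta_{\SX/T} \) is not flat over \( T \) at the singular fiber \( X_{o}=X \), so the usual semicontinuity theorem does not directly relate \( \OH^{2}(X, \Theta_{X/\Bbbk}) \) to \( \OH^{2}(X_{t}, \Theta_{X_{t}/\Bbbk}) \) for \( t \) near \( o \). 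I would handle this by a local-to-global argument in the spirit of the proof of Theorem~\ref{thm:H2T=0}, using that every singular germ of \( X \) admits the one-parameter smoothing from Step~1 (so that the sheaves \( \ST^{i}(X/\Bbbk, \SO_{X}) \) supported on \( \Sing(X) \) are controlled by the cotangent complex of \( \SX/T \) at \( o \)) and combining this with the global vanishing \( \OH^{2}(X, \Theta_{X/\Bbbk}) = 0 \) at the central fiber via the Leray spectral sequence for \( \SX \to T \).
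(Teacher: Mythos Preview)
Your construction of the family via local models and Theorem~\ref{thm:algn} matches the paper's approach, and your treatment of (3)--(5) and (7) is fine. However, your handling of (6) rests on a misconception: the relative tangent sheaf \( \Theta_{\SX/T} = \SHom_{\SO_{\SX}}(\Omega^{1}_{\SX/T}, \SO_{\SX}) \) \emph{is} flat over \( T \). Indeed, \( \SX \) is normal and integral by (2), so \( \Theta_{\SX/T} \), being a \( \SHom \) into \( \SO_{\SX} \), is torsion-free as an \( \SO_{\SX} \)-module; since \( T \) is a smooth curve and \( \SX \to T \) is flat, any torsion-free coherent \( \SO_{\SX} \)-module is automatically \( T \)-flat. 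So ordinary upper semicontinuity applies directly to \( \Theta_{\SX/T} \).

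What actually fails is base change at the singular fiber: the natural map \( \Theta_{\SX/T}\otimes_{\SO_{\SX}}\SO_{X} \to \Theta_{X/\Bbbk} \) need not be an isomorphism, because \( \SHom \) does not commute with restriction where \( \Omega^{1}_{\SX/T} \) is not locally free. But this map \emph{is} an isomorphism outside the finite set \( \Sing X \), so its kernel and cokernel are supported in dimension zero and contribute nothing to \( \OH^{2} \) on the surface \( X \); hence \( \OH^{2}(X, \Theta_{\SX/T}|_{X}) = \OH^{2}(X, \Theta_{X/\Bbbk}) = 0 \). For a smooth fiber \( X_{t} \) the base-change map is an isomorphism, and semicontinuity gives \( \OH^{2}(X_{t}, \Theta_{X_{t}/\Bbbk}) = 0 \). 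This is exactly the paper's argument, and your proposed detour through cotangent complexes and a Leray spectral sequence is unnecessary. A smaller point: for (8), ``openness of nef-and-big'' is not a standard black box (nefness alone is not open in families); the paper supplies a short direct argument by writing \( mrK_{\SX} \) as an effective divisor with horizontal components and using constancy of intersection numbers.
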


\begin{proof}
In Theorem~\ref{thm:globalsmoothing}, it is enough to consider the deformation theory only
in the case where \( \Lambda = \Bbbk \).
First of all, we shall prove \eqref{thm:globalsmoothing:3}--\eqref{thm:globalsmoothing:8}
assuming \eqref{thm:globalsmoothing:1} and \eqref{thm:globalsmoothing:2}.
The assertions \eqref{thm:globalsmoothing:3} and \eqref{thm:globalsmoothing:4}
follow from \eqref{thm:globalsmoothing:1},
the assumption~\eqref{thm:globalsmoothing:ass2}, and
from the upper semi-continuity theorem for the flat morphism \( \SX \to  T \).
Since \( (1/2)r^{2}K_{X}^{2} \) is the leading coefficient of
the Hilbert polynomial \( \chi(X, \SO_{X}(mrK_{X})) \)
with respect to the variable \( m \),
\eqref{thm:globalsmoothing:5} follows from \eqref{thm:globalsmoothing:2} and
the upper semi-continuity theorem for the flat morphism \( \SX \to  T \).
For \eqref{thm:globalsmoothing:6}, let us consider the relative tangent sheaf
\( \Theta_{\SX/T} :=
\SHom_{\SO_{\SX}}(\Omega^{1}_{\SX/T}, \SO_{\SX}) \),
where the canonical homomorphism
\( \Theta_{\SX/T} \otimes_{\SO_{\SX}} \SO_{X} \to
\Theta_{X/\Bbbk}\)
is an isomorphism outside \( \Sing X \) but
another canonical homomorphism
\( \Theta_{\SX/T} \otimes_{\SO_{\SX}} \SO_{X_{t}} \to \Theta_{X_{t}/\Bbbk} \)
is an isomorphism. Then, we have
\( \OH^{2}(X, \Theta_{\SX/T} \otimes_{\SO_{\SX}} \SO_{X})
= 0\) by \( \OH^{2}(X, \Theta_{X/\Bbbk}) = 0 \).
By the upper semi-continuity theorem applied to
the sheaf \( \Theta_{\SX/T} \) flat over \( T\) and by
the base change isomorphism, we have the vanishing
\eqref{thm:globalsmoothing:6}.

The assertion \eqref{thm:globalsmoothing:7} is derived from \eqref{thm:globalsmoothing:2}
and \cite[III, Th\'eorem\`e~(4.7.1)]{EGA}.
The last assertion \eqref{thm:globalsmoothing:8} is derived from also
a general property.
The detail is as follows (cf.\ \cite[Chapter~III, \S 4a, Problem]{ZDA}):
Let \( D \) be a Cartier divisor on \( \SX \) such that \( D_{o} := D|_{X}  \)
is nef and big. It suffices to show that \( D_{t} = D|_{X_{t}} \) is also nef and big
for any point \( t  \) in a neighborhood of \( o \).
Let \( H \) be another Cartier divisor on \( \SX \)
over \( T \). Then, \( D_{o}H_{o} = D_{t}H_{t} \),
where \( H_{t} := H|_{X_{t}} \) by considering
the Hilbert polynomial \( \chi(X_{t}, \SO_{\SX}(m_{1}H + m_{2}D)|_{X_{t}})\)
of two variables \( m_{1} \), \( m_{2} \).
In particular, \( D_{t}^{2} = D_{o}^{2}> 0 \) and \( D_{t}H_{t} > 0 \)
for an ample divisor \( H_{t} \). Hence, \( D_{t} \) is big.
Thus, by replacing \( T \) with an affine open neighborhood of \( o \),
we have an effective divisor \( G \) on \( \SX \) such that \( mD \sim G \)
for some \( m > 0 \). Here, \( D_{t} \) is nef if and only if
\( D_{t}C \geq 0 \) for any irreducible component \( C \) of
\( G_{t} = G|_{X_{t}} \).
By eliminating the vertical components of \( G \), we may assume that
any irreducible component \( G_{\lambda} \) of \( G \) dominates \( T \).
Then, \( 0 \leq D_{o} G_{\lambda, o} = D_{t}G_{\lambda, t}\), where
\( G_{\lambda, t} = G_{\lambda}|_{X_{t}} \).
Replacing \( T \) by the Stein factorization of \( G_{\lambda} \to T \)
if necessary, we see that \( D_{t} \) is nef for a general point of \( T \).

We shall construct a deformation \( \SX \to T \) satisfying
\eqref{thm:globalsmoothing:1} and \eqref{thm:globalsmoothing:2} by
applying Theorem~\ref{thm:algn}.
Note that the assumption \eqref{thm:algn:ass1} of Theorem~\ref{thm:algn}
is identical to \eqref{thm:globalsmoothing:ass1} above
and the other assumption \eqref{thm:algn:ass2} of Theorem~\ref{thm:algn}
is satisfied by \eqref{thm:globalsmoothing:ass2} above
(cf.\ Remark~\ref{remsub:H2O=0}).
For a singular point \( P \), we shall construct a deformation
\( \SU_{(P)} \to T \) of an affine \'etale neighborhood of \( (X, P) \)
over a non-singular curve \( T \) as follows.
If \( P \) is a rational double point, then we set
\( \SU_{(P)} \to T \) to be a deformation obtained in
Lemma~\ref{lem:SmoothingLCI} (cf.\ Corollary~\ref{corsub:SmoothingLCI}),
which is a smoothing of \( (X, P) \).
Note that \( \SU_{(P)} \) is normal and Gorenstein.
If \( (X, P) \) is a toric singularity of class T, then,
by Theorem~\ref{thm:graph2toric000},
there is an affine \'etale neighborhood \( (U_{(P)}, P') \)
of \( (X, P) \) which is also an \'etale neighborhood of
\( (V_{(P)}, \bzero) \) for an affine toric surface \( V_{(P)} \)
with the closed orbit \( \bzero \). In this case, by applying
Theorem~\ref{thm:localsmoothing} to \( V_{(P)}\)
and Lemma~\ref{lem:functorsisom01} to
\( (U_{(P)}, P') \to (V_{(P)}, \bzero) \), we have a deformation
\( \SU_{(P)} \to T \) of \( U_{(P)} \) with
a \( \Bbbk \)-rational reference point \( o \)
such that
\begin{itemize}
\item  \( \SU_{(P)} \to T \) is smooth over \( T \setminus \{o\} \),

\item  \( \SU_{(P)} \) is normal, \( rK_{\SU_{(P)}} \) is Cartier, and
\[ \SO_{\SU_{(P)}}(rK_{\SU_{(P)}})|_{U_{(P)}} \isom
\SO_{U_{(P)}}(rK_{U_{(P)}}) \]
for the Gorenstein index \( r \) of \( (X, P) \).
\end{itemize}
Note that in the construction above, we can take a common non-singular curve \( T \)
and a reference point \( o \).

Applying Theorem~\ref{thm:algn} to \( \SU_{(P)} \to T \)
for any \( P \in \Sing X \), after replacing \( (T, o) \) with
an \'etale neighborhood,
we have a deformation \( \SX \to T \) of \( X \) satisfying the required conditions
\eqref{thm:globalsmoothing:1} and \eqref{thm:globalsmoothing:2}.
Thus, we are done.
\end{proof}

We shall correct the proof of
\cite[Theorem~5.16]{logdelPezzo} concerning \( \BQQ \)-Gorenstein
smoothings of log del~Pezzo surfaces of index two
by proving the following stronger assertion as an application of
Theorem~\ref{thm:globalsmoothing}.

\begin{cor}\label{cor:log del Pezzo index two}
Let \( X \) be a log del~Pezzo surface of index two
over an algebraically closed field \( \Bbbk \), i.e.,
\( X \) is a normal projective surface,
\( X \) has only log-terminal singularities, \( -K_{X} \)
is an ample non-Cartier divisor, and \( 2K_{X} \) is Cartier
\emph{(}cf.\ \cite[Definition~3.2]{logdelPezzo}\emph{)}.
Then, there is a projective deformation
\( \SX \to T \) of \( X \) over a non-singular curve \( T \)
over \( \Bbbk \) such that
\begin{itemize}
\item  \( \SX \to T \) is smooth outside \( X \),

\item \( 2K_{\SX} \) is Cartier, and

\item any closed fiber of \( \SX \to T \) other than \( X \) is a del~Pezzo
surface with \( K^{2} = K_{X}^{2} \).
\end{itemize}
\end{cor}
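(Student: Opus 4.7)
The plan is to apply Theorem~\ref{thm:globalsmoothing} to $X$ and read the three asserted properties directly from its conclusions: smoothness of $\SX \to T$ outside $X$ from part~(1); the Cartierness of $2 K_{\SX}$ from part~(2) applied with Gorenstein index $r = 2$; the non-singularity of the general fiber from (3); the equality $K_{X_{t}}^{2} = K_{X}^{2}$ from (5); and the ampleness of $-K_{X_{t}}$ (making $X_{t}$ a del Pezzo surface) from (7). It therefore suffices to verify the three hypotheses: (a) the singularities of $X$ are rational double points or toric singularities of class T; (b) $\OH^{2}(X, \SO_{X}) = 0$; (c) $\OH^{2}(X, \Theta_{X/\Bbbk}) = 0$.

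For (a), I would invoke the classification of log-terminal surface singularities of index at most two from the literature underlying \cite{logdelPezzo}: the index-one ones are precisely the Du Val singularities, while the non-Gorenstein log-terminal ones of index two are the cyclic quotient singularities $\frac{1}{4d}(1, 2d-1)$ for $d \geq 1$. A direct comparison with Definition~\ref{dfn:classT} identifies $\frac{1}{4d}(1, 2d-1)$ with the toric singularity of type $(4d, 2d-1) = T(d, 2, 1)$, whose Gorenstein index equals $2$ by Lemma~\ref{lem:GorIndex}. For (b), Serre duality on the normal (hence Cohen--Macaulay) projective surface $X$ yields an isomorphism between $\OH^{2}(X, \SO_{X})^{\vee}$ and $\OH^{0}(X, \SO_{X}(K_{X}))$, and the latter vanishes since the ampleness of $-K_{X}$ prevents any effective divisor from being linearly equivalent to $K_{X}$.

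The hardest step is (c). My plan is to pass to the partial resolution $\mu' \colon M' \to X$ contracting only the chain resolutions over the class T singularities, so that $M'$ inherits exactly the Du Val singularities of $X$ and its exceptional divisor $E'$ is a disjoint union of linear chains of smooth $\BPP^{1}$'s in which every coefficient of $\Delta := \mu'^{*}(K_{X}) - K_{M'}$ equals $1/2$ (by Lemma~\ref{lem:n=2} for the types $T(d, 2, 1)$). Adapting Proposition~\ref{prop:tangentsheaves} and Corollary~\ref{cor:compareTangent} to $\mu'$ reduces (c) to $\OH^{2}(M', \Theta_{M'/\Bbbk}(-\log E')) = 0$. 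By Serre duality on the Gorenstein surface $M'$, this group is dual to $\OH^{0}(M', \Omega^{1}_{M'/\Bbbk}(\log E') \otimes \SO_{M'}(K_{M'}))$, and the standard residue sequence together with the identity $2 K_{M'} = 2 \mu'^{*} K_{X} - E'$ reduces the vanishing to explicit degree calculations on the $\BPP^{1}$-components of $E'$, combined with the nef-and-bigness of $-\mu'^{*} K_{X}$.

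This last $H^{2}$-vanishing is the main obstacle: Kodaira-type theorems are unavailable in positive characteristic, so the argument cannot simply be quoted but must exploit the very restricted combinatorics of the $T(d, 2, 1)$-chains recorded in Lemma~\ref{lem:n=2} and the ampleness of $-K_{X}$, together with the Gorenstein nature of the Du Val part of $M'$ to apply Serre duality cleanly. In the original proof \cite[Theorem~5.16]{logdelPezzo} this step contained a gap in characteristic $p > 0$, which the present corollary corrects by routing the deformation-theoretic input through Theorem~\ref{thm:globalsmoothing}.
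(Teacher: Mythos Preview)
Your reduction to Theorem~\ref{thm:globalsmoothing} and your treatment of hypotheses (a) and (b) match the paper exactly. The divergence is in step (c), the vanishing $\OH^{2}(X,\Theta_{X/\Bbbk})=0$, where your proposed route via a partial resolution $M'$, the residue sequence, and ``explicit degree calculations'' is left as an admitted obstacle. The paper does not attempt anything of that kind.

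Instead, the paper argues directly on $X$. By Serre duality the vanishing is equivalent to $\OH^{0}\bigl(X,\SHom_{\SO_X}(\Theta_{X/\Bbbk},\SO_X(K_X))\bigr)=0$. One first checks $\OH^{0}(X,\SO_X(-K_X))\ne 0$ by Riemann--Roch on the minimal resolution $\mu\colon M\to X$: with $L=\mu^{*}(-2K_X)$ one has $\OH^{0}(X,\SO_X(-K_X))\cong \OH^{0}(M,\SO_M(K_M+L))$, the top cohomology vanishes by Serre duality and ampleness of $-K_X$, and $\chi(M,\SO_M(K_M+L))=K_X^{2}+1>0$. A nonzero section of $\SO_X(-K_X)$ yields an injection $\SO_X(K_X)\hookrightarrow\SO_X$, hence
\[
\SHom_{\SO_X}\!\bigl(\Theta_{X/\Bbbk},\SO_X(K_X)\bigr)\;\hookrightarrow\;
\SHom_{\SO_X}\!\bigl(\Theta_{X/\Bbbk},\SO_X\bigr)\;\cong\;(\Omega^{1}_{X/\Bbbk})^{\vee\vee}\;\cong\;\mu_{*}\Omega^{1}_{M/\Bbbk},
\]
the last isomorphism being Proposition~\ref{prop:tangentsheaves}\eqref{prop:tangentsheaves:reflexive0}. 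Since $M$ is a smooth rational surface, $\OH^{0}(M,\Omega^{1}_{M/\Bbbk})=0$, and (c) follows. This argument is characteristic-free and sidesteps entirely the Kodaira-type issue you flagged; it also avoids having to adapt Corollary~\ref{cor:compareTangent} to a non-smooth partial resolution.
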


\begin{proof}
A singular point of \( X \) is
either a rational double point or
a singular point of type \( \SKK_{n} \) (cf.\ \cite[Lemma~4.15]{logdelPezzo}),
where \( \SKK_{n} \) is just
the toric singularity of type \( \frac{1}{4n}(1, 2n-1) \):
this is of type \( T(n, 2, 1) \)
(cf.\ Definition~\ref{dfn:classT}).
We have \( \OH^{2}(X, \SO_{X}) = \OH^{0}(X, \SO_{X}(K_{X}))^{\vee} = 0 \)
by the Serre duality theorem, since \( -K_{X} \) is ample.
Hence, by Theorem~\ref{thm:globalsmoothing}, it suffices to show
\( \OH^{2}(X, \Theta_{X/\Bbbk}) = 0 \), or equivalently,
\( \OH^{0}(X, \SHom_{\SO_{X}}(\Theta_{X/\Bbbk}, \SO_{X}(K_{X}))) = 0 \)
by Serre duality.
We know that \( \OH^{0}(X, \SO_{X}(-K_{X})) \ne 0 \).
In fact, for the
minimal resolution \( \mu \colon M \to X \) of singularities, we have
\begin{align*}
&\OH^{0}(X, \SO_{X}(-K_{X})) \isom
\OH^{0}(M, \SO_{M}(K_{M} + \mu^{*}(-2K_{X}))), \\
&\OH^{2}(M, \SO_{M}(K_{M} + \mu^{*}(-2K_{X}))) \isom
\OH^{0}(M, \mu^{*}\SO_{X}(2K_{X}))^{\vee} = 0, \quad \text{and} \\
&\dim \OH^{0}(M, \SO_{M}(K_{M} + \mu^{*}(-2K_{X}))) \geq
\chi(M, \SO_{M}(K_{M} + \mu^{*}(-2K_{X}))) \\
&\phantom{xxxxx} =
\frac{1}{2} (K_{M} + \mu^{*}(-2K_{X})) \mu^{*}(-2K_{X}) + 1 =
K_{X}^{2} + 1 > 0.
\end{align*}
Taking a non-zero section of \( \SO_{X}(-K_{X}) \), we obtain  an injection
\( \SO_{X}(K_{X}) \injmap \SO_{X} \), and
hence an injection
\[ \SHom_{\SO_{X}}(\Theta_{X/\Bbbk}, \SO_{X}(K_{X}))
\injmap \SHom_{\SO_{X}}(\Theta_{X/\Bbbk}, \SO_{X})
\isom (\Omega_{X/\Bbbk}^{1})^{\vee\vee},  \]
where the right sheaf is isomorphic to
\( \mu_{*}\Omega^{1}_{M/\Bbbk}\)
by Proposition~\ref{prop:tangentsheaves}.\eqref{prop:tangentsheaves:reflexive0}.
Since  \( M \) is rational,
\[ \OH^{0}(X, \SHom_{\SO_{X}}(\Theta_{X/\Bbbk}, \SO_{X}(K_{X})))
\subset \OH^{0}(M, \Omega_{M/\Bbbk}^{1}) = 0. \]
Thus, we are done.
\end{proof}

\begin{thm}\label{thm:DVRsmoothing}
Let \( \Lambda \) be a complete discrete valuation ring
with an algebraically closed residue field \( \Bbbk \).
Let \( X_{\Lambda} \to \Spec \Lambda \) be a flat projective morphism
satisfying the following two conditions\emph{:}
\begin{enumerate}
    \renewcommand{\theenumi}{\roman{enumi}}
    \renewcommand{\labelenumi}{(\theenumi)}
\item \label{thm:DVRsmoothing:ass1}
The closed fiber \( X = X_{\Bbbk}\) is a normal projective surface
with only toric singularities of class T satisfying
\( \OH^{2}(X, \Theta_{X/\Bbbk}) = \OH^{2}(X, \SO_{X}) = 0 \).

\item \label{thm:DVRsmoothing:ass2}
For any singular point \( P \) of \( X \), let \( (n_{(P)}, q_{(P)}) \)
be the type of the toric singularity.
Then, there exist an affine neighborhood \( Y_{(P)} \) of \( P \)
in \( X_{\Lambda} \) and
two prime divisors \( B_{1\, (P)}\), \( B_{2\, (P)} \) on \( Y_{(P)} \)
containing \( P \) such that \( (Y_{(P)}, B_{1\, (P)}, B_{2\, (P)})  \)
satisfies the condition \( C(n_{(P)}, q_{(P)})' \) over \( \Spec \Lambda \)
\emph{(}cf.\ Definition~\emph{\ref{dfn:cond_b})}.
\end{enumerate}
Then, there exist an algebraic deformation \( Z \to T \)
of \( X \) over an algebraic smooth \( \Lambda \)-scheme \( T \)
of relative dimension one,
a \( \Bbbk \)-rational point \( o \in T\), and
a section \( \sigma \colon \Spec \Lambda \to T \) such that\emph{:}
\begin{enumerate}
\item \label{thm:DVRsmoothing:1}
\( \sigma(\GM_{\Lambda}) = o \) and
\( Z \times_{T, \sigma} \Spec \Lambda \isom X_{\Lambda} \).
\item \label{thm:DVRsmoothing:2}
\( Z \) is smooth over \( T \setminus \sigma(\Spec \Lambda) \).
\item  \label{thm:DVRsmoothing:3}
\( Z \) is normal, \( rK_{Z}  \) is Cartier, and
\( \SO_{Z}(rK_{Z})|_{X_{\Lambda}} \isom \SO_{X_{\Lambda}}(rK_{X_{\Lambda}}) \)
for the Gorenstein index \( r \) of \( X \).
\end{enumerate}
\end{thm}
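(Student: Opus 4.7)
The plan is to adapt the proof of Theorem~\ref{thm:globalsmoothing} to the present setting, working relatively to $\Spec \Lambda$ so that the given family $X_\Lambda$ appears as the restriction of the global deformation along the section $\sigma$. The key ingredients are the local $\BQQ$-Gorenstein smoothings over $\Lambda$ produced by Theorem~\ref{thm:localsmoothing}, the étale identification with toric models supplied by Theorem~\ref{thm:graph2toric} together with hypothesis \eqref{thm:DVRsmoothing:ass2}, and the algebraization Theorem~\ref{thm:algn} applied with base ring $\Lambda$.

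First, for each singular point $P$ of $X$, applying Theorem~\ref{thm:localsmoothing} to the toric $\Lambda$-scheme $V_{(P)}$ of type $(n_{(P)}, q_{(P)})$ yields a flat family $\SV_{(P)} \to T_{(P)}$ over an open subset $T_{(P)} \subset \BAA^{1}_{\Lambda}$ with a section $\sigma_{(P)}$ recovering $V_{(P)}$, smooth outside $\sigma_{(P)}(\Spec \Lambda)$, and $\BQQ$-Gorenstein of index $n_{(P)}$. By hypothesis \eqref{thm:DVRsmoothing:ass2} and Theorem~\ref{thm:graph2toric}, after shrinking $Y_{(P)}$ to an affine neighborhood $Y_{(P)}^{\circ}$ of $P$ in $X_\Lambda$, there is an étale morphism $\tau_{(P)} \colon Y_{(P)}^{\circ} \to V_{(P)}$ sending $(B_{1\,(P)}, B_{2\,(P)})$ to the toric boundary. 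I would then lift $\tau_{(P)}$ to an étale morphism $\SU_{(P)} \to \SV_{(P)}$ over $T_{(P)}$ restricting to $\tau_{(P)}$ along $\sigma_{(P)}$: locally $\tau_{(P)}$ is presented as a standard étale algebra $V_{(P)}[y]/(f)$ with $f'(y)$ a unit in the quotient, so lifting $f$ to a polynomial $\tilde f$ over $\SV_{(P)}$ gives $\SU_{(P)} := \Spec \SV_{(P)}[y]/(\tilde f)$, étale by the Jacobian criterion, with local pieces glued by descent. Passing to a common open subset $T_{0} \subset \bigcap_{P} T_{(P)}$, I arrange all sections to coincide with a single $\sigma_{0} \colon \Spec \Lambda \to T_{0}$ with closed $\Bbbk$-rational point $o_{0} := \sigma_{0}(\GM_{\Lambda})$.

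Next I would invoke Theorem~\ref{thm:algn} — whose hypothesis \eqref{thm:algn:ass1} is \eqref{thm:DVRsmoothing:ass1} and whose hypothesis \eqref{thm:algn:ass2} follows from \eqref{thm:DVRsmoothing:ass1} by Remark~\ref{remsub:H2O=0} — applied to the closed fiber $X$ with local deformations $\{\SU_{(P)} \to T_{0}\}_{P}$ and reference point $o_{0}$. This produces an étale neighborhood $(T, o)$ of $(T_{0}, o_{0})$ and a projective deformation $Z \to T$ of $X$ such that $(Z, P)$ and $(\SU_{(P)}, P')$ share a common étale neighborhood for every $P \in \Sing X$. Because $\Lambda$ is Henselian, $\sigma_{0}$ lifts uniquely through $o$ to a section $\sigma \colon \Spec \Lambda \to T$, providing the required section. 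Smoothness of $Z \to T$ outside $\sigma(\Spec \Lambda)$ is an open condition inherited from the $\SU_{(P)}$, while the normality of $Z$ and the relation $\SO_{Z}(rK_{Z})|_{X_\Lambda} \isom \SO_{X_\Lambda}(rK_{X_\Lambda})$ for the Gorenstein index $r$ of $X$ descend étale-locally from Theorem~\ref{thm:localsmoothing}\eqref{thm:localsmoothing:3}, propagating globally by reflexivity and the codimension-two criterion.

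The main obstacle I anticipate is verifying the identification $Z \times_{T, \sigma} \Spec \Lambda \isom X_\Lambda$. Theorem~\ref{thm:algn} as stated outputs a deformation of the closed fiber $X$; matching the local étale data at each singular point pins down the $\Lambda$-restriction locally but not yet as a single globally isomorphic $\Lambda$-scheme. To close this gap I plan to rerun the arguments of Theorems~\ref{thm:H2T=0} and \ref{thm:algn} for the relative deformation functor of $X_\Lambda$ over $\Lambda$, exploiting that $\OH^{2}(X_\Lambda, \Theta_{X_\Lambda/\Lambda})$ vanishes (reducing by cohomology and base change to \eqref{thm:DVRsmoothing:ass1}) so that the relative analogue of Theorem~\ref{thm:H2T=0} still furnishes a smooth morphism to the local deformation functor, and then invoking Artin's approximation to algebraize the resulting formal deformation over $\Spf \Lambda[[t]]$ to an algebraic family over a smooth $\Lambda$-curve. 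Once the correct identification on the section is secured, the remaining assertions follow as in the proof of Theorem~\ref{thm:globalsmoothing}.
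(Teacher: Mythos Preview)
Your setup of the local $\BQQ$-Gorenstein smoothings via Theorems~\ref{thm:graph2toric} and~\ref{thm:localsmoothing}, together with the lifting of the \'etale morphism (this is Lemma~\ref{lem:functorsisom01} in the paper), matches the paper's first step. You also correctly isolate the central difficulty: invoking Theorem~\ref{thm:algn} as a black box produces a deformation $Z \to T$ of the closed fiber $X$ with the prescribed local behaviour at each singular point, but gives no control over the restriction $Z \times_{T,\sigma} \Spec \Lambda$ beyond its special fiber, so there is no reason for it to be globally isomorphic to $X_\Lambda$.

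Your proposed fix---rerunning the obstruction and algebraization arguments for a ``relative deformation functor of $X_\Lambda$ over $\Lambda$''---is vague and would require setting up Schlessinger's theory in a different category (deformations of a $\Lambda$-scheme rather than of a $\Bbbk$-scheme), which is more than is needed. The paper's route is more direct and stays within the existing framework: instead of applying Theorem~\ref{thm:algn}, one opens up its proof. The algebraization $W \to S$ of the hull $(\GR,\xi)$ of $\Def_X$ is versal, so the given family $X_\Lambda \to \Spec \Lambda$ itself determines a surjection $\GR \to \Lambda$ and hence a section $\sigma_1 \colon \Spec \Lambda \to S$ with $W \times_{S,\sigma_1} \Spec \Lambda \isom X_\Lambda$ (the formal completions agree by versality, and one concludes by Grothendieck's existence theorem \cite[III, Th\'eor\`eme~(5.4.1)]{EGA}). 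Now one has morphisms $\phi \colon S \to \prod_P T_{(P)}$ and $\varphi \colon T \to \prod_P T_{(P)}$ into the product of local versal bases, with $\varphi \circ \sigma = \phi \circ \sigma_1$. Since $\phi$ is smooth by Theorem~\ref{thm:H2T=0}, the fiber product $S_T := S \times_{\prod T_{(P)}} T$ is smooth over $T$; the pair $(\sigma,\sigma_1)$ gives a section $\Spec\Lambda \to S_T$, and after an \'etale base change on $T$ this extends to a section $\psi \colon T \to S_T$ with $\psi \circ \sigma = (\sigma,\sigma_1)$. Pulling $W$ back along $T \to S_T \to S$ gives $Z$, and the compatibility $\psi \circ \sigma = (\sigma,\sigma_1)$ forces $Z \times_{T,\sigma} \Spec\Lambda \isom W \times_{S,\sigma_1} \Spec\Lambda \isom X_\Lambda$ on the nose. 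The point is that the section-matching comes for free from versality of the \emph{global} hull of $\Def_X$, not from any relative deformation theory of $X_\Lambda$.
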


\begin{proof}
By Theorems~\ref{thm:graph2toric}, \ref{thm:localsmoothing} and
by Lemma~\ref{lem:functorsisom01},
we have an algebraic smooth \( \Lambda \)-scheme \( T \)
of relative dimension one,
a \( \Bbbk \)-rational reference point \( o \in T\),
a section \( \sigma \colon \Spec \Lambda \to T \)
with \( \sigma(\GM_{\Lambda}) = o \),
and a flat family \( V_{(P)} \to T \) of normal affine surfaces
for any singular point \( P \) of \( X \) such that
\begin{enumerate}
    \renewcommand{\theenumi}{$V$\arabic{enumi}}
    \renewcommand{\labelenumi}{(\theenumi)}
\item \label{item:V1}
\( V_{(P)} \times_{T, \sigma}
\Spec \Lambda \isom Y_{(P)} \),

\item \label{item:V2}
\( V_{(P)} \to T \) is smooth over
\( T \setminus \sigma(\Spec \Lambda) \),

\item \label{item:V3}
\( V_{(P)} \) is normal, \( r_{P}K_{V_{(P)}} \) is Cartier
with \( \SO_{V_{(P)}}(r_{P}K_{V_{(P)}})|_{Y_{(P)}}
\isom \SO_{Y_{(P)}}(r_{P}K_{Y_{(P)}}) \)
for the Gorenstein index \( r_{P} \) of the toric singularity \( P \in X\).
\end{enumerate}
In fact, \( (Y_{(P)}, P) \) is an \'etale neighborhood of
an affine toric surface at the closed orbit by Theorem~\ref{thm:graph2toric},
and the toric surface admits a deformation satisfying
conditions \eqref{thm:localsmoothing:1}--\eqref{thm:localsmoothing:3}
of Theorem~\ref{thm:localsmoothing} since it is of class T, and finally
by Lemma~\ref{lem:functorsisom01}, we can lift the deformation to that
of \( Y_{(P)} \). Here, the section in Theorem~\ref{thm:localsmoothing}
induces the section \( \sigma \), since \( \Lambda \) is a complete
discrete valuation ring.

Let \( \GX \to \Spf \GR \) be the formal deformation associated with
a hull \( (\GR, \xi) \) of \( \Def_{X} \).
By the assumption \eqref{thm:DVRsmoothing:ass1},
we can find an algebraization \( W \to S \) of \( \GX \to \Spf \GR \)
as in the proof of Theorem~\ref{thm:algn} by Remark~\ref{remsub:H2O=0}.
Here, \( W \to S \) is a projective flat morphism over an
algebraic \( \Lambda \)-scheme \( S \),
the fiber over a \( \Bbbk \)-rational point \( b \in S \)
is isomorphic to \( X \), the completion of \( \SO_{S, b} \) is isomorphic to
\( \GR \), and
the formal completion of \( W \) along \( X \) is isomorphic to \( \GX \).
Since \( X_{\Lambda} \to \Spec \Lambda \) is a deformation of \( X \)
with reference point \( \GM_{\Lambda} \), we have a surjection
\( \GR \to \Lambda \) such that \( \GX \times_{\Spf \GR} \Spf \Lambda \)
is isomorphic to the formal completion of \( X_{\Lambda} \) along \( X \).
Hence, for the induced section
\( \sigma_{1} \colon \Spec \Lambda \to \Spec \GR \to \Spec \SO_{S, b} \to S \),
we have an isomorphism
\( W \times_{S, \sigma_{1}} \Spec \Lambda \isom X_{\Lambda}  \)
by \cite[III, Th\'eor\`eme~(5.4.1)]{EGA}.

Let \( W_{(P)} \to T_{(P)} \) be an algebraization of
the formal deformation associated with a hull of
\( \Def_{(X, P)} \) as in the proof of Theorem~\ref{thm:algn}.
After replacing \( (S, b) \) with an \'etale neighborhood and
replacing \( (T, o) \) with an \'etale neighborhood,
we have morphisms \( \phi \colon S \to \prod_{P \in \Sing X} T_{(P)} \)
and \( \varphi \colon T \to \prod_{P \in \Sing X} T_{(P)} \)
such that \( \varphi \circ \sigma = \phi \circ \sigma_{1}\) (cf.\ \eqref{item:V1})
and that, for any \( P \in \Sing X \),
\begin{itemize}
\item  \( (W, P) \) and \( (W_{(P)} \times_{T_{(P)}} S, P) \)
have a common \'etale neighborhood,
\item  \( (V_{(P)}, P) \) and \( (W_{(P)} \times_{T_{(P)}} T, P) \)
have a common \'etale neighborhood.
\end{itemize}
Let \( S_{T} \to T\) be the base change of \( \phi \) by \( \varphi \).
By replacing \( T\) with an open neighborhood of \( o \), we may assume that
\( S_{T} \to T \) is smooth, since \( \phi \) is smooth at \( b \)
by Theorem~\ref{thm:H2T=0}.
Now, we have a section
\( \sigma' = (\sigma, \sigma_{1}) \colon \Spec \Lambda \to S_{T}\).
Then, after replacing \( (T, o) \) with an \'etale neighborhood,
we have a section \( \psi \colon T \to S_{T} \)
such that \( \psi \circ \sigma = \sigma' \).
In fact, \( S_{T} \) is \'etale over
\( \BAA^{k}_{T} := \BAA^{k}_{\Lambda} \times_{\Spec \Lambda} T \) for some \( k \), and
\( \sigma' \) induces a section of \( \BAA^{k}_{\Lambda} \)
over \( \Spec \Lambda \). Here, we may assume that the section is defined by
\( \ttt_{1} = \cdots = \ttt_{k} = 0 \) for
\( \BAA^{k}_{\Lambda} = \Spec \Lambda[\ttt_{1}, \ldots, \ttt_{k}] \).
The closed subscheme of \( \BAA^{k}_{T} \) defined by
\( \ttt_{1} = \cdots = \ttt_{k} = 0 \) is isomorphic to \( T \).
Hence a connected component of the pullback of the closed subscheme by
\( S_{T} \to \BAA^{k}_{T} \) gives a desired \'etale neighborhood.
Let \( Z \to T \) be the base change of \( W \to S \) by \( T \to S_{T} \to S \).
Then, \( Z \to T \) is a deformation of \( X \) with reference point \( o \)
satisfying the condition \eqref{thm:DVRsmoothing:1}, and
\( (Z, P) \) and \( (V_{(P)}, P) \) have a common \'etale neighborhood
for any \( P \in \Sing X \).
Hence, the other conditions \eqref{thm:DVRsmoothing:2} and \eqref{thm:DVRsmoothing:3}
are derived from \eqref{item:V2} and \eqref{item:V3},
and we have finished the proof.
\end{proof}

\begin{cor}[Fundamental group]\label{cor:Pi1}
Let \( X_{\Lambda} \to \Spec \Lambda \) be the flat projective morphism in
Theorem~\emph{\ref{thm:DVRsmoothing}} satisfying the two conditions
\eqref{thm:DVRsmoothing:ass1} and \eqref{thm:DVRsmoothing:ass2}.
Assume that the field of fractions of \( \Lambda \) is of characteristic zero.
Let \( X \) be the closed fiber of \( X_{\Lambda} \to \Spec \Lambda \).
Let \( \BKK \) be an algebraically closed field
containing \( \Lambda \) and let \( X_{\BKK} \) be the fiber product
\( X_{\Lambda} \times_{\Spec \Lambda} \Spec \BKK\).
Then, \( \OH^{2}(X_{\BKK}, \SO_{X_{\BKK}}) =
\OH^{2}(X_{\BKK}, \Theta_{X_{\BKK}/\BKK}) = 0 \).
Moreover, there exist a deformation \( \SX \to C \) of \( X \)
over a non-singular algebraic curve \( C \) defined over \( \Bbbk \),
and a deformation \( \SY \to D \) of \( X_{\BKK} \)
over a non-singular algebraic curve \( D \) defined over \( \BKK \)
such that
\( \SX \to C \) and \( \SY \to D \) satisfy
the conditions corresponding to
\eqref{thm:globalsmoothing:1} and \eqref{thm:globalsmoothing:2}
of Theorem~\emph{\ref{thm:globalsmoothing}} and
that there is a surjection
\[ \pi_{1}^{\alg}(Y_{d}) \to \pi_{1}^{\alg}(X_{c}) \]
of algebraic fundamental groups for any
smooth fibers \( X_{c} \) and \( Y_{d} \)
of \( \SX \to C \) and \( \SY \to D \) over
closed points \( c \in C\) and \( d \in D\), respectively.
\end{cor}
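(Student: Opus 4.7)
The plan is to derive the corollary from Theorem~\ref{thm:DVRsmoothing} by base change and Grothendieck's specialization theorem, using only the flatness of the family $Z \to T$ provided by that theorem together with standard invariance of $\pi_{1}^{\alg}$ under extension of algebraically closed fields of characteristic zero.

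First, let $Z \to T$ be the algebraic deformation of Theorem~\ref{thm:DVRsmoothing} with section $\sigma \colon \Spec \Lambda \to T$, and define $C := T \times_{\Spec \Lambda} \Spec \Bbbk$ and $D := T \times_{\Spec \Lambda} \Spec \BKK$; after shrinking $T$ to an irreducible neighborhood of $\sigma$, both $C$ and $D$ are smooth connected curves over $\Bbbk$ and $\BKK$ respectively. Set $\SX := Z \times_{\Spec \Lambda} \Spec \Bbbk \to C$ and $\SY := Z \times_{\Spec \Lambda} \Spec \BKK \to D$, with reference points $o \in C$ and $o_{\BKK} \in D$ induced from $\sigma$. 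The fiber of $\SX$ over $o$ is $X$ and the fiber of $\SY$ over $o_{\BKK}$ is $X_{\BKK}$, so these are genuine deformations. Conditions~\eqref{thm:globalsmoothing:1} and~\eqref{thm:globalsmoothing:2} of Theorem~\ref{thm:globalsmoothing} for $\SX \to C$, and their evident analogues for $\SY \to D$, are inherited from conditions~\eqref{thm:DVRsmoothing:1}--\eqref{thm:DVRsmoothing:3} of Theorem~\ref{thm:DVRsmoothing} by base change: smoothness off the special section, normality of the total space, and the Cartier condition on $rK$.

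For the cohomology vanishings on $X_{\BKK}$, I would argue as follows. Since $X_{\Lambda}$ is flat projective over the DVR $\Lambda$ with closed fiber $X$ of dimension two, the module $\OH^{2}(X_{\Lambda}, \SO_{X_{\Lambda}})$ is finitely generated over $\Lambda$ and $\OH^{i}(X_{\Lambda}, \SO) = 0$ for $i \geq 3$. The uniformizer sequence $0 \to \SO_{X_{\Lambda}} \xrightarrow{\pi} \SO_{X_{\Lambda}} \to \SO_{X} \to 0$ yields $\OH^{2}(X_{\Lambda}, \SO) = \pi \cdot \OH^{2}(X_{\Lambda}, \SO)$ in view of the hypothesis $\OH^{2}(X, \SO_{X}) = 0$, so Nakayama's lemma forces $\OH^{2}(X_{\Lambda}, \SO) = 0$, and flat base change along $\Lambda \to \BKK$ produces $\OH^{2}(X_{\BKK}, \SO_{X_{\BKK}}) = 0$. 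The same strategy applies to $\Theta_{X_{\Lambda}/\Lambda}$, which is $\Lambda$-flat as a kernel in a local free presentation of $\Omega^{1}_{X_{\Lambda}/\Lambda}$: one checks that the natural comparison $\Theta_{X_{\Lambda}/\Lambda} \otimes_{\Lambda} \Bbbk \to \Theta_{X/\Bbbk}$ has kernel and cokernel supported on the finite singular locus $\Sing X$, so these do not affect $\OH^{2}$ on the two-dimensional surface $X$; the hypothesis $\OH^{2}(X, \Theta_{X/\Bbbk}) = 0$ therefore forces $\OH^{2}(X, \Theta_{X_{\Lambda}/\Lambda} \otimes_{\Lambda} \Bbbk) = 0$, and Nakayama again gives $\OH^{2}(X_{\Lambda}, \Theta_{X_{\Lambda}/\Lambda}) = 0$, whence $\OH^{2}(X_{\BKK}, \Theta_{X_{\BKK}/\BKK}) = 0$ by flat base change.

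For the surjection of fundamental groups I proceed by lifting the base point. Fix a $\Bbbk$-rational point $c \in C \setminus \{o\}$; by smoothness of $T \to \Spec \Lambda$ at $c$ together with the Henselian property of the complete DVR $\Lambda$, we obtain a section $\tilde{c} \colon \Spec \Lambda \to T$ lifting $c$. Since the image of $\sigma$ is closed in $T$ and $\tilde{c}$ avoids $\sigma$ at the closed point, $\tilde{c}$ factors through $T \setminus \sigma(\Spec \Lambda)$, so $W := Z \times_{T, \tilde{c}} \Spec \Lambda \to \Spec \Lambda$ is a smooth proper morphism with geometrically connected fibers, whose closed fiber is $X_{c}$. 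Grothendieck's specialization theorem (\cite[Exp.~X, Corollaire~2.4, Th\'eor\`eme~3.8]{SGA1}) provides a surjection $\pi_{1}^{\alg}(W_{\bar{K}}) \surjmap \pi_{1}^{\alg}(X_{c})$, where $\bar{K}$ is an algebraic closure of the fraction field $K$ of $\Lambda$. Since $\BKK$ is algebraically closed of characteristic zero containing $K$, we may fix an embedding $\bar{K} \subset \BKK$, and invariance of $\pi_{1}^{\alg}$ of a proper variety under extension of the algebraically closed base field in characteristic zero (\cite[Exp.~X]{SGA1}) yields $\pi_{1}^{\alg}(W_{\bar{K}}) \isom \pi_{1}^{\alg}(W \times_{\Spec \Lambda} \Spec \BKK)$. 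The target is the fiber $Y_{d}$ of $\SY \to D$ over the $\BKK$-point $d \in D$ induced from $\tilde{c}$ via the inclusion $\Lambda \subset \BKK$, and we obtain $\pi_{1}^{\alg}(Y_{d}) \surjmap \pi_{1}^{\alg}(X_{c})$ for this specific pair. Since $\SX \to C$ (resp.\ $\SY \to D$) restricted to $C \setminus \{o\}$ (resp.\ $D \setminus \{o_{\BKK}\}$) is smooth proper with geometrically connected fibers over a connected base, all its smooth fibers have isomorphic $\pi_{1}^{\alg}$ by \cite[Exp.~X]{SGA1}, and the surjection extends to an arbitrary pair $(c, d)$ of smooth fibers.

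The main technical obstacle is the cohomology vanishing for $\Theta$ on $X_{\BKK}$, specifically the non-flatness of $\Lambda \to \Bbbk$ interacting with $\SHom$ at the toric singularities of class T; controlling this rests on the observation that any discrepancy between $\Theta_{X_{\Lambda}/\Lambda} \otimes_{\Lambda} \Bbbk$ and $\Theta_{X/\Bbbk}$ is zero-dimensional on the two-dimensional surface $X$. Beyond this point, the argument is a careful bookkeeping of Theorem~\ref{thm:DVRsmoothing}, standard base change, and the specialization apparatus of SGA~1, Exp.~X.
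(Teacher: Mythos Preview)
Your cohomology-vanishing argument is essentially the paper's (upper semi-continuity over the DVR is precisely your Nakayama computation on top cohomology, and the paper makes the same observation that $\Theta_{X_{\Lambda}/\Lambda}\otimes_{\Lambda}\Bbbk \to \Theta_{X/\Bbbk}$ is an isomorphism off the finite set $\Sing X$).

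For the fundamental-group surjection you take a genuinely different route. The paper chains through geometric generic fibers: with $\Bbbk_{1}\supset\Bbbk(C)$ and $\BKK_{1}\supset\BKK(D)$ algebraic closures, it applies SGA1~X to the smooth proper family $Z|_{T\setminus\sigma}\to T\setminus\sigma$ to get $\pi_{1}^{\alg}(\SY_{\BKK_{1}})\surjmap\pi_{1}^{\alg}(\SX_{\Bbbk_{1}})$ (specialization from the generic point of $T$ to the generic point of $C$) and $\pi_{1}^{\alg}(\SX_{\Bbbk_{1}})\surjmap\pi_{1}^{\alg}(X_{c})$, together with the characteristic-zero isomorphism $\pi_{1}^{\alg}(\SY_{\BKK_{1}})\isom\pi_{1}^{\alg}(Y_{d})$. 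You instead lift each closed $c$ through Hensel to a $\Lambda$-section $\tilde{c}$ and specialize over the DVR directly. This is a legitimate alternative: it avoids geometric generic points and uses only the DVR case of specialization, at the cost of the extra lifting step.

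There is one overstatement to remove. Your last sentence asserts that all smooth fibers of $\SX\to C$ have isomorphic $\pi_{1}^{\alg}$; in characteristic $p$ SGA1~X only gives surjections from the geometric generic fiber to each closed fiber, not isomorphisms among closed fibers. You do not need this claim: for each $c$ your lifting already produces a specific $d'=d'(c)$ with $\pi_{1}^{\alg}(Y_{d'})\surjmap\pi_{1}^{\alg}(X_{c})$, and then the characteristic-zero invariance on the $D$ side (which \emph{is} valid) gives $\pi_{1}^{\alg}(Y_{d})\isom\pi_{1}^{\alg}(Y_{d'})$ for arbitrary $d$. Drop the claim about $\SX\to C$ and simply run the lift for each $c$.
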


\begin{proof}
First, we shall show:
\( \OH^{2}(X_{\BKK}, \SO_{X_{\BKK}}) =
\OH^{2}(X_{\BKK}, \Theta_{X_{\BKK}/\BKK}) = 0 \).
Since \( \OH^{2}(X, \SO_{X}) = 0 \), we have
\( \OH^{2}(X_{\Lambda}, \SO_{X_{\Lambda}}) = 0 \)
by the upper semi-continuity theorem for the flat morphism
\( X_{\Lambda} \to \Spec \Lambda \), and
\( \OH^{2}(X_{\BKK}, \SO_{X_{\BKK}}) = 0 \) by the flat base change isomorphism
\( \OH^{2}(X_{\Lambda}, \SO_{X_{\Lambda}}) \otimes_{\Lambda} \BKK \isom
\OH^{2}(X_{\BKK}, \SO_{X_{\BKK}})\).
The vanishing of \( \OH^{2}(X_{\BKK}, \Theta_{X_{\BKK}/\BKK}) \) is shown as in
the proof of Theorem~\ref{thm:globalsmoothing}.\eqref{thm:globalsmoothing:6}:
For the relative tangent sheaf
\( \Theta_{X_{\Lambda}/\Lambda} :=
\SHom_{\SO_{X_{\Lambda}}}(\Omega^{1}_{X_{\Lambda}/\Lambda}, \SO_{X_{\Lambda}}) \),
the canonical homomorphism
\( \Theta_{X_{\Lambda}/\Lambda} \otimes_{\SO_{X_{\Lambda}}} \SO_{X} \to
\Theta_{X/\Bbbk}\)
is an isomorphism outside \( \Sing X \) but
another canonical homomorphism
\( \Theta_{X_{\Lambda}/\Lambda} \otimes_{\Lambda} \BKK \to \Theta_{X_{\BKK}/\BKK} \)
is an isomorphism. Thus, we have
\( \OH^{2}(X, \Theta_{X_{\Lambda}/\Lambda} \otimes_{\SO_{X_{\Lambda}}} \SO_{X})
= 0\) by \( \OH^{2}(X, \Theta_{X/\Bbbk}) = 0 \).
By the upper semi-continuity theorem applied to
the sheaf \( \Theta_{X_{\Lambda}/\Lambda} \) flat over \( \Lambda \) and by
the base change isomorphism, we have the vanishing
\( \OH^{2}(X_{\BKK}, \Theta_{X_{\BKK}/\BKK}) = 0 \).

Second, we shall define \( \SX \to C \) and \( \SY \to D \).
Let \( Z \to T \) be the deformation of \( X \) obtained in
Theorem~\ref{thm:DVRsmoothing}. By the surjection \( \Lambda \surjmap \Bbbk \) and
the injection \( \Lambda \injmap \BKK \), we define
\begin{align*}
\SX := Z \times_{\Spec \Lambda} \Spec \Bbbk &\to C := T \times_{\Spec \Lambda} \Spec \Bbbk,
\quad \text{ and }\\
\SY := Z \times_{\Spec \Lambda} \Spec \BKK &\to D := C \times_{\Spec \Lambda} \Spec \BKK.
\end{align*}
Then, \( \SX \to C \) is a deformation of \( X \) with reference point
\( o = C \cap \sigma(\Spec \Lambda) \), and
\( \SY \to D \) is a deformation of \( X_{\BKK} \) with the reference point
\( o_{\BKK} := D \times_{Z} \sigma(\Spec \Lambda) \).
Moreover, these deformations satisfy
the conditions corresponding to \eqref{thm:globalsmoothing:1} and
\eqref{thm:globalsmoothing:2} of Theorem~\ref{thm:globalsmoothing}.

Finally, we shall compare several algebraic fundamental groups
using results in \cite[Exp.~X]{SGA1} concerning with
Grothendieck's specialization theorem
\cite[Exp.~X, Corollaire~2.4, Th\'eor\`eme~3.8]{SGA1}.
Let \( \Bbbk_{1} \) be the algebraic closure of
the function field \( \Bbbk(C) \)
of \( C \) and set \( \SX_{\Bbbk_{1}} \) to be
the fiber product \( \SX \times_{C} \Spec \Bbbk_{1}  \).
Then, for any smooth closed fiber \( X_{c} \) of \( \SX \to C \),
we have a surjection
\begin{equation}\label{eq:surjpi1:1}
\pi_{1}^{\alg}(\SX_{\Bbbk_{1}}) \surjmap \pi_{1}^{\alg}(X_{c})
\end{equation}
by \cite[Exp.~X, Corollaire~2.4, Th\'eor\`eme~3.8]{SGA1}.

Let \( \BKK_{1} \) be an algebraically closed field
containing the function field \( \BKK(D) \) of \( D \),
and let \( \SY_{\BKK_{1}} \) be
the fiber product \( \SY \times_{D} \Spec \BKK_{1} \).
The geometric generic points
\( \Spec \BKK_{1} \to D \to T\) and
\( \Spec \Bbbk_{1} \to C \to T \)
are lying on the open subset \( T \setminus \sigma(\Spec \Lambda) \),
and the corresponding geometric fibers of \( Z \to T \) are
\( \SY_{\BKK_{1}} \) and \( \SX_{\Bbbk_{1}} \), respectively.
By \cite[Exp.~X, Corollaire~2.4, Th\'eor\`eme~3.8]{SGA1},
we have a surjection
\begin{equation}\label{eq:surjpi1:2}
\pi_{1}^{\alg}(\SY_{\BKK_{1}}) \surjmap \pi_{1}^{\alg}(\SX_{\Bbbk_{1}})
\end{equation}
Since \( \chara(\BKK) = 0 \), by \cite[Exp.~X, Th\'eorem\`e~3.8, Corollaire~3.9]{SGA1},
we have an isomorphism
\begin{equation}\label{eq:isompi1:3}
\pi_{1}^{\alg}(\SY_{\BKK_{1}}) \isom \pi_{1}^{\alg}(Y_{d})
\end{equation}
for any smooth closed fiber \( Y_{d} \) of \( \SY \to D \).
Thus, we have a desired surjection
\( \pi_{1}^{\alg}(Y_{d}) \surjmap \pi_{1}^{\alg}(X_{c})  \)
from \eqref{eq:surjpi1:1}--\eqref{eq:isompi1:3}.
\end{proof}

\begin{remsub}\label{remsub:cor:Pi1}
The calculation of \( \pi_{1}^{\alg}(Y_{d}) \) in Corollary~\ref{cor:Pi1}
is reduced to the case over the complex number field \( \BCC \), as follows.
Let \( \BKK \), \( X_{\BKK} \), and \( \SY \to D \) be as in Corollary~\ref{cor:Pi1}.
Here, \( \SY \to D \) is a deformation of \( X_{\BKK} \) with
a reference \( \BKK \)-rational point \( b := o_{\BKK} \in D \) as in the proof of
Corollary~\ref{cor:Pi1}.
Then, there is a finitely generated field \( \BKK_{0} \) over the field
\( \BQQ \) of rational numbers such that \( X_{\BKK} \), \( \SY \to D \),
\( o_{\BKK} \in D\),
\( d \in D \setminus \{b\}\),
and every point of \( \Sing X_{\BKK} = \{P_{1}, \ldots, P_{k}\}\)
descend to over \( \BKK_{0} \). Namely, there exist
algebraic schemes \( X_{0} \) and \( D_{0} \) over \( \Spec \BKK_{0} \),
\( \BKK_{0} \)-rational points \( b_{0} \) and \( d_{0} \) of \( D_{0} \),
\( \BKK \)-rational points \( P_{1, 0} \), \ldots, \( P_{k, 0} \) of \( X_{0} \),
and a morphism \( \SY_{0} \to D_{0} \) such that
\begin{gather*}
X_{\BKK} \isom X_{0} \times_{\Spec \BKK_{0}} \Spec \BKK, \quad
D \isom D_{0} \times_{\Spec \BKK_{0}} \Spec \BKK, \\
(b \in D) \isom (b_{0} \in D_{0}) \times_{\Spec \BKK_{0}} \Spec \BKK, \quad
(d \in D) \isom (d_{0} \in D_{0}) \times_{\Spec \BKK_{0}} \Spec \BKK, \\
(P_{i} \in X) \isom (P_{i, 0} \in X_{0}) \times_{\Spec \BKK_{0}} \Spec \BKK
\quad (1 \leq i \leq k), \\
(\SY \to D) \isom (\SY_{0} \to D_{0}) \times_{\Spec \BKK_{0}} \Spec \BKK,
\end{gather*}
with the following properties (cf.\ \cite[IV, Propositions~(2.5.1), (2.7.1), (6.7.4),
Corollaire~(2.7.2)]{EGA}):
\begin{itemize}
\item  \( X_{0} \) is a normal projective integral \( \BKK_{0} \)-scheme,
and \( X_{0} \setminus \{P_{i, 0}\}_{1 \leq i \leq k} \) is smooth over \( \Spec \BKK_{0} \).

\item \( D_{0} \) is a smooth algebraic \( \BKK_{0} \)-scheme.

\item \( \SY_{0} \) is normal and integral.

\item  \( \SY_{0} \to D_{0} \) is a projective flat morphism
whose fiber over \( b_{0} \) is identified with \( X_{0} \).

\item \( \SY_{0} \to D_{0} \) is smooth on
\( \SY_{0}^{\circ} := \SY_{0} \setminus \{P_{i, 0}\}_{1 \leq i \leq k} \).
\end{itemize}
Moreover, \( rK_{\SY_{0}/D_{0}} \) is Cartier
for the relative canonical divisor \( K_{\SY_{0}/D_{0}} \) and
for the index \( r \) of \( X \), and there is an isomorphism
\[ \SO_{\SY_{0}}(rK_{\SY_{0}/D_{0}}) \isom
j_{*}(\omega^{\otimes r}_{\SY_{0}^{\circ}/D_{0}}) \]
for the relative dualizing sheaf \( \omega_{\SY_{0}^{\circ}/D_{0}} \) where
\( j \colon \SY^{\circ}_{0} \injmap \SY_{0} \) denotes the open immersion.
These properties are also derived from the corresponding properties on \( \SY \to D \).
Note that the fiber \( Y_{d} \) of \( \SY \to D \) over \( d \)
is the base change of the fiber \( Y_{d_{0}}\) of
\( \SY_{0} \to D_{0} \) over \( d_{0} \) by
\( \Spec \BKK \to \Spec \BKK_{0}\).
Taking a field extension \( \BKK_{0} \subset \BCC \), we set
\begin{gather*}
X_{\BCC} := X_{0} \times_{\Spec \BKK_{0}} \Spec \BCC, \quad
D_{\BCC} := D_{0} \times_{\Spec \BKK_{0}} \Spec \BCC, \\
(b_{\BCC} \in D_{\BCC}) := (b_{0} \in D_{0}) \times_{\Spec \BKK_{0}} \Spec \BCC, \quad
(d_{\BCC} \in D_{\BCC}) := (d_{0} \in D_{0}) \times_{\Spec \BKK_{0}} \Spec \BCC, \\
(P_{i, \BCC} \in D_{\BCC}) := (P_{i, 0} \in D_{0}) \times_{\Spec \BKK_{0}} \Spec \BCC,
\quad (1 \leq i \leq k), \\
(\SY_{\BCC} \to D_{\BCC}) := (\SY_{0} \to D_{0}) \times_{\Spec \BKK_{0}} \Spec \BCC.
\end{gather*}
By considering also the descent of the minimal resolution of singularities
of \( X \),
we may assume that \( X_{\BCC} \) has only toric singularities of class T.
In fact, the exceptional locus over \( P_{i, \BCC} \) of the minimal resolution
of \( X_{\BCC} \)
is the linear chain of rational curves with the same self-intersection numbers
as that for \( P_{i} \); thus
\( (X_{\BCC}, P_{i, \BCC}) \) is a toric singularity by
Theorem~\ref{thm:graph2toric000}.
Then, \( \SY_{\BCC} \to D_{\BCC} \) is a \( \BQQ \)-Gorenstein smoothing of
\( X_{\BCC} \) in the sense of \cite[Section~3]{KSh} (cf.\ \cite[Corollary~3.6]{KSh}),
since \( \SY_{\BCC} \) is \( \BQQ \)-Gorenstein.
For the smooth fiber \( Y_{\BCC, d_{\BCC}} \) of \( \SY_{\BCC} \to D_{\BCC} \)
over \( d_{\BCC} \), we have an isomorphism
\[ \pi_{1}^{\alg}(Y_{d})  \isom \pi_{1}^{\alg}(Y_{\BCC, d_{\BCC}})\]
by \cite[Exp.~X, Corollaire~1.8]{SGA1}.
\end{remsub}


\section{Simply connected surfaces of general type with \texorpdfstring{$p_{g} = q = 0$}{pg = q = 0}}
\label{sect:Global}

We apply the results in
Sections~\ref{sect:dualgraph}--\ref{sect:DeformationClassT} to
construct algebraically simply connected surfaces \( \BSS\) of general type
with \(p_g=q=0\) and \( 1 \leq K^2 \leq 4\) which is defined over
the given algebraically closed field \( \Bbbk \), where
\[ p_{g} = p_{g}(\BSS) = \dim \OH^{0}(\BSS, \SO_{\BSS}(K_{\BSS}))
= \dim \OH^{2}(\BSS, \SO_{\BSS}), \qquad
q = q(\BSS) = \dim \OH^{1}(\BSS, \SO_{\BSS}),\]
and \( K = K_{\BSS} \) denotes the canonical divisor.
An outline of our method is as follows.
We first construct a normal projective rational surface \( X \)
with only toric singularities of class T
satisfying the following conditions:
\begin{itemize}
\item
\( K_{X} \) is nef and big (or ample),

\item
\( K_{X}^{2} \) equals the given number \( K^{2} > 0 \),

\item
\( \OH^{2}(X, \Theta_{X}) = 0 \).
\end{itemize}
Note that \( \OH^{i}(X, \SO_{X}) = 0 \) for any \( i > 0 \)
since \( X \) is rational and has only rational singularities.
For the construction of \( X \),
we follow the method used in \cite{LeePark}, \cite{PPS1}, and
\cite{PPS2}, which is however considered over the field \( \BCC \)
of complex numbers.
By Theorem~\ref{thm:globalsmoothing},
we have a projective deformation \( \SX \to T \) of \( X \)
over a non-singular curve \( T \)
defined over \( \Bbbk \) satisfying the conditions
\eqref{thm:globalsmoothing:1} and \eqref{thm:globalsmoothing:2} of
Theorem~\ref{thm:globalsmoothing}.
Here, a general closed smooth fiber \( X_{t} \) of \( \SX \to T \)
is a non-singular projective surface \( \BSS \) having
the following properties:

\begin{itemize}
\item
\( q(\BSS) = p_{g}(\BSS) = 0 \) and \( \chi(\BSS, \SO_{\BSS}) = 1 \).

\item
\( \BSS \) is a minimal surface of general type with
\( K_{\BSS}^{2} = K^{2} \).

\item
\( \OH^{2}(\BSS, \Theta_{\BSS/\Bbbk}) = 0 \). In particular,
\( \BSS \) is liftable to characteristic zero.
\end{itemize}

For the second condition above, note that
\( K_{X_{t}} \) is ample if \( K_{X} \) is.
If \( X_{t} \) is algebraically simply connected, then
this is one of the surfaces what we want to get.
In order to construct a simply connected one, we select the deformation
\( \SX \to T \) by considering
a lifting problem to characteristic zero.
Namely, we construct \( X \) as the closed fiber of
a flat family \( X_{\Lambda} \to \Spec \Lambda \) for
a complete discrete valuation ring \( \Lambda \) of mixed characteristic
with the residue field \( \Bbbk \) satisfying the assumptions
\eqref{thm:DVRsmoothing:1} and
\eqref{thm:DVRsmoothing:2} of
Theorem~\ref{thm:DVRsmoothing}.
Then, by Corollary~\ref{cor:Pi1},
we have a deformation \( \SX \to T \)
satisfying the conditions
\eqref{thm:globalsmoothing:1} and \eqref{thm:globalsmoothing:2} of
Theorem~\ref{thm:globalsmoothing} in which \( \pi_{1}^{\alg}(X_{t}) \)
is dominated by \( \pi_{1}^{\alg} \) of a \( \BQQ \)-Gorenstein smoothing
of a geometric generic fiber of \( X_{\Lambda} \to \Spec \Lambda \).
Looking at the construction of \( X_{\Lambda} \),
we shall prove the simply connectedness of the \( \BQQ \)-Gorenstein smoothing
of a geometric generic fiber from the argument used in the proof of
\cite[Theorem~3.1]{LeePark}, \cite[Theorem~3.1]{PPS1}, and
\cite[Proposition~2.1]{PPS2}. The argument shows especially
that, when \( \Bbbk = \BCC \),
\( X \setminus \Sing X \) is topologically simply connected.
In this way, the new \( X_{t} \) is shown to be algebraically simply connected,
and we have a desired surface.

\begin{rem}\label{rem:K2bound}
In the construction above,
the number \( K^{2} \) must be between \( 1 \) and \( 4 \).
In fact, we may assume that \( \chara(\Bbbk) = 0 \), and
in this case, we have \( \OH^{0}(X_{t}, \Theta_{X_{t}/\Bbbk}) = 0 \),
since the automorphism group of the surface \( X_{t} \) of general type is finite.
Thus, by Riemann--Roch,
\[ \dim \OH^{1}(X_{t}, \Theta_{X_{t}/\Bbbk}) = -\chi(X_{t}, \Theta_{X_{t}/\Bbbk}) =
10 - 2K^{2},\]
where we use \( \chi(X_{t}, \SO_{X_{t}}) = 1 \), \( K_{X_{t}}^{2} = K^{2} \), and
\(\OH^{2}(X_{t}, \Theta_{X_{t}/\Bbbk}) = 0\).
Hence, \( 1 \leq K^{2} \leq 5 \).
Moreover, our \( X_{t} \) is a fiber of a \( \BQQ \)-Gorenstein smoothing of
a rational surface with only cyclic quotient singularities of class T.
Hence, \( X_{t} \) has a non-trivial deformation by
the existence of the coarse moduli of surfaces of general type
(cf.\ \cite[Theorem~1.3]{Gieseker}, \cite[Corollary~5.7]{KSh}).
Thus, \( \dim \OH^{1}(X_{t}, \Theta_{X_{t}/\Bbbk}) > 0 \) and \( K^{2} \leq 4 \).
\end{rem}

We shall explain the construction of
\( X \) and \( X_{\Lambda} \to \Spec \Lambda \)
from suitable cubic pencils on \( \BPP^{2} \)
step by step in Section~\ref{sect:Global}.
We also give sufficient conditions for the surface \( X \)
to be a desired one.
Explicit examples of the cubic pencils are given in
Section~\ref{sect:proof}, and
the Main Theorem is proved using these examples.

Let us fix a complete discrete valuation ring \( \Lambda \)
of mixed characteristic with the residue field \( \Bbbk \).
Let \( K^{2} \) be a given positive integer.

\begin{step}\label{step:pencil}
We first take two cubic homogeneous
polynomials \( \phi_{0} \), \( \phi_{\infty} \)
from \( \BZZ[\xtt, \ytt, \ztt] \), and let
\( \Phi_{0} \) and \( \Phi_{\infty} \) be the divisors
of zeros of \( \phi_{0} \) and \( \phi_{\infty} \), respectively,
on \( \BPP^{2}_{\Bbbk} = \Proj \Bbbk[\xtt, \ytt, \ztt]\).
Let \( \Phi \) be the cubic pencil defined by \( \Phi_{0} \) and
\( \Phi_{\infty} \).
For \( c \in \Bbbk \), let \( \Phi_{c} \) be the divisor of zeros
of \( \phi_{0} + c\phi_{\infty} \).
We here require the following conditions on \( \Phi \):
\begin{enumerate}
    \renewcommand{\labelenumi}{($\SC$\theenumi)}
    \makeatletter
    \renewcommand{\p@enumi}{$\SC$}
    \makeatother
\item \label{condC:Phi}
The base locus of \( \Phi \), i.e.
\( \Phi_{0} \cap \Phi_{\infty} \), is a finite set.

\item  \label{condC:nodal}
There exist at least two values of \( c \in \Bbbk \setminus \{0\}\)
such that \( \Phi_{c} \) is singular. Moreover, if \(\Phi_{c}\) is singular for
\( c \ne 0 \), then it is a nodal rational curve.
\end{enumerate}

Let \( Y \to \BPP^{2}_{\Bbbk} \) be the elimination
of the base locus of \( \Phi \) which is a succession of blowings up
at points. Then, we have a minimal elliptic fibration
\( \pi \colon Y \to \BPP^{1}_{\Bbbk} \) with a section
such that \( \SO_{Y}(-K_{Y}) \isom \pi^{*}\SO(1) \).
\end{step}

\begin{lem}\label{lem:piLambda}
In the situation of Step~\emph{\ref{step:pencil}},
there is a flat morphism
\( \pi_{\Lambda} \colon Y_{\Lambda} \to \BPP^{1}_{\Lambda} \)
such that \( \pi \) is the base change of \( \pi_{\Lambda} \) by
the closed immersion \( \BPP^{1}_{\Bbbk} \to \BPP^{1}_{\Lambda} \).
Let \( c_{1} \), \( c_{2} \), \ldots, \( c_{n} \) be
the values \( c \in \Bbbk\) such that \( \Phi_{c} \) is singular.
Then, \( c_{i} \) are regarded as
elements of \( \Lambda \), and hence
\( \pi_{\Lambda} \) is a smooth elliptic fibration outside
the sections of \( \BPP^{1}_{\Lambda} \to \Spec \Lambda \)
defined by \( (\utt : \vtt) = (1 : c_{i}) \) for \( 1 \leq i \leq n \)
and \( (\utt : \vtt) = (0:1) \),
where  \( (\utt:\vtt) \) is a homogeneous coordinate of \( \BPP^{1} \).
\end{lem}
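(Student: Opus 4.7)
The strategy is to lift the construction of \( \pi \colon Y \to \BPP^1_\Bbbk \) to \( \Lambda \), exploiting that \( \phi_0, \phi_\infty \in \BZZ[\xtt, \ytt, \ztt] \subset \Lambda[\xtt, \ytt, \ztt] \) extend integrally and that the complete discrete valuation ring \( \Lambda \) is henselian. The same cubics define a pencil \( \Phi_\Lambda \) on \( \BPP^2_\Lambda \) with base scheme \( B_\Lambda = V(\phi_0, \phi_\infty) \); by \( (\SC 1) \) the closed fiber \( B \) is finite, and since \( B_\Lambda \to \Spec \Lambda \) is proper, \( B_\Lambda \) is in fact finite over \( \Lambda \). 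Each closed point of \( B \) is \( \Bbbk \)-rational in the smooth scheme \( \BPP^2_\Lambda \), hence lifts uniquely by Hensel's lemma to a \( \Lambda \)-section of \( \BPP^2_\Lambda \).

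I would then inductively lift the succession of blow-ups \( Y \to \BPP^2_\Bbbk \) that resolves the base locus of \( \Phi \) to a corresponding succession over \( \Lambda \). At each stage the ambient \( \Lambda \)-scheme is smooth, the blow-up center is a \( \Bbbk \)-rational closed-fiber point, and Hensel's lemma uniquely lifts it to a \( \Lambda \)-section along which one blows up. The result is a smooth projective \( \Lambda \)-scheme \( Y_\Lambda \) with a morphism \( \pi_\Lambda \colon Y_\Lambda \to \BPP^1_\Lambda \) whose base change to \( \Bbbk \) is \( \pi \). Flatness of \( \pi_\Lambda \) follows from the miracle flatness theorem: \( Y_\Lambda \) is regular of dimension three, \( \BPP^1_\Lambda \) is regular of dimension two, and every fiber of \( \pi_\Lambda \) is a curve.

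To locate the singular fibers, consider the discriminant divisor \( \Delta_\Lambda \subset \BPP^1_\Lambda \), namely the image of the non-smooth locus of \( \pi_\Lambda \); its closed fiber is contained in \( \{c_1, \ldots, c_n\} \cup \{\infty\} \). By \( (\SC 2) \), each \( c_i \) with \( c_i \ne 0 \) corresponds to a nodal singular fiber, equivalently a simple zero of the discriminant polynomial \( \Delta(c) \in \Lambda[c] \); Hensel's lemma then supplies a unique lift \( \tilde{c}_i \in \Lambda \) with \( \Delta(\tilde{c}_i) = 0 \). The remaining values (\( c_i = 0 \) if present, and \( c = \infty \)) already lie in \( \Lambda \). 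With these lifts, \( \Delta_\Lambda \) is supported on the union of the horizontal sections \( (1 : \tilde{c}_i) \) and \( (0 : 1) \), so \( \pi_\Lambda \) is smooth away from them. The main obstacle is the inductive bookkeeping for possibly infinitely near base points during the blow-up, together with verifying that the discriminant polynomial factors suitably over \( \Lambda \) at any non-nodal singular fiber (i.e.\ at \( c_i = 0 \) if it is singular, and at \( c = \infty \)).
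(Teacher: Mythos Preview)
Your approach is essentially the same as the paper's: both lift the pencil to \(\Lambda\) using that \(\phi_0,\phi_\infty\in\BZZ[\xtt,\ytt,\ztt]\), lift the blow-up centers via the henselian property of \(\Lambda\), and lift the singular values \(c_i\) the same way. The paper's proof is considerably terser---it simply observes that every center of blowing up ``has coordinates algebraic over \(\BZZ\)'' and hence is defined over the henselian ring \(\Lambda\), and that the \(c_i\) lift for the same reason---whereas you spell out the inductive blow-up, invoke miracle flatness, and analyze the discriminant explicitly. Two minor remarks: your word ``uniquely'' in the Hensel lift of a closed-fiber point to a section of a \emph{smooth} \(\Lambda\)-scheme is not literally correct (smoothness gives existence, not uniqueness); what you really want is the lift lying in the base locus \(B_\Lambda\), which is what the paper's ``algebraic over \(\BZZ\)'' formulation captures. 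And the obstacle you flag at the non-nodal fibers \(c=0,\infty\) is a fair concern that the paper does not address in general either; in the concrete examples of Section~7 the fiber types at \(0\) and \(\infty\) are visibly constant across \(\Spec\Lambda\), so the issue does not arise.
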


\begin{proof}
By construction, \( \Phi_{0} \) and \( \Phi_{\infty} \) are defined over \( \BZZ \),
and the each center of the blowing up between \( Y \)
and \( \BPP^{2}_{\Bbbk} \) is a point whose coordinates are algebraic
over \( \BZZ \).
Thus, the point is defined over the Henselian local ring \( \Lambda \).
Hence, \( Y \to \BPP^{2}_{\Bbbk} \) extends to
a birational morphism \( Y_{\Lambda} \to \BPP^{2}_{\Lambda}\) which is
a succession of blowings up along centers over \( \Spec \Lambda \).
Moreover, by the pencil over \( \Lambda \), we have
an elliptic fibration
\( \pi_{\Lambda} \colon Y_{\Lambda} \to \BPP^{1}_{\Lambda} \)
extending \( \pi \).
The elements \( c_{1} \), \ldots, \( c_{n} \) can be regarded as
elements of \( \Lambda \), since \( \Lambda \) is Henselian, and
\( \pi_{\Lambda} \) is smooth outside the sections above.
\end{proof}

\begin{step}\label{step:Y}
For the elliptic fibration \( \pi \) in
\emph{Step}~\ref{step:pencil}, after choosing two values \( c_{1} \),
\(c_{2} \in \Bbbk \setminus \{0\} \) such that \( \Phi_{c_{1}} \) and \(
\Phi_{c_{2}} \) are singular, we define \( \bar{F}_{1} \) and \(
\bar{F}_{2} \) to be the proper transforms of \( \Phi_{c_{1}} \) and
\( \Phi_{c_{2}} \) in \( Y \), respectively. Note that, for \( i = 1
\), \( 2 \), \( \bar{F}_{i} \) is the fiber over the point \(
(1:c_{i}) \in \BPP^{1}_{\Bbbk} \), and it is a singular fiber of
type \( \text{I}_{1} \) in Kodaira's notation (cf.\
\cite[Theorem~6.2]{Kodaira}). We define \( \bar{F} := \bar{F}_{1} +
\bar{F}_{2} \). Let \( \bar{G}^{+} \) be the union of all the \(
(-2) \)-curves on \( Y \). Note that every \( (-2) \)-curve on \( Y \)
is an irreducible component of a reducible fiber by \(
\SO_{Y}(-K_{Y}) \isom \pi^{*}\SO(1) \), and thus it is an
irreducible component of the total transform of \( \Phi_{0} +
\Phi_{\infty} \) in \( Y \) (cf.\ \eqref{condC:nodal}).
We choose \( (-2) \)-curves \(
\bar{G}_{1} \), \( \bar{G}_{2} \), \ldots, \( \bar{G}_{k} \) on \( Y
\) and set \( \bar{G} := \sum\nolimits_{i = 1}^{k} \bar{G}_{i} \).
Moreover, we choose horizontal curves \( \bar{S}_{1} \), \(
\bar{S}_{2} \), \ldots, \( \bar{S}_{l} \) with respect to \( \pi \)
such that each \( \bar{S}_{j} \) is either exceptional for \( Y \to
\BPP_{\Bbbk}^{2} \) or its image in \( \BPP^{2}_{\Bbbk} \) is a line
defined over the ring of algebraic integers. We set \( \bar{S} :=
\sum\nolimits_{j = 1}^{l}\bar{S}_{j} \), and also
\[ \bar{B} := \bar{F} + \bar{G} + \bar{S} \quad \text{and} \quad
\bar{B}^{+} := \bar{F} + \bar{G}^{+} + \bar{S}.\]
Note that, by construction and by Lemma~\ref{lem:piLambda},
any irreducible component of \( \bar{B}^{+} \) is realized as
the closed fiber of
a prime divisor of \( Y_{\Lambda} \) over \( \Spec \Lambda \).
We require the following conditions on \( Y \), \( \bar{B}^{+} \), and
\( \bar{G} \):
\begin{enumerate}
    \addtocounter{enumi}{2}
    \renewcommand{\labelenumi}{($\SC$\theenumi)}
    \makeatletter
    \renewcommand{\p@enumi}{$\SC$}
    \makeatother
\item \label{condC:NC1}
\( \bar{B}^{+} \) is a simple normal crossing divisor outside the nodes of
\( \bar{F} \).

\item  \label{condC:LinIndep}
\( \SO_{Y}(\bar{G}_{1}) \), \ldots, \( \SO_{Y}(\bar{G}_{k}) \), and
\( \pi^{*}\SO(1) \) are linearly independent in \( \Pic(Y)
\otimes_{\BZZ} \Bbbk \).
\end{enumerate}
We do not require the following additional conditions \eqref{condA:1}
and \eqref{condA:2} which are however useful to check the ampleness of \( K_{X} \)
(cf.\ Proposition~\ref{prop:step:M}.\eqref{prop:step:M:3} below):

\begin{enumerate}
    \renewcommand{\labelenumi}{($\SA$\theenumi)}
    \makeatletter
    \renewcommand{\p@enumi}{$\SA$}
    \makeatother

\item  \label{condA:1}
The open set \( Y \setminus (\bar{S} \cup \bar{G}) \) is affine.
\item \label{condA:2}
The open set \( Y \setminus (\bar{S} \cup \bar{G})  \) does not contain
any \( (-2) \)-curve.
\end{enumerate}
\end{step}

\begin{lem}\label{lem:determinant}
In the situation of Step~\emph{\ref{step:Y}},
the condition \eqref{condC:LinIndep} is
satisfied if
\[ \det (\bar{G}_{i}\bar{G}_{j}) \not\equiv 0 \mod \chara(\Bbbk). \]
\end{lem}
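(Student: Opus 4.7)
My plan is to prove contrapositive in spirit: I will show that any $\Bbbk$-linear relation
$\sum_{i=1}^k a_i[\bar G_i] + b[\pi^{*}\SO(1)] = 0$ in $\Pic(Y)\otimes_{\BZZ}\Bbbk$ must force $a_1=\cdots=a_k=b=0$. The intersection pairing $\Pic(Y)\times\Pic(Y)\to\BZZ$ is $\BZZ$-bilinear, so it extends to a well-defined $\Bbbk$-bilinear pairing on $\Pic(Y)\otimes_{\BZZ}\Bbbk$, and testing the relation against suitable classes is the natural tool.

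First I would intersect the relation with each $[\bar G_j]$. By construction each $\bar G_i$ is a $(-2)$-curve contained in a (reducible) fiber of $\pi$, hence $\bar G_i\cdot\pi^{*}\SO(1)=0$ for all $i$. The relation thus reduces to the system $\sum_{i=1}^k a_i(\bar G_i\cdot\bar G_j)=0$ in $\Bbbk$ for every $j=1,\dots,k$, i.e.\ $M\,\mathbf{a}=\mathbf{0}$, where $M$ is the intersection matrix $(\bar G_i\bar G_j)$ regarded as a matrix over $\Bbbk$ via the reduction $\BZZ\to\Bbbk$. The hypothesis $\det(\bar G_i\bar G_j)\not\equiv 0\bmod\chara(\Bbbk)$ says exactly that $M$ is invertible over $\Bbbk$, forcing $a_1=\cdots=a_k=0$.

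What remains is $b[\pi^{*}\SO(1)]=0$ in $\Pic(Y)\otimes_{\BZZ}\Bbbk$. To conclude $b=0$, I would intersect with any section $\bar S$ of the elliptic fibration $\pi\colon Y\to\BPP^{1}_{\Bbbk}$; such a section exists because $Y$ is obtained from $\BPP^{2}_{\Bbbk}$ by successive blow-ups eliminating the base locus of the pencil $\Phi$, so each base point of $\Phi$ gives rise to a $\pi$-section. For such an $\bar S$ one has $\pi^{*}\SO(1)\cdot\bar S = \deg(\pi|_{\bar S})=1$, so the intersection yields $b=0$ in $\Bbbk$.

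I do not expect any serious obstacle: the result is a short consequence of intersection theory once one observes (i) that $\pi^{*}\SO(1)$ is a fiber class and hence annihilates every $\bar G_i$, and (ii) that $\pi$ admits a section. The only point deserving a brief remark is that the $\BZZ$-valued intersection form passes to a $\Bbbk$-bilinear form on $\Pic(Y)\otimes_{\BZZ}\Bbbk$, which is formal.
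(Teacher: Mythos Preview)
Your proof is correct and follows essentially the same route as the paper's: intersect the putative relation with each $\bar G_j$ (using that the fiber class kills the $\bar G_i$) to reduce to a linear system whose matrix is the intersection matrix, apply the determinant hypothesis to kill the $a_i$, and then intersect with a section to kill $b$. The only cosmetic difference is that the paper writes the relation as $\sum c_i\bar G_i + c\bar F_1 \sim pH$ with integer coefficients and works modulo $p=\chara(\Bbbk)$, whereas you work directly with $\Bbbk$-coefficients in $\Pic(Y)\otimes_{\BZZ}\Bbbk$; since $\Pic(Y)$ is free of finite rank, the two formulations are equivalent.
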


\begin{proof}
For \( p := \chara(\Bbbk) \), suppose that
\[ \sum\nolimits_{i = 1}^{k} c_{i}\bar{G}_{i} + c \bar{F}_1 \sim pH \]
for some integers \( c_{i} \) and \( c \), and for the fiber
\( \bar{F}_1 \) of \( \pi \) and a Cartier divisor \( H \). Then, \(
c_{i} \equiv 0 \bmod p \) for any \( 1 \leq i \leq k\) by \(
\det(\bar{G}_{i}\bar{G}_{j}) \not\equiv 0 \bmod p \) and by \(
\bar{F}_1 \bar{G}_{i} = 0 \). Moreover, \( c \equiv 0 \bmod p \),
since \( \bar{F}_1\Sigma = 1 \) for a section \( \Sigma \) of \( \pi
\). Thus, \eqref{condC:LinIndep} is satisfied.
\end{proof}

\begin{lem}\label{lem:step:Y}
In the situation of Step~\emph{\ref{step:Y}},
\[ \OH^{0}(Y, \Omega^{1}_{Y/\Bbbk}(\log (\bar{G} + \bar{F}_{2})) = 0. \]
\end{lem}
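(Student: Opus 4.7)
The plan is to apply the standard residue exact sequence for logarithmic differentials and reduce the statement to the linear-independence hypothesis \eqref{condC:LinIndep} combined with the injectivity of the first Chern class map on a rational surface. First, I would observe that $D := \bar{G} + \bar{F}_2$ is a normal crossing divisor in the sense of the paper's convention: at points where $\bar{B}^+$ is simple normal crossing by \eqref{condC:NC1} there is nothing to check, while at a node of $\bar{F}_2$ the divisor $D$ is étale-locally of the form $\{xy=0\}$ (the components $\bar{G}_j$ do not pass through such a node, since otherwise three branches would meet, contradicting SNC of $\bar{B}^+$ nearby). Hence $\Omega^1_{Y/\Bbbk}(\log D)$ is well-defined and sits in a residue exact sequence
\[
0 \to \Omega^1_{Y/\Bbbk} \to \Omega^1_{Y/\Bbbk}(\log D) \xrightarrow{\Res} \nu_{*}\SO_{\tilde{D}} \to 0,
\]
where $\nu\colon \tilde{D} \to D$ is the normalization; each connected component of $\tilde{D}$ is a $\BPP^1$ (the $\bar{G}_j$ are $(-2)$-curves, and the normalization of the nodal $\bar{F}_2$ is $\BPP^1$).

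Next, since $Y$ is rational, $\OH^{0}(Y, \Omega^1_{Y/\Bbbk}) = 0$, so the long exact sequence gives
\[
0 \to \OH^{0}(Y, \Omega^1_{Y/\Bbbk}(\log D)) \to \bigoplus\nolimits_{i=0}^{k} \Bbbk \xrightarrow{\partial} \OH^{1}(Y, \Omega^1_{Y/\Bbbk}),
\]
where the summands correspond to $\tilde{\bar{F}}_2, \bar{G}_1, \ldots, \bar{G}_k$. I would then identify $\partial$ via the usual \v{C}ech cocycle computation: choosing local trivializations $\sigma_i$ of $\SO_Y(D_i)$ on an open cover, the generator of the $i$-th summand lifts to $d\log(\sigma_i)$, and the Čech coboundary is $d\log(\sigma_i/\sigma_j)$, which represents $c_1(\SO_Y(D_i)) \in \OH^{1}(Y, \Omega^1_{Y/\Bbbk})$. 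This argument is local and works even when $D_i = \bar{F}_2$ is nodal, so $\partial$ sends the summands to $c_1(\pi^{*}\SO(1)), c_1(\SO_Y(\bar{G}_1)), \ldots, c_1(\SO_Y(\bar{G}_k))$.

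Finally, the vanishing reduces to injectivity of the first Chern class map restricted to the subspace these classes span. Hypothesis \eqref{condC:LinIndep} gives linear independence in $\Pic(Y) \otimes_{\BZZ} \Bbbk$, and since $Y$ is a smooth projective rational surface the Chern class map $\Pic(Y) \otimes_{\BZZ} \Bbbk \to \OH^{1}(Y, \Omega^1_{Y/\Bbbk})$ is injective (an isomorphism, in fact). This last statement is the only mildly subtle point in positive characteristic; I would justify it by induction on blowups from $\BPP^2$, noting that at each blowup the exceptional class contributes an independent new generator on both sides (as can be checked via the intersection pairing $E^2 = -1$ and $E \cdot \pi^{*}L = 0$, which is compatible with cup product on $\OH^{1}(\Omega^1)$ after identifying it with the middle graded piece of $\OH^{2}_{\mathrm{dR}}$). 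The composite $\partial$ is therefore injective, yielding $\OH^{0}(Y, \Omega^1_{Y/\Bbbk}(\log D)) = 0$ as claimed; handling the nodal component and invoking the Chern class injectivity in characteristic $p$ are the only places requiring care.
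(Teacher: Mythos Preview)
Your proposal is correct and follows essentially the same approach as the paper: the residue exact sequence for $\Omega^1_{Y/\Bbbk}(\log(\bar G+\bar F_2))$, identification of the connecting homomorphism with the first Chern class map, and injectivity of $c_1\colon \Pic(Y)\otimes_{\BZZ}\Bbbk \to \OH^1(Y,\Omega^1_{Y/\Bbbk})$ combined with condition~\eqref{condC:LinIndep}. The paper's justification of this last injectivity is slightly slicker than your inductive blowup argument --- it uses that the intersection pairing on $\Pic(Y)$ is unimodular (since $Y$ is rational) and is compatible with the cup product $\OH^1(\Omega^1)\times\OH^1(\Omega^1)\to\OH^2(\Omega^2)\isom\Bbbk$, which avoids any appeal to $\OH^2_{\mathrm{dR}}$ (whose identification with the Hodge pieces is not automatic in positive characteristic).
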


\begin{proof}
Since \( \bar{G} + \bar{F}_{2}\) is normal crossing (cf.\ \eqref{condC:NC1}),
we can consider the exact sequence
\begin{equation}\label{eq:shortexactlog G}
0 \to \Omega^{1}_{Y/\Bbbk} \to \Omega^{1}_{Y/\Bbbk}(\log (\bar{G} + \bar{F}_{2}))
\to \bigoplus\nolimits_{i = 1}^{k} \SO_{\bar{G}_{i}} \oplus
\SO_{F_{2}} \to 0,
\end{equation}
where \( F_{2} \) is the normalization of \( \bar{F}_{2} \).
Let \( f_{2} \in \OH^{1}(Y, \Omega^{1}_{Y/\Bbbk})\) be
the image of \( 1 \in \OH^{0}(\SO_{F_{2}}) \)
by the connecting homomorphism
of the long exact sequence of cohomology groups
associated with \eqref{eq:shortexactlog G}.
Similarly, let \( g_{i} \in \OH^{1}(Y, \Omega^{1}_{Y/\Bbbk})\) be the image of
\( 1 \in \OH^{0}(\SO_{G_{i}}) \) for \( 1 \leq i \leq k \).
Then, \( f_{2} \) and \( g_{i} \) can be regarded as
the first Chern classes (with respect to ``\( \dd\!\log \)'')
of the invertible sheaves
\( \SO_{Y}(\bar{F}_{2}) \) and \( \SO_{Y}(\bar{G}_{i}) \), respectively.
In fact,
we have a commutative diagram
\[ \begin{CD}
1 @>>> \SO_{Y}^{\times} @>>> \SO_{Y}(* (\bar{G} + \bar{F}_{2}))^{\times} @>>>
\SH^{0}_{\bar{G} + \bar{F}_{2}}(\mathcal{D}\textit{iv}_{Y}) @>>> 0 \\
@. @V{\dd\!\log}VV @V{\dd\!\log}VV @VVV \\
0 @>>> \Omega^{1}_{Y/\Bbbk} @>>>
\Omega^{1}_{Y/\Bbbk}(\log (\bar{G} + \bar{F}_{2})) @>>>
\bigoplus\nolimits_{i = 1}^{k} \SO_{\bar{G}_{i}} \oplus \SO_{F_{2}} @>>> 0
\end{CD}\]
of exact sequence, where
\( \dd\!\log (u) = u^{-1}\dd u\) for a rational function \( u \), and
for an effective divisor \( R \),
\begin{itemize}
\item  \( \SO_{Y}(*R)^{\times} \) stands for the sheaf of invertible rational functions
on \( Y \) regular on the open subset
\( Y \setminus \Supp R \), and

\item \( \SH^{0}_{R}(\mathcal{D}\textit{iv}_{Y}) \) stands for the sheaf of Cartier
divisors on \( Y \) supported on \( \Supp R \).
\end{itemize}
Note that, as global sections of
\( \SH^{0}_{\bar{G} + \bar{F}_{2}}(\mathcal{D}\textit{iv}_{Y}) \),
\( \bar{F}_{2} \) and
\( \bar{G}_{i} \) are mapped to \( 1 \in \SO_{F_{2}} \) and
\( 1 \in \SO_{\bar{G}_{i}} \), respectively, by the right vertical homomorphism.
The first Chern class map
\( c \colon \Pic(Y) = \OH^{1}(Y, \SO_{Y}^{\times})
\to \OH^{1}(Y, \Omega^{1}_{Y/\Bbbk}) \) is
induced by the left vertical homomorphism: \( \dd\!\log \). Therefore,
\( f_{2} = c(\SO_{Y}(\bar{F}_{2})) \) and \( g_{i} = c(\SO_{Y}(\bar{G}_{i})) \).
For the natural bilinear form
\[ \langle \phantom{x}, \phantom{y} \rangle
\colon \OH^{1}(Y, \Omega^{1}_{Y/\Bbbk}) \times
\OH^{1}(Y, \Omega^{1}_{Y/\Bbbk})  \to
\OH^{2}(Y, \Omega^{2}_{Y/\Bbbk}) \isom \Bbbk, \]
the value \( \langle c(\SL_{1}), c(\SL_{2}) \rangle \) in \( \Bbbk \)
equals the intersection number \( \SL_{1}\cdot\SL_{2} \)
modulo \( \chara(\Bbbk) \)
for any invertible sheaves \( \SL_{1} \), \( \SL_{2} \) on \( Y \)
(cf.\ \cite[Chapter.~V, Exer.~1.8]{HartshorneGIT}).
Now, \( \Pic(Y) \) is a unimodular lattice for the intersection pairing,
since \( Y \) is rational. As a consequence,
\( c \) induces an injection
\( \Pic(Y) \otimes_{\BZZ} \Bbbk \to \OH^{1}(Y, \Omega^{1}_{Y/\Bbbk}) \).
Hence, \( g_{1}\), \ldots, \(g_{k}\), and \( f_{2} \)
are linearly independent by \eqref{condC:LinIndep}. Therefore,
the connecting homomorphism for \eqref{eq:shortexactlog G} is injective, and
we have
\[ \OH^{0}(Y, \Omega^{1}_{Y/\Bbbk}(\log (\bar{G} + \bar{F}_{2}))) =
\OH^{0}(Y, \Omega^{1}_{Y/\Bbbk}) = 0. \qedhere\]
\end{proof}

\begin{step}\label{step:Z}
We consider the blowing up \( \tau \colon Z \to Y \)
at the two nodes \( P_{1} \) and \( P_{2} \)
of \( \bar{F}_{1} \) and \( \bar{F}_{2} \), respectively.
Let \( F_{i} \) be the proper transform of \( \bar{F}_{i} \) in \( Z \)
and let \( J_{i} \) be the exceptional divisor \( \tau^{-1}(P_{i}) \)
for \( i = 1 \), \( 2 \).
Then, \( F_{i} \to \bar{F}_{i} \) is the normalization map, and
\( \tau^{*}(\bar{F}_{i}) = F_{i} + 2J_{i}\) for \( i = 1 \), \( 2 \).
In particular,
\begin{equation}\label{eq:-KZ}
K_{Z} \sim -F_{1} - J_{1} + J_{2}, \quad \text{and} \quad
-2K_{Z} \sim F_{1} + F_{2}.
\end{equation}
Let \( G_{i} \) and \( S_{j} \) denote the proper transforms of
\( \bar{G}_{i} \) and \( \bar{S}_{j} \) in \( Z \)
for \( 1 \leq i \leq k \) and \( 1 \leq j \leq l \).
Here, \( G_{i} \) is the total transform of \( \bar{G}_{i} \),
since \( \bar{G}_{i} \cap \bar{F} = \emptyset \).
We set \( F := F_{1} + F_{2} \), \( G := \sum\nolimits_{i = 1}^{k} G_{i} \), and
\( S := \sum\nolimits_{j = 1}^{l} S_{j} \).
We require the following conditions:

\begin{enumerate}
    \addtocounter{enumi}{4}
    \renewcommand{\labelenumi}{($\SC$\theenumi)}
    \makeatletter
    \renewcommand{\p@enumi}{$\SC$}
    \makeatother
\item \label{condC:NC2}
\( S + F + J_{1} + J_{2}\) is a simple normal crossing divisor.
\item  \label{condC:-1}
\( S_{j} \) are \( (-1) \)-curves for any \( 1 \leq j \leq l \).
\end{enumerate}
Let \( B^{+} \) be the total transform of \( \bar{B}^{+} \) in \( Z \).
Then, \( B^{+} \) is also
a simple normal crossing divisor by \eqref{condC:NC1} and \eqref{condC:NC2}.
We define \( B \) to be a reduced divisor such that
\( S + G + F \leq B \leq S + G + F + J_{1} + J_{2} = B^{+}\).
\end{step}

\begin{lem}\label{lem:step:Z}
In the situation of Step~\emph{\ref{step:Z}},
\[ \OH^{2}(Z, \Theta_{Z/\Bbbk}(-\log (G + F))) =
\OH^{2}(Z, \Theta_{Z/\Bbbk}(-\log B)) = 0.\]
\end{lem}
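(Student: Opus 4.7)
The plan is to reduce both vanishings to the single vanishing $\OH^{2}(Z, \Theta_{Z/\Bbbk}(-\log(G + F))) = 0$, and then transfer this to $Y$ and apply Lemma~\ref{lem:step:Y}. For any $B$ with $G + F \leq B \leq B^{+}$, every component of $B^{+} - B$ lies in $\{S_{1}, \ldots, S_{l}, J_{1}, J_{2}\}$, and each such component is a smooth $(-1)$-curve in $Z$ by \eqref{condC:-1} and the blowup construction. Adding such a component $D$ to the logarithmic divisor produces a short exact sequence
\[
0 \to \Theta_{Z/\Bbbk}(-\log(B + D)) \to \Theta_{Z/\Bbbk}(-\log B) \to \SO_{D}(D) \to 0
\]
with $\SO_{D}(D) \cong \SO_{\BPP^{1}}(-1)$, so $\OH^{0}(D, \SO_{D}(D)) = \OH^{1}(D, \SO_{D}(D)) = 0$. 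The long exact cohomology sequence then identifies the two $\OH^{2}$'s, and iterating reduces everything to the case $B = G + F$.

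For this case, I use the contraction $\tau \colon Z \to Y$. A local computation at each node $P_{i}$, in a chart with coordinates $(u, s)$ satisfying $v = us$, $J_{i} = \{u = 0\}$, and $F_{i} = \{s = 0\}$, shows that a regular vector field $a\partial_{u} + b\partial_{v}$ on $Y$ lifts to a section of $\Theta_{Z/\Bbbk}(-\log F)$ near $P_{i}$ exactly when $a \in (u)$ and $b \in (v)$; that is, precisely when it lies in Saito's logarithmic tangent sheaf $\Theta_{Y/\Bbbk}(-\log \bar{F}_{i})$ of the nodal curve $\bar{F}_{i} = \{uv = 0\}$. Combined with the fact that $\tau$ is an isomorphism away from $P_{1}, P_{2}$ and that $\bar{G} \cap \bar{F} = \emptyset$ (the $\bar{G}_{j}$ are components of reducible fibers distinct from the irreducible $\bar{F}_{1}, \bar{F}_{2}$), this shows
\[
\tau_{*} \Theta_{Z/\Bbbk}(-\log(G + F)) = \Theta_{Y/\Bbbk}(-\log(\bar{G} + \bar{F}))
\]
in the Saito sense. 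Since $R^{1}\tau_{*}$ of the same sheaf is supported on the finite set $\{P_{1}, P_{2}\}$, the Leray spectral sequence yields a surjection $\OH^{2}(Y, \Theta_{Y/\Bbbk}(-\log(\bar{G} + \bar{F}))) \twoheadrightarrow \OH^{2}(Z, \Theta_{Z/\Bbbk}(-\log(G+F)))$, so it suffices to prove the former vanishing.

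By Serre duality on $Y$, the task becomes showing $\OH^{0}(Y, \Omega^{1}_{Y/\Bbbk}(\log(\bar{G} + \bar{F})) \otimes \omega_{Y/\Bbbk}) = 0$, where $\Omega^{1}_{Y/\Bbbk}(\log(\bar{G} + \bar{F}))$ denotes Saito's logarithmic sheaf. Using $\omega_{Y/\Bbbk} \cong \SO_{Y}(-\bar{F}_{2})$ (from $-K_{Y} \sim \bar{F}_{2}$), I claim the sheaf inclusion
\[
\Omega^{1}_{Y/\Bbbk}(\log(\bar{G} + \bar{F})) \otimes \SO_{Y}(-\bar{F}_{2}) \hookrightarrow \Omega^{1}_{Y/\Bbbk}(\log(\bar{G} + \bar{F}_{1}))
\]
holds globally. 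Indeed, away from $\bar{F}_{2}$ the twist is trivial and both sides coincide; at a smooth point of $\bar{F}_{2}$ with local equation $f$, the left side has local basis $df$ and $f\,dg$, both regular; at the node $P_{2}$ with $\bar{F}_{2} = \{uv = 0\}$, Saito's basis $du/u, dv/v$ after twisting by $\SO(-\bar{F}_{2})$ becomes $v\,du, u\,dv$, again regular; the right side agrees with $\Omega^{1}_{Y/\Bbbk}$ in each of these local neighborhoods because neither $\bar{G}$ nor $\bar{F}_{1}$ meets $\bar{F}_{2}$. Taking global sections and invoking Lemma~\ref{lem:step:Y} with $\bar{F}_{1}$ in place of $\bar{F}_{2}$---legitimate because $\SO_{Y}(\bar{F}_{1}) \cong \SO_{Y}(\bar{F}_{2}) \cong \pi^{*}\SO(1)$, so condition \eqref{condC:LinIndep} is symmetric in the two fibers---gives the desired vanishing. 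The main obstacle is the careful bookkeeping at the two nodes $P_{1}, P_{2}$, where Saito's logarithmic sheaves rather than the usual ones govern the situation and the effect of twisting by $\omega_{Y/\Bbbk}$ must be tracked through the corresponding local models; once this is handled, the cohomology chases collapse and the vanishing drops out.
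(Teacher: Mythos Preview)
Your proof is correct and follows essentially the same strategy as the paper's: reduce from $B$ to $G+F$ via the normal-bundle exact sequences for the $(-1)$-curves $S_{j}$ and $J_{i}$, then pass to $Y$ and invoke Lemma~\ref{lem:step:Y}.

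The one point of divergence is the order of the two operations in the second half. The paper applies Serre duality on $Z$ first---where $G+F$ is simple normal crossing, so the log sheaves are the standard ones---and then, using $K_{Z}\sim -F_{1}-J_{1}+J_{2}$, embeds $\tau_{*}\bigl(\Omega^{1}_{Z}(\log(G+F))\otimes\SO_{Z}(K_{Z})\bigr)$ into $\Omega^{1}_{Y}(\log(\bar{G}+\bar{F}_{2}))$ as the injection into the double dual. You instead push $\Theta_{Z}(-\log(G+F))$ forward to $Y$ first, landing in Saito's logarithmic tangent sheaf for the nodal $\bar{G}+\bar{F}$, and only then apply Serre duality on $Y$ using $\omega_{Y}\cong\SO_{Y}(-\bar{F}_{2})$. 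Your route requires the (true) facts that Saito's sheaves for the normal-crossing divisor $\bar{G}+\bar{F}$ are locally free of rank two and mutually dual, whereas the paper's order sidesteps this by doing the duality upstairs on $Z$. The paper finishes with $\bar{F}_{2}$ and you with $\bar{F}_{1}$, but as you observe the two are interchangeable in Lemma~\ref{lem:step:Y}. Neither route is materially harder; the paper's is marginally more self-contained.
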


\begin{proof}
The second vanishing is derived from the first, since
we have an exact sequence
\[ 0 \to \Theta_{Z/\Bbbk}(-\log B) \to
\Theta_{Z/\Bbbk}(-\log (G + F)) \to
\bigoplus\nolimits_{j = 1}^{l}\SO_{S_{j}}(S_{j})
\oplus \bigoplus\nolimits_{J_{i} \subset B} \SO_{J_{i}}(J_{i}) \to 0\]
for the simple normal crossing divisors \( B \) and \( G + F \),
in which \( \OH^{1}(\SO_{S_{j}}(S_{j})) = \OH^{1}(\SO_{J_{i}}(J_{i}))
\linebreak 
= \OH^{1}(\BPP^{1}, \SO(-1)) = 0 \) by \eqref{condC:-1}.
The first vanishing is equivalent to
\[ \OH^{0}(Z, \Omega^{1}_{Z/\Bbbk}(\log (G + F)) \otimes \SO_{Z}(K_{Z})) = 0 \]
by Serre duality.
By \eqref{eq:-KZ}, we have an inclusion
\begin{align*}
\tau_{*}\left( \Omega^{1}_{Z/\Bbbk}(\log (G + F)) \otimes \SO_{Z}(K_{Z}) \right)
&\injmap
\tau_{*}\left( \Omega^{1}_{Z/\Bbbk}(\log (G + F_{2})) \otimes \SO_{Z}(J_{2})
\right) \\
&\injmap \Omega^{1}_{Y/\Bbbk}(\log (\bar{G} + \bar{F}_{2})),
\end{align*}
where the last map is the injection to the double-dual.
Thus, we are done by Lemma~\ref{lem:step:Y}.
\end{proof}

\begin{step}\label{step:M}
We take a successive blowings up \( \varphi \colon M \to Z \)
whose centers are certain nodes of the total transform of \( B^{+} \).
On the choice of nodes,
we assume that the total transform \( B_{M} = \varphi^{-1}(B)\) of \( B \)
in \( M \) contains a disjoint union \( D = \bigcup\nolimits_{i = 1}^{m} D^{(i)} \) of
linear chains \( D^{(1)} \), \ldots, \( D^{(m)} \)
of smooth rational curves satisfying the following conditions
\eqref{condC:DB}--\eqref{condC:end}:
\begin{enumerate}
    \addtocounter{enumi}{6}
    \renewcommand{\labelenumi}{($\SC$\theenumi)}
    \makeatletter
    \renewcommand{\p@enumi}{$\SC$}
    \makeatother

\item \label{condC:DB}
\( \varphi(D) = B \).
\item \label{condC:classT}
Each \( D^{(i)} \) contains no \( (-1) \)-curve, and
is contractible to a toric singularity of class T.
\end{enumerate}

Let \( \mu \colon M \to X \) be the contraction morphism of \( D \).
Then, \( X \) has only toric singularities of class T and
\( \mu \) is the minimal resolution of singularities.
Let \( \Delta \) be the \( \BQQ \)-divisor supported on \( D \) define by
\( K_{M} + \Delta = \mu^{*}(K_{X}) \).

\begin{enumerate}
    \addtocounter{enumi}{8}
    \renewcommand{\labelenumi}{($\SC$\theenumi)}
    \makeatletter
    \renewcommand{\p@enumi}{$\SC$}
    \makeatother

\item \label{condC:K2}
\( K_{X}^{2} = K^{2} \).

\item \label{condC:1+}
\( \Delta \Gamma > 1 \) for any \( \varphi \)-exceptional
\( (-1) \)-curve \( \Gamma \) on \( M \).

\item  \label{condC:end}
For any \( D^{(i)} \), there exist smooth rational
curves \( \Gamma \) and \( \Gamma' \) not contained in \( D^{(i)} \)
such that
\begin{itemize}
\item  \( \Gamma + D^{(i)} + \Gamma' \) is a linear chain of smooth
rational curves with the end components \( \Gamma \) and \( \Gamma' \),

\item \( \varphi( \Gamma \cup \Gamma') \)
is contained in \( B^{+} \).
\end{itemize}

\end{enumerate}

We do not require the following condition \eqref{condA:3}
which is however useful for checking the ampleness of \( K_{X} \)
(cf.\ Proposition~\ref{prop:step:M}.\eqref{prop:step:M:3} below):

\begin{enumerate}
        \addtocounter{enumi}{2}
    \renewcommand{\labelenumi}{($\SA$\theenumi)}
    \makeatletter
    \renewcommand{\p@enumi}{$\SA$}
    \makeatother

\item  \label{condA:3}
There is no \( \varphi \)-exceptional \( (-2) \)-curve
contained in \( M \setminus D \).
\end{enumerate}

\end{step}

\begin{prop}\label{prop:step:M}
In the situation of Step~\emph{\ref{step:M}}, the following hold\emph{:}
\begin{enumerate}

\item \label{prop:step:M:0}
\( \OH^{1}(X, \SO_{X}) = \OH^{2}(X, \SO_{X}) = 0 \).

\item  \label{prop:step:M:1}
\( \OH^{2}(M, \Theta_{M/\Bbbk}(-\log D)) = \OH^{2}(X, \Theta_{X}) = 0 \).

\item \label{prop:step:M:2} \( K_{X}\) is nef and big,
and it is \( \BQQ \)-linearly equivalent to an effective \( \BQQ \)-divisor
whose support is the union of \( D \) and
the \( \varphi \)-exceptional locus.

\item \label{prop:step:M:3}
\( K_{X} \) is ample if and only if
there is no \( (-2) \)-curve contained in \( M \setminus D \).
If \eqref{condA:1} and \eqref{condA:3} are satisfied, then \( K_{X} \) is ample.
If \( K_{X} \) is ample, then \eqref{condA:2}
and \eqref{condA:3} are satisfied.
In case \( B \) contains \( J_{1} \) or \( J_{2} \),
\( K_{X} \) is ample if and only if \eqref{condA:2}
and \eqref{condA:3} are satisfied.

\end{enumerate}
\end{prop}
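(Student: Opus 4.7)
The plan is to establish the four assertions in order, using the cohomological vanishings of Lemma~\ref{lem:step:Z} and the birational structure $M \xrightarrow{\varphi} Z \xrightarrow{\tau} Y$. Assertion (1) follows by combining the rationality of $M$ (an iterated blowup of the rational elliptic surface $Y$), which gives $\OH^{i}(M, \SO_{M}) = 0$ for $i > 0$, with the rationality of toric singularities of class~T (Corollary~\ref{corsub:lem:Lipman000}), which gives $\OR^{i}\mu_{*}\SO_{M} = 0$ for $i > 0$; the Leray spectral sequence then yields $\OH^{i}(X, \SO_{X}) = 0$ for $i > 0$.

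For (2), the asserted isomorphism $\OH^{2}(M, \Theta_{M/\Bbbk}(-\log D)) \isom \OH^{2}(X, \Theta_{X/\Bbbk})$ is exactly Corollary~\ref{cor:compareTangent}. For the vanishing, I would propagate Lemma~\ref{lem:step:Z} through the chain
\begin{gather*}
\OH^{2}(Z, \Theta_{Z/\Bbbk}(-\log B)) = 0 \;\Longrightarrow\; \OH^{2}(Z, \Theta_{Z/\Bbbk}(-\log B^{+})) = 0 \\
\;\Longrightarrow\; \OH^{2}(M, \Theta_{M/\Bbbk}(-\log B^{+}_{M})) = 0 \;\Longrightarrow\; \OH^{2}(M, \Theta_{M/\Bbbk}(-\log D)) = 0,
\end{gather*}
where $B^{+}_{M} := \varphi^{-1}(B^{+})$. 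The first step uses the exact sequence attached to $B \subseteq B^{+}$, whose cokernel is supported on those $J_{i} \in B^{+} \setminus B$, each contributing $\SO_{J_{i}}(J_{i}) \isom \SO_{\BPP^{1}}(-1)$ (since $J_{i}^{2} = -1$), so the cokernel has vanishing $\OH^{1}$. The second step uses that every blowup center of $\varphi$ is a node of a successive total transform of $B^{+}$, hence $\varphi^{*}\Theta_{Z/\Bbbk}(-\log B^{+}) \isom \Theta_{M/\Bbbk}(-\log B^{+}_{M})$; combining the projection formula with $\OR^{i}\varphi_{*}\SO_{M} = 0$ for $i > 0$ and applying Leray yields the vanishing on $M$. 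The third step uses $D \subseteq B^{+}_{M}$: the cokernel of $\Theta_{M/\Bbbk}(-\log B^{+}_{M}) \injmap \Theta_{M/\Bbbk}(-\log D)$ has one-dimensional support, so its $\OH^{2}$ vanishes.

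For (3) and (4), bigness follows from $K_{X}^{2} = K^{2} \geq 1$ by (C9). For nefness, I would verify $\mu^{*}K_{X} \cdot C = (K_{M} + \Delta) \cdot C \geq 0$ for every irreducible curve $C \not\subseteq D$ on $M$ by case analysis: (a) $\varphi$-exceptional $(-1)$-curves are handled by (C10); (b) $\varphi$-exceptional $C$ with $C^{2} \leq -2$ give $K_{M} \cdot C \geq 0$ automatically; (c) strict transforms of non-exceptional curves from $Z$ are handled via the identity $\mu^{*}(2K_{X}) \sim 2A_{\varphi} + 2\Delta - \varphi^{*}F$ (derived from $-2K_{Z} \sim F$ together with $K_{M} = \varphi^{*}K_{Z} + A_{\varphi}$, where $A_{\varphi}$ is an effective $\varphi$-exceptional divisor), combined with the structural facts that the strict transform $F^{M}$ of $F$ is contained in $D$ and that the horizontal components of $B$ yield components of $D$ meeting every fiber. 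The same identity provides the asserted $\BQQ$-effective representative of $\mu^{*}(2K_{X})$ supported on $D \cup (\varphi\text{-exceptional locus})$. For (4), Nakai--Moishezon together with the above case analysis shows that $\mu^{*}K_{X} \cdot C = 0$ only when $C$ is a smooth rational $(-2)$-curve disjoint from $D$, yielding the ``iff'' statement; the sufficiency of (A1)+(A3) follows since (A3) excludes $\varphi$-exceptional $(-2)$-curves and (A1), by affineness, forces every $(-2)$-curve in $Y$ to meet $\bar{S} \cup \bar{G}$, whose strict transforms sit in $D$. The principal obstacle will be the detailed bookkeeping in case~(c) of (3) and the effectivity claim, where the coefficients of $\Delta$, $A_{\varphi}$, and $\varphi^{*}F$ must be carefully balanced.
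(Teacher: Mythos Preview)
Your arguments for (1) and (2) are correct and essentially the paper's; the only variation is that you route through $B^{+}$ rather than $B$. The paper pulls back $\Theta_{Z/\Bbbk}(-\log B)$ directly, asserting that each blowup center of $\varphi$ is a node of the total transform of $B$; your detour through $B^{+}$ is if anything more faithful to the literal wording of Step~\ref{step:M} and costs only one extra line.

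The gap is in (3). Your identity $\mu^{*}(2K_{X})\sim 2A_{\varphi}+2\Delta-\varphi^{*}F$ is correct and, up to a factor of $2$, is the paper's representative, but a case-by-case nefness check is the wrong order of operations: the ``structural facts'' you invoke for case~(c) do not bound $(K_{M}+\Delta)\cdot C$ from below for an arbitrary non-exceptional curve, because the contribution $-\varphi^{*}F\cdot C$ can be negative and is not controlled by those facts. The paper instead proves the \emph{effectivity} assertion first, and nefness then falls out in one line: any $\Gamma$ with $(K_{M}+\Delta)\cdot\Gamma<0$ must lie in the support of the effective representative, hence is either $\mu$-exceptional (giving intersection $0$, contradiction) or $\varphi$-exceptional (forcing $\Gamma$ to be a $(-1)$-curve with $\Delta\Gamma<1$, contradicting \eqref{condC:1+}). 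The two ingredients you are missing for effectivity are: (i) the pair $(Z,\tfrac12 F)$ is \emph{terminal} because $F=F_{1}+F_{2}$ is smooth, so $K_{M}+\tfrac12(F_{1}'+F_{2}')\sim_{\BQQ} A$ with $A$ effective, $\varphi$-exceptional, and with support equal to the full exceptional locus; and (ii) $\mult_{F_{i}'}(\Delta)>\tfrac12$, which follows from $F_{i}'^{2}\le F_{i}^{2}=-4$ via Corollary~\ref{corsub:going-up2} and Lemma~\ref{lem:n=2}. The same effective representative streamlines (4): from $(K_{M}+\Delta)\cdot\Gamma=0$, $\Delta\Gamma=0$, and $F_{i}'\subset D$ one gets $A\cdot\Gamma=0$ for a non-$\varphi$-exceptional $(-2)$-curve $\Gamma\subset M\setminus D$, so $\Gamma$ avoids the $\varphi$-exceptional locus entirely and $\tau\varphi(\Gamma)\subset Y\setminus(\bar{S}\cup\bar{G})$; this is how \eqref{condA:1} and \eqref{condA:2} enter, and is sharper than ``strict transforms sit in $D$''.
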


\begin{proof}
\eqref{prop:step:M:0} follows from that \( X \) has only rational singularities
and \( M \) is rational.

\eqref{prop:step:M:1}:
By Corollary~\ref{cor:compareTangent}, it suffices to prove
\( \OH^{2}(M, \Theta_{M/\Bbbk}(-\log D)) = 0 \).
Now \( B_{M} = \varphi^{-1}(B)\) is a simple normal crossing divisor
containing \( D \).
Since the cokernel of the natural injection
\( \Theta_{M/\Bbbk}(-\log B_{M}) \injmap \Theta_{M/\Bbbk}(-\log D) \)
is supported on the one-dimensional subscheme \( B_{M} - D \),
it is enough to prove:
\( \OH^{2}(M, \Theta_{M/\Bbbk}(-\log B_{M})) = 0 \).
We have an isomorphism
\( \Theta_{M/\Bbbk}(-\log B_{M}) \isom \varphi^{*}\Theta_{Z/\Bbbk}(-\log B) \),
since the center of the each step of the successive blowings up \( \varphi \)
is a node of the total transform of \( B \).
Hence, by Lemma~\ref{lem:step:Z}, we have
\[ \OH^{2}(M, \Theta_{M/\Bbbk}(-\log B_{M})) \isom
\OH^{2}(Z, \Theta_{Z/\Bbbk}(-\log B)) = 0. \]

\eqref{prop:step:M:2}: By \eqref{eq:-KZ}, \( K_{Z} + (1/2)(F_{1} +
F_{2}) \sim_{\BQQ} 0 \). Since \( F_{1} + F_{2} \) is smooth, the
pair \( (Z, (1/2)(F_{1} + F_{2})) \) is terminal (in the sense of
\cite[Definition~1.16]{Kollar}). Hence, \( K_{M} + (1/2)(F_{1}' +
F_{2}') \sim_{\BQQ} A \) for an effective \( \BQQ \)-divisor \( A \)
whose support is just the exceptional locus for
\( \varphi \colon M \to Z \). Thus,
\begin{equation}\label{eq:pullbackKX}
K_{M} + \Delta \sim_{\BQQ} A + \Delta - (1/2)(F_{1}' + F'_{2}).
\end{equation}
Here, the right hand side is an effective \( \BQQ \)-divisor
whose support is the union of \( D \) and
the \( \varphi \)-exceptional locus, since
\[ \mult_{F_{i}'}(\Delta) > 1/2 \quad \text{ for } \quad  i = 1, 2, \]
by \( F_{i}^{\prime 2} \leq F_{i}^{2} = -4 \) and
by Corollary~\ref{corsub:going-up2} and Lemma~\ref{lem:n=2}.
If \( K_{M} + \Delta \) is nef, then this is also big by \eqref{condC:K2}.
Thus, it suffices to derive a contradiction
assuming that \( K_{M} + \Delta \) is not nef.
Then, there is a curve \( \Gamma \) with \( (K_{M} + \Delta)\Gamma < 0 \).
By \eqref{eq:pullbackKX},
\( \Gamma \) is \( \varphi \)-exceptional or \( \mu \)-exceptional.
But \( \Gamma \) is not \( \mu \)-exceptional; for otherwise,
\( (K_{M} + \Delta)\Gamma = \mu^{*}(K_{X})\Gamma = 0 \).
Thus, \( \Gamma \) is \( \varphi \)-exceptional.
Hence, \( \Delta \Gamma \geq 0 \) and
\( K_{M}\Gamma < 0 \); consequently, \( \Gamma \) is a \( (-1) \)-curve with
\( \Delta \Gamma < 1 \). This is a contradiction to \eqref{condC:1+}.

\eqref{prop:step:M:3}:
By Nakai--Moishezon's criterion,
\( K_{X}\) is not ample if and only if
there is a curve \( \Gamma \) such that it is not \( \mu \)-exceptional and
\( (K_{M} + \Delta)\Gamma = 0 \).
Let \( \Gamma \) be such a curve.
Then, \( \Gamma^{2} < 0 \) by the Hodge index theorem, and moreover,
\( K_{M}\Gamma = \Delta\Gamma = 0 \) by \eqref{condC:1+}.
Thus, \( \Gamma \) is a \( (-2) \)-curve contained in \( M \setminus D\).
Conversely, if \( \Gamma \) is a \( (-2) \)-curve in \( M \setminus D \),
then \( K_{X} \) is not ample by
\( K_{X}\varphi_{*}(\Gamma) = (K_{M} + \Delta)\Gamma = 0 \).

Assume that the \( (-2) \)-curve \( \Gamma \) above is not
\( \varphi \)-exceptional.
Then, \( \Gamma \) does not intersect
the \( \varphi \)-exceptional locus,
since \( A\Gamma = 0 \) by \eqref{eq:pullbackKX}.
Hence, \( \varphi(\Gamma) \cap \varphi(D) = \emptyset \).
In particular, \( \tau\varphi(\Gamma) \subset Y \setminus (\bar{S} \cup \bar{G}) \).
Therefore,  \eqref{condA:1} and \eqref{condA:3}
imply the ampleness of \( K_{X} \).
If \( B \) contains \( J_{1} \),
then \( \varphi(\Gamma) \) does not intersect the fiber \( F_{1} \cup J_{1} \);
hence, \( \tau\varphi(\Gamma) \) is a \( (-2) \)-curve contained in a fiber of
\( Y \to \BPP^{1} \) and is contained in \( Y \setminus (\bar{S} \cup \bar{G}) \).
Thus, \eqref{condA:2} and \eqref{condA:3}
imply the ampleness of \( K_{X} \) in this case.
If \eqref{condA:2} or \eqref{condA:3} fails,
then we can find an irreducible curve
\( \Gamma \) contained in \( M \setminus D \); thus \( K_{X} \) is not ample.
\end{proof}

\begin{remn}
Our proof of Proposition~\ref{prop:step:M}.\eqref{prop:step:M:1} is
different from the proof of the corresponding result in
\cite[Section~4]{LeePark}, but has many common ideas.
\end{remn}

\begin{lem}\label{lem:condC:1+}
In Step~\emph{\ref{step:M}}, the condition
\eqref{condC:1+} is derived from the following three conditions
for all the \( \varphi \)-exceptional \( (-1) \)-curves \( \Gamma \)\emph{:}
\begin{enumerate}

\item \label{condC:1+:1}
\( \Gamma \) intersects at least two irreducible components of \( D \).

\item \label{condC:1+:2}
If \( \Gamma \) intersects a connected component of \( D \) which
defines a toric singularity of type \( T(l, 2, 1) \) for some \( l \geq 1 \),
then \( \Gamma \) intersects another component of \( D \)
defining a toric singularity of type \( T(d, n, a)\)
with \( n > 2 \).

\item \label{condC:1+:3} \( \Delta \Gamma > 1 \)
if \( \Gamma \) intersects a \( (-2) \)-curve
belonging to a sequence of \( (-2) \)-curves
at an end of a connected component of \( D \).
\end{enumerate}
\end{lem}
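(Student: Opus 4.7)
The plan is to decompose \( \Delta \Gamma \) along the irreducible components \( E_j \) of \( D \) met by \( \Gamma \) and exploit two structural facts about the coefficients produced by Lemma~\ref{lem:dnaInteger}: along any chain \( D^{(i)} \) of type \( T(d_i, n_i, a_i) \), the contribution of each component to \( \Delta \) is \( c_j = 1 - r_j/n_i \). When \( n_i = 2 \), Lemma~\ref{lem:n=2} forces every \( c_j = 1/2 \); when \( n_i > 2 \), Corollary~\ref{corsub:going-up2} tells us that any \( c_j \leq 1/2 \) can occur only for components lying in a maximal run of \( (-2) \)-curves at one end of the chain, which are exactly the ``tail'' components appearing in hypothesis~\eqref{condC:1+:3}.

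I would first dispose of the case in which \( \Gamma \) meets some such end-tail \( (-2) \)-curve: then hypothesis \eqref{condC:1+:3} applies verbatim and \( \Delta\Gamma > 1 \). Otherwise, by the contrapositive of the corollary above, every component \( E_j \) of a chain with \( n_i > 2 \) met by \( \Gamma \) contributes strictly more than \( 1/2 \) to \( \Delta\Gamma \), while every \( E_j \) met by \( \Gamma \) in a chain of type \( T(l, 2, 1) \) contributes exactly \( 1/2 \). In this residual case I would invoke \eqref{condC:1+:1} to obtain two distinct components \( E_{j_1}, E_{j_2} \) of \( D \) met by \( \Gamma \). If neither of \( E_{j_1}, E_{j_2} \) lies in a \( T(l, 2, 1) \) chain, both contribute more than \( 1/2 \), immediately giving \( \Delta\Gamma > 1 \). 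If one of them does lie in a \( T(l, 2, 1) \) chain, hypothesis~\eqref{condC:1+:2} supplies a further component \( E_{j_3} \) of \( D \) met by \( \Gamma \) that lies in a chain of type \( T(d, n, a) \) with \( n > 2 \); since we are in the residual case this \( E_{j_3} \) is not an end-tail \( (-2) \)-curve, so \( c_{j_3} > 1/2 \), and therefore \( \Delta\Gamma \geq 1/2 + c_{j_3} > 1 \).

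The argument is essentially combinatorial once the two regimes \( c_j = 1/2 \) and \( c_j > 1/2 \) have been identified, and there is no serious technical obstacle. The only point of care is to ensure that the dichotomy ``end-tail \( (-2) \)-curve versus not'' really partitions the components of \( D \) that \( \Gamma \) can meet into precisely these two coefficient regimes in the \( n > 2 \) case, which is nothing but the statement of Corollary~\ref{corsub:going-up2}; the \( n = 2 \) case contributes a uniform \( 1/2 \) by Lemma~\ref{lem:n=2}. The three hypotheses \eqref{condC:1+:1}--\eqref{condC:1+:3} are then used in a single, mutually exclusive case split, and the remaining bookkeeping is routine.
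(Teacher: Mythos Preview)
Your proposal is correct and follows essentially the same approach as the paper's proof: both reduce to the dichotomy coming from Corollary~\ref{corsub:going-up2} (coefficients $>1/2$ away from end-tails when $n>2$) and Lemma~\ref{lem:n=2} (coefficients $=1/2$ when $n=2$), then use conditions \eqref{condC:1+:1}--\eqref{condC:1+:3} to guarantee one strict and one non-strict contribution of at least $1/2$. The paper phrases it slightly more tersely by directly picking components $\Xi_1,\Xi_2$ with the required properties rather than splitting into subcases, but the content is identical.
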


\begin{proof}
Let \( \Gamma \) be a \( \varphi \)-exceptional \( (-1) \)-curve.
Assume that \( \Gamma \) intersects two irreducible components
\( \Xi_{1} \), \( \Xi_{2} \) of \( D \) such that
\begin{itemize}
\item  \( \Xi_{i} \) does not belong to
any sequence of \( (-2) \)-curves at an end of a connected component of \( D \)
for \( i = 1 \), \( 2 \),

\item  the linear chain containing \( \Xi_{1} \) defines a singularity
of type \( T(d, n, a) \) with \( n > 1 \).
\end{itemize}
It suffices to prove \( \Delta\Gamma > 1 \).
By Lemma~\ref{corsub:going-up2},
the multiplicity of \( \Delta \) along \( \Xi_{1} \) is greater than \( 1/2 \)
and the multiplicity along \( \Xi_{2} \) is greater than or equal to \( 1/2 \).
Thus, we have \( \Delta \Gamma > 1 \).
\end{proof}

Applying Theorem~\ref{thm:globalsmoothing} to \( X \) above,
we have:

\begin{prop}\label{prop:LPmethod1}
Let \( \Phi \) be a cubic pencil on \( \BPP^{2}_{\Bbbk} \)
satisfying \eqref{condC:Phi} and \eqref{condC:nodal},
and let \( Y \to \BPP^{2} \) be the elimination of the base locus of \( \Phi \)
by a succession of blowings up at points.
Let \( \bar{F} \), \( \bar{G} \), \( \bar{S} \), \( \bar{B} \),
\( \bar{B}^{+} \) be divisors on \( Y \) satisfying
conditions \eqref{condC:NC1} and \eqref{condC:LinIndep}, and also
conditions \eqref{condC:NC2} and \eqref{condC:-1} on the blown up
surface \( Z \) at the nodes of \( \bar{F} \).
Let \( \varphi \colon M \to Z \) be a succession of blowings
up at certain nodes of the total transform of \( \bar{B}^{+} \), and
let \( D = \sum D^{(i)} \) be a disjoint union of linear chains of
smooth rational curves contained in the total transform of \( \bar{B} \)
in which the conditions \eqref{condC:DB}--\eqref{condC:end} are satisfied.
Then, there is a minimal projective surface \( \BSS \) of general type
defined over \( \Bbbk \) such that \( p_{g}(\BSS) = q(\BSS) = 0 \),
\( K_{\BSS}^{2} \) equals the given integer \( 1 \leq K^{2} \leq 4\) in the Main Theorem,
and \( \OH^{2}(\BSS, \Theta_{\BSS/\Bbbk}) = 0 \).
If \( M \setminus D \) contains no \( (-2) \)-curves, we can even assume
the canonical divisor \( K_{\BSS} \) to be ample.
\end{prop}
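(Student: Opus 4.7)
The plan is to set $X$ equal to the normal projective surface obtained by contracting $D$ via $\mu \colon M \to X$, verify the key properties of $X$ using Proposition~\ref{prop:step:M}, and then invoke Theorem~\ref{thm:globalsmoothing} to produce the desired smooth surface $\mathbb{S}$ as a general fiber of a $\mathbb{Q}$-Gorenstein smoothing.

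First I would observe that, by condition \eqref{condC:classT}, each $D^{(i)}$ contracts to a toric singularity of class T, so $X$ has only such singularities and $\mu$ is its minimal resolution. Proposition~\ref{prop:step:M} then supplies, all at once: the vanishings $\OH^{1}(X,\SO_{X}) = \OH^{2}(X,\SO_{X}) = 0$ coming from the rationality of $X$ and its rational singularities; the vanishing $\OH^{2}(X,\Theta_{X/\Bbbk}) = 0$, which ultimately rests on \eqref{condC:LinIndep} via Lemmas~\ref{lem:step:Y} and \ref{lem:step:Z} and on the isomorphism $\OH^{2}(M,\Theta_{M/\Bbbk}(-\log D)) \isom \OH^{2}(X,\Theta_{X/\Bbbk})$ of Corollary~\ref{cor:compareTangent}; nefness and bigness of $K_{X}$ with $K_{X}^{2} = K^{2}$, together with the additional ampleness criterion in Proposition~\ref{prop:step:M}.\eqref{prop:step:M:3}.

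Second, having both $\OH^{2}(X,\SO_{X}) = 0$ and $\OH^{2}(X,\Theta_{X/\Bbbk}) = 0$ in hand, I would apply Theorem~\ref{thm:globalsmoothing} to produce a projective deformation $\SX \to T$ of $X$ over a non-singular algebraic curve $T/\Bbbk$ satisfying \eqref{thm:globalsmoothing:1}--\eqref{thm:globalsmoothing:8}. Setting $\BSS := X_{t}$ for a closed point $t \in T \setminus \{o\}$, conclusions \eqref{thm:globalsmoothing:3}--\eqref{thm:globalsmoothing:6} give directly that $\BSS$ is a smooth projective $\Bbbk$-surface with $p_{g}(\BSS) = q(\BSS) = 0$, $K_{\BSS}^{2} = K_{X}^{2} = K^{2}$, and $\OH^{2}(\BSS,\Theta_{\BSS/\Bbbk}) = 0$, while \eqref{thm:globalsmoothing:8} yields that $K_{\BSS}$ is nef and big. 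If in addition $M \setminus D$ contains no $(-2)$-curves, then $K_{X}$ is ample by Proposition~\ref{prop:step:M}.\eqref{prop:step:M:3}, whence $K_{\BSS}$ is ample by \eqref{thm:globalsmoothing:7}.

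Finally, minimality and general-type follow at once: since $K_{\BSS}$ is nef, no curve $E \subset \BSS$ can satisfy $K_{\BSS} E < 0$, so $\BSS$ contains no $(-1)$-curve; since $K_{\BSS}$ is additionally big, $\BSS$ is of general type. Thus the proof is essentially an assembly of the already-proved machinery: the technical content — the vanishing $\OH^{2}(X,\Theta_{X/\Bbbk}) = 0$, the construction of the $\mathbb{Q}$-Gorenstein smoothing, and the ampleness analysis — has already been absorbed into Proposition~\ref{prop:step:M} and Theorem~\ref{thm:globalsmoothing}, so no substantive obstacle remains at this step. The only point requiring a line of care is checking that the ``ampleness if and only if'' criterion of Proposition~\ref{prop:step:M}.\eqref{prop:step:M:3} pulls through the flat family, which is immediate from \eqref{thm:globalsmoothing:7}.
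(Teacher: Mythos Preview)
Your proposal is correct and follows exactly the paper's approach: the proposition is stated there as an immediate consequence of applying Theorem~\ref{thm:globalsmoothing} to the surface $X$ obtained by contracting $D$, with Proposition~\ref{prop:step:M} supplying the needed hypotheses and the ampleness criterion. You have simply spelled out the assembly in more detail than the paper does.
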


\begin{step}\label{step:XLambda}
Let \( B_{M}^{+} \) be the total transform of \( B^{+} \) in \( M \).
This is a simple normal crossing divisor consisting of rational curves.
Since \( \tau \circ \varphi \colon M \to Y \) is
a succession of blowings up whose centers are nodes of
the total transform of \( \bar{B} \),
by Lemma~\ref{lem:piLambda} and by the condition on \( \bar{S}_{j} \)
in \emph{Step}~\ref{step:Y},
we can realize \( M \) as the closed
fiber of a smooth projective family \( M_{\Lambda} \to \Spec \Lambda \)
of rational surfaces and
\( B_{M}^{+} \) as the closed fiber of a divisor \( B_{M_{\Lambda}}^{+} \) on
\( M_{\Lambda}\) flat over \( \Lambda \) such that
every irreducible component of \( B_{M_{\Lambda}}^{+} \) is
a \( \BPP^{1} \)-bundle over \( \Lambda \) and
that any non-empty intersection of two
prime divisors of \( B_{M_{\Lambda}}^{+} \) is
a section over \( \Spec \Lambda \).
Moreover, we have a relative Cartier divisor on \( M_{\Lambda} \)
whose restriction to \( M \) is linearly equivalent to the pullback
of an ample divisor on \( X \). For,
the exceptional divisors for the birational
morphism \( M_{\Lambda} \to Y_{\Lambda} \to \BPP^{2}_{\Lambda}\)
and the pullback of \( \SO_{\BPP^{2}}(1) \)
generate \( \Pic(M_{\Lambda}) \).
Then, by Proposition~\ref{prop:contraction} below,
we have a projective family \( X_{\Lambda} \to \Spec \Lambda \)
of normal projective surfaces and a birational morphism
\( \mu_{\Lambda} \colon M_{\Lambda} \to X_{\Lambda}  \) over \( \Spec \Lambda \)
such that the closed fiber of \( X_{\Lambda} \) is isomorphic to \( X \)
and the morphism \( \mu_{\Lambda} \) restricted to the closed fibers
is just the minimal resolution \( \mu \colon M \to X \).
\end{step}

\begin{prop}\label{prop:contraction}
Let \( \Lambda \) be a discrete valuation ring with the residue field \( \Bbbk \)
which is algebraically closed. Let \( V \) be a normal projective surface
with only rational singularities defined over \( \Bbbk \).
Let \( \widetilde{W} \to \Spec \Lambda\) be a smooth projective morphism
such that \( W := \widetilde{W} \times_{\Spec \Lambda} \Spec \Bbbk\)
is the minimal resolution of singularities of \( V \).
Let \( \{\widetilde{E}_{i}\}_{i \in I} \) be a set of
prime divisors on \( \widetilde{W} \) such that\emph{:}
\begin{itemize}
\item \( \widetilde{E}_{i} \) is a \( \BPP^{1} \)-bundle over
\( \Spec \Lambda \) for any \( i \in I \).
\item
\( E_{i} = \widetilde{E}_{i} \times_{\Spec \Lambda} \Spec \Bbbk \)
is an irreducible component
of the exceptional locus \( E \) of the minimal resolution
\( \nu \colon W \to V \).
\item  Conversely, any irreducible component of \( E\)
is equal to \( E_{i} \) for a unique \( i \in I\).

\item  \( \widetilde{E}_{i} \cap \widetilde{E}_{j} \) is a section
of \( \widetilde{W} \to \Spec \Lambda\)
if \( E_{i} \cap E_{j} \ne \emptyset \).
\end{itemize}
Assume that there is a divisor \( \widetilde{L} \) on \( \widetilde{W} \)
such that \( L := \widetilde{L}|_{W} \) is linearly equivalent to
the pullback of an ample divisor on \( V \).
Then, there exist a normal projective \( \Lambda \)-scheme \( \widetilde{V} \)
and a proper birational morphism
\( \tilde{\nu} \colon \widetilde{W} \to \widetilde{V} \)
satisfying the following conditions\emph{:}
\begin{enumerate}
\item  \( V \isom \widetilde{V} \times_{\Spec \Lambda} \Spec \Bbbk  \), and
\( \nu \) is obtained by the base change of \( \widetilde{\nu} \)
by \( \Spec \Bbbk \to \Spec \Lambda\).

\item  The \( \tilde{\nu} \)-exceptional locus
is \( \bigcup \widetilde{E}_{i} \), and \( \tilde{\nu}(\widetilde{E}_{i}) \)
is a section of \( \widetilde{V} \to \Spec \Lambda \) for all \( i \in I\),

\item \label{prop:contraction:index}
If \( rK_{V} \) is Cartier for a positive integer \( r \), then
so is \( rK_{\widetilde{V}} \) and \( rK_{\widetilde{V}}|_{V} \sim rK_{V} \).
\end{enumerate}
\end{prop}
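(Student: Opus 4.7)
The plan is to construct \( \widetilde{V} \) as the relative \( \Proj \) over \( \Spec \Lambda \) of a graded algebra built from a sufficiently positive multiple of \( \widetilde{L} \), and then to verify each conclusion by reducing it to its counterpart on the closed fiber, where it is already known. After replacing \( \widetilde{L} \) by a large positive multiple I would first reduce to the case where \( L := \widetilde{L}|_{W} \) is the pullback \( \nu^{*}A \) of a very ample Cartier divisor \( A \) on \( V \). Then \( mL \) is globally generated on \( W \) for every \( m \geq 1 \), and using that \( V \) has rational singularities (Corollary~\ref{corsub:lem:Lipman000}) one has \( \nu_{*}\SO_{W}(mL) \isom \SO_{V}(mA) \) and \( \OR^{i}\nu_{*}\SO_{W}(mL) = 0 \) for \( i > 0 \); combined with Serre vanishing on \( V \) this yields \( \OH^{i}(W, mL) = 0 \) for \( i > 0 \) and \( m \gg 0 \). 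By cohomology and base change applied to the smooth projective morphism \( \pi \colon \widetilde{W} \to \Spec \Lambda \), the sheaf \( \pi_{*}\SO_{\widetilde{W}}(m\widetilde{L}) \) is then locally free of the expected rank, its formation commutes with base change, and by Nakayama the evaluation \( \pi^{*}\pi_{*}\SO_{\widetilde{W}}(m\widetilde{L}) \to \SO_{\widetilde{W}}(m\widetilde{L}) \) is surjective for \( m \gg 0 \) since it is surjective on the closed fiber.

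Next I would set \( \widetilde{V} := \Proj_{\Lambda} \bigoplus_{k \geq 0} \pi_{*}\SO_{\widetilde{W}}(km\widetilde{L}) \) and let \( \tilde{\nu} \colon \widetilde{W} \to \widetilde{V} \) be the induced morphism. Base change of \( \Proj \) together with the base-change property of \( \pi_{*} \) identifies \( \widetilde{V} \times_{\Spec \Lambda} \Spec \Bbbk \) with \( \Proj_{\Bbbk} \bigoplus \OH^{0}(V, kmA) \isom V \) and \( \tilde{\nu}|_{W} \) with \( \nu \), establishing property~(1). The hypothesis \( L \cdot E_{i} = 0 \), together with constancy of fiberwise intersection numbers for the \( \BPP^{1} \)-bundle \( \widetilde{E}_{i} \to \Spec \Lambda \), forces \( \widetilde{L}|_{\widetilde{E}_{i}} \) to have degree zero on every geometric fiber; hence it is pulled back from \( \Spec \Lambda \), and \( \tilde{\nu}(\widetilde{E}_{i}) \) is a section of \( \widetilde{V} \to \Spec \Lambda \), giving property~(2). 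Normality of \( \widetilde{V} \) follows as in Lemma~\ref{lem:relnormal}, using \( \tilde{\nu}_{*}\SO_{\widetilde{W}} \isom \SO_{\widetilde{V}} \) (which is built into the \( \Proj \) construction for the zeroth graded piece) and the regularity of \( \widetilde{W} \).

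For assertion~\eqref{prop:contraction:index}, assume \( rK_{V} \) is Cartier. On \( W \) we have \( rK_{W} \sim \nu^{*}(rK_{V}) + r\Delta \) with \( r\Delta \) an integral divisor supported on \( E = \bigcup E_{i} \). I would lift \( r\Delta \) to an integral divisor \( r\widetilde{\Delta} \) on \( \widetilde{W} \) supported on \( \bigcup \widetilde{E}_{i} \), assigning to each \( \widetilde{E}_{i} \) the same multiplicity as \( \Delta \) assigns to \( E_{i} \); because intersection numbers are locally constant on the flat family \( \widetilde{E}_{i} \to \Spec \Lambda \), the sheaf \( \SO_{\widetilde{W}}(rK_{\widetilde{W}} - r\widetilde{\Delta}) \) is then numerically trivial on every geometric fiber of every \( \widetilde{E}_{i} \to \Spec \Lambda \). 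I expect the main obstacle to be the descent step: one must show that this fiberwise-numerically-trivial sheaf really is the pullback by \( \tilde{\nu} \) of an invertible sheaf on \( \widetilde{V} \). Over a field this is precisely Lemma~\ref{lem:Lipman000} (compare \cite[Theorem~(12.1)]{Lipman}); in the present relative setting I expect the same formal-function argument to apply, since the dual-graph configuration and the assigned multiplicities along \( \bigcup \widetilde{E}_{i} \) are constant over \( \Spec \Lambda \) by hypothesis. Once the descent is secured, comparison on the smooth locus of \( \tilde{\nu} \)---where the relative dualizing sheaf of \( \widetilde{W}/\Lambda \) is invertible---identifies the descended sheaf with \( \SO_{\widetilde{V}}(rK_{\widetilde{V}}) \), whose restriction to the closed fiber is \( \SO_{V}(rK_{V}) \) by construction.
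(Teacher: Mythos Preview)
Your construction of \(\widetilde{V}\) via \(\Proj\) of the section ring, together with cohomology-and-base-change to lift the vanishing and global generation from the closed fiber, is essentially the paper's argument recast in \(\Proj\) language; the paper phrases it as the Stein factorization of the morphism \(\widetilde{W}\to\BPP^{N}_{\Lambda}\) defined by \(|\widetilde{L}|\), which has the advantage that normality of \(\widetilde{V}\) is automatic. One point you pass over is the \emph{other} half of conclusion~(2): you show each \(\widetilde{E}_{i}\) is contracted, but not that nothing else is. The paper handles this by taking an arbitrary \(\tilde{\nu}\)-exceptional prime divisor \(\widetilde{\Gamma}\), observing it must be flat over \(\Lambda\) (else it would sit inside some \(E_{i}\), a codimension-two locus), and then using that its closed fiber \(\Gamma\) is a union of \(\nu\)-exceptional curves, so \(\Gamma\cdot E_{i}<0\) for some \(i\) forces \(\widetilde{\Gamma}=\widetilde{E}_{i}\).

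The genuine divergence is in part~\eqref{prop:contraction:index}. You aim to descend \(\SO_{\widetilde{W}}(rK_{\widetilde{W}}-r\widetilde{\Delta})\) directly by a relative version of the Lipman/Lemma~\ref{lem:Lipman000} argument, and you correctly flag this as the delicate step: Lemma~\ref{lem:Lipman000} as stated covers only the linear-chain case, while here the singularities are arbitrary rational, so you would need to establish \(\OR^{1}\tilde{\nu}_{*}\SO_{\widetilde{W}}=0\) and the pullback property over \(\Lambda\) in that generality. The paper sidesteps this entirely with a neat trick: it sets \(\widetilde{D}=rK_{\widetilde{W}}+\sum rb_{i}\widetilde{E}_{i}\) and reruns the \emph{same} contraction argument with \(\widetilde{L}'=a\widetilde{L}+\widetilde{D}\) for \(a\gg 0\), whose restriction to \(W\) is again the pullback of an ample divisor on \(V\). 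This produces a second contraction \(\tilde{\nu}'\colon\widetilde{W}\to\widetilde{V}'\) with the same exceptional locus, so Zariski's Main Theorem gives \(\widetilde{V}'\isom\widetilde{V}\); hence \(\widetilde{D}\) is the \(\tilde{\nu}\)-pullback of a Cartier divisor, and \(rK_{\widetilde{V}}\) is Cartier. Your route is workable in principle but requires an extra input; the paper's route recycles machinery already in hand.
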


\begin{proof}
We may assume that \( L = \nu^{*}(L_{V}) \) for a very ample divisor \( L_{V} \)
on \( V \) such that \( \OH^{i}(V, \SO_{V}(L_{V})) = 0 \) for all \( i > 0 \),
by replacing \( L \) with \( mL \) for \( m \gg 0 \). Since
\( V \) has only rational singularities,
\( \OH^{i}(W, \SO_{W}(L)) = 0 \) for any \( i > 0 \).
The morphism defined by the base point free linear system \( |L| \) is
just \( \nu \colon W \to V \) followed by
a closed immersion \( V \injmap \BPP^{N} \), where \( N = \dim |L| \).
We can show:

\begin{clsub}\label{clsub:prop:contraction}
\begin{enumerate}
    \renewcommand{\theenumi}{\roman{enumi}}
    \renewcommand{\labelenumi}{(\theenumi)}
\item \label{clsub:prop:contraction:1}
\( \OH^{i}(\widetilde{W}, \SO_{\widetilde{W}}(\widetilde{L})) = 0 \)
for any \( i > 0 \).

\item \label{clsub:prop:contraction:2}
The natural homomorphism
\( \OH^{0}(\widetilde{W}, \SO_{\widetilde{W}}(\widetilde{L}))
\otimes_{\Lambda} \Bbbk \to
\OH^{0}(W, \SO_{W}(L))\)
is an isomorphism.

\item \label{clsub:prop:contraction:3}
\( \SO_{\widetilde{W}}(\widetilde{L}) \) is generated by global sections,
i.e.,
\( \OH^{0}(\widetilde{W}, \SO_{\widetilde{W}}(\widetilde{L}))
\otimes_{\Lambda} \SO_{\widetilde{W}}
\to \SO_{\widetilde{W}}(\widetilde{L})\)
is surjective.
\end{enumerate}
\end{clsub}

\begin{proof}
In fact, \eqref{clsub:prop:contraction:1} is a consequence of
the upper semi-continuity theorem
for \( \widetilde{W} \to \Spec \Lambda \) and the vanishing
\( \OH^{i}(W, \SO_{W}(L)) = 0 \)
for any \( i > 0 \).
Then, \eqref{clsub:prop:contraction:2} is obtained as the
base change isomorphism.
The homomorphism in \eqref{clsub:prop:contraction:3} is surjective after
tensoring \( \Bbbk \) over \( \Lambda \),
by the freeness of \( |L| \) and
by \eqref{clsub:prop:contraction:2}. Thus, the homomorphism is surjective
along the closed fiber of \( \widetilde{W} \to \Spec \Lambda \), and
is surjective everywhere on \( \widetilde{W} \),
since \( \Lambda \) is a local ring.
\end{proof}

We continue the proof of Proposition~\ref{prop:contraction}.
Now, by Claim~\ref{clsub:prop:contraction}.\eqref{clsub:prop:contraction:2}
above, we see that
\( \OH^{0}(\widetilde{W}, \SO_{\widetilde{W}}(\widetilde{L}))  \)
is a free \( \Lambda \)-module of rank \( N \), and
the surjection in
Claim~\ref{clsub:prop:contraction}.\eqref{clsub:prop:contraction:3}
defines a morphism
\( \widetilde{W} \to \BPP^{N}_{\Lambda} \).
Let \( \widetilde{W} \to \widetilde{V} \to \BPP^{N}_{\Lambda} \) be
the Stein factorization of \( \widetilde{W} \to \BPP^{N}_{\Lambda}  \).
Then, \( \widetilde{V} \) is normal and
the fiber over \( \Spec \Bbbk \) of
the proper morphism \( \tilde{\nu} \colon \widetilde{W} \to \widetilde{V} \)
is just \( \nu \colon W \to V \).

By construction, \( \tilde{\nu} \) is an isomorphism on
\( W \setminus \bigcup E_{i} \);
hence \( \tilde{\nu} \) is a birational morphism.
Every \( \widetilde{E}_{i} \) for \( i \in I \)
is \( \tilde{\nu} \)-exceptional, since
\( \SO_{\widetilde{W}}(\widetilde{L})|_{\widetilde{E}_{i}}
\isom \SO_{\widetilde{E}_{i}}\).
We shall show that there is no other \( \tilde{\nu} \)-exceptional divisor.
Let \( \widetilde{\Gamma} \) be a \( \tilde{\nu} \)-exceptional
prime divisor on \( \widetilde{W} \).
If \( \widetilde{\Gamma} \) is not flat over \( \Lambda \), then
\( \widetilde{\Gamma} \)
is contained in the closed fiber \( W \) and also contained in some \( E_{i} \);
this is a contradiction, since  \( \Codim (E_{i}, \widetilde{W}) = 2 \).
Thus, \( \widetilde{\Gamma} \) is flat over \( \Lambda \).
Then, the closed fiber \( \Gamma \) of
\( \widetilde{\Gamma} \to \Spec \Lambda \)
is a union of \( \nu \)-exceptional curves.
Here, the intersection number \( b := \Gamma E_{i} \) is negative
for some \( i \in I \).
Hence, \( \widetilde{\Gamma} = \widetilde{E}_{i} \), since
\( \SO_{\widetilde{W}}(\widetilde{\Gamma})|_{\widetilde{E}_{i}} \)
is isomorphic to \( \SO_{\BPP^{1}_{\Lambda}}(b) \).
Therefore, the \( \tilde{\nu} \)-exceptional locus is
\( \bigcup \widetilde{E}_{i} \).
Since \( \widetilde{E}_{i} \) is a \( \BPP^{1} \)-bundle over \( \Lambda \),
the image \( \tilde{\nu}(\widetilde{E}_{i}) \)
is a section of \( \widetilde{V} \to \Spec \Lambda \).
Thus, all the assertions except \eqref{prop:contraction:index} are satisfied.

For \eqref{prop:contraction:index},
it suffices to show that \( rK_{\widetilde{V}} \) is Cartier.
In fact, if \( rK_{\widetilde{V}} \) is Cartier, then
\( rK_{\widetilde{V}}|_{V} \sim rK_{V} \) by Remark~\ref{remsub:lem:GorIndex}.
We consider the rational numbers
\( b_{i} \geq 0\) such that \( \nu^{*}(K_{X}) = K_{W} + \sum b_{i}E_{i}\).
Then, \( rb_{i} \in \BZZ \) for any \( i \).
Let \( \widetilde{D} \) be the Cartier divisor
\( rK_{\widetilde{W}} + \sum rb_{i} \widetilde{E}_{i} \) on \( \widetilde{W} \).
Then, \( \widetilde{D}|_{W} \sim \nu^{*}(rK_{V}) \).
We apply the argument above to
\( \widetilde{L'} = a\widetilde{L} + \widetilde{D} \) for \( a \gg 0 \)
instead of \( \widetilde{L} \), where \( aL_{V} + rK_{V} \) is ample on \( V \).
Then, \( \SO_{\widetilde{W}}(\widetilde{L'}) \) is generated by global sections and
defines a morphism \( \widetilde{W} \to \BPP^{N'}_{\Lambda} \) for
some \( N' > 0\). Let \( \tilde{\nu}' \colon \widetilde{W} \to \widetilde{V'} \) be the birational morphism
obtained as the Stein factorization. Then, the exceptional locus of \( \widetilde{W} \to \widetilde{V'}  \)
is just the union of \( \bigcup \widetilde{E}_{i} \).
By Zariski's main theorem, we have an isomorphism
\( \widetilde{V} \isom \widetilde{V'} \) compatible with \( \tilde{\nu} \)
and \( \tilde{\nu}' \). Consequently, \( \widetilde{D} \) is the pullback
of a Cartier divisor on \( \widetilde{V} \).
Hence \( rK_{\widetilde{V}} \) is Cartier and
\( \widetilde{D} = \tilde{\nu}^{*}(rK_{\widetilde{V}}) \) by
\( \tilde{\nu}_{*}(\widetilde{D}) = \tilde{\nu}_{*}(rK_{\widetilde{W}}) =
rK_{\widetilde{V}}\).
Thus, we have finished the proof of Proposition~\ref{prop:contraction}.
\end{proof}

\begin{step}\label{step:redC}
We can apply Theorem~\ref{thm:DVRsmoothing} to
\( X_{\Lambda} \to \Spec \Lambda \)
constructed in \emph{Step}~\ref{step:XLambda}. In fact,
the assumptions \eqref{thm:DVRsmoothing:ass1} and \eqref{thm:DVRsmoothing:ass2}
of Theorem~\ref{thm:DVRsmoothing}
are confirmed by Proposition~\ref{prop:step:M}
and by an argument in \emph{Step}~\ref{step:M} and
the condition \eqref{condC:end}, respectively.
Let \( X_{\BKK}\) be the geometric generic fiber
of \( X_{\Lambda} \to \Spec \Lambda \)
for an algebraically closed field \( \BKK \) containing \( \Lambda \).
Thus, by Theorem~\ref{thm:DVRsmoothing} and by Corollary~\ref{cor:Pi1},
we have deformations
\( \SX \to T \) of \( X \) and \( \SX_{\BKK} \to T_{\BKK} \) of
\( \overline{X} \) such that:
\begin{itemize}
\item  \( T \) is a non-singular algebraic curve over \( \Bbbk \), and
\( \SX \to T \) satisfies the conditions
\eqref{thm:globalsmoothing:1} and \eqref{thm:globalsmoothing:2}
of Theorem~\ref{thm:globalsmoothing}.

\item  \( T_{\BKK} \) is a non-singular algebraic curve over
\( \BKK \), and \( \SX_{\BKK} \to T_{\BKK} \) satisfies the conditions
\eqref{thm:globalsmoothing:1} and \eqref{thm:globalsmoothing:2}
of Theorem~\ref{thm:globalsmoothing}.

\item  \( \pi_{1}^{\alg}(X_{\BKK, t'}) \to \pi_{1}^{\alg}(X_{t}) \)
is surjective for any closed smooth fibers \( X_{t} = \SX \times_{T} t\) and
\( X_{\BKK, t'} = \SX_{\BKK} \times_{T_{\BKK}} t'\).
\end{itemize}
In order to get the algebraic simply connectedness of \( X_{\BKK, t'} \),
we shall compare \( X_{\BKK} \) and a normal projective surface
\( \overline{X} \) defined over \( \BCC \) which is constructed by
the same method as in \emph{Steps}~\ref{step:pencil}--\ref{step:M}.

Let \( Y_{\BCC} \to \BPP^{1}_{\BCC}\) be the elliptic fibration obtained from
the pencil \( \Phi \) on \( \BPP^{2}_{\BCC} \) by
replacing \( \Bbbk \) with \( \BCC \).
Then, by the same process as in \emph{Steps}~\ref{step:Z} and \ref{step:M},
we have birational morphisms \( \tau_{\BCC} \colon Z_{\BCC} \to Y_{\BCC} \),
\( \varphi_{\BCC} \colon M_{\BCC} \to Z_{\BCC} \), and a disjoint union
\( D_{\BCC} \) of linear chains of smooth rational curves such that
the contraction of \( D_{\BCC} \) is the minimal resolution
\( \mu_{\BCC} \colon M_{\BCC} \to X_{\BCC} \) of
a normal projective surface \( X_{\BCC} \)
with only toric singularities of class T.
Let \( \BKK_{0} \) be a subfield of \( \BKK \) finitely generated over
\( \BQQ \) such that \( X_{\BKK} \), \( \Sing X_{\BKK} \),
and \( \overline{X} \to \overline{T} \) are defined over \( \BKK_{0} \).
We take an injection \( \BKK_{0} \injmap \BCC \) and set
\( \overline{X} := X_{\BKK_{0}} \times_{\Spec \BKK_{0}} \Spec \BCC \).
Then, \( \overline{X} \isom X_{\BCC} \)
by our construction of \( X_{\Lambda} \to \Spec \Lambda \).

\begin{lem}\label{lem:simplyconn}
If \( X_{\BCC} \setminus \Sing X_{\BCC} \isom M_{\BCC} \setminus D_{\BCC}\)
is simply connected \emph{(}with respect to the Euclidean topology\emph{)},
then a general smooth closed fiber \( X_{t} \) of \( \SX \to T \)
is algebraically simply connected.
\end{lem}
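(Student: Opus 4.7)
The plan is to reduce the vanishing of $\pi_{1}^{\alg}(X_{t})$ in arbitrary characteristic, step by step, to the vanishing of the classical topological fundamental group of a smooth complex $\BQQ$-Gorenstein smoothing of $X_{\BCC}$, where the simply connectedness follows from the van~Kampen argument of \cite{LeePark}, \cite{PPS1}, and \cite{PPS2}.

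First, applying Corollary~\ref{cor:Pi1} to the family $X_{\Lambda}\to \Spec \Lambda$ constructed in Step~\ref{step:XLambda} gives a surjection $\pi_{1}^{\alg}(X_{\BKK,t'}) \surjmap \pi_{1}^{\alg}(X_{t})$, so it suffices to show $\pi_{1}^{\alg}(X_{\BKK,t'})=0$. Next, I invoke Remark~\ref{remsub:cor:Pi1}: the data $(X_{\BKK}, X_{\BKK,t'}, \SX_{\BKK}\to T_{\BKK})$ descend to a finitely generated subfield $\BKK_{0}\subset \BKK$, and extending along any embedding $\BKK_{0}\injmap \BCC$ reproduces the $\BCC$-construction of Step~\ref{step:redC}, because the pencil $\Phi$, the horizontal curves $\bar{S}_{j}$, and all the blowings up in Steps~\ref{step:pencil}--\ref{step:M} are defined over $\BZZ$; in particular, $\overline{X}:=X_{\BKK_{0}}\times_{\Spec \BKK_{0}}\Spec \BCC$ is isomorphic to $X_{\BCC}$. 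Remark~\ref{remsub:cor:Pi1} then yields an isomorphism $\pi_{1}^{\alg}(X_{\BKK,t'}) \isom \pi_{1}^{\alg}(Y_{\BCC,d_{\BCC}})$ for a smooth fiber $Y_{\BCC,d_{\BCC}}$ of the corresponding complex $\BQQ$-Gorenstein smoothing of $X_{\BCC}$. Since $Y_{\BCC,d_{\BCC}}$ is a smooth complex projective variety, the comparison theorem \cite[Exp.~XII, Corollaire~5.2]{SGA1} identifies $\pi_{1}^{\alg}(Y_{\BCC,d_{\BCC}})$ with the profinite completion of $\pi_{1}^{\mathrm{top}}(Y_{\BCC,d_{\BCC}})$, reducing the problem to showing $\pi_{1}^{\mathrm{top}}(Y_{\BCC,d_{\BCC}})=1$.

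The last step is the classical topological argument by van~Kampen's theorem. The smooth fiber $Y_{\BCC,d_{\BCC}}$ is diffeomorphic to the union of $X_{\BCC}\setminus \bigcup_{P}U_{P}$ with the Milnor fibers $F_{P}$ of the local $\BQQ$-Gorenstein smoothings of the toric singularities of class T, glued along the lens-space links $L_{P}=\partial U_{P}=\partial F_{P}$. By hypothesis, $X_{\BCC}\setminus \bigcup_{P}U_{P}$ is homotopy equivalent to $X_{\BCC}\setminus \Sing X_{\BCC}\isom M_{\BCC}\setminus D_{\BCC}$, and hence simply connected; furthermore, for each class T singularity the inclusion $L_{P}\injmap F_{P}$ is surjective on topological $\pi_{1}$, the Milnor fiber being a rational homology ball whose cyclic fundamental group is generated by the image of the link (cf.\ \cite[Section~3]{LeePark}). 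Van~Kampen then forces $\pi_{1}^{\mathrm{top}}(Y_{\BCC,d_{\BCC}})=1$. The main obstacle is this last topological step, whose verification I plan to import directly from the arguments of \cite[Section~3]{LeePark}, \cite[Theorem~3.1]{PPS1}, and \cite[Proposition~2.1]{PPS2}; all the preceding reductions are formal applications of results already established in the previous sections.
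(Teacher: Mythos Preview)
Your proof is correct and follows essentially the same route as the paper's own argument: reduce $\pi_{1}^{\alg}(X_{t})$ via Corollary~\ref{cor:Pi1} and Remark~\ref{remsub:cor:Pi1} to the algebraic (hence profinite-topological) fundamental group of a smooth complex $\BQQ$-Gorenstein smoothing of $X_{\BCC}$, and then invoke the van~Kampen/Milnor-fiber argument from \cite{LeePark}, \cite{PPS1}, \cite{PPS2}. The paper presents these steps in the reverse order (topology first, then the two reductions) and is terser, but the content is the same; your explicit mention of \cite[Exp.~XII, Corollaire~5.2]{SGA1} and of the surjectivity of $\pi_{1}(L_{P})\to\pi_{1}(F_{P})$ merely spells out what the paper compresses into a single citation.
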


\begin{proof}
By the assumption, any \( \BQQ \)-Gorenstein smoothing of \( X_{\BCC} \) is
simply connected by an argument in the proof of \cite[Theorem~3.1]{LeePark}
using results on rational blowdown \( 4 \)-manifolds, Milnor fibers, and
van-Kampen's theorem.
Thus, by Remark~\ref{remsub:cor:Pi1}, a smooth closed fiber \( X_{\BKK, t'} \)
of \( \SX_{\BKK} \to T_{\BKK} \) is algebraically simply connected.
Therefore, so is \( X_{t}\) by Corollary~\ref{cor:Pi1}.
\end{proof}

The following is useful for proving the simply connectedness of
\( M_{\BCC} \setminus D_{\BCC} \) and used in the proof of
\cite[Theorem~3.1]{LeePark}.

\begin{lemsub}\label{lemsub:simplyconn}
In the construction of \( M \) and \( D \) above,
assume that \( \Bbbk = \BCC \).
Then, \( M \setminus D \) is simply connected
\emph{(}with respect to the euclidean topology\emph{)}
provided that, for any connected component \( D_{i} \) of \( D \),
there exists a smooth rational curve \( E \) on \( M \)
and another connected component \( D_{j} \) of \( D \) such that\emph{:}
\begin{enumerate}
\item  \label{lemsub:simplyconn:1}
\( E \) intersects an end component of \( D_{i} \) and
an end component of \( D_{j} \).

\item  \label{lemsub:simplyconn:2} \( E D_{i} = ED_{j} = 1 \), and
\( E \) does not intersect the other connected components of \( D \).

\item  \label{lemsub:simplyconn:3}
\( \gcd(d_{i}n_{i}, d_{j}n_{j}) = 1 \) for the types
\( T(d_{i}, n_{i}, a_{i}) \) and \( T(d_{j}, n_{j}, a_{j}) \)
of the toric singularities defined by \( D_{i} \) and \( D_{j} \),
respectively.
\end{enumerate}
\end{lemsub}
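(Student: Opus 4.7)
The plan is to apply van Kampen's theorem to the cover $M = M^* \cup N$, where $M^* := M \setminus D$ and $N = \bigsqcup N_i$ is a disjoint union of small open tubular neighborhoods of the connected components $D_i = D^{(i)}$. Since each $D_i$ is a tree of smooth rational curves, $N_i$ deformation retracts to $D_i$ and is simply connected. The punctured neighborhood $N_i \setminus D_i$ is homotopy equivalent to the link of the corresponding toric singularity of class T, which is the lens space $L(d_i n_i^2, d_i n_i a_i - 1)$; in particular, $\pi_1(N_i \setminus D_i) \isom \BZZ/(d_i n_i^2)$. Because $\pi_1(M) = 0$ (as $M$ is a smooth complex rational surface) and each $\pi_1(N_i) = 0$, the groupoid version of van Kampen gives that $\pi_1(M^*)$ is \emph{normally generated} by the images of the cyclic groups $\pi_1(N_i \setminus D_i)$.

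The next step is to identify a convenient set of normal generators. For each $i$, let $\Xi_i$ be the end component of $D_i$ that the auxiliary curve $E$ (from the hypothesis) meets, and let $\alpha_i \in \pi_1(M^*)$ denote the image of a meridian around $\Xi_i$. I would verify using the toric/plumbing description of the link (as in Section~\ref{sect:dualgraph} and the continued fraction computation) that the end meridian already generates the full cyclic group $\pi_1(N_i \setminus D_i) \isom \BZZ/(d_i n_i^2)$; for a linear chain $[-b_1,\dots,-b_l]$ of smooth rational curves with determinant $d_i n_i^2$, a direct Smith-normal-form argument on the plumbing intersection matrix shows that the class of either end component generates the cokernel. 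Consequently, $\alpha_i$ has order dividing $d_i n_i^2$ in $\pi_1(M^*)$, and $\pi_1(M^*)$ is normally generated by $\{\alpha_1, \ldots, \alpha_m\}$.

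The curve $E$ supplies the crucial ``bridge relation.'' Since $E \isom \BPP^{1}$ meets $D$ transversally at exactly two points, one on $\Xi_i$ and one on the analogous end component of $D_j$ (and $E \cdot D_i = E \cdot D_j = 1$), the surface $E \setminus D$ is a twice-punctured sphere, so $\pi_1(E \setminus D) \isom \BZZ$, generated by a small loop around either puncture. Because $E$ meets each $D$-component transversally in a single point, the two such loops, pushed into $M^*$, are identified (up to inversion) with $\alpha_i$ and $\alpha_j$ respectively. Thus $\alpha_i = \alpha_j^{\pm 1}$ in $\pi_1(M^*)$. Combining with the order bounds from the previous step gives $\alpha_i^{d_j n_j^2} = 1$, so the order of $\alpha_i$ divides $\gcd(d_i n_i^2, d_j n_j^2)$. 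The hypothesis $\gcd(d_i n_i, d_j n_j) = 1$ is equivalent to $\gcd(d_i n_i^2, d_j n_j^2) = 1$, forcing $\alpha_i = 1$.

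Since the hypothesis is imposed for every connected component $D_i$, every normal generator $\alpha_i$ becomes trivial, so $\pi_1(M \setminus D) = 1$. The main technical obstacle I foresee is a careful bookkeeping of two orientation/sign issues: first, that the end meridian really generates (and does not merely map into) $\pi_1$ of the link, and second, that the sign ambiguity in $\alpha_i = \alpha_j^{\pm 1}$ does not spoil the divisibility argument; both are resolved by observing that the relation $\alpha_i^{\pm 1} = \alpha_j^{\pm 1}$ still yields the same pair of order-divisibility constraints used in the final coprimality step. A secondary subtlety is the application of van Kampen to the disconnected piece $N$, which I would handle via the groupoid formulation using the star-shaped incidence graph (with $M^*$ at the centre) to reduce to the statement that $\pi_1(M^*)$ is normally generated by the images of the $\pi_1(N_i \setminus D_i)$.
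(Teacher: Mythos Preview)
Your argument is correct and follows essentially the same route as the paper: van Kampen on the cover of $M$ by $M\setminus D$ and tubular neighborhoods of the $D_i$, identification of each local $\pi_1$ with $\BZZ/(d_in_i^2)$, the bridge relation coming from the twice-punctured sphere $E\setminus D$, and the coprimality condition to kill all generators. One small imprecision: the relation you obtain from $E$ is only $\alpha_i$ \emph{conjugate} to $\alpha_j^{\pm1}$ (basepoint paths for different $i$ need not match up), not literal equality; the paper states it this way, and your order-divides-$\gcd$ argument is unaffected since conjugate elements have equal order.
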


\begin{proof}
Let \( Q_{i} \) be the toric singular point \( \mu(D_{i}) \) of class T
and let \( T(d_{i}, n_{i}, a_{i}) \) be the type.
We have an open neighborhood \( \overline{\SU}_{i} \)
(with respect to the euclidean topology) of \( Q_{i} \) and
a finite surjective morphism
\( \lambda_{i} \colon \SU_{i}\sptilde \to \overline{\SU}_{i} \)
such that
\begin{itemize}
\item  \( \overline{\SU}_{i} \cap \Sing X = \{Q_{i}\} \),

\item  \( \overline{\SU}_{i} \) is topologically contractible,

\item  \( \SU_{i}\sptilde \) is a non-singular surface topologically
contractible to a point,

\item \( \lambda_{i} \) is \'etale over \( \overline{\SU}_{i} \setminus \{Q_{i}\} \),

\item \( \lambda_{i} \) is a cyclic covering of degree \( d_{i}n_{i}^{2} \).
\end{itemize}
In particular,
\( \pi_{1}(\overline{\SU}_{i} \setminus \{Q_{i}\}) \) is a cyclic group of
order \( d_{i}n_{i}^{2} \).
We set \( \SU_{i} := \mu^{-1}(\overline{\SU}_{i})\), which is
homotopic to \( D_{i} \). Hence, \( \SU_{i} \) is simply connected.
Moreover,
\( \SU_{i} \setminus D = \SU_{i} \setminus D_{i} \isom
\overline{\SU}_{i} \setminus Q_{i} \).
We may assume that \( \SU_{i} \cap \SU_{j} = \emptyset\)
for any \( i \), \( j \).
For the open immersion \( \SU_{i} \setminus D \injmap M \setminus D \),
we have a homomorphism
\( \pi_{1}(\SU_{i} \setminus D) \to \pi_{1}(M \setminus D) \)
defined up to conjugate by considering a path connecting the
reference points of \( \SU_{i} \setminus D \) and \( M \setminus D \).
Let \( \gamma_{i} \) be the image of a generator
of \( \pi_{1}(\SU_{i} \setminus D) \) in  \( \pi_{1}(M \setminus D) \).
Then, \( \pi_{1}(M \setminus D) \) is generated by the images
\( \gamma_{i} \) for any \( i \)
by van-Kampen's theorem applied to the open covering
\( M = (M \setminus D) \cup \bigcup \SU_{i} \),
since \( M \) and \( \SU_{i} \) are simply connected.

For a connected component \( D_{i} \), let \( D_{j} \) be
another connected component and \( E \) be a smooth rational curve satisfying
\eqref{lemsub:simplyconn:1}--\eqref{lemsub:simplyconn:3}.
Then, \( E \setminus D \isom \BCC \setminus \{0\}\).
Let \( \gamma_{E} \) be the image of a generator of
the cyclic group
\( \pi_{1}(E \setminus D) \) in \( \pi_{1}(M \setminus D) \)
by a homomorphism \( \pi_{1}(E \setminus D) \to \pi_{1}(M \setminus D) \)
defined up to conjugate similarly to the above
from the inclusion \( E \setminus D \injmap M \setminus D \).
Then, \( \gamma_{E} \) is conjugate to
\( \gamma_{i} \) or \( \gamma_{i}^{-1} \),
and conjugate to \( \gamma_{j} \) or \( \gamma_{j}^{-1} \), since
\( E \) intersects \( D_{i} \) (resp. \( D_{j} \)) transversely
only at one point which is in an end component.
Therefore, \( \gamma_{i} \) is conjugate to \( \gamma_{j} \) or \( \gamma_{j}^{-1} \).
Hence, \( \gamma_{i} = \gamma_{j} = 1 \) by \eqref{lemsub:simplyconn:3}.
This proves that \( M \setminus D \) is simply connected.
\end{proof}

\end{step}

By the discussion in \emph{Steps}~\ref{step:XLambda} and \ref{step:redC},
we have:

\begin{prop}\label{prop:LPmethod2}
In the situation of Proposition~\emph{\ref{prop:LPmethod1}},
let \( M_{\BCC} \) be the same surface obtained as above by replacing \( \Bbbk \)
with \( \BCC \), and let \( D_{\BCC} \) be the same linear chain as \( D \)
on \( M_{\BCC} \).
If \( M_{\BCC} \setminus D_{\BCC} \) is simply connected in addition,
then one can require the surface \( \BSS \) in
Proposition~\emph{\ref{prop:LPmethod1}} to be algebraically simply connected.
\end{prop}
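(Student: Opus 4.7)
The plan is to follow exactly the blueprint already sketched in Step~\ref{step:XLambda} and Step~\ref{step:redC}, and to verify that each ingredient goes through. First I would lift the whole construction from $\Bbbk$ to a complete discrete valuation ring $\Lambda$ of mixed characteristic with residue field $\Bbbk$. By Lemma~\ref{lem:piLambda}, the cubic pencil $\Phi$ and the base points lift to $\Spec \Lambda$, yielding an elliptic fibration $\pi_\Lambda \colon Y_\Lambda \to \BPP^1_\Lambda$; the nodes of the chosen singular fibers $\bar F_1$, $\bar F_2$ are sections over $\Spec \Lambda$, as are the horizontal curves $\bar S_j$ by the hypothesis that their images are defined over the ring of algebraic integers. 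Blowing up these sections successively produces a smooth projective family $M_\Lambda \to \Spec \Lambda$ whose closed fiber is $M$, and $B^+_M$ lifts to a divisor $B^+_{M_\Lambda}$ each of whose components is a $\BPP^1$-bundle over $\Lambda$, with pairwise intersections being sections. The linear chain $D \subset B^+_M$ lifts to $D_\Lambda \subset M_\Lambda$, and each connected component $D^{(i)}_\Lambda$ satisfies the condition $C(n_{(P_i)}, q_{(P_i)})'$ over $\Spec \Lambda$: here condition~\eqref{condC:end} is exactly what furnishes the two extra curves $\Gamma$, $\Gamma'$ needed to realize $B_{1,(P_i)}$ and $B_{2,(P_i)}$ on an affine neighbourhood of the singular point.

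Next I would contract $D_\Lambda$ to obtain $\mu_\Lambda \colon M_\Lambda \to X_\Lambda$ by applying Proposition~\ref{prop:contraction}; the required relative ample divisor on $M_\Lambda$ is assembled from the pullback of $\SO_{\BPP^2}(1)$ and the exceptional divisors of $M_\Lambda \to \BPP^2_\Lambda$, and the $\BQQ$\nobreakdash-factoriality of $X$ guarantees that the push-forward of the obvious Cartier lift restricts to an ample divisor on the closed fiber $X$. This yields the flat family $X_\Lambda \to \Spec \Lambda$ whose closed fiber is $X$. The two hypotheses \eqref{thm:DVRsmoothing:ass1} and \eqref{thm:DVRsmoothing:ass2} of Theorem~\ref{thm:DVRsmoothing} then hold: the cohomological vanishings come from Proposition~\ref{prop:step:M}.\eqref{prop:step:M:0} and \eqref{prop:step:M:1}, while condition $C(n_{(P)}, q_{(P)})'$ for each singular point of $X_\Lambda$ is exactly what the construction of $D_\Lambda$ and condition~\eqref{condC:end} provide. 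Applying Theorem~\ref{thm:DVRsmoothing} and then Corollary~\ref{cor:Pi1}, I obtain a deformation $\SX \to T$ of $X$ over a non-singular $\Bbbk$-curve $T$ satisfying conditions \eqref{thm:globalsmoothing:1}, \eqref{thm:globalsmoothing:2} of Theorem~\ref{thm:globalsmoothing}, together with a deformation $\SX_{\BKK} \to T_{\BKK}$ of the geometric generic fibre $X_{\BKK}$ and a surjection $\pi_1^{\alg}(X_{\BKK,t'}) \twoheadrightarrow \pi_1^{\alg}(X_t)$ between smooth closed fibres.

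The third step is to descend $X_{\BKK}$ together with its $\BQQ$\nobreakdash-Gorenstein smoothing $\SX_{\BKK} \to T_{\BKK}$ to a finitely generated subfield $\BKK_0 \subset \BKK$, following Remark~\ref{remsub:cor:Pi1}, and then to base change along any embedding $\BKK_0 \hookrightarrow \BCC$. Because the construction of $X_\Lambda$ was carried out by blowing up points that were algebraic over the ring of integers and contracting the same chains $D$, the complex surface $X_{\BKK} \otimes \BCC$ obtained after base change is isomorphic to the surface $X_{\BCC}$ produced from the analogous construction over $\BCC$, with the same $M_{\BCC}$, $D_{\BCC}$, and exceptional configuration. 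The assumption of the proposition is that $M_{\BCC} \setminus D_{\BCC} \simeq X_{\BCC} \setminus \Sing X_{\BCC}$ is topologically simply connected. Lemma~\ref{lem:simplyconn} then applies: the Milnor-fibre/rational-blowdown argument of \cite[Theorem~3.1]{LeePark}, which replaces each toric singularity of class T by a $\BQQ$-Gorenstein smoothing and then invokes van~Kampen, shows that any smooth fibre of the complex $\BQQ$-Gorenstein smoothing of $X_{\BCC}$ is simply connected, hence algebraically simply connected. Combining this with the surjection supplied by Corollary~\ref{cor:Pi1} shows that a general smooth closed fibre $X_t$ of $\SX \to T$ has trivial algebraic fundamental group. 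Setting $\BSS := X_t$ concludes the proof, since the other properties (minimal, $p_g=q=0$, $K^2$ the prescribed value, $\OH^2(\BSS,\Theta_{\BSS/\Bbbk})=0$, ampleness of $K_{\BSS}$ when $M \setminus D$ contains no $(-2)$-curve) were already established in Proposition~\ref{prop:LPmethod1}.

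The most delicate step is the second one: verifying that $X_\Lambda$ genuinely satisfies hypothesis~\eqref{thm:DVRsmoothing:ass2} of Theorem~\ref{thm:DVRsmoothing}, i.e.\ exhibiting the prime divisors $B_{1,(P)}$, $B_{2,(P)}$ with the correct intersection behaviour near each toric singularity over the whole base $\Spec \Lambda$, rather than only over the closed point. The choice of the curves $\Gamma, \Gamma'$ in \eqref{condC:end} and the fact that their images in $B^+$ lift to $\BPP^1$-bundles over $\Lambda$ (because every component of $\bar B^+$ does) is what makes this work; without \eqref{condC:end} the argument breaks down, which is precisely why this condition is imposed in Step~\ref{step:M}.
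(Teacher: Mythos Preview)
Your proposal is correct and follows exactly the route the paper takes: the paper's proof of Proposition~\ref{prop:LPmethod2} consists precisely of the discussion in Step~\ref{step:XLambda} and Step~\ref{step:redC}, which you have reconstructed faithfully, including the use of Proposition~\ref{prop:contraction} to build \(X_\Lambda\), the verification of hypothesis~\eqref{thm:DVRsmoothing:ass2} via condition~\eqref{condC:end}, and the appeal to Lemma~\ref{lem:simplyconn} and Remark~\ref{remsub:cor:Pi1} for the descent to \(\BCC\).
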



\section{Proof of Main Theorem}
\label{sect:proof}

We shall prove the Main Theorem by giving explicit examples using
the method in Section~\ref{sect:Global}.
In Examples~\ref{exam:K2=2Main}--\ref{exam:char2,K2=2} below,
the necessary tasks are:
\begin{itemize}
\item  Giving assumptions on \( \chara(\Bbbk) \) and \( K^{2} \).

\item  Defining two cubic homogeneous polynomials \( \phi_{0} \) and \( \phi_{\infty} \)
in \( \BZZ[\xtt, \ytt, \ztt] \) and checking the conditions
\eqref{condC:Phi} and \eqref{condC:nodal}
(cf.\ \emph{Step}~\ref{step:pencil} in Section~\ref{sect:Global}).

\item  Defining the divisors \( \bar{F} \), \( \bar{S} \), and
\( \bar{G} \) on \( Y \), and checking \eqref{condC:NC1}
and \eqref{condC:LinIndep}.
For the ampleness of \( K_{X_{t}} \),
we check \eqref{condA:1} or \eqref{condA:2}
(cf.\ \emph{Step}~\ref{step:Y} in Section~\ref{sect:Global}).

\item Defining the divisor \( B \) on \( Z \) after
checking \eqref{condC:NC2} and \eqref{condC:-1}
(cf.\ \emph{Step}~\ref{step:Z} in Section~\ref{sect:Global}).

\item Defining the birational morphism \( \varphi \colon M \to Y \) and
the union \( D \) of linear chains of rational curves on \( M \)
satisfying \eqref{condC:DB}--\eqref{condC:end}.
For the ampleness of \( K_{X_{t}} \),
we check \eqref{condA:3} (cf.\ \emph{Step}~\ref{step:M} in Section~\ref{sect:Global}).

\item If possible, proving the ampleness of \( K_{X} \)
(equivalent to the absence of \( (-2) \)-curves on \( M \setminus D \))
using \eqref{condA:1}--\eqref{condA:3}, or another argument
(cf.\ Proposition~\ref{prop:step:M}.\eqref{prop:step:M:3}).

\item Proving that \( M \setminus D \)
is simply connected (with respect to the euclidean topology)
when \( \Bbbk = \BCC \) using Lemma~\ref{lemsub:simplyconn} or
referring to papers \cite{LeePark}, \cite{PPS1} and \cite{PPS2}.
\end{itemize}

Having done these tasks, we obtain the desired surfaces by
Propositions~\ref{prop:LPmethod1} and \ref{prop:LPmethod2}.
The proof of the Main Theorem is written at the end.

\begin{notan}
\begin{itemize}
\item Let \( (\xtt:\ytt:\ztt) \) be a homogeneous coordinate of \( \BPP^{2} \).
\item A \( (-k) \)-curve means a non-singular rational curve
with self-intersection number \( -k \).

\item  The symbol \( \LC(b_{1}, \ldots, b_{l}) \) expresses
a linear chain \( E = \sum\nolimits_{i = 1}^{l} E_{i} \)
of smooth rational curves with the dual graph
\begin{center}
\begin{picture}(160, 30)(30, 0)
    \put(35, 5){\circle{10}}\put(30, 20){$E_{1}$}
    \put(40, 5){\line(1, 0){25}}
    \put(70, 5){\circle{10}}\put(65, 20){$E_{2}$}
    \put(75, 5){\line(1, 0){10}}
    \put(90, 5){\line(1, 0){10}}
    \put(105, 5){\line(1, 0){10}}
    \put(120, 5){\line(1, 0){10}}
    \put(135, 5){\line(1, 0){10}}
    \put(150, 5){\circle{10}}\put(145, 20){$E_{l}$}
\end{picture}
\end{center}
such that
\( E_{i}^{2} = -b_{i} \) for \( 1 \leq i \leq l \).
Here, \( E_{i} \) is called the \( i \)-th component.
\item For a non-zero regular section \( \phi \) of an invertible sheaf,
\( (\phi)_{0} \) denotes the divisor of zeros of \( \phi \).
\item The configuration type of singular fibers for
a minimal elliptic fibration is the list of
singular fibers written by Kodaira's symbol
(cf.\ \cite[Theorem~6.2]{Kodaira}).
\end{itemize}
\end{notan}

\begin{exam}\label{exam:K2=2Main}
Assume that \( \chara(\Bbbk) \ne 2, 3\), and
we set \( K^{2} = 2 \). Here, we use essentially the same
construction as in \cite[Section~3]{LeePark}. We set
\[ \phi_{0} := (3\xtt\ytt - \ztt^{2})\ztt \quad \text{and} \quad
\phi_{\infty} := (\xtt + \ytt)^{3}.\]
Then, \( \Phi_{0} = (\phi_{0})_{0} = Q + L_{1}\) and
\( \Phi_{\infty} = (\phi_{\infty})_{0} = 3L_{2} \)
for the conic \( Q := (3\xtt\ytt - \ztt^{2})_{0} \) and for the lines
\( L_{1} := (\ztt)_{0} \) and \( L_{2} := (\xtt + \ytt)_{0} \).
Thus, \eqref{condC:Phi} holds.
Moreover, \eqref{condC:nodal} also holds; In fact, for \( c \ne 0 \),
the divisor \( \Phi_{c} = (\phi_{0} + c\phi_{\infty})_{0}\)
is singular if and only if
\( c = \pm 1/2 \), where \( \Phi_{1/2} \) and \( \Phi_{-1/2} \)
are nodal rational curves with the nodes at \( (1:-1:1) \) and \( (1:1:1) \),
respectively.
On the minimal elliptic fibration \( \pi \colon Y \to \BPP^{1}_{\Bbbk} \)
defined by \( \Phi \), the configuration type of singular fibers is
\( (\text{IV}^{*}, \text{I}_{2}, \text{I}_{1}, \text{I}_{1}) \), and
the \( (-1) \)-curves exceptional for \( Y \to \BPP^{2}_{\Bbbk} \)
are mutually disjoint sections \( \bar{S}_{1} \), \( \bar{S}_{2} \), and
\( \bar{S}_{3} \) as in Figure~\ref{fig-Y1}.
\begin{figure}[hbtb]
\begin{center}
\includegraphics[scale=1]{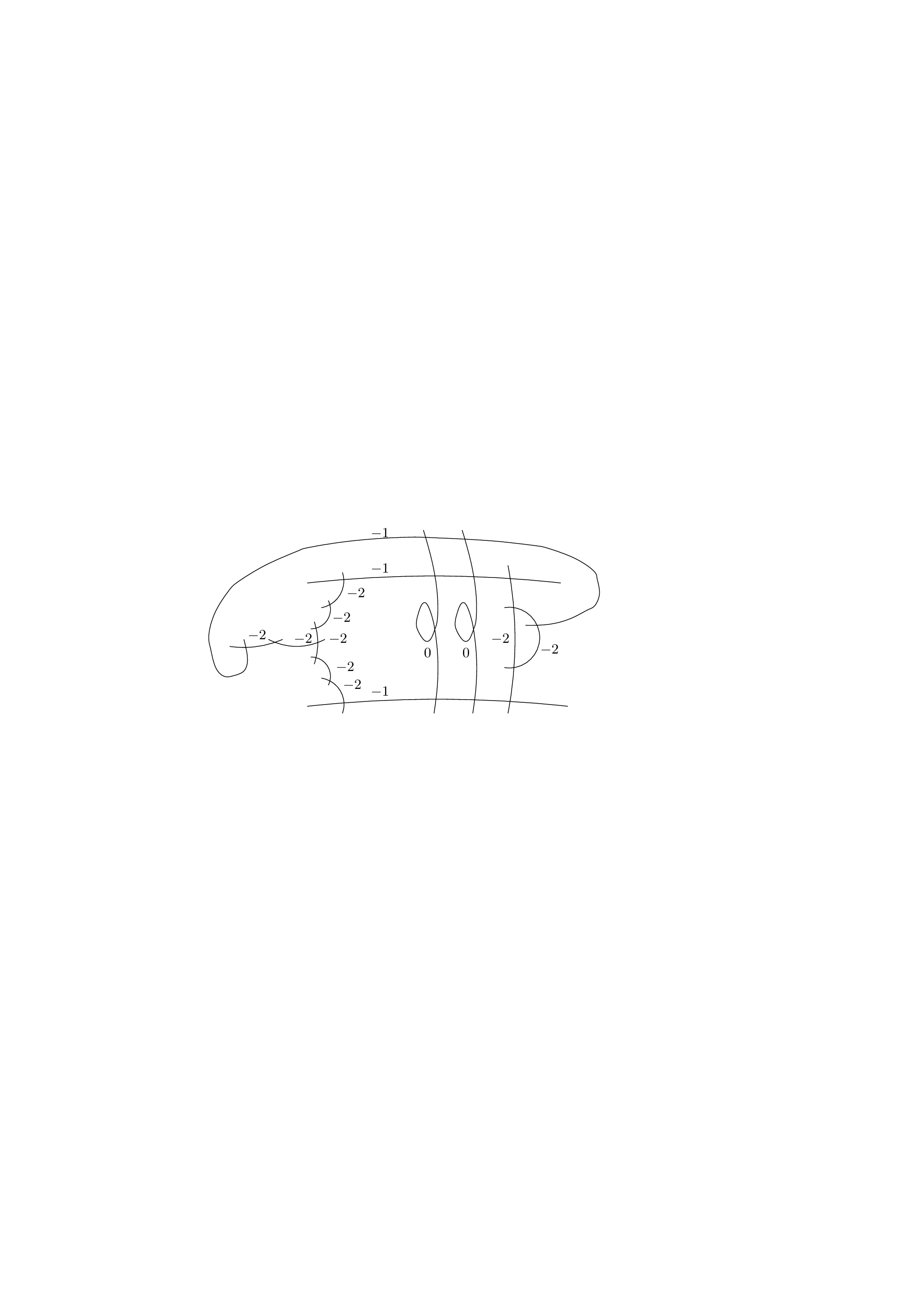}
\end{center}
\caption{The rational elliptic surface $Y$ in Example~\ref{exam:K2=2Main}}
\label{fig-Y1}
\end{figure}
Let \( \bar{F}_{1} \) and \( \bar{F}_{2} \) be the proper transforms of
\( \Phi_{1/2} \) and \( \Phi_{-1/2} \) in \( Y \), respectively.
These are the singular fibers of type \( \text{I}_{1} \), and
we define \( \bar{F} := \bar{F}_{1} + \bar{F}_{2} \).
The singular fiber of type \( \text{I}_{2} \) is the union of
the proper transforms \( Q\sptilde \) and \( L_{1}\sptilde \)
of \( Q \) and \( L_{1} \) in \( Y \), respectively.
The central irreducible component of the singular fiber of
type \( \text{IV}^{*} \), which meets three other components, is just
the proper transform \( L_{2}\sptilde \)
of \( L_{2} \) in \( Y \).
We may assume that \( \bar{S}_{1} \), \( \bar{S}_{2} \), and \( \bar{S}_{3} \)
are contracted to the points
\( (1:-1:\sqrt{-3}) \), \( (1:-1:-\sqrt{-3}) \), and \( (1:-1: 0) \),
respectively,
where \( Q \cap L_{2} = \{(1:-1:\sqrt{-3}), (1:-1:-\sqrt{-3})\}\),
and \( L_{1} \cap L_{2} = (1:-1:0) \).
Hence, \( \bar{S}_{1} \) and \( \bar{S}_{2} \)
intersect \( Q\sptilde \),
and \( \bar{S}_{3} \) intersects \( L_{1}\sptilde \).
We define \( \bar{S} \) to be \( \bar{S}_{1} + \bar{S}_{2} + \bar{S}_{3} \).
The union \( \bar{G}^{+} \) of all the \( (-2) \)-curves on \( Y \)
is just the union of the singular fibers of type \( \text{IV}^{*} \) and
\( \text{I}_{2} \).
Hence, \( \bar{B}^{+} = \bar{F} + \bar{G}^{+} + \bar{S}\)
satisfies the condition \eqref{condC:NC1}.
We define \( \bar{G} \) to be the union of the \( (-2) \)-curves
except for two \( (-2) \)-curves:
One is \( L_{1}\sptilde \) and the other is
the irreducible component of the singular fiber
of type \( \text{IV}^{*} \)
next to the end component meeting \( \bar{S}_{1} \).
Then, \( \bar{G} \) satisfies \eqref{condC:LinIndep}.
This is shown as follows:
Now, \( \bar{G} \) has three connected components whose
dual graphs are of type \( \SAA_{5} \), \( \SAA_{1} \), and
\( \SAA_{1} \). Hence,
\( \det (\bar{G}_{i}\bar{G}_{j}) = -24 \not\equiv 0 \mod \chara(\Bbbk) \)
by \( \det \SAA_{n} = (-1)^{n}(n+1) \).
Thus, we have \eqref{condC:LinIndep} by Lemma~\ref{lem:determinant}.

We shall prove \eqref{condA:1}.
Let \( V \to \BPP^{2}_{\Bbbk} \) be the blowing up at the point
\( (1:-1:\sqrt{-3}) \in Q \cap L_{2}\).
Then, \( V \) is the Hirzebruch surface of degree one and
the proper transform \( Q' \) of \( Q \) in \( V \) is ample.
For the total transform \( L_{1}' \) of \( L_{1} \) in \( V \),
we have an isomorphism from
\( Y \setminus (\bar{S} \cup \bar{G})\) to the affine open subset
\( V \setminus (Q' \cup L_{1}') \).
Thus, \eqref{condA:1} holds.

By Figure~\ref{fig-Y1}, we see that
\eqref{condC:NC2} is true with the exceptional divisors
\( J_{1} \) and \( J_{2} \) over the nodes, and
that \eqref{condC:-1} is also true.
We define \( B : = S + F + G\), i.e.,
\( B \) does not contain \( J_{1} \) and \( J_{2} \).

We take a birational morphism \( \varphi \colon M \to Z \)
so that the total transform \( B_{M}^{+} \)
of \( \bar{B}^{+} \) in \( M \) as in Figure~\ref{fig-tZ1}.
\begin{figure}[hbtb]
\begin{center}
\includegraphics[scale=1]{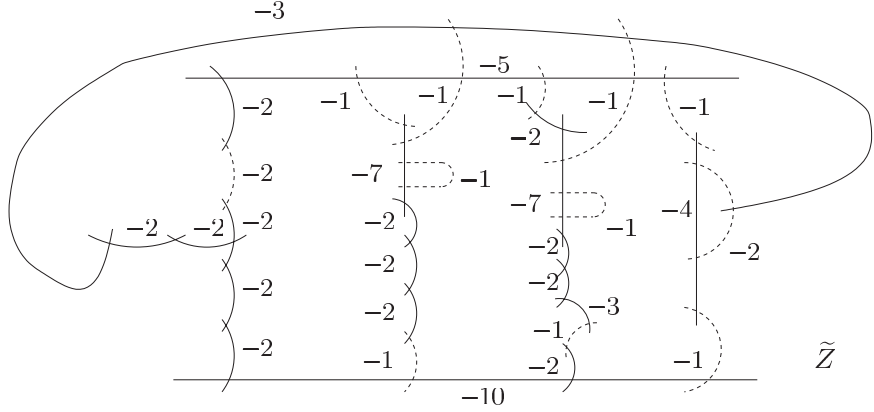}
\end{center}
\caption{The rational surface \( M \) in Example~\ref{exam:K2=2Main}}
\label{fig-tZ1}
\end{figure}
Note that \( \varphi \) is determined uniquely by this figure.
A detailed construction of \( \varphi \) is written in
\cite[Section~3]{LeePark}.
In particular, \( \rho(M) = \rho(Z) + 16 = 28 \) and
\( K_{M}^{2} = -18 \).
Here, we can find a disjoint union \( D \) of
the following five linear chains
of smooth rational curves in \( \varphi^{-1}(B) \):
\begin{gather*}
D_{1} = \LC(2, 10, 2, 2, 2, 2, 2, 3), \quad
D_{2} = \LC(2, 7, 2, 2, 3), \\
D_{3} = \LC(7, 2, 2, 2), \quad
D_{4} = \LC(5, 2), \quad
D_{5} = \LC(4)
\end{gather*}
Then, \eqref{condC:DB} is checked by
comparing Figures~\ref{fig-Y1} and \ref{fig-tZ1}.
Moreover, \eqref{condC:classT} is also true:
In fact, \( D_{1} \), \( D_{2} \), \ldots, \( D_{5} \)
define toric singularities of types
\( T(1, 15, 8) \), \( T(1, 9, 5) \), \( T(1, 5, 1) \), \( T(1, 3, 1) \),
\( T(1, 2, 1) \), respectively (cf.\ Table~\ref{table:M}).
Moreover, using Table~\ref{table:M}, we can check \eqref{condC:K2} by:
\begin{align*}
K_{X}^{2} &= K_{M}^{2} + \delta(1, 15, 8) + \delta(1, 9, 5) +
\delta(1, 5, 1) + \delta(1, 3, 1) + \delta(1, 2, 1) \\
&= -18 + 8 + 5 + 4 + 2 + 1 = 2.
\end{align*}
The condition \eqref{condC:end} is also checked by Figure~\ref{fig-tZ1}.
We shall show \eqref{condC:1+}.
By Lemma~\ref{lem:condC:1+} and by Figure~\ref{fig-tZ1},
it suffices to check \( \Delta E_{i} > 1 \) for
the \( (-1) \)-curves \( E_{1} \), \( E_{2} \), and
\( E_{3} \) characterized by:
\begin{itemize}
\item  \( E_{1} \) joins the \( (-10) \)-curve of \( D_{1} \) and
the end \( (-2) \)-component of \( D_{3} \).

\item \( E_{2} \) joins the end \( (-2) \)-component
of \( D_{2} \) and \( (-5) \)-curve of \( D_{4} \).

\item  \( E_{3} \) joins the end \( (-2) \)-component
of \( D_{1} \) and the \( (-3)\)-curve of \( D_{2}  \).
\end{itemize}
We can calculate \( \Delta E_{i} \) for \( i = 1 \), \( 2 \), \( 3 \)
using Table~\ref{table:M}, where
the multiplicity \( c_{i} \) of the \( i \)-th irreducible component
of the linear chain of type \( T(d, n, a) \) equals \( 1 - r_{i}/n \).
Thus,
\begin{gather*}
\Delta E_{1} = (1 - 1/15) + (1 - 4/5) > 1, \quad
\Delta E_{2} = (1 - 5/9) + (1 - 1/3) > 1, \text{ and } \\
\Delta E_{3} = (1 - 8/15) + (1 - 4/9) > 1.
\end{gather*}
Hence, \eqref{condC:1+} is satisfied.
Looking at
Figure~\ref{fig-tZ1}, we have \eqref{condA:3} immediately.
Thus, \( K_{X} \) is ample
by \eqref{condA:1}, \eqref{condA:3}, and
by Proposition~\ref{prop:step:M}.\eqref{prop:step:M:3}.
The simply connectedness of \( X \setminus \Sing X \isom M \setminus D \)
has been shown in the proof of \cite[Theorem~3.1]{LeePark}.
Thus, we have done all the tasks.
\end{exam}

\begin{exam}
    \label{exam:K2=2char3}
Assume that \( \chara(\Bbbk) \ne 2 \) and we set \( K^{2} = 2 \).
We follow the construction in \cite[Section~6, Construction]{LeePark}. We set
\[ \phi_{0} := \ytt^{2}\ztt - \xtt^{2}(\xtt - \ztt) \quad \text{and} \quad
\phi_{\infty} := \begin{cases}
(\xtt + \ztt)\ztt^{2}, &\text{ if } \chara(\Bbbk) \ne 5, \\
(\xtt + 2\ztt)\ztt^{2}, &\text{ if } \chara(\Bbbk) = 5.
\end{cases}\]
Then, \( \Phi_{\infty} = (\phi_{\infty})_{0} = L_{1} + 2L_{2} \) for the lines
\( L_{1} = (\xtt + \ztt)_{0} \) and \( L_{2} = (\ztt)_{0} \), and
\( \Phi_{0} = (\phi_{0})_{0}\)
is a nodal rational cubic curve with node at \( (0:0:1) \).
Furthermore, \( L_{2} \) is the tangent line of \( \Phi_{0} \)
at an inflection point \( (0:1:0) = L_{1} \cap L_{2} \).
In particular, \eqref{condC:Phi} holds.
Moreover, \eqref{condC:nodal} also holds: In fact,
we can check the following:
\begin{itemize}
\item For \( c \ne 0 \), the divisor
\( \Phi_{c} = (\phi_{0} + c\phi_{\infty})_{0}\) is singular if and only if
\( c = (11 \pm 5\sqrt{5})/2 \) when \( \chara(\Bbbk) \ne 5 \) and
\( c = 1 \pm \sqrt{3} \) when \( \chara(\Bbbk)=5\).

\item Let \( c_{\pm} \) be the constants \( (11 \pm 5\sqrt{5})/2 \)
when \( \chara(\Bbbk) \ne 5 \) and \( 1 \pm \sqrt{3}\) when \( \chara(\Bbbk) = 5 \).
Then, the node of \( \Phi_{c_{\pm}} \) is \( (-(1 \pm \sqrt{5})/2: 0 : 1) \)
when \( \chara(\Bbbk) \ne 5  \) and
\( (\pm 2\sqrt{3}: 0 : 1) \) when \( \chara(\Bbbk)=5\).
\end{itemize}
On the minimal elliptic fibration \( \pi \colon Y \to \BPP^{1}_{\Bbbk} \)
defined by \( \Phi \),
the configuration type of singular fibers is
\( (\text{III}^{*}, \text{I}_{1}, \text{I}_{1}, \text{I}_{1}) \), and
the \( (-1) \)-curves exceptional for \( Y \to \BPP^{2}_{\Bbbk} \)
are mutually disjoint sections \( \bar{S}_{1} \), \( \bar{S}_{2} \), and
\( \bar{S}_{3} \) as in Figure~\ref{figure3}.
\begin{figure}[hbtb]
\begin{center}
\includegraphics[scale=1]{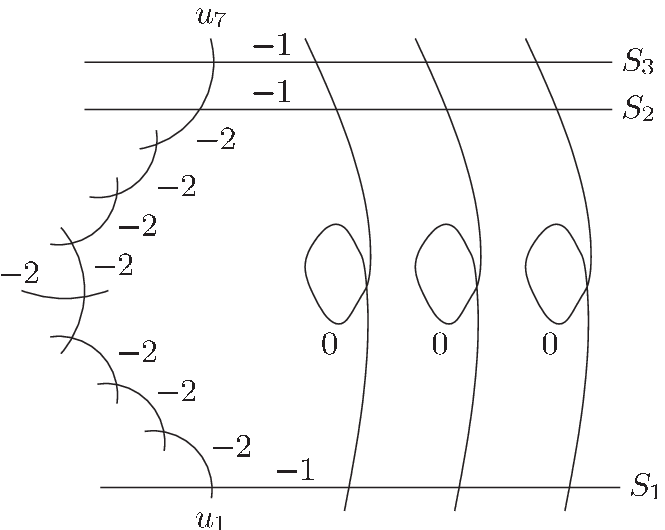}
\end{center}
\caption{The rational elliptic surface $Y$ in
Examples~\ref{exam:K2=2char3}} \label{figure3}
\end{figure}
Let \( \bar{F}_{1} \) and \( \bar{F}_{2} \) be the proper transforms of
\( \Phi_{c_{+}} \) and \( \Phi_{c_{-}} \) in \( Y \), respectively.
These are singular fibers of type \( \text{I}_{1} \).
We define \( \bar{F} := \bar{F}_{1} + \bar{F}_{2} \).
We may assume that \( \bar{S}_{1} \), \( \bar{S}_{2} \), and \( \bar{S}_{3} \)
are contracted to the points \( (0:1:0) \), \( (1: \sqrt{-2} : -1) \), and
\( (1 : -\sqrt{-2} : -1) \), respectively,
where \( \Phi_{0} \cap L_{2} = L_{1} \cap L_{2} = \{(0:1:0)\}  \)
and \( \Phi_{0} \cap L_{1} = \{(0:1:0), (1: \pm\sqrt{-2} : -1)\} \).
An end component of the singular fiber of type \( \text{III}^{*} \) is
contracted to \( (0:1:0) \) and meets \( \bar{S}_{1} \).
Another end component is the proper transform \( L_{1}\sptilde \)
of \( L_{1} \) in \( Y \)
and meets \( \bar{S}_{2} \) and \( \bar{S}_{3} \).
The other end component is the proper transform \( L_{2}\sptilde \)
of \( L_{2} \) in \( Y \).
We define \( \bar{S} := \bar{S}_{1} + \bar{S}_{2} + \bar{S}_{3} \).
The union \( \bar{G}^{+} \) of \( (-2) \)-curves on \( Y \) is just
the support of the singular fiber of type \( \text{III}^{*} \).
Thus, \eqref{condC:NC1} holds
for \( \bar{B}^{+} = \bar{F} + \bar{G}^{+} + \bar{S} \).
We define \( \bar{G} := \bar{G}^{+} - L_{2}\sptilde \).
Then, \( \bar{G} \) is connected with the dual graph \( \SAA_{7} \).
Thus, \( \det (\bar{G}_{i}\bar{G}_{j}) = -8 \not\equiv 0 \mod \chara(\Bbbk) \),
and hence, we have  \eqref{condC:LinIndep} by
Lemma~\ref{lem:determinant}.
Furthermore, \eqref{condA:1} is true, since
\( Y \setminus (\bar{S} \cup \bar{G})  \) is isomorphic to
\( \BPP^{2} \setminus L_{1} \).

We have \eqref{condC:NC2} and \eqref{condC:-1} on \( Z\)
by looking at Figure~\ref{figure3}.
We define \( B := S + F + G \), i.e., \( J_{1} \) and \( J_{2} \)
are not contained in \( B \).

We take the birational morphism \( \varphi \colon M \to Z \)
so that the total transform \( B_{M}^{+} \)
of \( \bar{B}^{+} \) in \( M \) as in
Figure~\ref{fig:K2=2char3} (cf.\ \cite[Section~6, Fig.~4]{LeePark}).
\begin{figure}[hbtb]
\begin{center}
\includegraphics[scale=1.2]{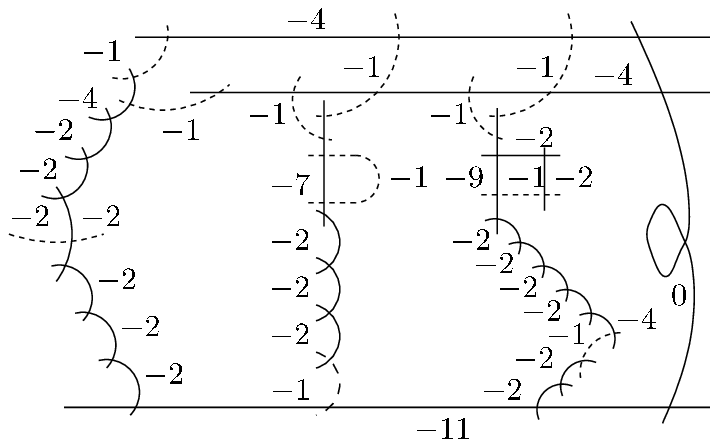}
\end{center}
\caption{The rational surface $M$ in Example~\ref{exam:K2=2char3}}
\label{fig:K2=2char3}
\end{figure}
In particular, \( \rho(M) = \rho(Z) + 20 = 32 \) and \( K_{M}^{2} = -22 \).
Here, we can find a disjoint union \( D \) of the following five linear chains
of smooth rational curves satisfying \eqref{condC:DB}:
\begin{gather*}
D_{1} = \LC(2, 2, 11, 2, 2, 2, 2, 2, 2, 4), \quad
D_{2} = \LC(2, 2, 9, 2, 2, 2, 2, 4), \\
D_{3} = \LC(7, 2, 2, 2), \quad
D_{4} = \LC(4), \quad D_{5} = \LC(4).
\end{gather*}
By Tables~\ref{table:M} and \ref{table:M2}, we see that
the linear chains \( D_{1} \), \ldots, \( D_{5} \)
define toric singularities of types
\( T(1, 25, 17) \), \( T(1, 19, 13) \), \( T(1, 5, 1) \), \( T(1, 2, 1) \),
\( T(1, 2, 1) \), respectively.
Thus, \eqref{condC:classT} holds.
We have \eqref{condC:K2} from the calculation
\begin{align*}
K_{X}^{2} &= K_{M}^{2} + \delta(1, 25, 17) + \delta(1, 19, 13) +
\delta(1, 5, 1) + 2\delta(1, 4, 1) \\
&= -22 + 10 + 8 + 4 + 2 = 2
\end{align*}
using Tables~\ref{table:M} and \ref{table:M2}.
The condition \eqref{condC:end} is checked by looking at
Figure~\ref{fig:K2=2char3}.
We shall prove \eqref{condC:1+}. By Lemma~\ref{lem:condC:1+},
it is enough to check \( \Delta E_{i} > 1 \) for the
\( \varphi \)-exceptional \( (-1) \)-curves
\( E_{1} \), \( E_{2} \), \( E_{3} \) characterized by:
\begin{itemize}
\item  \( E_{1} \) joins the \( (-11) \)-curve in \( D_{1} \)
and the end \( (-2) \)-curve in \( D_{3} \).

\item  \( E_{2} \) joins the \( (-9) \)-curve in \( D_{2} \) and
the end \( (-2) \)-curve of \( D_{2} \).

\item  \( E_{3} \) joins \( (-4) \)-curve in \( D_{2} \) and
the end \( (-2) \)-curve in \( D_{1} \).
\end{itemize}
Then, by Tables~\ref{table:M} and \ref{table:M2}, we have:
\begin{gather*}
\Delta E_{1} = (1 - 1/25) + (1 - 4/5) > 1, \quad
\Delta E_{2} = (1 - 1/19) + (1 - 13/19) > 1, \\
\Delta E_{3} = (1 - 6/19) + (1 - 17/25) > 1.
\end{gather*}
Thus, \eqref{condC:1+} is satisfied.
The condition \eqref{condA:3} is checked by looking at
Figure~\ref{fig:K2=2char3}.
Thus, \( K_{X} \) is ample by \eqref{condA:1}, \eqref{condA:3},
and Proposition~\ref{prop:step:M}.\eqref{prop:step:M:3}.

We shall prove that \( M \setminus D\) is simply connected when
\( \Bbbk = \BCC \) (the proof is omitted in \cite[Section~6]{LeePark}).
We apply Lemma~\ref{lemsub:simplyconn}.
Let \( T(d_{i}, n_{i}, a_{i}) \) be the type of the singular point
defined by \( D_{i} \).
Then, \( d_{1} = \cdots = d_{5} = 1 \) and \( (n_{1}, \ldots, n_{5}) =
(25, 19, 5, 2, 2)\). Looking at
Figure~\ref{fig:K2=2char3}, we have:
\begin{itemize}
\item  the \( (-1) \)-curve \( E_{3} \) which meets
end components of \( D_{1} \) and \( D_{2} \).

\item a \( (-1) \)-curve meeting the end \( (-4) \)-component of \( D_{1} \) and
\( D_{4} \) (resp.\ \( D_{5} \)).

\item a \( (-1) \)-curve meeting the end \( (-7) \)-component of
\( D_{3} \) and \( D_{4} \).
\end{itemize}
Since \( \gcd(n_{1}, n_{2}) = \gcd(25, 19) = 1 \),
\( \gcd(n_{1}, n_{4}) = \gcd(n_{1}, n_{5}) = \gcd(25, 2) = 1\), and
\( \gcd(n_{3}, n_{4}) = \gcd(5, 2) = 1 \),
the conditions of Lemma~\ref{lemsub:simplyconn} are all satisfied, and
hence \( M \setminus D \) is simply connected.
Therefore, we have done all the tasks.
\end{exam}

\begin{remsub}\label{remsub:exam:K2=1,2char3}
It is known by \cite{JLRRS} that there exist rational elliptic surfaces whose
singular fibers are of configuration type
\( (\text{III}^{*}, \text{I}_{1}, \text{I}_{1}, \text{I}_{1}) \)
in characteristic 3.
\end{remsub}

\begin{exam}
    \label{exam:K2=1Main}
Assume that \( \chara(\Bbbk) \ne 2 \) and set \( K^{2} = 1 \).
We consider the same cubic pencil \( \Phi \) generated by \( \Phi_{0} \) and
\( \Phi_{\infty} \) in Example~\ref{exam:K2=2char3}. Thus,
\eqref{condC:Phi} and \eqref{condC:nodal} hold.
On the elliptic fibration \( Y \to \BPP^{1}_{\Bbbk} \),
let \( \bar{F} = \bar{F}_{1} + \bar{F}_{2}\) and
\( \bar{S} = \bar{S}_{1}+ \bar{S}_{2} + \bar{S}_{3} \)
be the same as in Example~\ref{exam:K2=2char3}, but
here we define \( \bar{G} \) to be \( \bar{G}^{+} \) minus
two irreducible components of the singular fiber of type
\( \text{III}^{*} \); one is \( L_{2}\sptilde \)
and the other is the component next to \( L_{1}\sptilde \).
Then, \eqref{condC:NC1} and \eqref{condC:LinIndep} are also satisfied.
For the blowing up \( V \to \BPP^{2} \) at
\( (0:1:0) = L_{1} \cap L_{2} \cap \Phi_{0}\) and
for the proper transform \( L_{1}' \) of \( L_{1} \) in \( V \),
we see that \( Y \setminus (\bar{S} \cup \bar{G}) \)
is isomorphic to \( V \setminus L_{1}' \) minus a point, which
is the image of \( \bar{S}_{1} \).
In particular, the condition \eqref{condA:1} is not satisfied.
However, any projective curve contained in
\( Y \setminus (\bar{S} \cup \bar{G}) \) is a non-singular rational curve having
the self-intersection number zero and is mapped to a line of \( \BPP^{2}_{\Bbbk} \)
passing through \( (0:1:0) \).

We have \eqref{condC:NC2} and \eqref{condC:-1} by Figure~\ref{figure3}.
We define \( B := S + G + F \).

We take the birational morphism \( \varphi \colon M \to Z \) by
the following steps:
\eqref{exam:K2=1Main:step1}--\eqref{exam:K2=1Main:step3}:
\begin{enumerate}
    \renewcommand{\theenumi}{\roman{enumi}}
    \renewcommand{\labelenumi}{(\theenumi)}
\item  \label{exam:K2=1Main:step1}
First, we blow up at the two intersection points
\( F_{1} \cap S_2\) and \( F_{1} \cap S_3\).

\item \label{exam:K2=1Main:step2}
We blow up \( 4 \) times successively at the
intersection point of \( F_{1} \) and the proper transform of \( S_{1} \).
Then, we have a linear chain \( \LC(7, 2, 2, 2) \) of rational curves.

\item \label{exam:K2=1Main:step3}
Applying the same process in \eqref{exam:K2=1Main:step1} and
\eqref{exam:K2=1Main:step2} to \( F_{2} \),
we obtain a rational surface \( M \) with \( \rho(M) = \rho(Z) + 12 = 24\)
and \( K_{M}^{2} = -14 \).
\end{enumerate}
Then, the total transform \( B_{M}^{+} \) of \( B^{+} \) is as in
Figure~\ref{fig:K2=1Main}.

\begin{figure}[htb]
\begin{center}
\includegraphics[scale=0.95]{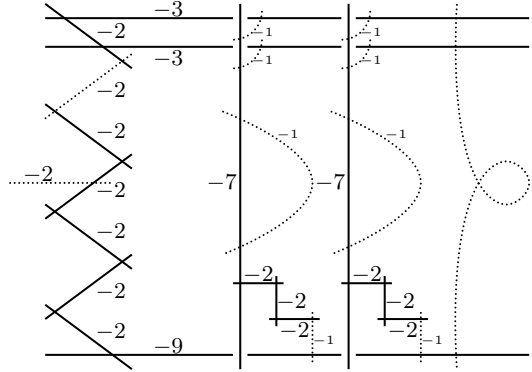}
\end{center}
\caption{The rational surface \( M \) in Example~\ref{exam:K2=1Main}}
\label{fig:K2=1Main}
\end{figure}
Here, we see that \( M \) has a disjoint union \( D \) of the following
four linear chains of smooth rational curves:
\begin{gather*}
D_{1} = \LC(7, 2, 2, 2) \text{ over } F_{1}, \quad
D_{2} = \LC(7, 2, 2, 2) \text{ over } F_{2}, \\
D_{3} = \LC(3, 2, 3), \quad D_{4} = \LC(9, 2, 2, 2, 2, 2).
\end{gather*}
Then, \eqref{condC:DB} and \eqref{condC:classT} hold: In fact,
\( D_{1} \), \ldots, \( D_{4} \) define toric singularities of types
\( T(1, 5, 1) \), \( T(1, 5, 1) \), \( T(3, 2, 1) \), \( T(1, 7, 1) \),
respectively (cf.\ Table~\ref{table:M}).
We have \eqref{condC:K2} by the calculation
\[ K_{X}^{2} = K_{M}^{2} +
2\delta(1, 5, 1) + \delta(3, 2, 1) + \delta(1, 7, 1) =
-14 + 8 + 1 + 6 = 1\]
using Table~\ref{table:M}.
We can check \eqref{condC:end} and \eqref{condA:3} by Figure~\ref{fig:K2=1Main}.
We shall prove \eqref{condC:1+}. By Lemma~\ref{lem:condC:1+},
it is enough to show \( \Delta E_{i} > 1 \) for \( i = 1 \), \( 2 \)
for the \( (-1) \)-curves \( E_{1} \), \( E_{2} \) such that
\( E_{i} \) joins the \( (-9) \)-curve of \( D_{4} \) and
the end \( (-2) \)-curve of \( D_{i} \) for \( i = 1 \), \( 2 \).
By using Table~\ref{table:M},
we have
\[ \Delta E_{1} = \Delta E_{2} = (1 - 1/7) + (1 - 4/5) > 1. \]
Thus, \eqref{condC:1+} holds.
The ampleness of \( K_{X} \) is proved as follows.
Assuming the contrary, we have a \( (-2) \)-curve \( \Gamma \) contained in
\( M \setminus D \) by Proposition~\ref{prop:step:M}.\eqref{prop:step:M:3}.
Then, \( \Gamma \) is not \( \varphi \)-exceptional by \eqref{condA:3}, and
the projective curve \( \bar{\Gamma} := \tau(\varphi(\Gamma)) \) is contained in
\( Y \setminus (\bar{G} \cup \bar{S}) \).
By the property of this open subset of \( Y \) discussed above,
\( \bar{\Gamma} \) is a non-singular rational curve with self-intersection number zero
and the image of \( \bar{\Gamma} \) in \( \BPP^{2}_{\Bbbk} \)
is a line \( L \) passing through \( (0:1:0) \).
Since \( \varphi(\Gamma) \) is a \( (-2) \)-curve,
\( \bar{\Gamma} \) is a bisection of \( Y \to \BPP^{1} \) passing through
the nodes of \( \bar{F}_{1} \) and \( \bar{F}_{2} \).
Hence, \( L \) is a line passing through the nodes of \( \Phi_{c_{\pm}} \);
thus, \( L = (\ytt)_{0} \). This contradicts \( L \ni (0:1:0) \).
Therefore, \( K_{X} \) is ample.

We shall show that \( M \setminus D \) is simply connected
when \( \Bbbk = \BCC \).
We apply Lemma~\ref{lemsub:simplyconn}.
Let \( T(d_{i}, n_{i}, a_{i}) \) be the type of \( D_{i} \)
for \( 1 \leq i \leq 4 \).
Then, \( (d_{1}, \ldots, d_{4}) = (1, 1, 3, 1) \) and
\( (n_{1}, \ldots, n_{4}) = (5, 5, 2, 7) \).
Looking at Figure~\ref{fig:K2=1Main},
we have:
\begin{itemize}
\item  the \( (-1) \)-curve \( E_{4} \) which meets end components of
\( D_{1} \) and \( D_{4} \).

\item  the \( (-1) \)-curve \( E_{2} \) which meets end components of
\( D_{2} \) and \( D_{4} \).

\item  a \( (-1) \)-curve meeting the end \( (-3) \)-component of \( D_{3} \)
and the end \( (-7) \)-component of \( D_{1} \).
\end{itemize}
Since \( \gcd(d_{1}n_{1}, d_{4}n_{4}) = \gcd(d_{2}n_{2}, d_{4}n_{4}) =
\gcd(5, 7) = 1 \) and
\( \gcd(d_{1}n_{1}, d_{3}n_{3}) = \gcd(5, 6)
\linebreak 
= 1 \),
the conditions of Lemma~\ref{lemsub:simplyconn} are all satisfied, and
hence \( M \setminus D \) is simply connected.
Therefore, we have done all the tasks.
\end{exam}

\begin{exam}
    \label{exam:K2=3}
Assume that \( \chara(\Bbbk) \ne 2 \), and we set \( K^{2} = 3 \).
We follow the construction in \cite[Section 3]{PPS1}.
We set
\[ \phi_{0} := (\xtt\ytt + \ztt^{2})(\xtt + \ytt) \quad \text{and} \quad
\phi_{\infty} := \xtt\ytt\ztt.\]
Then, \( \Phi_{0} = Q + L_{4} \) and
\( \Phi_{\infty} = L_{1} + L_{2} + L_{3} \) for
the conic \( Q = (\xtt\ytt + \ztt^{2})_{0} \), and
the lines
\( L_{1} = (\xtt)_{0} \), \( L_{2} = (\ytt)_{0} \), \( L_{3} = (\ztt)_{0} \), and
\( L_{4} = (\xtt + \ytt)_{0} \).
In particular, \eqref{condC:Phi} holds, and
moreover, \eqref{condC:nodal} holds: In fact, for \( c \ne 0 \),
the divisor \( \Phi_{c} = (\phi_{0} + c\phi_{\infty})_{0}\) is singular if and only if
\( c = \pm 4 \), where \( \Phi_{4} \) and \( \Phi_{-4} \) are nodal rational
curves with the nodes at \( (1:1:-1) \) and \( (1:1:1) \), respectively.
On the minimal elliptic fibration \( \pi \colon Y \to \BPP^{1}_{\Bbbk} \) defined
by \( \Phi \), the configuration type of singular fibers is
\( (\text{I}_{8}, \text{I}_{2}, \text{I}_{1}, \text{I}_{1}) \), and
the \( (-1) \)-curves exceptional for \( Y \to \BPP^{2}_{\Bbbk} \)
are mutually disjoint four sections \( \bar{S}_{1} \), \ldots,
\( \bar{S}_{4} \) as in Figure~\ref{fig:K2=3}
(cf.\ \cite[Section 3, Figure~2]{PPS1} for \( E(1) \)
(where the symbols are different from ours)).
\begin{figure}[hbtb]
\begin{center}
\includegraphics[scale=1]{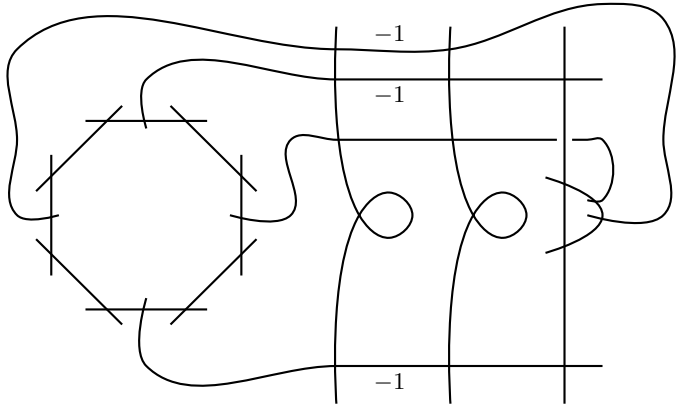}
\end{center}
\caption{The rational elliptic surface $Y$ in
Example~\ref{exam:K2=3}} \label{fig:K2=3}
\end{figure}
Let \( \bar{F}_{1} \) and \( \bar{F}_{2} \) be the proper transforms of
\( \Phi_{4} \) and \( \Phi_{-4} \), respectively.
These are the two singular fibers of type \( \text{I}_{1} \).
We define \( \bar{F} := \bar{F}_{1} + \bar{F}_{2} \).
The singular fiber of type \( \text{I}_{2} \) is the union of
the proper transforms \( Q\sptilde \) and \( L_{4}\sptilde \)
of \( Q \) and \( L_{4} \) in \( Y \).
The singular fiber of type \( \text{I}_{8} \) contains the proper transforms
\( L_{1}\sptilde \), \( L_{2}\sptilde \), and \( L_{3}\sptilde \)
of the lines \( L_{1}  \), \( L_{2} \),
and \( L_{3} \) in \( Y \).
The irreducible components of the singular fiber of type \( \text{I}_{8} \) is
labelled as
\( \Gamma_{0} + \Gamma_{1} + \cdots + \Gamma_{7}\)
as a cyclic chain, where we set \( L_{1}\sptilde = \Gamma_{1} \),
\( L_{2}\sptilde = \Gamma_{7}\), and
\( L_{3}\sptilde = \Gamma_{4}\).
We may assume that \( \bar{S}_{1} \), \( \bar{S}_{2} \), \( \bar{S}_{3} \), and
\( \bar{S}_{4} \) are contracted to the points
\( (0:0:1) \), \( (0:1:0) \), \( (1:0:0) \), and \( (1:-1:0) \), respectively,
where
\( L_{1} \cap L_{2} \cap L_{4} = \{(0:0:1)\} \),
\( L_{1} \cap Q = L_{1} \cap  L_{3} = \{(0:1:0)\} \),
\( L_{2} \cap Q = L_{2} \cap L_{3} = \{(1:0:0)\} \), and
\( L_{3} \cap L_{4} = \{(1 : -1 :0)\} \).
Hence, \( \bar{S}_{1} \) intersects \( L_{4}\sptilde \) and
\( \Gamma_{0} \);
\( \bar{S}_{2} \) intersects \( Q\sptilde \) and \( \Gamma_{2} \);
\( \bar{S}_{3} \) intersects \( Q\sptilde \) and \( \Gamma_{6} \);
\( \bar{S}_{4} \) intersects
\( L_{4}\sptilde \) and \( L_{3}\sptilde = \Gamma_{4}\).
We define \( \bar{S} := \bar{S}_{1} + \bar{S}_{2} + \bar{S}_{4} \)
removing \( \bar{S}_{3} \).
Then, the condition \eqref{condC:NC1} holds.
We define \( \bar{G} \) to be
\[ \bar{G}^{+} - \Gamma_{3} - Q\sptilde =
\sum\nolimits_{0 \leq i \leq 7, \, i \ne 3} \Gamma_{i} + L_{4}\sptilde. \]
Then, \( \bar{G} \) has two connected components whose dual graphs are
\( \SAA_{7} \) and \( \SAA_{1} \).
Thus, \( \det (\bar{G}_{i}\bar{G}_{i}) = 16 \not\equiv 0 \mod \chara(\Bbbk) \),
and hence \eqref{condC:LinIndep} is satisfied by Lemma~\ref{lem:determinant}.
For \( \eqref{condA:1} \), since \( \rho(Y) = 10 \),
it is enough to prove that \( \bar{S}_{1} \),
\( \bar{S}_{2} \), and the irreducible components of \( \bar{G} \) are
linearly independent in \( \Pic(Y) \).
Assume that
\[ a_{1}\bar{S}_{1} + a_{2}\bar{S}_{2} + a_{3}L_{4}\sptilde +
\sum\nolimits_{0 \leq i \leq 7, \, i \ne 3} m_{i}\Gamma_{i} \sim 0 \]
for integers \( a_{j} \) and \( m_{i} \).
Then, calculating the intersection numbers with
\( Q\sptilde \), \( L_{4}\sptilde \), \( \bar{S}_{2} \), \( \bar{S}_{4} \), and
\( \Gamma_{3} \), we have:
\[
a_{2} + 2a_{3} = a_{1} - 2a_{3} = -a_{2} + m_{2} = a_{3} + m_{4}
= m_{2} + m_{4} = 0.
\]
In particular, \( a_{1} = a_{2} = a_{3} = m_{2} = m_{4} = 0 \).
The other \( m_{j} \) are all zero, since \( \det (\bar{G}_{i}\bar{G}_{j}) \ne 0 \).
Thus, we have the linear independence, and hence \eqref{condA:1}.

We have \eqref{condC:NC2} and \eqref{condC:-1} on \( Z \) by
looking at Figure~\ref{fig:K2=3}.
We define \( B := S + G + F \).
\begin{figure}[hbtb]
\begin{center}
\includegraphics[scale=1]{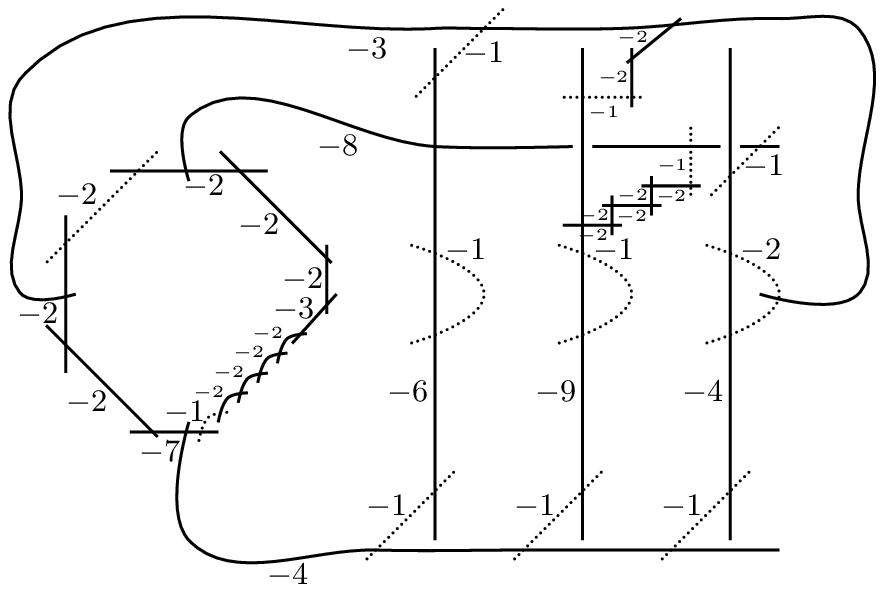}
\end{center}
\caption{The rational surface $M$ in Example~\ref{exam:K2=3}}
\label{fig:MforK2=3}
\end{figure}
We take the birational morphism \( \varphi \colon M \to Y \) so that
the total transform \( B_{M}^{+} \) of \( \bar{B}^{+} \) in \( M \)
as in
Figure~\ref{fig:MforK2=3} (cf.\ \cite[Section 3, Figure~5]{PPS1}).
In particular, \( \rho(M) = \rho(Z) + 19 \) and \( K_{M}^{2} = -21 \).
We have a disjoint union
\( D \) of the following linear chains of smooth rational curves in
\( \varphi^{-1}(B) \):
\begin{gather*}
D_{1} = \LC(6, 8, 2, 2, 2, 3, 2, 2, 2, 2), \quad D_{2} = \LC(4), \\
D_{3} = \LC(4, 7, 2, 2, 3, 2, 2), \quad D_{4} = \LC(9, 2, 2, 2, 2,
2).
\end{gather*}
Then, \eqref{condC:DB} holds. Moreover, \eqref{condC:classT} and \eqref{condC:K2} hold:
In fact, by Tables~\ref{table:M} and \ref{table:M2}, we see that
\( D_{1} \), \ldots \( D_{4} \) define toric singularities
of type \( T(1, 35, 6) \), \( T(1, 2, 1) \), \( T(1, 15, 9) \),
\( T(1, 7, 1) \), respectively, and we calculate
\[ K_{X}^{2} = K_{M}^{2} + \delta(1, 35, 6) + \delta(1, 2, 1) +
\delta(1, 15, 9) + \delta(1, 7, 1) =
-21 + 10 + 1 + 7 + 6 = 3.\]
The conditions \eqref{condC:end} and \eqref{condA:3}
follow immediately from Figure~\ref{fig:MforK2=3}.
We shall show \eqref{condC:1+} using Lemma~\ref{lem:condC:1+}.
Then, it suffices to prove \( \Delta E_{i} > 1 \) for the \( (-1) \)-curves
\( E_{1} \), \( E_{2} \), and \( E_{3} \) determined by:
\begin{itemize}
\item  \( E_{1} \) joins the \( (-7) \)-curve in \( D_{3} \) and
the end \( (-2) \)-component of \( D_{1} \).

\item  \( E_{2} \) joins the \( (-9) \)-curve in \( D_{4} \) and
the end \( (-2) \)-component of \( D_{3} \).

\item  \( E_{3} \) joins the \( (-8) \)-curve in \( D_{1} \) and
the end \( (-2) \)-component of \( D_{4} \).
\end{itemize}
Then, by Tables~\ref{table:M} and \ref{table:M2}, we have:
\begin{gather*}
\Delta E_{1} = (1 - 1/19) + (1 - 29/35) > 1, \quad
\Delta E_{2} = (1 - 1/7) + (1 - 14/19) > 1, \quad \\
\Delta E_{3} = (1 -1/35 ) + (1 - 6/7) > 1.
\end{gather*}
This establishes \eqref{condC:1+}.
The ampleness of \( K_{X} \) follows from \eqref{condA:1} and \eqref{condA:3}
by Proposition~\ref{prop:step:M}.\eqref{prop:step:M:3}.
The simply connectedness of \( M \setminus D \) has been shown in
the proof of \cite[Theorem~3.1]{PPS1}.
Thus, we have done all the tasks.
\end{exam}

\begin{remsub}
It is known by \cite{JLRRS} that there exist rational elliptic surfaces whose singular fibers are of
configuration type \( (\text{I}_{8}, \text{I}_{1}, \text{I}_{1}, \text{I}_{2}) \)
in characteristic 3.
In characteristic 2, this configuration does not exist by \cite{Lang}.
\end{remsub}

\begin{exam}\label{exam:K2=4}
Assume that \( \chara(\Bbbk) \ne 2 \), and we set \( K^{2} = 4 \).
We follow the construction in \cite[Section~2]{PPS2}.
Here, we define \( \phi_{0} \) and \( \phi_{\infty} \) to be the same as
in Example~\ref{exam:K2=3}.
Thus, \eqref{condC:Phi} and \eqref{condC:nodal} hold.
On the elliptic fibration \( \pi \colon Y \to \BPP^{1}_{\Bbbk} \),
we have four sections \( \bar{S}_{1} \), \ldots, \( \bar{S}_{4} \) in
Example~\ref{exam:K2=3}, but here, consider another horizontal curve
\( N\sptilde \) which is the proper transform of the line
\( N := (\xtt + \ytt + 2\ztt)_{0} \) in \( Y \).
Note that \( N \) contains \( (1:-1:0) = L_{3} \cap L_{4} \)
and the node \( (1:1:-1) \) of \( \Phi_{4} \).
We define \( \bar{S} \) to be \( \bar{S}_{3} + \bar{S}_{4} + N\sptilde \).
We also set \( \bar{F} = \bar{F}_{1} + \bar{F}_{2} \) as in
Example~\ref{exam:K2=3}.
Then, we can check \eqref{condC:NC1} by
Figure~\ref{fig:K2=4} (cf.\ \cite[Figure~2]{PPS2}).
\begin{figure}[hbtb]
\begin{center}
\includegraphics[scale=1]{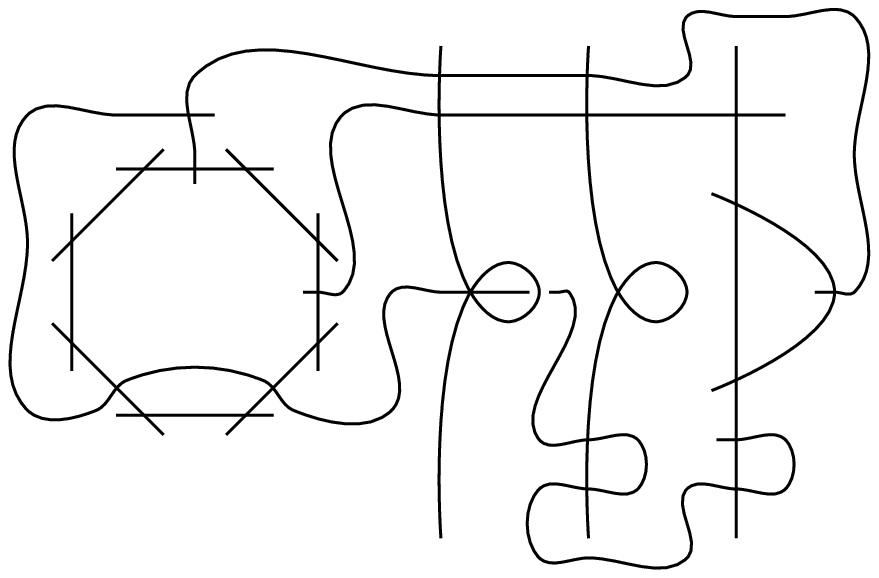}
\end{center}
\caption{The rational elliptic surface $Y$ in
Example~\ref{exam:K2=4}} \label{fig:K2=4}
\end{figure}
We define the divisor \( \bar{G} \) to be
\[ \bar{G}^{+} - \Gamma_{0} - L_{2}\sptilde - Q\sptilde =
\sum\nolimits_{i = 1}^{6}\Gamma_{i} + L_{4}\sptilde. \]
Note that \( \Gamma_{1} = L_{1}\sptilde \), \( \Gamma_{4} = L_{3}\sptilde \),
and \( \Gamma_{7} = L_{2}\sptilde \).
We shall prove \eqref{condC:LinIndep} not using Lemma~\ref{lem:determinant}.
Assume that we have a linear equivalence relation
\[ \sum\nolimits_{i = 1}^{6} m_{i} \bar{G}_{i} + m_{7}L_{4}\sptilde + m_{8}\bar{F}_{1}
\sim pH\]
for integers \( m_{1} \), \ldots, \( m_{8} \), \( p = \chara(\Bbbk) \), and
a Cartier divisor \( H \) on \( Y \).
Considering the intersection numbers with \( Q\sptilde \) and
\( \bar{S}_{j} \), we have
\[
pH Q\sptilde = m_{7}, \quad  pH \bar{S}_{1} = m_{7} + m_{8}, \quad
pH\bar{S}_{2} = m_{2} + m_{8}, \quad \text{and} \quad
pH\bar{S}_{3} = m_{6} + m_{8}, \]
which imply \( m_{2} \equiv m_{6} \equiv m_{7} \equiv m_{8} \equiv 0 \mod p\).
Moreover, we have
\( m_{i-1} + m_{i+1} \equiv 2m_{i} \mod p\) for \( 2 \leq i \leq 6 \)
by calculating \( pH \Gamma_{i} \).
Thus, \( m_{i} \equiv 0 \mod p\) for any \( i \), since \( p \ne 2 \).
Hence, \eqref{condC:LinIndep} holds.
The conditions \eqref{condA:2}, \eqref{condC:NC2}, and \eqref{condC:-1}
follow immediately from
Figure~\ref{fig:K2=4}.
We define \( B := S + G + F + J_{1}\), where we set \( \bar{F}_{1}
\) to be the proper transform of \( \Phi_{4} \) in \( Y \). Note
that \( N\sptilde \) passes through the node of \( \bar{F}_{1} \).

We take the birational morphism \( \varphi \colon M \to Y \)
by the same process as in \cite[Section~2]{PPS2}
for constructing \( Z = Y\sharp 9\bar{\BPP}^{2}\)
from \( Y \), where the symbol \( Y\sharp 9\bar{\BPP}^{2} \) in \cite{PPS2}
stands for a blown up surface of \( Y \) at nine (infinitely near) points.
Thus, \( \rho(M) = \rho(Z) + 7 = 19 \), \( K_{M}^{2} = -9 \), and
the total transform \( B_{M}^{+} \)
of \( \bar{B}^{+} \) in \( M \) has a configuration
in Figure~\ref{fig:MforK2=4} (cf.\ \cite[Figure~3]{PPS2}).
\begin{figure}[hbtb]
\begin{center}
\includegraphics[scale=1]{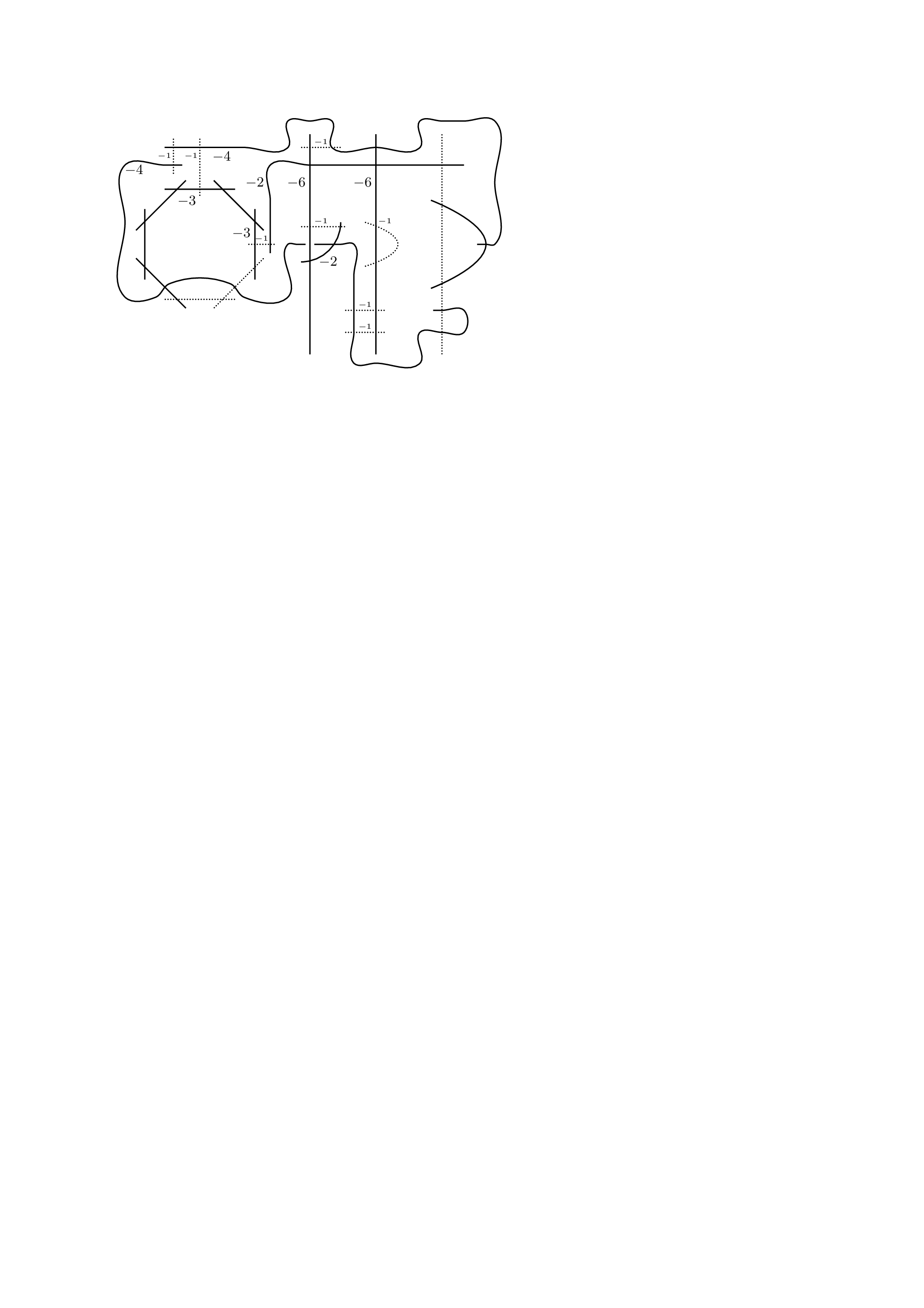}
\end{center}
\caption{The rational surface $M$ in Example~\ref{exam:K2=4}}
\label{fig:MforK2=4}
\end{figure}
Here, we can find a linear chain
\[ D = \LC(2, 4, 6, 2, 6, 2, 4, 2, 2, 2, 3, 2, 3)\]
of smooth rational curves satisfying \eqref{condC:DB},
which corresponds to the solid lines of
Figure~\ref{fig:MforK2=4}.
By Table~\ref{table:M2}, we see that \( D \) defines a toric singularity of
type \( T(1, 252, 145) \), and
\[ K_{M}^{2} = K_{X}^{2} + \delta(1, 252, 145) = -9 + 13 = 4. \]
Thus, \eqref{condC:classT} and \eqref{condC:K2} hold.
The conditions \eqref{condC:end} and
\eqref{condA:3} follow immediately from
Figure~\ref{fig:MforK2=4}.
We shall prove \eqref{condC:1+} by using Lemma~\ref{lem:condC:1+}.
Then, it suffices to show \( \Delta E > 1 \) for the \( (-1) \)-curve
\( E \) which
joins the end \( (-2) \)-component and the end \( (-3) \)-component
of \( D \). We have
\[\Delta E = (1 - 145/252) + (1 - 107/252) > 1 \]
by Table~\ref{table:M2}. Thus, \eqref{condC:1+} holds.
The ampleness of \( K_{X} \) follows from \eqref{condA:2} and
\eqref{condA:3} by Proposition~\ref{prop:step:M}.\eqref{prop:step:M:3}, since
\( B \supset J_{1} \).
The simply connectedness of \( M \setminus D \) has been shown in
the proof of \cite[Proposition~2.1]{PPS2}.
Thus, we have done all the tasks.
\end{exam}

\begin{exam}
    \label{exam:char2,K2=1}
Assume that \( \chara(\Bbbk) \ne 3 \) and we set \( K^{2} = 1 \).
We define
\[ \phi_{0} = \xtt^{2}\ytt + \ytt^{2}\ztt + \ztt^{2}\xtt
\quad \text{and}
\quad \phi_{\infty} = 3\xtt\ytt\ztt.\]
Then, \( \Phi_{\infty} = (\phi_{\infty})_{0} = L_{1} + L_{2} + L_{3}\)
for the lines \( L_{1} = (\xtt)_{0} \), \( L_{2} = (\ytt)_{0} \),
\( L_{3} = (\ztt)_{0} \), and
\( \Phi_{0} = (\phi_{0})_{0} \) is a smooth cubic curve such that
\[ \Phi_{0}|_{L_{1}} = P_{2} + 2P_{3}, \quad
\Phi_{0}|_{L_{2}} = 2P_{1} + P_{3}, \quad
\Phi_{0}|_{L_{3}} = P_{1} + 2P_{2} \]
where \( P_{1} = (1:0:0) \), \( P_{2} = (0:1:0) \), and \( P_{3} = (0:0:1) \).
In particular, \eqref{condC:Phi} holds, and
moreover, \eqref{condC:nodal} holds: In fact, since \( \chara(\Bbbk) \ne 3 \),
for \( c \ne 0 \),
\( \Phi_{c} = (\phi_{0} + c\phi_{\infty})_{0} \) is singular if and only if
\( c = -\omega^{i} \) for \( i = 0 \), \( 1 \), \( 2 \), where
\( \omega \) is a primitive cubic root of \( 1 \), and
\( \Phi_{-\omega^{i}} \) is a nodal rational curve with the node at
\( (1:\omega^{i}:\omega^{-i}) \).
We also define \( N \) to be the line passing through
\( (0:1:0) = L_{1} \cap L_{3} \)
and the node \( (1:1:1) \) of \( \Phi_{-1} \). Hence,
\( N = (\xtt - \ztt)_{0} \). Thus, we have Figure~\ref{figure6}.
\begin{figure}[hbtb]
\begin{center}
\includegraphics[scale=0.9]{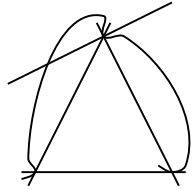}\end{center}
\caption{A pencil of cubics in Examples~\ref{exam:char2,K2=1} and
\ref{exam:char2,K2=3}}
\label{figure6}
\end{figure}
On the minimal elliptic fibration \( \pi \colon Y \to \BPP^{1}_{\Bbbk} \)
defined by \( \Phi \), the configuration type of singular fibers is
\( (\text{I}_{9}, \text{I}_{1}, \text{I}_{1}, \text{I}_{1}) \),
and the \( (-1) \)-curves exceptional for \( Y \to \BPP^{2}_{\Bbbk} \)
are mutually disjoint three sections \( \bar{S}_{1} \), \( \bar{S}_{2} \), and
\( \bar{S}_{3} \) as in Figure~\ref{figure7}.
\begin{figure}[hbtb]
\begin{center}
\includegraphics[scale=0.85]{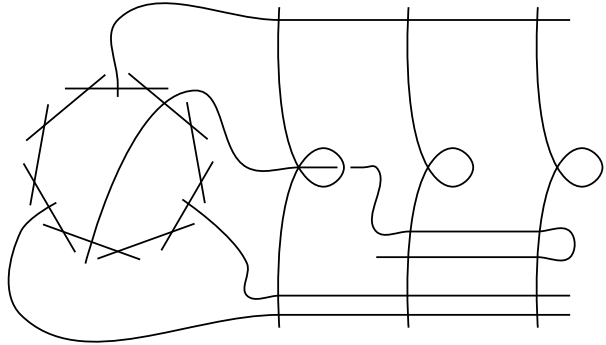}
\end{center}
\caption{The rational elliptic surface $Y$ in %
Examples~\ref{exam:char2,K2=1} and \ref{exam:char2,K2=3}}
\label{figure7}
\end{figure}
Here, the singular fibers of type \( \text{I}_{1} \) are the proper transforms of
\( \Phi_{-\omega^{i}} \) for \( i = 0 \), \( 1 \), \( 2 \).
We set \( \bar{F}_{1} \) and \( \bar{F}_{2} \) to be the proper transforms of
\( \Phi_{-1} \) and \( \Phi_{-\omega} \), respectively, and define
\( \bar{F} := \bar{F}_{1} + \bar{F}_{2} \).
The irreducible components of the singular fiber of type \( \text{I}_{9} \) are
labeled as \( \Gamma_{1} + \cdots + \Gamma_{9} \) as a cyclic chain of
smooth rational curves,
where \( \Gamma_{3i} \) is the proper transform of \( L_{i}\sptilde \)
for \( i = 1 \), \( 2 \), \( 3 \).
We may assume that \( \bar{S}_{j} \) is contracted to \( P_{j} \) by
\( Y \to \BPP^{2}_{\Bbbk} \) for \( j = 1 \), \( 2 \), \( 3 \).
Then,
\[ \bar{S_{j}} \Gamma_{i} =
\begin{cases}
1, & \text{ if } i \equiv 3j + 4 \mod 9, \\
0, & \text{otherwise}.
\end{cases}
\]

The proper transform \( N\sptilde \) of \( N \) in \( Y \) is
a bisection of \( \pi \colon Y \to \BPP^{1}_{\Bbbk} \)
with self-intersection number zero
and passing through the node of \( \bar{F}_{1} \) but no nodes of other
singular fibers of type \( \text{I}_{1} \).
We have
\[ N\sptilde \bar{S}_{j} = N\sptilde \Gamma_{2} -1
= N\sptilde \Gamma_{6} -1 = N\sptilde \Gamma_{i} = 0\]
for any \( 1 \leq j \leq 3\) and \( 1 \leq i \ne 2, 6 \leq 9 \).
We define \( \bar{S} := \bar{S}_{2} + N\sptilde\).
Since the union \( \bar{G}^{+} \) of all the \( (-2) \)-curves on
\( Y \) are the singular fiber of type \( \text{I}_{9} \),
we have \eqref{condC:NC1}.
We define
\[ \bar{G} := \bar{G}^{+} - \Gamma_{3} - \Gamma_{4} - \Gamma_{5}
- \Gamma_{8} - \Gamma_{9}
= \bar{\Gamma}_{1} + \bar{\Gamma}_{2} + \bar{\Gamma}_{6} + \bar{\Gamma}_{7}. \]
Then, \( \bar{G} \) has two connected components with the dual graph
\( \SAA_{2} \). Hence,
\( \det (\bar{G}_{i}\bar{G}_{j}) \not\equiv 0 \mod \chara(\Bbbk) \),
and \eqref{condC:LinIndep} holds by Lemma~\ref{lem:determinant}.
The condition \eqref{condA:2} fails:
In fact, \( \Gamma_{4} \subset Y \setminus (\bar{S} \cup \bar{G}) \).
Thus, it is impossible to require \( K_{X} \) to be ample by
Proposition~\ref{prop:step:M}.\eqref{prop:step:M:3}.
The conditions \eqref{condC:NC2} and \eqref{condC:-1} on \( Z \)
follow immediately from Figure~\ref{figure7}.
We define \( B := S + G + F + J_{1} \). Note that \( N\sptilde \) passes through
the node of \( \bar{F}_{1} \).

We take the birational morphism \( \varphi \colon M \to Z \) so that
the total transforms of \( B + J_{2} \) and \( \bar{G}^{+} \) in \( M \)
form a configuration of curves as in Figure~\ref{fig:K^2=1char2}.
\begin{figure}[hbtb]
\begin{center}
\includegraphics[scale=0.95]{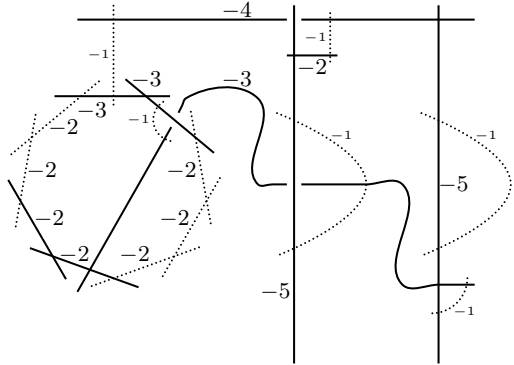}
\end{center}
\caption{The rational surface \( M \) in Example~\ref{exam:char2,K2=1}}
\label{fig:K^2=1char2}
\end{figure}
Then, \( \rho(M) = \rho(Z) + 5 = 17\) and \( K_{M}^{2} = -7 \).
Here, we have a disjoint union \( D \) of the following three
linear chains of rational curves satisfying \eqref{condC:DB}:
\[ D_{1} = \LC(4, 5, 3, 2, 2), \quad
D_{2} = \LC(5, 2), \quad
D_{3} = \LC(3, 3).\]
By Tables~\ref{table:M} and \ref{table:M2},
\( D_{1} \), \( D_{2} \), and \( D_{3} \) define
toric singularities of type
\( T(1, 11, 3) \), \( T(1, 3, 1) \), and \( T(2, 2, 1) \), respectively, and
\[ K_{X}^{2} = K_{M}^{2} + \delta(1, 11, 3) + \delta(1, 3, 1) + \delta(2, 2, 1)
= -7 + 5 + 2 + 1 = 1. \]
Thus, \eqref{condC:classT} and \eqref{condC:K2} hold.
The condition \eqref{condC:end} follows immediately from
Figure~\ref{fig:K^2=1char2}.
We shall prove \eqref{condC:1+} by using Lemma~\ref{lem:condC:1+}.
Then, it suffices to show \( \Delta E > 1 \)
for the \( (-1) \)-curve \( E \)
which joins the end \( (-4) \)-component of \( D_{1} \) and
the end \( (-2) \)-component of \( D_{2} \).
By Tables~\ref{table:M} and \ref{table:M2}, we have
\[ \Delta E = (1 - 3/11) + (1 - 2/3) > 1.\]
Hence, \eqref{condC:1+} follows.
We shall prove
the simply connectedness of \( M \setminus D \) in case \( \Bbbk = \BCC \)
by applying Lemma~\ref{lemsub:simplyconn}.
We have \( (d_{1}, d_{2}, d_{3}) = (1, 1, 2) \) and
\( (n_{1}, n_{2}, n_{3}) = (11, 3, 2) \) for the type
\( T(d_{i}, n_{i}, a_{i}) \)
of the singularity defined by \( D_{i} \) for \( 1 \leq i \leq 3 \).
Looking at Figure~\ref{fig:K^2=1char2}, we have the following \( (-1) \)-curves:
\begin{itemize}
\item  the \( (-1) \)-curve \( E \) which meets end components of \( D_{1} \) and \( D_{2} \).

\item  a \( (-1) \)-curve meeting the end \( (-4) \)-component of \( D_{1} \)
and an end \( (-3) \)-component of \( D_{3} \).
\end{itemize}
Since \( \gcd(d_{1}n_{1}, d_{2}n_{2}) = \gcd(11, 3) = 1 \)
and \( \gcd(d_{1}n_{1}, d_{3}n_{3}) = \gcd(11, 4) = 1 \),
the conditions of Lemma~\ref{lemsub:simplyconn} are all satisfied, and
hence \( M \setminus D \) is simply connected.
Therefore, we have done all the tasks.
\end{exam}

\begin{exam}\label{exam:char2,K2=3}
Assume that \( \chara(\Bbbk) \ne 3 \) and we set \( K^{2} = 3 \).
We consider the same cubic pencil \( \Phi \) as in
Example~\ref{exam:char2,K2=1}. We also consider the same \( \bar{F} =
\bar{F}_{1} + \bar{F}_{2}\), but define
\[ \bar{S} :=\bar{S}_{1} + N\sptilde \quad \text{and} \quad
\bar{G} := \bar{G}^{+} - L_{1}\sptilde
= \sum\nolimits_{1 \leq i \ne 3 \leq 9} \Gamma_{i}.\]
Then, \eqref{condC:Phi}, \eqref{condC:nodal}, and \eqref{condC:NC1}
hold automatically. Moreover \eqref{condC:LinIndep} by Lemma~\ref{lem:determinant},
since the dual graph of \( \bar{G} \) is \( \SAA_{8} \) and
\( \det(\bar{G}_{i}\bar{G}_{j}) \not\equiv 0 \mod \chara(\Bbbk) \).
We have \eqref{condA:2} immediately from Figure~\ref{figure7}.
As in Example~\ref{exam:char2,K2=1},
\eqref{condC:NC2} and \eqref{condC:-1} hold on \( Z \).
But, we define here \( G \) to be \( S + F + J_{1} + J_{2} \).
Let \( \varphi \colon M \to Z \) be the birational morphism
such that the total transform \( B^{+}_{M} \)
of \( \bar{B}^{+}\)  is as in Figure~\ref{fig:K2=3char2}.
\begin{figure}[hbtb]
\begin{center}
\includegraphics[scale=0.95]{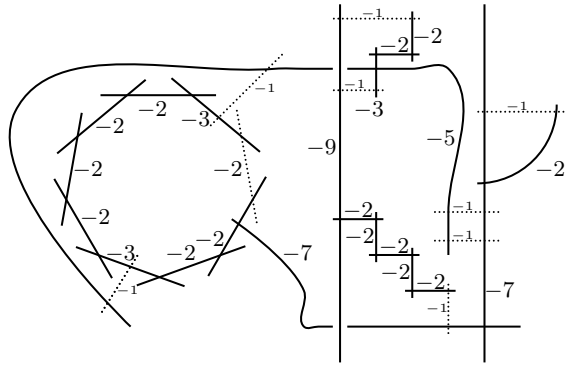}
\end{center}
\caption{The rational surface \( M \) in Example~\ref{exam:char2,K2=3}}
\label{fig:K2=3char2}
\end{figure}
Then, \( \rho(M) = \rho(Z) + 15 = 27 \) and \( K_{M}^{2} = -17 \).
Here, we have a disjoint union \( D \) of the following three
linear chains of smooth rational curves satisfying \eqref{condC:DB}:
\[ D_{1} = (2, 7, 7, 2, 2, 3, 2, 2, 2, 2, 3) \quad
D_{2} = (5, 3, 2, 2), \quad
D_{3} = (9, 2, 2, 2, 2, 2).\]
By Tables~\ref{table:M} and \ref{table:M2},
\( D_{1} \), \( D_{2} \), and \( D_{3} \) define the toric singularities of type
\( T(1, 63, 34) \), \( T(2, 4, 1) \), and \( T(1, 7, 1) \), respectively, and
\[ K_{X}^{2} = K_{M}^{2} + \delta(1, 63, 34) + \delta(2, 4, 1) + \delta(1, 7, 1) =
-17 + 11 + 3 + 6 = 3.\]
Thus, \eqref{condC:classT} and \eqref{condC:K2} hold.
The conditions \eqref{condC:end} and \eqref{condA:3} follow immediately
from Figure~\ref{fig:K2=3char2}.
We shall prove \eqref{condC:1+} by using Lemma~\ref{lem:condC:1+}.
Then, it suffices to check \( \Delta E_{i} > 0 \) for \( i = 1 \), \( 2 \), \( 3 \)
for the \( (-1) \)-curves \( E_{1} \), \( E_{2} \), and \( E_{3} \) on \( M \)
characterized by:
\begin{itemize}
\item  \( E_{1} \) joins the end \( (-2) \)-component of \( D_{2} \) and
the \( (-9) \)-curve in \( D_{3} \).

\item  \( E_{2} \) joins the end \( (-2) \)-component of \( D_{3} \) and
the third component of \( D_{1} \) which is a \( (-7) \)-curve.

\item  \( E_{3} \) joins the end \( (-2) \)-component of \( D_{1} \) and
the second component of \( D_{1} \) which is a \( (-7) \)-curve.
\end{itemize}
By Tables~\ref{table:M} and \ref{table:M2}, we can calculate
\begin{gather*}
\Delta E_{1} = (1 - 3/4) + (1 - 1/7) > 1, \quad
\Delta E_{2} = (1 - 6/7) + (1 - 1/63) > 1, \\
\Delta E_{3} = (1 - 34/63) + (1 - 5/63) > 1.
\end{gather*}
Hence, \eqref{condC:1+} holds.
The ampleness of \( K_{X} \) follows from
\eqref{condA:2} and \eqref{condA:3} by
Proposition~\ref{prop:step:M}.\eqref{prop:step:M:3}.
We shall show that \( M \setminus D \) is simply connected
when \( \Bbbk = \BCC \) by applying Lemma~\ref{lemsub:simplyconn}.
We have \( (d_{1}, d_{2}, d_{3}) = (1, 2, 1) \) and
\( (n_{1}, n_{2}, n_{3}) = (63, 4, 7) \) for the type
\( T(d_{i}, n_{i}, a_{i}) \) of the singularity defined by \( D_{i} \)
for \( 1 \leq i \leq 3 \). Looking at Figure~\ref{fig:K2=3char2},
we have the following two \( (-1) \)-curves:
\begin{itemize}
\item  the \( (-1) \)-curve \( E_{1} \) which meets end components of
\( D_{2} \) and \( D_{3} \).

\item  a \( (-1) \)-curve meeting the end \( (-3) \)-component of \( D_{1} \)
and the end \( (-5) \)-component of \( D_{2} \).
\end{itemize}
Since \( \gcd(d_{2}n_{2}, d_{3}n_{3}) = \gcd(8, 7) = 1 \) and
\( \gcd(d_{1}n_{1}, d_{2}n_{2}) = \gcd(63, 8) = 1\), the conditions of
Lemma~\ref{lemsub:simplyconn} are all satisfied,
and hence \( M \setminus D \) is simply connected.
Therefore, we have done all the tasks.
\end{exam}

\begin{exam}
    \label{exam:char2,K2=2}
Assume that \( \chara(\Bbbk) \ne 3 \) and we set \( K^{2} = 2 \).
We define
\[ \phi_{0} = \xtt^{3} + \ytt\ztt(\ytt + \ztt) \quad \text{and} \quad
\phi_{\infty} = 3\xtt\ytt\ztt.\]
Then, \( \Phi_{0} \) is a smooth cubic curve, and
\( \Phi_{\infty} = L_{1} + L_{2} + L_{3} \)
for the lines \( L_{1} = (\xtt)_{0} \), \( L_{2} = (\ytt)_{0} \), and
\( L_{3} = (\ztt)_{0} \).
In particular, \eqref{condC:Phi} holds.
Here, note that \( P_{3} = (0:0:1) = L_{1} \cap L_{2}  \) and
\( P_{2} = (0:1:0) = L_{1} \cap L_{3} \) are inflection points of
\( \Phi_{0} \), and
that \( L_{2} \) and \( L_{3} \) are the tangent lines of
\( \Phi_{0} \) at \( P_{3} \) and \( P_{2} \),
respectively (cf.\ Figure~\ref{figure8}).
\begin{figure}[hbtb]
\begin{center}
\includegraphics[scale=0.9]{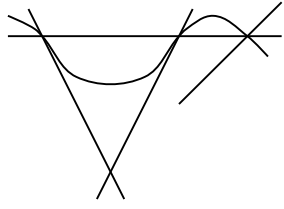}
\end{center}
\caption{A pencil of cubics} \label{figure8}
\end{figure}
The condition \eqref{condC:nodal} also holds. In fact,
for \( c \ne 0 \), the divisor \( \Phi_{c} = (\phi_{0} + c\phi_{\infty})_{0} \)
is singular if and only if
\( c = -\omega^{i} \) for \( i = 0 \), \( 1 \), \( 2 \),
where \( \omega \) is a primitive cubic root of \( 1 \), and
\( \Phi_{-\omega^{i}} \) is a nodal rational curve with the node at
\( (1 : \omega^{i} : \omega^{i}) \).
We define \( N \) to be the line passing through
\( (0:1:-1) \in \Phi_{0} \cap L_{1} \) and
the node \( (1:1:1) \) of \( \Phi_{-1} \). Thus,
\( N = (2\xtt - \ytt - \ztt)_{0} \) (cf.\ Figure~\ref{figure8}).
On the minimal elliptic fibration \( \pi \colon Y \to \BPP^{1}_{\Bbbk} \)
defined by \( \Phi \), the configuration type of singular fibers is
\( (\text{I}_{9}, \text{I}_{1}, \text{I}_{1}, \text{I}_{1}) \),
and the \( (-1) \)-curves exceptional for \( Y \to \BPP^{2}_{\Bbbk} \)
are mutually disjoint sections
\( \bar{S}_{1} \), \( \bar{S}_{2} \), and \( \bar{S}_{3} \) as in
Figure~\ref{figure9}.
\begin{figure}[hbtb]
\begin{center}
\includegraphics[scale=0.85]{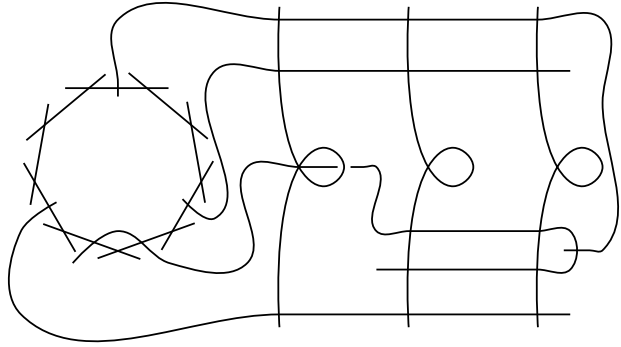}
\end{center}
\caption{The rational elliptic surface $Y$ in Example~\ref{exam:char2,K2=2}}
\label{figure9}
\end{figure}
We may assume that \( \bar{S}_{1} \), \( \bar{S}_{2} \), \( \bar{S}_{3} \)
are contracted to the points \( (0:1:-1) \in \Phi_{0} \cap L_{1} \),
\( P_{3} = (0:0:1) \), and \( P_{2} = (0:1:0) \),
respectively.
The three singular fibers of type \( \text{I}_{1} \) of \( \pi \)
are the proper transforms of \( \Phi_{-\omega^{i}} \)
for \( i = 0 \), \( 1 \), \( 2 \).
We set \( \bar{F}_{1} \) and \( \bar{F}_{2} \) to be the proper transforms
of \( \Phi_{-1} \) and \( \Phi_{-\omega} \), respectively, and define
\( \bar{F} = \bar{F}_{1} + \bar{F}_{2} \).
The irreducible components of the singular fiber of type \( \text{I}_{9} \)
are labeled as \( \Gamma_{0} + \Gamma_{1} + \cdots + \Gamma_{8} \)
as a cyclic chain of rational curves in such a way that
\( \Gamma_{0} = L_{1}\sptilde\), \( \Gamma_{4} = L_{2}\sptilde \), and
\( \Gamma_{5} = L_{3}\sptilde \), where \( L_{i}\sptilde \)
is the proper transform of \( L_{i} \) in \( Y \)
for \( i = 1 \), \( 2 \), \( 3 \).
Then,
\[ \bar{S}_{1}\Gamma_{0} = \bar{S}_{2}\Gamma_{3} = \bar{S}_{2}\Gamma_{6} = 1,
\quad \text{and} \quad \bar{S}_{j}\Gamma_{i} = 0\]
for other \( (i, j) \) with \( 0 \leq i \leq 8 \) and \( 1 \leq j \leq 3 \).
The proper transform \( N\sptilde \) of \( N \) in \( Y \) is a bisection
of \( \pi \) with self-intersection number zero
passing through the node of \( \bar{F}_{1} \) but
no nodes of the other singular fibers of type \( \text{I}_{1} \).
We have
\[ N\sptilde \bar{S}_{1} = N\sptilde \Gamma_{4} = N\sptilde \Gamma_{5} = 1, \quad
\text{and} \quad
N\sptilde \bar{S}_{j} = N\sptilde\Gamma_{i} = 0 \]
for \( 0 \leq i \ne 4, 5 \leq 8\) and \( j = 2 \), \( 3 \)
(cf.\ Figure~\ref{figure9}).
We define \( \bar{S} := \bar{S}_{3} + N\sptilde\).
Since the union \( \bar{G}^{+} \) of all the \( (-2) \)-curves on \( Y \)
is just the singular fiber of type \( \text{I}_{9} \),
we have \eqref{condC:NC1}.
We define
\[ \bar{G} := \bar{G}^{+} - \Gamma_{2} - \Gamma_{3} - \Gamma_{4}
= \sum\nolimits_{0 \leq i \ne 2, 3, 4 \leq 8} \Gamma_{i}. \]
We shall show \eqref{condC:LinIndep} without using Lemma~\ref{lem:determinant}.
Assume that
\[ a_{1}\bar{S}_{3} + a_{2}N\sptilde +
\sum\nolimits_{i = 0}^{8}m_{i}\Gamma_{i} \sim pH \]
for a Cartier divisor \( H \), integers \( a_{1} \), \( a_{2} \),
\( m_{0} \), \ldots, \( m_{8} \) with \( m_{2} = m_{3} = m_{4} = 0 \), where
\( p = \chara(\Bbbk) \).
Considering the intersection numbers with
\( \bar{S}_{1} \), \( \bar{S}_{3} \), \( N\sptilde \),
\( \Gamma_{1} \), \( \Gamma_{2} \), \( \Gamma_{5} \),
we have
\[
a_{2} + m_{0} \equiv -a_{1} + m_{6} \equiv m_{5}
\equiv m_{0} - 2m_{1} \equiv m_{1} \equiv a_{2} - 2m_{5} + m_{6} \equiv 0
\mod p.\]
Thus, \( a_{1} \equiv a_{2} \equiv m_{i} \equiv 0  \mod p\) for \( 0 \leq i \leq 6 \).
Moreover, we have \( m_{7} \equiv m_{8} \equiv 0 \mod p\)
by considering the intersection numbers
with \( \Gamma_{0} \) and \( \Gamma_{8} \).
Hence, \eqref{condC:LinIndep} holds.
The condition \eqref{condA:2} fails. In fact,
\( \Gamma_{4} \subset Y \setminus (\bar{S} \cup \bar{G})  \).
Thus, it is impossible to require \( K_{X} \) to be ample by
Proposition~\ref{prop:step:M}.\eqref{prop:step:M:3}.
The conditions \eqref{condC:NC2} and \eqref{condC:-1} on \( Z \)
follow from Figure~\ref{figure9} immediately.
We define \( B := S + G + F + J_{1} \).

We take the birational morphism \( \varphi \colon M \to Z \)
so that the total transforms of \( B + J_{2} \)  and \( \bar{G}^{+} \)
in \( M \) form a configuration of curves as in Figure~\ref{fig:K2=2char2}.
\begin{figure}[hbtb]
\begin{center}
\includegraphics[scale=0.95]{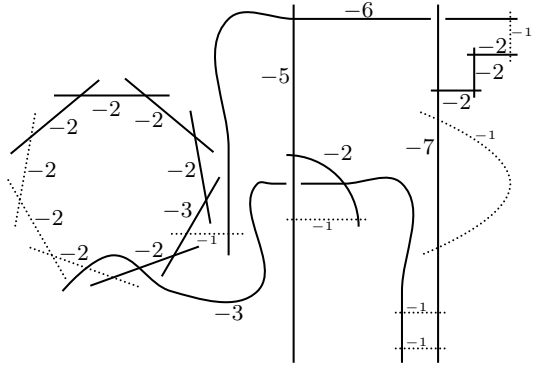}
\end{center}
\caption{The rational surface \( M \) in Example~\ref{exam:char2,K2=2}}
\label{fig:K2=2char2}
\end{figure}
Then, \( \rho(M) = \rho(Z) + 8 = 20 \) and \( K_{M}^{2} = -10 \).
Here, we have a disjoint union \( D \) of the following two
linear chains of smooth rational curves satisfying \eqref{condC:DB}:
\[ D_{1} = \LC(6, 5, 2, 3, 2, 3, 2, 2, 2, 2), \quad
D_{2} = \LC(7, 2, 2, 2).\]
By Tables~\ref{table:M} and \ref{table:M2},
\( D_{1} \) and \( D_{2} \) define toric singularities of type
\( T(3, 23, 4) \) and \( T(1, 5, 1) \), respectively, and
\[ K_{X}^{2} = K_{M}^{2} + \delta(3, 23, 4) + \delta(1, 5, 1) = -10 + 8 + 4 = 2.\]
Thus, \eqref{condC:classT} and \eqref{condC:K2} hold.
The condition \eqref{condC:end} follows immediately
from Figure~\ref{fig:K2=2char2}. We shall prove \eqref{condC:1+} using
Lemma~\ref{lem:condC:1+}. Then, it suffices to show \( \Delta E  > 1 \)
for the \( (-1) \)-curve \( E \) which joins the \( (-6) \)-curve of \( D_{1} \)
and the end \( (-2) \)-component
of \( D_{2} \).
By Tables~\ref{table:M} and \ref{table:M2}, we have
\[ \Delta E = (1 - 4/23) + (1 - 4/5) > 1. \]
Hence, \eqref{condC:1+} holds.
We shall prove the simply connectedness of \( M \setminus D \)
in case \( \Bbbk = \BCC \) by applying Lemma~\ref{lemsub:simplyconn}.
We have \( (d_{1}, d_{2}) = (3, 1) \) and \( (n_{1}, n_{2}) = (23, 5) \)
for the type \( T(d_{i}, n_{i}, a_{i}) \) of the singularity defined by
\( D_{i} \) for \( i = 1 \), \( 2 \).
Since the \( (-1) \)-curve \( E \)
meets end components of \( D_{1} \) and \( D_{2} \) and since
\( \gcd(d_{1}n_{1}, d_{2}n_{2}) = \gcd(3 \times 23, 5) = 1  \),
the conditions of Lemma~\ref{lemsub:simplyconn} are satisfied,
and hence \( M \setminus D \) is simply connected.
Therefore, we have done all the tasks.
\end{exam}

Finally, we shall prove our main result. We restate the result.

\begin{mainthm}
For any algebraically closed field \( \Bbbk \) and for any integer
\( 1 \leq K^{2} \leq 4 \), there exists an algebraically simply
connected minimal surface \( \BSS \)
of general type over \( \Bbbk \) with
\(p_{g}(\BSS) = q(\BSS) = \dim \OH^{2}(\BSS, \Theta_{\BSS/\Bbbk}) = 0 \)
and \( K_{\BSS}^{2} = K^{2} \) except \( (\chara(\Bbbk), K^{2}) = (2, 4) \),
where \( \Theta_{\BSS/\Bbbk} \) denotes the tangent sheaf.
Moreover, one can find such a surface with ample
canonical divisor when \( 1 \leq K^{2} \leq 4 \), except
\((\chara(\Bbbk), K^{2}) = (2, 1)\), \((2, 2)\), and \( (2, 4)\).
\end{mainthm}

\begin{proof}
Assume that \( (p, K^{2}) \ne (2, 4) \). Then,
we have an algebraically simply connected minimal
surface \( \BSS \) of general type defined over \( \Bbbk \) such that
\( p_{g}(\BSS) = q(\BSS) = \dim \OH^{2}(\BSS, \Theta_{\BSS/\Bbbk})
\linebreak 
= 0\)
and \( K_{\BSS}^{2} = K^{2} \) by Propositions~\ref{prop:LPmethod1}
and \ref{prop:LPmethod2} applied to
Examples~\ref{exam:K2=2Main}--\ref{exam:char2,K2=2} above.
In fact, the case \( K^{2} = 1 \) is treated in
Examples~\ref{exam:K2=1Main} (when \( p \ne 2  \)) and
\ref{exam:char2,K2=1} (when \( p \ne 3 \));
the case \( K^{2} = 2 \) in
Examples~\ref{exam:K2=2Main} (when \( p \ne 2\), \( 3 \)),
\ref{exam:K2=2char3} (when \( p \ne 2 \)), and
\ref{exam:char2,K2=2} (when \( p \ne 3 \));
the case \( K^{2} = 3 \) in Examples~\ref{exam:K2=3} (when \( p \ne 2 \))
and \ref{exam:char2,K2=3} (when \( p \ne 3 \));
and the case \( K^{2} = 4 \) in Example~\ref{exam:K2=4} (when
\( p \ne 2 \)).
Here, \( K_{X} \) is not ample only in Examples~\ref{exam:char2,K2=1} and
\ref{exam:char2,K2=2}. Thus, we can require \( K_{\BSS} \) to be ample
if \( (p, K^{2}) \ne (2, 1) \), \( (2, 2) \).
Hence, the proof
has been completed.
\end{proof}


\end{document}